\documentclass[12pt]{amsart}
\usepackage[margin=1in]{geometry}
\usepackage{amsmath,latexsym,amssymb,amsthm,graphicx}
\usepackage{mathtools}
\usepackage{subfig}

\usepackage{xfrac}
\usepackage{fix-cm}
\usepackage{color,transparent}
\usepackage{verbatim}

\usepackage[bookmarks=true,%
    colorlinks=true,%
    linkcolor=blue,%
    citecolor=blue,%
    filecolor=blue,%
    menucolor=blue,%
    urlcolor=blue]{hyperref}
\usepackage{cleveref}    
\usepackage{doi}
\usepackage{stmaryrd}
\usepackage{multirow}
\usepackage{float}
\usepackage{array}
\usepackage[shortlabels]{enumitem}
\usepackage{pdfsync}
\usepackage{tikz}
\usepackage{tikz-cd}
\usepackage{float}
\usepackage[T1]{fontenc}
\usepackage[utf8]{inputenc}
\usepackage[textsize=small]{todonotes}

\newcommand{\DMGtodo}[2][]{{}}

\usepackage{hyphenat}
\usepackage{booktabs}

\graphicspath{{mpdraws/}{draws/}}

\setenumerate[1]{label=(\arabic*)}

\numberwithin{equation}{section}
\numberwithin{figure}{section}
\newtheorem{theorem}[equation]{Theorem}

\newtheorem{mainthm}{Theorem}

\newtheorem{maincor}[mainthm]{Corollary}

\newtheorem{lemma}[equation]{Lemma}

\newtheorem{corollary}[equation]{Corollary}
\newtheorem{proposition}[equation]{Proposition}

\newtheorem*{ex*}{Exercise}

\theoremstyle{definition}
\newtheorem{remark}[equation]{Remark}

\newtheorem{definition}[equation]{Definition}
\newtheorem{question}[equation]{Question}

\newtheorem{notation}[equation]{Notation}

\theoremstyle{plain}

\newcommand\C{\mathcal{C}}

\DeclareMathOperator{\Teich}{Teich}

\DeclareMathOperator{\CAT}{CAT}

\DeclareMathOperator{\PC}{\mathbb{P}\mathcal{C}}
\DeclareMathOperator{\PCf}{\mathbb{P}\mathcal{C}_{\mbox{f}}}
\DeclareMathOperator{\Cf}{\mathcal{C}_{f}}

\DeclareMathOperator{\DNonPos}{DNonPos}

\newcommand{\ora}[1]{\overrightarrow{#1}}

\newcommand*{\wt}[1]{\widetilde{#1}}

\renewcommand{\epsilon}{\varepsilon}

\DeclareMathOperator{\PSL}{\mathit{PSL}}

\DeclareMathOperator{\supp}{supp}

\newcommand{\cal}[1]{\mathcal{#1}}
\newcommand{\bb}[1]{\mathbb{#1}}

\usetikzlibrary{arrows}
\tikzset{
    labl/.style={anchor=south, rotate=90, inner sep=.5mm}
}
\tikzstyle{every picture}=[> = to]
\tikzset{cdlabel/.style={execute at begin node=$\scriptstyle,execute at end node=$}}
\tikzset{implication/.style={double equal sign distance, -implies}}
\tikzset{biimplication/.style={double equal sign distance, implies-implies}}

\begin{document}

\title{Geodesic currents of coarse negative curvature}

\author[Jyothis]{Meenakshy Jyothis}
\address{Department of Mathematics\\
         University of Oklahoma\\
         601 Elm Ave Room 423, Norman, OK 73019\\
         USA}
\email{mjyothis@ou.edu}

\author[Martínez-Granado]{Dídac Martínez-Granado}
\address{Department of Mathematics\\University of Luxembourg\\Av. de la Fonte 6, Esch-sur-Alzette, L-4364, Luxembourg}       
\email{didac.martinezgranado@uni.lu}

\begin{abstract}
Strong hyperbolicity is a coarse notion of negative curvature, stronger than Gromov hyperbolicity, that includes all $\CAT(-k)$ metrics for $k>0$ and allows the use of dynamical techniques available in negative curvature, such as thermodynamical formalism. We prove that the subset of geodesic currents whose dual pseudometric is strongly hyperbolic is dense in the space of geodesic currents. The proof combines an elementary finite-cover argument with a characterization of strong hyperbolicity in terms of boundary data for pseudometrics dual to geodesic currents.

In contrast, we show that currents arising from non-positively curved metrics on the surface are not dense. As a consequence, we construct infinitely many pairwise non-roughly-isometric invariant strongly hyperbolic geodesic metrics on the universal cover of the surface which are not $\CAT(0)$.

Finally, we establish correlation counting results for the associated length spectra.
\end{abstract}

\maketitle

\section{Introduction}
Geodesic currents, introduced by Bonahon, provide a unifying framework for studying surface geometry, dynamics, and topology. 
They simultaneously generalize closed geodesics, measured laminations~\cite{Bonahon86:EndsHyperbolicManifolds,Bonahon88:GeodesicCurrent}, and a wide range of geometric structures on surfaces, including hyperbolic and non-positively curved Riemannian metrics~\cite{HP97:RigidityNegCurvedCone,CFF92:RigidityNonPosCurvedRiem,BL15:RigidityNonPosCurvedCone,Con18:MarkedNonpos}.

Throughout this paper, we assume that $S$ is a closed orientable surface of genus at least two, and let X denote $S$ equipped with a hyperbolic structure. Every geodesic current $\mu$ on a surface $X$ determines a dual invariant (pseudo-)metric $d_\mu$ on the universal cover $\widetilde{X}$, introduced in~\cite{BIPP21:Currents} and further developed in~\cite{DRMG23:Duals} (see also~\cite{COR22:Manhattan}). These pseudometrics are always Gromov hyperbolic, and their marked length spectra can be recovered via intersection numbers. In fact, when the currents are filling, they induce points in $\mathcal{D}(\Gamma)$, the space of classes of Gromov hyperbolic pseudometrics on $\pi_1(S)$ quasi-isometric to a word metric (\cite{Fur02:Coarse,OR22:SpaceMetric,DRMG23:Duals,CT25:Manhattan,CMGR26:GreenMetrics}). This gives a natural embedding from projective filling currents $\PCf(S)$ into $\mathcal{D}(\Gamma)$ (Proposition~\ref{prop:embedding}, see also~\cite[Proposition~6.12]{COR22:Manhattan}).

On the one hand, we show that geodesic currents associated to metrics of sharp negative or non-positive curvature---such as geodesic currents arising from non-positively curved Riemannian or locally $\CAT(0)$ metrics on $S$---have strong algebraic constraints in their marked length spectrum and thus fail to be dense in the space of geodesic currents.

On the other hand, we identify a dense subset of geodesic currents whose associated pseudometrics exhibit a coarse form of negative curvature: the class of \emph{strongly hyperbolic} pseudometrics. Introduced in~\cite{NicaSpakula2016}, strong hyperbolicity is a coarse analogue of negative curvature in the following sense:
\begin{enumerate}
\item it implies Gromov hyperbolicity~\cite[Theorem~4.2]{NicaSpakula2016};
\item negatively curved Riemannian metrics, as well as $\CAT(-k)$ metrics for $k>0$, are strongly hyperbolic~\cite[Theorem~5.1]{NicaSpakula2016};
\item strongly hyperbolic metrics induce H\"older potentials on $T^1X$~\cite[Example~4.12]{CMGR26:GreenMetrics}, allowing the use of sharp dynamical tools (e.g.\ thermodynamic formalism), much as in the setting of negatively curved Riemannian metrics.
\end{enumerate}

We exploit the third property above to derive counting results for pairs of strongly hyperbolic currents by applying classical dynamical counting theorems (Theorem~\ref{mainthm:counting}).

Finally, combining both the density of the strongly hyperbolic family of currents and the non-density of $\CAT(0)$ currents, we construct a dense subset $\mathcal{F}$ of geodesic currents whose associated pseudometrics $d_\mu$ are, in fact, $\pi_1(X)$-invariant \emph{geodesic metrics} on $\widetilde{X}$ that are strongly hyperbolic but \emph{not} $\CAT(0)$ (Corollary~\ref{cor:sh_not_cat0}).
 
\subsection{Density of strongly hyperbolic geodesic currents}

Suppose $X$ is a closed hyperbolic surface and
let $\pi \colon Y \to X$ be a finite-degree cover of $X$. We define the \emph{transfer map} associated to $\pi$ and denoted by $\Pi_Y: \mathcal{C}(Y) \rightarrow \mathcal{C}(X)$ as 
\[
\Pi_Y(\mu) = \frac{1}{n} \sum_{i=1}^n (g_i)_* \mu
\]
where $\mu \in \mathcal{C}(Y)$, and $g_i$ is a transversal of $\pi_1(Y) \leq \pi_1(X)$.
We define the following subspace of geodesic currents
\begin{equation}
\mathbb{R}\Teich^\infty(S)
\coloneqq
\left\{
\Pi_{Y}\!\left(\sum_{i=1}^k \lambda_i \mathcal{L}_{Y_i}\right)
:\;
k \in \mathbb{N},\;
\lambda_i \ge 0,\;
Y_i \in \Teich(Y),
Y \to X
\text{ finite cover}
\right\}.
\label{eq:rteichinfty}
\end{equation}
i.e. the subspace of the image under $\Pi_Y$ of non-negative linear combinations of hyperbolic Liouville currents on all finite covers of $S$. 
We prove the following density result.

\begin{mainthm}
The subspace of geodesic currents $\mathbb{R}\Teich^{\infty}(X)$ is dense in $\C(X)$.
\label{mainthm:rteich_dense}
\end{mainthm}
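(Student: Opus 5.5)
Since weighted closed curves are dense in $\C(X)$ (Bonahon), and $\mathbb{R}\Teich^\infty(X)$ is a cone, I will show that the closure of $\mathbb{R}\Teich^\infty(X)$ contains every closed geodesic $\gamma$ up to a positive scalar. I will also use that $\mathbb{R}\Teich^\infty(X)$ is closed under sums. To see this, take two elements coming from covers $Y_1, Y_2$, and pass to a common finite cover $Z$ (for instance the cover corresponding to $\pi_1(Y_1)\cap\pi_1(Y_2)$). Then check that $\Pi_{Y_j}(\mathcal{L}_{Y'}) = \Pi_Z(\mathcal{L}_{\tilde Y'})$, where $\tilde Y'$ is the pulled-back hyperbolic structure on $Z$. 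This is transitivity of transfer maps: the Liouville current of a lifted metric, viewed on $\partial\widetilde X$, is the same $\pi_1(Z)$-invariant measure, and averaging over cosets composes. Hence the closure of $\mathbb{R}\Teich^\infty(X)$ is a closed convex cone, and it suffices to approximate each closed curve projectively.

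\textbf{Step 1: pinching in a cover.} Fix a primitive closed geodesic $\gamma$ on $X$. By residual finiteness and LERF for surface groups (Scott), there is a finite cover $\pi\colon Y\to X$ in which $\gamma$ has a lift $\tilde\gamma$ that is a \emph{simple} closed curve. On $Y$, the Liouville currents $\mathcal{L}_{Y_t}$ of hyperbolic structures pinching $\tilde\gamma$ satisfy $\epsilon_t\,\mathcal{L}_{Y_t}\to \tilde\gamma$ for suitable $\epsilon_t\to 0$. This is Bonahon's result that Thurston's boundary of Teichmüller space is realized in $\mathbb{P}\C$, applied to a Fenchel--Nielsen ray in $\Teich(Y)$ toward the simple curve $\tilde\gamma$. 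For instance, grafting or twisting along $\tilde\gamma$ converges projectively to the measured lamination $\tilde\gamma$.

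\textbf{Step 2: transfer.} The transfer map $\Pi_Y$ is continuous and linear, since it is a finite average of pushforwards. Therefore $\epsilon_t\Pi_Y(\mathcal{L}_{Y_t})\to \Pi_Y(\delta_{\tilde\gamma})$. I will then compute $\Pi_Y(\delta_{\tilde\gamma})$. The $\pi_1(X)$-orbit of the lifts of $\tilde\gamma$ in $\partial^{(2)}\widetilde X$ covers the lifts of $\gamma$, so $\Pi_Y(\delta_{\tilde\gamma}) = c\,\delta_\gamma$ with $c>0$, where $c$ depends on the degree of $\tilde\gamma\to\gamma$ and on $[\pi_1(X):\pi_1(Y)]$. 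Combining this with the convex-cone property, every element $\sum a_j\gamma_j$ lies in the closure of $\mathbb{R}\Teich^\infty(X)$, and density of such elements finishes the proof.

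\textbf{Main obstacle.} The main difficulty is bookkeeping with the transfer map. One point is its well-definedness: it must be independent of the transversal $g_i$, using $\pi_1(Y)$-invariance of $\mu$. The other point is the identity $\Pi_Y(\delta_{\tilde\gamma})=c\,\delta_\gamma$ and the compatibility under passing to common covers, which is needed for additivity. Both reduce to orbit-counting over cosets $\pi_1(X)/\pi_1(Y)$ together with stabilizers of axes. I would verify them directly on the lifts of $\gamma$ in $\partial^{(2)}\widetilde X$.
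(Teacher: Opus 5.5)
Your proposal is correct and follows essentially the same route as the paper. Both arguments use Scott's theorem to get a simple lift in a finite cover, Bonahon's approximation of a simple closed curve by rescaled Liouville currents on that cover, pushing down by the continuous linear transfer map, and density of weighted closed curves. The differences are minor. The paper passes to a regular cover in which every component of $\pi^{-1}(\gamma)$ is simple and approximates the whole preimage, so that $\Pi$ returns exactly $\delta_\gamma$; you instead use a single simple lift and the orbit argument $\Pi_Y(\delta_{\tilde\gamma})=c\,\delta_\gamma$. Your closure-under-sums step is harmless but unnecessary, since Bonahon's density statement already concerns single weighted closed curves.
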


The relevance of $\mathbb{R}^{\infty}\Teich(X)$ is that it consists of geodesic currents whose dual pseudometrics are strongly hyperbolic metrics.
 
Given $\epsilon>0$, a metric space $X$ is said $\epsilon$-\emph{strongly hyperbolic} if for every points $x,y,z,w \in X$, we have
\begin{equation}
e^{\frac{\epsilon}{2}(d(x,z) + d(y,w))} \leq e^{\frac{\epsilon}{2}(d(x,y) + d(z,w))} + e^{\frac{\epsilon}{2}(d(x,w) + d(y,z))}.
\label{eq:intro_sh}
\end{equation}

We say that a geodesic current $\mu$ is \emph{strongly hyperbolic} if its dual pseudometric $d_\mu$ on $\widetilde{X}$ is strongly hyperbolic. 
While the metric $d_\mu$ depends on the choice of universal cover, its marked length spectrum does not (see Section~\ref{subsec:dual_current}), and strong hyperbolicity is in fact an invariant of this spectrum (Corollary~\ref{cor:stronghyp_roughsimilar}).

\begin{mainthm}
$\mathbb{R}\Teich^\infty(S)$ consists of strongly hyperbolic geodesic currents.
\label{mainthm:rinftyeich_sh}
\end{mainthm}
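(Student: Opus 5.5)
The plan is to reduce everything to an inequality for cross-ratios on $\partial\widetilde X=S^1$, using the boundary characterization of strong hyperbolicity for dual pseudometrics announced in the abstract. The needed facts about Liouville currents are classical. Throughout, identify $\widetilde Y=\widetilde X$, so that all currents in the construction live on the same space of geodesics $\mathcal G(\widetilde X)$ with the same boundary circle.

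\emph{Step 1 (linearity).} The dual pseudometric is additive and homogeneous in the current: $d_{\mu+\nu}=d_\mu+d_\nu$ and $d_{\lambda\mu}=\lambda d_\mu$. For $g\in\pi_1(X)$ we have $d_{g_*\mu}(x,y)=d_\mu(g^{-1}x,g^{-1}y)$. Hence the dual pseudometric of $\Pi_Y(\sum_i\lambda_i\mathcal L_{Y_i})$ is
\[
d=\tfrac1n\sum_{j=1}^n\sum_{i=1}^k\lambda_i\, d_{Y_i}(g_j^{-1}\cdot,g_j^{-1}\cdot),
\]
with $d_{Y_i}$ the hyperbolic distance of $Y_i$ up to a universal constant. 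So $d$ is a finite positive combination $\sum_m a_m d_m$ in which each $d_m$ is an isometric copy of $\mathbb H^2$. Each $d_m$ is therefore $\CAT(-1)$, and hence $1$-strongly hyperbolic by \cite[Theorem~5.1]{NicaSpakula2016}. By Corollary~\ref{cor:stronghyp_roughsimilar} we may rescale and assume $\sum_m a_m=1$.

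\emph{Step 2 (reduction to boundary data).} Sums of strongly hyperbolic metrics need not be strongly hyperbolic pointwise: multiplying the defining inequalities \eqref{eq:intro_sh} produces cross terms. I would therefore pass to the boundary characterization of strong hyperbolicity for $d_\mu$. In that formulation, the relevant quantity for a quadruple of boundary points $a,b,c,d$ in cyclic order is the $\mu$-measure of the box $[a,b]\times[c,d]$, which is additive in $\mu$. For a Liouville current this measure equals $X=\log$ of a cross-ratio. With $Y$ the measure of the complementary box $[b,c]\times[d,a]$, the cross-ratio identity gives $e^{-X}+e^{-Y}=1$. For $\sum_m a_m d_m$ the box measures are $X=\sum_m a_mX_m$ and $Y=\sum_m a_mY_m$. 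The target inequality becomes: for some $\epsilon>0$,
\[
\prod_m u_m^{\epsilon a_m}+\prod_m v_m^{\epsilon a_m}\ \ge\ 1,\qquad u_m=e^{-X_m},\ v_m=1-u_m,
\]
uniformly over all quadruples.

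\emph{Step 3 (the main obstacle: uniformity).} With $\epsilon=1$ this inequality fails, since Hölder gives the reverse bound. The point is to choose $\epsilon$ small. As $\epsilon\to0$ each term tends to $1$ whenever the $u_m$ and $v_m$ stay away from $0$, but this is not uniform when some $u_m\to0$ or $v_m\to0$. The key input I would use is that all the $d_m$ come from hyperbolic structures on the same surface, twisted by deck transformations. Their boundary identifications are therefore related by $\pi_1$-equivariant quasisymmetric homeomorphisms of $S^1$, and such maps distort cross-ratios in a uniformly controlled way, say $X_{m'}\le K X_m+C$. Consequently, if some $u_m$ is near $0$ (so $X_m$ is large), then all $X_{m'}$ are large and all $Y_{m'}$ are uniformly small, so that $\prod_m v_m^{\epsilon a_m}$ is close to $1$. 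Splitting into three cases:
\begin{itemize}
\item all $u_m,v_m\in[\delta,1-\delta]$;
\item some $X_m$ is large;
\item some $Y_m$ is large;
\end{itemize}
a compactness argument in the first case and the quasisymmetry estimate in the other two should yield a uniform $\epsilon>0$.

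I expect this uniform comparison of cross-ratios across the finitely many marked structures to be the main technical step. Translating the boundary inequality back into the four-point condition \eqref{eq:intro_sh} is handled by the paper's boundary characterization.
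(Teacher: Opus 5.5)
Your argument is correct. It rests on the same two ingredients as the paper's proof: the box characterization (Lemma~\ref{lem:sh_boxes}), and an affine comparison $X_{m'}\le KX_m+C$ between the box masses of the finitely many summands $(g_j)_*\mathcal{L}_{Y_i}$. That comparison is exactly what Proposition~\ref{prop:ineq} combined with the linear regime of Corollary~\ref{cor:omega-regimes} gives.

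The difference lies in which condition gets checked.
\begin{itemize}
\item The paper (Theorem~\ref{thm:comb_liouville}, then Proposition~\ref{prop:push}(4)) verifies the $(A_0,B_0,C_0)$-condition with boxes. It feeds in the strong hyperbolicity constants $(A_j,B_j,C_j)$ of each Liouville current and leaves the bounded-box regime to the conversion in Lemma~\ref{lem:abc_sh_boxes}.
\item You verify the $\epsilon$-inequality directly. You replace those constants by the exact identity $e^{-X_m}+e^{-Y_m}=1$ and handle bounded boxes by the trivial bound $2\delta^{\epsilon}\ge 1$.
\end{itemize}
Your route produces an explicit $\epsilon$. It also makes transparent that only quasisymmetry of the boundary maps $\wt{f_i}\circ g_j^{-1}$ is used, never $\pi_1(Y)$-invariance of the individual summands. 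The paper's route is more modular: it applies verbatim to finite combinations of any strongly hyperbolic currents whose box masses are linearly comparable at large scale.

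There are three points to tighten.
\begin{enumerate}
\item In the large-box case, saying ``$\prod_m v_m^{\epsilon a_m}$ is close to $1$'' is not enough, because the competing term $\prod_m u_m^{\epsilon a_m}$ is exponentially small. Once the threshold ensures $\min_m X_m\ge\log 2$, and using $\sum_m a_m=1$, what you need is
\[
1-e^{-\epsilon\sum_m a_mY_m}\le \epsilon\sum_m a_mY_m\le 2\epsilon e^{-\min_m X_m}\le 2\epsilon e^{C/K}e^{-\max_m X_m/K}.
\]
This is dominated by $e^{-\epsilon\sum_m a_mX_m}\ge e^{-\epsilon\max_m X_m}$ as soon as $\epsilon\le\min\{1/K,\ e^{-C/K}/2\}$. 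This is precisely where linear, rather than merely qualitative, comparability is essential.
\item In Step~1, $d^X_{(g_j)_*\mathcal{L}_{Y_i}}$ is computed using $X$-geodesic segments. It is therefore only roughly isometric to the hyperbolic metric of $\wt{Y_i}$, not an isometric copy of $\mathbb{H}^2$, so the $\CAT(-1)$ claim is unjustified. Nothing depends on it, since Step~2 uses only box masses.
\item The normalization $\sum_m a_m=1$ follows from the scaling remark (if $d$ is $\epsilon$-strongly hyperbolic, then $\lambda d$ is $\epsilon/\lambda$-strongly hyperbolic). It does not come from Corollary~\ref{cor:stronghyp_roughsimilar}.
\end{enumerate}
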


See Theorem~\ref{thm:sh_dense} for the proofs \Cref{mainthm:rteich_dense} and \Cref{mainthm:rinftyeich_sh}. As an immediate corollary of both, we have the following.

\begin{maincor}
The subspace of strongly hyperbolic currents is dense in the space of geodesic currents of $S$.
\label{maincor:density}
\end{maincor}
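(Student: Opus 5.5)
The corollary follows formally from \Cref{mainthm:rteich_dense} and \Cref{mainthm:rinftyeich_sh}; the work lies entirely in those two results.

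By \Cref{mainthm:rinftyeich_sh}, every element of $\mathbb{R}\Teich^\infty(S)$ is a strongly hyperbolic current. So $\mathbb{R}\Teich^\infty(S)$ is contained in the set $\mathcal{SH}$ of strongly hyperbolic currents. By \Cref{mainthm:rteich_dense}, $\mathbb{R}\Teich^\infty(S)$ is dense in $\C(S)$. Any set containing a dense set is dense, so $\mathcal{SH}$ is dense in $\C(S)$.

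Two small points need checking along the way.

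\begin{itemize}
\item \textbf{Well-definedness.} Strong hyperbolicity of $\mu$ is defined using $d_\mu$ on a chosen universal cover. By Corollary~\ref{cor:stronghyp_roughsimilar}, it depends only on the marked length spectrum, so "strongly hyperbolic current" is an intrinsic notion.
\item \textbf{The zero current.} The cone $\mathbb{R}\Teich^\infty(S)$ contains $0$ (take all $\lambda_i=0$). Its dual pseudometric is identically zero, and \eqref{eq:intro_sh} then reads $1\le 2$, which holds. So the containment is literally correct; alternatively, if one prefers, restrict to nonzero combinations, which are still dense.
\end{itemize}

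For the two main results, I would argue as follows.

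\textbf{\Cref{mainthm:rteich_dense}.} Weighted closed curves are dense in $\C(X)$, so it suffices to approximate $\delta_\gamma$ for a single closed geodesic $\gamma$. Pass to finite covers in which $\gamma$ lifts to a simple closed curve $\tilde\gamma$; such covers exist by subgroup separability (Scott). Liouville currents of hyperbolic structures on the cover in which $\tilde\gamma$ is pinched, suitably rescaled, converge to the simple closed curve current $\delta_{\tilde\gamma}$. Pushing forward by the transfer map $\Pi_Y$ recovers a multiple of $\delta_\gamma$.

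\textbf{\Cref{mainthm:rinftyeich_sh}.} Use the boundary characterization: $d_\mu$ is strongly hyperbolic when the cross-ratio/Gromov product data of $\mu$ on $\partial\widetilde X$ satisfy a multiplicative inequality. This inequality holds for each Liouville current, since hyperbolic metrics are $\CAT(-1)$. The main obstacle is showing that it survives sums and averages under $\Pi_Y$. I would aim to show that the relevant quantity for a sum is controlled by the maximum over the summands, after shrinking $\epsilon$.
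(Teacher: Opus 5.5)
Your deduction is exactly the paper's: Theorem~\ref{mainthm:rinftyeich_sh} gives $\mathbb{R}\Teich^\infty(S)\subset\mathcal{SH}$, and Theorem~\ref{mainthm:rteich_dense} gives density of $\mathbb{R}\Teich^\infty(S)$, so the corollary is immediate. Your single-lift version of the Theorem~\ref{mainthm:rteich_dense} argument also works, since $\Pi_Y(\delta_{\tilde\gamma})$ is already a positive multiple of $\delta_\gamma$; your optional sketch of Theorem~\ref{mainthm:rinftyeich_sh} does not, because bounding by the maximum over summands cannot rule out one summand being large on $B$ while another is not small on $B^\perp$, and the paper closes exactly this gap with the quasisymmetric comparison $\omega_{kj}(t)\le Mt$ for $t\ge 1$ between Liouville currents, which forces every $\mathcal{L}_{Y_j}(B)$ to be large once $\mu(B)$ is.
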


This result can be compared of recent work of Cantrell, Reyes, and the second author~\cite[Theorem~C]{CMGR26:GreenMetrics}, which shows that Green metrics associated to admissible random walks---also strongly hyperbolic---are dense in the space $\mathcal{D}(\Gamma)$ for any non-elementary hyperbolic group. 
By constrast, our Theorem~\ref{mainthm:rteich_dense} provides a family of metrics approximating any geodesic current entirely \emph{within} the space of geodesic currents. Our result does not follow from~\cite[Theorem~C]{CMGR26:GreenMetrics}, since it is not known whether such Green metrics arise from geodesic currentse~\cite[Question~11.8]{MGT25:Intersections} (and, indeed, this is likely false). 

The proof of Theorem~\ref{mainthm:rteich_dense} proceeds by an explicit geometric construction using finite covers. 
Given a closed curve $\gamma$ on $X$, Scott's LERF theorem for surface subgroups~\cite{Scott78:LERF} provides a finite cover in which $\gamma$ lifts to a union of simple closed geodesics. 
We approximate each simple geodesic by weighted Liouville currents, show that such combinations satisfy strong hyperbolicity, and then push forward to the original surface $X$ via the transfer map on currents. The above result uses a characterization of strong hyperbolicity in the setting of geodesic currents (see Theorem~\ref{thm:stronghyp_equivalent}). 

Observe that, by Equation~\ref{eq:intro_sh}, if $d$ is $\epsilon$-strongly hyperbolic, then $a \cdot d$ is $\epsilon/a$-strongly hyperbolic for every $a>0$. We will typically work with the representative in the projective class normalized to have entropy 1, i.e. $\hat{d} \coloneqq h(d) d$.

One consequence of Theorem~\ref{thm:stronghyp_equivalent} is that strong hyperbolicity for geodesic currents can be detected from boundary data. In particular, given a box $B= I \times J \subset G(S)$ of bi-infinite geodesics of $\wt{X}$ (for some bounded intervals $I, J$ of $S^1$), let $B^\perp = I^c \times J^c$ denote the box of geodesics determined by the complementary intervals defining $B$.
Theorem~\ref{thm:stronghyp_equivalent} implies, in particular, that among geodesic currents with entropy 1 (Eq.~\eqref{eq:entropy_mu}), the $1$-strongly hyperbolic ones are characterized by the inequalities
\[
1 \leq e^{-\mu(B)}+e^{-\mu(B^\perp)}
\]
for every box of geodesics $B \subset G(S)$
In fact, the Teichm\"uller space is exactly the locus where all these inequalities become equalities, by~\cite[Theorem~13]{Bonahon88:GeodesicCurrent}.

A qualitative version of strong hyperbolicity, still sharper than Gromov hyerbolicity, is that of \emph{strong bolicity} (see Section~\ref{sec:bolic}). It has played a key technical role in relevant rigidity conjectures relating geometric group theory with harmonic analysis and operator algebras (\cite{MineyevYu2002BaumConnes}).

We show that filling geodesic currents whose pseudometric is non-atomic are exactly the strongly bolic ones.
\begin{mainthm}
Let $\mu$ be a filling geodesic current.
Then $\mu$ is non-atomic if and only if $d_\mu$ is strongly bolic.
\label{mainthm:bolic}
\end{mainthm}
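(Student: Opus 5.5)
We have to prove Theorem~\ref{mainthm:bolic}: for a filling current $\mu$, the pseudometric $d_\mu$ is strongly bolic if and only if $\mu$ is non-atomic. We use the Mineyev--Yu definition of strong bolicity. Condition (B1) asks that for every $\delta>0$ and $r>0$ there is $R>0$ such that
\[
d(x,y)+d(z,w)\le r \quad\text{and}\quad d(x,z)+d(y,w)\ge R
\]
imply
\[
d(x,w)+d(y,z)\le d(x,z)+d(y,w)+\delta .
\]
Condition (B2) asks for a coarse midpoint map with uniformly bounded error; it holds for any Gromov hyperbolic geodesic-like space. Since $d_\mu$ is Gromov hyperbolic, the content of the theorem is (B1). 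The plan is to express both sides of the (B1) inequality through the current $\mu$, as the paper does for strong hyperbolicity.

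\textbf{Key identity.} For $x,y\in\widetilde{X}$, $d_\mu(x,y)$ equals, up to the convention on endpoints, $\mu$ of the set of geodesics separating $x$ from $y$. For four points, the expression
\[
d(x,w)+d(y,z)-d(x,z)-d(y,w)
\]
is then a signed combination of $\mu$-masses of sets of geodesics. In the configuration of (B1), with $x,y$ close, $z,w$ close, and the pairs far apart, this expression is bounded by
\[
2\mu\bigl(G(\sigma_{xy},\sigma_{zw})\bigr),
\]
the $\mu$-mass of geodesics crossing both segments $\sigma_{xy}$ and $\sigma_{zw}$. This is the boundary-box expression of Theorem~\ref{thm:stronghyp_equivalent}.

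\textbf{Non-atomic implies strongly bolic.} Fix $\delta$ and $r$. Because $\mu$ is filling, $d_\mu$ is quasi-isometric to the hyperbolic metric. Hence $d_\mu(x,y)\le r$ bounds the hyperbolic length of $\sigma_{xy}$, and $d_\mu(x,z)$ large forces the hyperbolic distance between the segments to be large. The set of geodesics crossing both segments is then a box $I\times J$, where $I$ and $J$ are boundary intervals of small visual size about the two "ends".

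The main obstacle is uniformity. We need $\mu(I\times J)\to 0$ as the separation $R\to\infty$, uniformly over all positions of the segments. To get this, use $\pi_1$-invariance and cocompactness to move $\sigma_{xy}$ into a fixed compact fundamental domain $K$. The boxes in question then lie in the set of geodesics through a compact set $K'$, intersected with the set of geodesics having an endpoint in an interval $J$ of visual size $\to 0$. This family shrinks to the set of geodesics through $K'$ with an endpoint equal to a single boundary point $\xi$.

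\emph{Step 1.} Show that for non-atomic $\mu$, the set of geodesics through $K'$ with an endpoint at a fixed $\xi$ has $\mu$-mass zero. Such a set is a subset of the geodesics through $K'$ asymptotic to $\xi$, and "non-atomic" is understood here in the paper's sense of no atom on a single geodesic. If this family had positive mass for some $\xi$, invariance and local finiteness should force $\xi$ to be the endpoint of a closed geodesic with an atom; this is where the argument must be checked carefully.

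\emph{Step 2.} Upgrade to uniform smallness via a compactness (Dini-type) argument over $\xi\in S^1$ and over positions in $K$. This yields an $R$ with
\[
\mu\bigl(G(\sigma_{xy},\sigma_{zw})\bigr)<\delta/2 .
\]

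\textbf{Atomic implies not strongly bolic.} Suppose $\mu$ has an atom of mass $c>0$ on a geodesic $g$. Then $g$ projects to a closed geodesic $\gamma$ and is invariant under a hyperbolic element $h$. Take $x,y$ on opposite sides of $g$ near a point $p\in g$, and set $z=h^n y$, $w=h^n x$. Then $d(x,y)+d(z,w)$ stays bounded, while $d(x,z)+d(y,w)\to\infty$ as $n\to\infty$.

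The geodesic $g$ crosses both segments $\sigma_{xy}$ and $\sigma_{zw}$. Using the key identity, the crossing structure contributes the atom, so
\[
d(x,w)+d(y,z)-d(x,z)-d(y,w)\ge 2c-o(1).
\]
This violates (B1) for every $\delta<2c$. The labelling of $z,w$ should be fixed so that the atom enters with the right sign, which is a matter of orienting the sides of $g$.
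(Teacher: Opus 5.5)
Your proposal is correct and is essentially the paper's argument. For an atom on the axis of $h$, pushing the quadruple along that axis by powers of $h$ keeps the mass of geodesics crossing both short sides at least $c$ while $d(x,z)+d(y,w)$ diverges. Conversely, the bolicity defect is bounded by the mass of geodesics through a compact set with an endpoint in a shrinking boundary interval, and this tends to zero uniformly because pencils carry no mass for a non-atomic current.

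The fact you leave unchecked in Step~1 is exactly \cite[Lemma~2.6]{DRMG23:Duals}, which the paper invokes at the same point. The remaining differences are cosmetic: you argue directly rather than by contradiction with boxes, and you verify Lafforgue's (B1) with bounded short sides rather than Definition~\ref{def:strong_bolicity}. That last discrepancy closes for currents, since the mass of geodesics crossing both short sides only increases when the short sides are moved inward into the thin part of the quadrilateral.
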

See Corollary~\ref{cor:fillingsb_noatoms} for the proof of the above statement. In fact, we prove a slightly stronger result (Theorem~\ref{thm:bolic}).

\subsection{$\CAT(0)$-geodesic currents are not dense and examples of strongly hyperbolic metrics that are not $\CAT(0)$}

We next show that the $\CAT(0)$ condition imposes strong algebraic constraints on the marked length spectrum, preventing density, and allowing us to construct examples of strongly hyperbolic metrics which are not $\CAT(0)$.

A metric space $X$ is called \emph{Ptolemaic} if for all $x,y,z,w \in X$,
\[
d(x,z)d(y,w) \leq d(x,y)d(z,w) + d(x,w)d(y,z).
\]
Examples include lifts of negatively curved Riemannian or locally $\CAT(0)$ Riemannian metrics on $S$.

We show that any $\pi_1(S)$-invariant Ptolemaic length metric on $\widetilde{S}$ satisfies a countable family of homogeneous quadratic inequalities on its marked length spectrum:

\begin{mainthm}
Let $\rho$ be a Ptolemaic metric on $S$ which is also a length metric. 
For any $a,b \in \pi_1(S)$ with crossing axes, and every $n \in \mathbb{N}$,
\[
\ell_{\rho}(a^n)\,\ell_{\rho}(b^n) 
\leq 
\frac{\ell_{\rho}(a^n b^n)^2}{4}
+
\frac{\ell_{\rho}(a^n b^{-n})^2}{4}.
\]
\label{mainthm:Ptolemy_lengths}
\end{mainthm}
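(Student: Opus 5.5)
We apply the Ptolemy inequality to four points built from $a^n$ and $b^n$, and then pass to stable lengths. Replacing $a,b$ by $a^n,b^n$ (which still have crossing axes), it suffices to treat $n=1$, provided the argument only uses that the axes cross. Write $\ell_\rho(g)=\lim_k \frac1k d(x,g^k x)$ for the stable (translation) length of the lift $d$ of $\rho$ to $\wt S$. Since $\rho$ is a length metric, $\ell_\rho(g)$ also equals the infimum of $d(x,gx)$ over $x\in\wt S$, and this coincides with the length of the shortest closed curve in the free homotopy class of $g$.

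The first idea is to apply the Ptolemy inequality to the quadrilateral $x,\ ax,\ abx,\ bx$ for a basepoint $x$, and then iterate. A single application has the wrong shape, so we will iterate using the orbit points
\[
p=x,\qquad q=a^k x,\qquad r=a^kb^k x,\qquad s=b^k x ,
\]
with $x$ chosen near the intersection point of the axes of $a$ and $b$. These four points form a large ``parallelogram''. Ptolemy applied to the diagonals $d(p,r)\,d(q,s)$ compares the products of opposite sides, which gives the wrong direction. We therefore order the vertices so that the sides are the diagonals. Take the quadrilateral $(q,p,s,r)$, i.e.\ the points $a^kx,\ x,\ b^kx,\ a^kb^kx$. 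Ptolemy then bounds the product of the ``diagonals'' $d(a^kx,b^kx)\,d(x,a^kb^kx)$ by sums of products of sides. Instead, we want $\ell(a)\ell(b)$ on the left, so we need a quadrilateral whose diagonals have lengths $\approx k\ell(a)$ and $\approx k\ell(b)$. The natural choice is
\[
P_1=a^{-k}x,\quad P_2=b^{-k}x,\quad P_3=a^{k}x,\quad P_4=b^{k}x .
\]
Its diagonals are $d(a^{-k}x,a^kx)=d(x,a^{2k}x)\approx 2k\ell(a)$ and $d(b^{-k}x,b^kx)\approx 2k\ell(b)$. Its sides are $d(a^{-k}x,b^{-k}x)=d(x,a^kb^{-k}x)$, $d(b^{-k}x,a^kx)=d(x,b^ka^kx)$, $d(a^kx,b^kx)=d(x,a^{-k}b^kx)$ and $d(b^kx,a^{-k}x)=d(x,b^{-k}a^{-k}x)$. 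Ptolemy gives
\[
4k^2\ell(a)\ell(b)\lesssim d(x,b^ka^kx)\,d(x,b^{-k}a^{-k}x)+d(x,a^kb^{-k}x)\,d(x,a^{-k}b^kx).
\]

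The key step is to show that these displacements are $\approx \ell(a^kb^k)$ and $\approx \ell(a^kb^{-k})$ up to errors that are $o(k)$ after the right normalisation. Here $b^ka^k$ is conjugate to $a^kb^k$, and $b^{-k}a^{-k}=(a^kb^k)^{-1}$ has the same length. This is where crossing axes and the length-metric hypothesis (a geodesic, Gromov hyperbolic, $\pi_1$-invariant metric quasi-isometric to $\mathbb H^2$) enter. When the axes cross at $x$, the broken path $x\to b^{-k}x\to b^{-k}a^{-k}x$ is a quasi-geodesic with angle bounded away from zero. The point $x$ then lies within a bounded distance $C$ (independent of $k$) of the axis of $a^kb^k$, so $d(x,a^kb^kx)\le \ell(a^kb^k)+2C$. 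The left side is exact up to $O(1)$ by the same quasi-axis argument. This yields $4k^2\ell(a)\ell(b)\le \ell(a^kb^k)^2+\ell(a^kb^{-k})^2+O(k)$.

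This alone is not the stated inequality with $n$, since the $O(k)$ error does not vanish. To remove it, apply the same estimate to $a^{n}, b^{n}$ with multiplicity $m$: the words $(a^nb^n)^m$ are not of the form above. I would therefore instead use the exact identity $\ell(g)=\inf_x d(x,gx)$ and choose $x$ on a $\rho$-geodesic axis of $a^nb^n$ (which exists because $\rho$ is a length metric on a compact surface). The main obstacle is making the diagonal estimate exact: one needs $d(a^{-n}x,a^nx)\ge 2\ell(a^n)$, which holds since $d(y,g^2y)\ge \ell(g^2)=2\ell(g)$. Conversely, the sides must be bounded above by lengths with $x$ simultaneously minimising two displacements. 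I expect to need to pass to $k=mn$, apply the inequality to powers, divide by $m^2$ and let $m\to\infty$, using subadditivity to control the constant.
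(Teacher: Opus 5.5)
Your core estimate is correct, and it is a cleaner form of what the paper does. Apply Ptolemy to the symmetric quadruple $a^{-k}x,\ b^{-k}x,\ a^kx,\ b^kx$ with $x$ near the crossing point. Combined with $d(x,a^{2k}x)\ge 2\ell_\rho(a^k)$ and the quasi-axis bound $d(x,gx)\le \ell_\rho(g)+2C$, this gives
\[
4\,\ell_\rho(a^k)\,\ell_\rho(b^k)\le \bigl(\ell_\rho(a^kb^k)+2C\bigr)^2+\bigl(\ell_\rho(a^kb^{-k})+2C\bigr)^2,
\]
and the parallelogram shape comes out directly. The paper instead uses the quadruple $y,x,b^ny,a^nx$, passes to the straight pseudometric $d_{\mathcal L_\rho}$ through a rough isometry, and needs AM--GM.

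The gap is that the statement is an exact inequality for each fixed $n$, and nothing in your last paragraph removes the additive $O(k)$ error.
\begin{enumerate}
\item A single basepoint cannot make all four sides minimal. They are displacements by $b^ka^k$, $(a^kb^k)^{-1}$, $a^kb^{-k}$ and $a^{-k}b^k$, whose axes are four distinct lines: the axis of $b^ka^k$ is $a^{-k}$ times that of $a^kb^k$, and similarly for the other pair.
\item Passing to $k=mn$ and dividing by $m^2$ compares $\ell_\rho(a^n)\ell_\rho(b^n)$ with $\frac{1}{4m^2}\ell_\rho(a^{mn}b^{\pm mn})^2$, not with $\frac14\ell_\rho(a^nb^{\pm n})^2$, because $a^{mn}b^{mn}\neq(a^nb^n)^m$.
\item For crossing axes one has $\ell_\rho(a^Nb^{\pm N})=N\bigl(\ell_\rho(a)+\ell_\rho(b)\bigr)-O(1)$. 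So the limiting inequality is just $\ell_\rho(a)\ell_\rho(b)\le\frac12\bigl(\ell_\rho(a)+\ell_\rho(b)\bigr)^2$. This holds for any two nonnegative numbers, so it carries no information.
\end{enumerate}

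The error is not a technicality. The self-intersecting curve of Lemma~\ref{lem:notptolemaic} violates the inequality by $2k_n$, which is at the same linear scale as your error term. Moreover, your constant $C$ depends on $\rho$, so an error-laden version does not pass to the weak$^*$ limits used in Corollary~\ref{cor:notdense}.

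What is missing is an exact, error-free comparison between the sides of a Ptolemy quadrilateral and the translation lengths $\ell_\rho(a^nb^{\pm n})$. For the hyperbolic metric such a comparison exists. Write $a=\sigma_{q_1}\sigma_p$ and $b=\sigma_p\sigma_{q_2}$ as products of half-turns about points of the axes. Then $\ell(ab)=2d(q_1,q_2)$ and $\ell(ab^{-1})=2d(q_1,\sigma_pq_2)$, and Ptolemy applied to $q_1,q_2,\sigma_pq_1,\sigma_pq_2$ gives the inequality exactly. This uses symmetries that a general $\pi_1(S)$-invariant Ptolemaic metric does not have.

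The paper's written proof also ends by dividing by $n^2$ and letting $n\to\infty$, so it does not supply this step either. As written, both routes yield only the limiting inequality above.
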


See Theorems~\ref{thm:intersections} and~\ref{thm:Ptolemy_lengths} below.
These inequalities provide a strong rigidity constraint. 
We then show that intersection functionals associated to non-simple curves violate these relations, and deduce:

\begin{maincor}
The Liouville currents induced by Ptolemaic metrics on $S$ (in particular, locally $\CAT(0)$ or negatively curved Riemannian metrics) are not dense in the space of geodesic currents.
\label{maincor:negnotdense}
\end{maincor}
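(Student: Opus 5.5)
The plan is to argue by contradiction, combining Theorem~\ref{mainthm:Ptolemy_lengths} with continuity of intersection numbers. Suppose Liouville currents $\mathcal{L}_\rho$ of Ptolemaic length metrics were dense in $\C(S)$. Then their projective classes would be dense in $\PC(S)$. Since multiplying $\rho$ by a positive constant preserves the Ptolemy inequality, the currents $\lambda\mathcal{L}_\rho$ with $\lambda>0$ are still of this form, so we may approximate any current itself, not just its projective class.

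For such a metric, the marked length spectrum is $\ell_\rho(g)=i(\mathcal{L}_\rho,\delta_g)$, the Liouville-current property recalled in Section~\ref{subsec:dual_current}. Hence Theorem~\ref{mainthm:Ptolemy_lengths} says that every such current $\mu$ satisfies, for all $a,b\in\pi_1(S)$ with crossing axes and all $n$,
\[
i(\mu,\delta_{a^n})\, i(\mu,\delta_{b^n}) \le \tfrac14 i(\mu,\delta_{a^nb^n})^2+\tfrac14 i(\mu,\delta_{a^nb^{-n}})^2 .
\]
Both sides are continuous in $\mu$, because Bonahon's intersection form is continuous. So this inequality, for one fixed $a,b,n$, defines a closed subset of $\C(S)$. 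If the Ptolemaic currents were dense, every geodesic current would satisfy it.

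It therefore suffices to find a single current $\mu$, a pair $a,b$ with crossing axes, and some $n$ (take $n=1$) that violate the inequality. I would take $\mu=\delta_c$ for a suitable closed curve $c$, possibly non-simple as the paper suggests, or a weighted sum of such curves. The aim is to make $i(c,a)$ and $i(c,b)$ large while $i(c,ab)$ and $i(c,ab^{-1})$ are small. The first attempt would use simple curves $a,b$ meeting once on a one-holed torus. Setting $c=a$ gives a left side of $0$, so that choice is useless. Instead I would look for $c$ in the one-holed torus with slopes chosen so that $i(c,ab)=0$ while $i(c,a)i(c,b)>0$. For example, take $c=ab$ itself, which is simple in the torus. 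Then $i(c,ab)=0$ and $i(c,ab^{-1})=2$, while $i(c,a)=i(c,b)=1$. This gives $1\le 0+1$, which holds with equality, so it is borderline and does not yet give a contradiction.

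To break the equality, I would use the ratios provided by homogeneity. Take $\mu=\delta_{ab}+t\,\delta_{c'}$, or replace $a,b$ by $a^n,b^n$, and compute the torus intersection numbers. These equal $|\det|$ of the slope vectors $(p,q)$ of the curves involved. The resulting inequality reads
\[
|p_1 q - p q_1|\,|p_2q-pq_2|\le \tfrac14\bigl(\ldots\bigr)^2+\tfrac14\bigl(\ldots\bigr)^2,
\]
and I would search over slopes for a violation. This search is the main obstacle. If simple torus curves never violate it (the Liouville currents of flat tori may satisfy it identically), I would follow the paper's hint and take $c$ to be a non-simple curve, such as a figure-eight in a pair of pants containing $a,b$. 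For such a curve, $i(c,ab)$ can vanish or be small while $i(c,a)$ and $i(c,b)$ are both at least $2$. An explicit count then gives a strict violation, and the contradiction shows that Ptolemaic Liouville currents are not dense.
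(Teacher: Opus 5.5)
Your reduction is the same as the paper's. By continuity of Bonahon's intersection form, for fixed crossing $a,b$ and fixed $n$ the inequality of Theorem~\ref{mainthm:Ptolemy_lengths} cuts out a closed subset of $\C(S)$ containing every Ptolemaic Liouville current. So it suffices to exhibit one current violating it; the paper does this in Lemma~\ref{lem:notptolemaic} with a self-intersecting closed curve.

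That last step is the entire content of the corollary, and your proposal never carries it out. There are three problems:
\begin{enumerate}
\item The only computation you do, $\mu=\delta_{ab}$, gives equality.
\item The slope search you propose with a single simple curve in the one-holed torus cannot succeed. For $a,b$ meeting once and $c$ of slope $(p,q)$, the inequality reads $|pq|\le \tfrac14(p-q)^2+\tfrac14(p+q)^2=\tfrac12(p^2+q^2)$, which always holds.
\item The figure-eight fallback is only asserted (``an explicit count then gives a strict violation''). You specify neither the count nor $a,b$, which must have crossing axes and so cannot be boundary curves of the pair of pants.
\end{enumerate}
As written, the proof is incomplete.

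The gap is easy to close along the lines you already suggested, taking $c'=ab^{-1}$ and $t=1$. Let $a,b$ be simple closed curves meeting once in a one-holed torus in $S$, so that $ab$ and $ab^{-1}$ are simple of slopes $(1,1)$ and $(1,-1)$. Set $\mu=\delta_{ab}+\delta_{ab^{-1}}$. The determinant formula gives
\begin{itemize}
\item $i(ab,a)=i(ab,b)=i(ab^{-1},a)=i(ab^{-1},b)=1$,
\item $i(ab,ab^{-1})=2$,
\item $i(ab,ab)=i(ab^{-1},ab^{-1})=0$.
\end{itemize}
Hence $i(\mu,a)=i(\mu,b)=i(\mu,ab)=i(\mu,ab^{-1})=2$. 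The left side is then $4$, while the right side is $\tfrac14\cdot 4+\tfrac14\cdot 4=2$.

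Each summand alone is borderline ($1\le 1$), but the cross terms break the inequality: they contribute $2$ on the left and $0$ on the right. This is the same mechanism as the self-crossing in the paper's non-simple curve, but it gives a fully explicit, easily checked violator with $n=1$. It is also symmetric under $b\mapsto b^{-1}$, so the orientation convention for crossing axes is irrelevant.
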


See Corollary~\ref{cor:neg_notdense} below. This contrasts with~\cite[Lemma~2.8]{Ham99:Cocycles}, where density is asserted for currents arising from negatively curved metrics. Corollary~\ref{maincor:negnotdense} shows that the statement is not correct as stated and that the proof contains a gap. See the discussion following Corollary~\ref{cor:neg_notdense} for further details. Nevertheless, the statement of~\cite[Lemma~2.8]{Ham99:Cocycles} was one of the inspirations for Theorem~\ref{mainthm:rteich_dense} and Corollary~\ref{maincor:density}.

Finally, our methods allow us to construct infinitely many \emph{distinct rough isometry classes} of strongly hyperbolic geodesic metrics which are not $\CAT(0)$.

\begin{maincor}
There exists a subset $\mathcal F \subset \mathbb{R}\Teich^\infty(S)$ which is dense in the space of geodesic currents such that, for every $\mu\in\mathcal F$, the associated $\pi_1(S)$-invariant pseudometric $d_\mu$ on $\wt S$ is a strongly hyperbolic geodesic metric but not Ptolemaic, and hence not $\CAT(0)$.
\label{maincor:sh_not_cat0}
\end{maincor}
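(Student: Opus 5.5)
We will construct $\mathcal F$ as the elements of $\mathbb{R}\Teich^\infty(S)$ that lie in a suitable open set of currents. Corollary~\ref{maincor:sh_not_cat0} then follows from three ingredients: Theorem~\ref{mainthm:rinftyeich_sh} gives strong hyperbolicity; Theorem~\ref{mainthm:Ptolemy_lengths} gives failure of the Ptolemaic property; and a density argument shows $\mathcal F$ is dense.

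\textbf{Step 1: a Ptolemy-violating open set.} Theorem~\ref{mainthm:Ptolemy_lengths} is a closed condition on marked length spectra. Since $\ell_{d_\mu}(g)=i(\mu,\delta_g)$ is continuous in $\mu$, for fixed $a,b$ with crossing axes and fixed $n$ the set
\[
U_{a,b,n}=\Bigl\{\mu\in\C(S) : i(\mu,a^n)\,i(\mu,b^n) > \tfrac14 i(\mu,a^nb^n)^2+\tfrac14 i(\mu,a^nb^{-n})^2\Bigr\}
\]
is open. Here $i(\mu,a^n)$ abbreviates $i(\mu,\delta_{a^n})$. The argument behind Corollary~\ref{maincor:negnotdense} produces a current, namely an intersection functional $\delta_\gamma$ of a non-simple curve, lying in some $U_{a,b,n}$. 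The same statement is true for any positive multiple of that current, because the inequality is homogeneous.

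\textbf{Step 2: obtaining density, the main obstacle.} A single open set cannot be dense, so we take a countable union. We would take $\mathcal F=\mathbb{R}\Teich^\infty(S)\cap\bigcup_j V_j$, where the $V_j$ are open sets that violate Ptolemy and whose union is dense. To build the $V_j$, take an arbitrary current $\nu$ and a neighborhood $W$ of it. By density of weighted closed curves, first move $\nu$ to $\nu'=\sum c_k\delta_{\gamma_k}$. Then add a small perturbation $t\,\delta_\gamma$, where $\gamma$ is a non-simple curve chosen disjoint from, or far from, the supports of the $\gamma_k$ in the region relevant to $a,b$. The aim is to choose $a,b$ supported in a small subsurface so that $i(\nu',\cdot)$ contributes little to all four lengths, while the $\delta_\gamma$ term makes the strict inequality hold. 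The hardest point is this choice. Since $\nu'$ need not avoid every subsurface, a cleaner fallback is to pass to large $n$ and large powers, where lengths become additive in the Manhattan/intersection sense: $i(\mu,a^nb^{\pm n})\approx n(i(\mu,a)+i(\mu,b)) - \text{(correction)}$. We would then perturb $\nu'$ by adding a tiny Ptolemy-violating current. We would check whether the violation is stable under adding a Ptolemaic-ish term, which requires a strict quantitative gap.

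\textbf{Step 3: conclusion.} Given the dense open union, Theorem~\ref{mainthm:rteich_dense} gives that $\mathbb{R}\Teich^\infty(S)$ meets it densely, so $\mathcal F$ is dense. For $\mu\in\mathcal F$, $d_\mu$ is strongly hyperbolic by Theorem~\ref{mainthm:rinftyeich_sh}. It is a genuine metric because elements of $\mathbb{R}\Teich^\infty$ are positive combinations of Liouville currents and so have full support and no atoms; for the same reason $d_\mu$ is geodesic, since it is a length metric comparable to averaged hyperbolic metrics via the transfer construction. By Theorem~\ref{mainthm:Ptolemy_lengths} applied with the relevant $a,b,n$, $d_\mu$ is not Ptolemaic, hence not $\CAT(0)$. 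Finally, infinitely many rough-isometry classes come from distinct marked length spectra up to additive error, and density supplies infinitely many such classes.
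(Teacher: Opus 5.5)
Your Steps~1 and~3 use the same ingredients as the paper. However, Step~2 is never completed, and it is the step you yourself flag as the main obstacle. Step~3 (``given the dense open union'') depends on it, so as written the proposal does not prove density.

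The perturbation route you sketch also does not work. For a fixed triple $(a,b,n)$, the Ptolemy defect
\[
Q(\mu)=i(\mu,a^n)\,i(\mu,b^n)-\tfrac14\, i(\mu,a^nb^n)^2-\tfrac14\, i(\mu,a^nb^{-n})^2
\]
is continuous, indeed polynomial, in $t$ along $\nu'+t\delta_\gamma$. So if $Q(\nu')<0$, adding $t\delta_\gamma$ with $t$ small cannot flip the sign. Your remedy of choosing $a,b$ where $\nu'$ ``contributes little'' fails whenever $\nu'$ is filling. In that case $i(\delta_\gamma,\cdot)\le C\,i(\nu',\cdot)$ for some constant $C$, so $t\delta_\gamma$ is uniformly negligible on every curve. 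The ``large powers'' fallback does not produce the quantitative gap you say you would need.

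The missing idea is that no perturbation is needed, because weighted \emph{non-simple} closed curves are already dense in $\mathcal{C}(S)$ (Lemma~\ref{lem:nonsimple_dense}). Bonahon's theorem gives density of \emph{single} weighted curves $\lambda\delta_c$, so you need not pass to sums $\sum c_k\delta_{\gamma_k}$. A simple curve $c=[h]$ is in turn the weak$^*$ limit of $\delta_{gh^m}/m$, where $gh^m$ is non-simple for a suitable $g$. Lemma~\ref{lem:notptolemaic} applies to every non-simple curve, and each $U_{a,b,n}$ is invariant under positive scaling since $Q(\lambda\mu)=\lambda^2Q(\mu)$. Hence every weighted non-simple curve lies in some $U_{a,b,n}$. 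It follows that $U=\bigcup_{a,b,n}U_{a,b,n}$ is a single open \emph{dense} set, contrary to your remark that a single open set cannot be dense. Then $\mathcal F=\mathbb{R}\Teich^\infty(S)\cap U$ is dense by Theorem~\ref{mainthm:rteich_dense}. This is exactly the paper's proof, which phrases it with sequences $\mu_i\in\mathbb{R}\Teich^\infty(S)$ converging to a non-simple curve.

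One further correction to Step~3. Being quasi-isometric, or even roughly isometric, to a geodesic metric does not make $d_\mu$ geodesic. The correct reason uses straightness of $d_\mu$ along hyperbolic geodesics (Proposition~\ref{prop:dmu}). For a non-atomic current of full support, $d_\mu$-distance along a hyperbolic segment varies continuously and strictly monotonically, so the reparametrized segment is a $d_\mu$-geodesic. The paper cites \cite[Proposition~3.6]{DRMG23:Duals} for this.
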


See Corollary~\ref{cor:sh_not_cat0} below.

\subsection{A correlation counting result}

The same techniques as in~\cite{Sharp98:Hyperbolic} and~\cite{DM23:Correlation},
ultimately relying on a theorem of Lalley~\cite{Lalley97:AxiomA},
yield a correlation counting theorem for strongly hyperbolic currents.

Given two strongly hyperbolic currents $\mu_1,\mu_2$, their \emph{Manhattan curve}
$\mathcal C(\mu_1,\mu_2)$  (Section~\ref{subsubsec:manhattan}) is the boundary of the set
\[
\left\{
(a,b)\in\mathbb R^2:
\sum_{[\gamma]\in[\Gamma]}
e^{-a\ell_{\mu_1}([\gamma])-b\ell_{\mu_2}([\gamma])}
<
\infty
\right\}.
\]
The \emph{renormalized length} spectrum of a geodesic current $\mu$ is
\[
L_\mu\coloneqq h(\mu)\ell_\mu,
\]
where $h(\mu)$ denotes the topological \emph{entropy} of the geodesic current, i.e., the exponential growth rate of curves with respect to $\ell_\mu$ (Definition~\ref{eq:entropy_mu}).
For any pair of strongly hyperbolic geodesic currents $\mu_1,\mu_2$, the Manhattan curve $\mathcal C(\mu_1,\mu_2)$ is real analytic (\cite[Theorem~4.16]{CT25:Manhattan} and paragraph after its proof) and, whenever $\mu_1,\mu_2$ have distinct renormalized length spectra, its corresponding abscissa function is a strictly convex curve (\cite[Proposition~2.3]{CR25:Marked}).

\begin{mainthm}
\label{mainthm:counting}
Fix $\varepsilon>0$. Let $\mu_1,\mu_2$ be strongly hyperbolic currents with distinct renormalized length spectra. Then there exist constants
\[
C=C(\varepsilon,\mu_1,\mu_2)>0
\qquad\text{and}\qquad
M=M(\mu_1,\mu_2)\in(0,1)
\]
such that
\begin{equation}
\#\Big\{
[\gamma]\in[\Gamma]:
L_{\mu_1}([\gamma])
\in
(x,x+h(\mu_1)\varepsilon),
\
L_{\mu_2}([\gamma])
\in
(x,x+h(\mu_2)\varepsilon)
\Big\}
\sim
C\frac{e^{Mx}}{x^{3/2}}.
\label{eq:correlation}
\end{equation}

Moreover,
\begin{equation}
\label{eq:correlation_formula}
M(\mu_1,\mu_2)
=
\frac{a}{h(\mu_1)}
+
\frac{b}{h(\mu_2)},
\end{equation}
where $(a,b)\in\mathcal C(\mu_1,\mu_2)$ is the unique point at which the tangent line is parallel to the line joining $(h(\mu_1),0)$ and $(0,h(\mu_2))$.
\end{mainthm}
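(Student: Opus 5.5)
The plan is to reduce the statement to a correlation theorem for two Hölder potentials on a mixing subshift of finite type, following the strategy of \cite{Sharp98:Hyperbolic} and \cite{DM23:Correlation}, and then identify the exponent through the Manhattan curve.

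\textbf{Step 1: coding.} Since $\mu_1,\mu_2$ are strongly hyperbolic, by \cite[Example~4.12]{CMGR26:GreenMetrics} there are Hölder potentials $f_1,f_2$ on $T^1X$ such that, for every closed geodesic $[\gamma]$, the period of $f_j$ over the orbit of $[\gamma]$ is $\ell_{\mu_j}([\gamma])$. Because $\mu_j$ is filling, $f_j$ is cohomologous to a strictly positive function. We pass through a Markov coding of the geodesic flow of the hyperbolic metric $X$ (Bowen--Series, or a Markov section for the flow). This turns the count in \eqref{eq:correlation} into a count of periodic orbits of a suspension flow over a mixing subshift $\sigma\colon\Sigma\to\Sigma$, with the periods recorded by Hölder functions $r_1,r_2$ on $\Sigma$. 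The coding miscounts only finitely many or exponentially negligible families of orbits, and these do not affect the asymptotics.

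\textbf{Step 2: Lalley's theorem.} Set $F=(h(\mu_1)r_1,\,h(\mu_2)r_2)$. The quantity in \eqref{eq:correlation} counts periodic orbits whose vector period lies in $(x,x)+$ a fixed box. Lalley's theorem \cite{Lalley97:AxiomA} for vector-valued Hölder functions gives, for targets moving along a direction in the interior of the rotation set, an asymptotic of the form $C e^{Mx}/x^{3/2}$. The power $x^{-3/2}$ comes from the local limit theorem in the direction transverse to the diagonal. Applying it requires two checks.

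\begin{enumerate}
\item The diagonal direction $(1,1)$ lies in the interior of the set of normalized asymptotic ratios $\{(L_{\mu_1},L_{\mu_2})/t\}$. This holds because the renormalized spectra are distinct and each has the same entropy $1$: some classes have $L_{\mu_1}/L_{\mu_2}>1$ and some have it $<1$ (the standard Bishop--Steger/Sharp argument, or \cite[Proposition~2.3]{CR25:Marked}).
\item Non-arithmeticity: the periods of $(L_{\mu_1},L_{\mu_2})$ must not be contained in a proper closed subgroup of $\mathbb R^2$. I expect this to be the main obstacle. The plan is to argue by contradiction. A lattice condition would force $aL_{\mu_1}+bL_{\mu_2}$ to take values in a discrete group, which would make a positive combination of $\ell_{\mu_1},\ell_{\mu_2}$ cohomologous to a function with values in $c\mathbb Z$. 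That in turn would make the length spectrum of the filling current $a\mu_1+b\mu_2$ (or of a difference) arithmetic. The intended contradiction uses the non-arithmeticity of the length spectrum of filling currents; alternatively one can exhibit explicit curves $a^nb^m$ whose lengths are asymptotically additive with irrational corrections. If only rationally dependent lengths arose, this would contradict distinctness together with the equal-entropy normalization.
\end{enumerate}

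\textbf{Step 3: the exponent.} Lalley's exponent is $M=\sup\{h_\nu/\int d\nu\}$, taken over equilibrium states $\nu$ with $\int h(\mu_1)r_1\,d\nu=\int h(\mu_2)r_2\,d\nu$. Equivalently, by Legendre duality, $M=\inf_{t}P\bigl(-t\,h(\mu_1)r_1-(1-t)h(\mu_2)r_2\bigr)$ in the suspended normalization. Writing this pressure in terms of the Manhattan curve, where $(a,b)\in\mathcal C$ if and only if the pressure of $-ar_1-br_2$ vanishes, the optimal point is the one at which the tangent to $\mathcal C(\mu_1,\mu_2)$ is parallel to the line through $(h(\mu_1),0)$ and $(0,h(\mu_2))$. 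Here we use that $\mathcal C$ is real analytic and strictly convex by \cite{CT25:Manhattan} and \cite{CR25:Marked}, so this point exists and is unique. Evaluating the dual linear functional there gives \eqref{eq:correlation_formula}, namely $M=a/h(\mu_1)+b/h(\mu_2)$.

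Finally, $M<1$ follows from strict convexity, since the line through $(h(\mu_1),0)$ and $(0,h(\mu_2))$ is a secant. $M>0$ holds because $(a,b)$ lies in the positive quadrant between the two intercepts.
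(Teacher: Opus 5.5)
The parts of your plan other than check (2) are sound and follow the paper's route up to cosmetic differences. The paper uses Sambarino's translation flow for the cocycle of $d_{\mu_1}$ and a reparametrization function with periods $\ell_{\mu_2}$, where you use two potentials on a coding of $T^1X$. It places $h(\mu_1)/h(\mu_2)$ in $J(f)$ using the equilibrium state of $-h(\mu_2)f$ together with $\mathbf J(\mu_1,\mu_2)>1$; this does the same job as your check (1).

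\textbf{The gap is check (2).} Non-arithmeticity is the hypothesis of the Lalley--Sharp theorem that needs an input specific to strongly hyperbolic currents, and your argument uses none.
\begin{enumerate}
\item The principle you invoke, that filling currents have non-arithmetic length spectra, is false. A filling closed curve $\gamma$ is a filling current with $\ell_\gamma=i(\gamma,\cdot)$ integer-valued.
\item Distinctness plus the entropy normalization cannot exclude a lattice. Two non-proportional filling curves $\gamma_1,\gamma_2$ have distinct renormalized spectra, yet $1\cdot\ell_{\gamma_1}+0\cdot\ell_{\gamma_2}\in\mathbb Z$.
\item $a$ and $b$ may have opposite signs, so $a\mu_1+b\mu_2$ is in general only a signed measure. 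Nothing justifies passing to a positive combination.
\item The alternative via curves $a^nb^m$ with irrational corrections gives no mechanism that would produce irrationality.
\end{enumerate}

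\textbf{What is missing} is the paper's two-step rigidity argument.
\begin{enumerate}
\item A filling strongly hyperbolic current has no atoms. Since $d_\mu$ is roughly geodesic, strong hyperbolicity implies strong bolicity. An atom on a closed geodesic with stabilizer $b$ would give double transversals $G_n$, built from disjoint $b$-translates of a box, with $\mu(G_n)\to\infty$. Meanwhile $\mu(G_n^\perp)$ stays at least the mass of the atom, contradicting strong bolicity.
\item Otal's approximation of box masses by $\tfrac12\bigl(i(\cdot,\gamma_1)+i(\cdot,\gamma_2)-i(\cdot,\gamma_3)-i(\cdot,\gamma_4)\bigr)$ is linear in the current. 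So it applies to the signed current $a\mu_1+b\mu_2$. If all its intersection numbers are integers, its box masses are limits of half-integers. This forces it to be a signed weighted multicurve, i.e.\ purely atomic.
\end{enumerate}
Proportional currents have the same renormalized spectrum, so $\mu_1,\mu_2$ are not proportional and $a\mu_1+b\mu_2\neq 0$ unless $a=b=0$. A nonzero combination of non-atomic measures cannot be purely atomic. This gives weak mixing of the flow and independence of the reparametrization function, which is exactly what Lalley's theorem needs.

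\textbf{A smaller slip.} Your displayed formula $M=\inf_t P\bigl(-t\,h(\mu_1)r_1-(1-t)h(\mu_2)r_2\bigr)$ is wrong as written. With shift pressure the infimum is negative; with flow pressure over the roof $h(\mu_1)r_1$ it equals $M-1$. What your tangent-line identification actually uses, correctly, is $M=\min\{a'+b'\}$ over the Manhattan curve of the renormalized spectra.
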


The counting statement is proved in Theorem~\ref{thm:main}, while the characterization of $M(\mu_1,\mu_2)$ in terms of the Manhattan curve is established in Theorem~\ref{thm:Manhattan}.

As an immediate consequence of Theorems~\ref{mainthm:counting},
\ref{mainthm:rinftyeich_sh}, and~\ref{mainthm:rteich_dense}, we obtain the following corollary.

\begin{maincor}
The correlation counting result of Theorem~\ref{mainthm:counting}
holds for pairs of geodesic currents belonging to a dense subset of the space of geodesic currents.
\end{maincor}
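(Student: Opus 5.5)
The statement is that the correlation counting asymptotic of Theorem~\ref{mainthm:counting} holds for pairs of currents drawn from a dense subset of $\C(S)$. The plan is to take that dense subset to be $\mathbb{R}\Teich^\infty(S)$ itself, or more precisely the set of its elements that are filling. Three points need checking: density, strong hyperbolicity, and the hypothesis of distinct renormalized length spectra.

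\textbf{Density and strong hyperbolicity.} By Theorem~\ref{mainthm:rteich_dense}, $\mathbb{R}\Teich^\infty(S)$ is dense in $\C(S)$. By Theorem~\ref{mainthm:rinftyeich_sh}, every element of it is strongly hyperbolic. Each nonzero element is in fact filling. Indeed, $\lambda_i\mathcal L_{Y_i}$ with $\lambda_i>0$ has positive intersection with every current on $Y$. The transfer $\Pi_Y$ averages translates, so for a current $\nu$ on $X$ we get $i(\Pi_Y\mu,\nu)$ as a positive multiple of $i(\mu,\pi^*\nu)$, and this is positive. Hence any two nonzero elements $\mu_1,\mu_2$ of the set are strongly hyperbolic filling currents. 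These are exactly the inputs Theorem~\ref{mainthm:counting} requires, apart from the spectral condition.

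\textbf{Distinct renormalized spectra.} Theorem~\ref{mainthm:counting} applies to a pair $(\mu_1,\mu_2)$ exactly when $L_{\mu_1}\neq L_{\mu_2}$. If instead $L_{\mu_1}=L_{\mu_2}$, then the correlation count reduces to the single-length count $\#\{[\gamma]: L_{\mu_1}([\gamma])\in(x,x+h\varepsilon)\}$, at least when $h(\mu_1)=h(\mu_2)$. That case is governed by the prime geodesic theorem for the Hölder potential, not by \eqref{eq:correlation}. Marked length spectrum rigidity for filling currents implies that equal renormalized spectra force $\mu_1,\mu_2$ to be projectively equal. The corollary should therefore be read as applying to pairs in the dense set that are not projectively equal. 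This set of pairs is dense in $\C(S)\times\C(S)$, because removing the projective diagonal from a dense set of pairs keeps it dense: one can perturb $\mu_2$ within $\mathbb{R}\Teich^\infty(S)$, for example by adding a small multiple of a different Liouville current.

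\textbf{Main obstacle.} The delicate point is only this projective-diagonal issue, together with making sure that Theorem~\ref{mainthm:counting} needs nothing beyond strong hyperbolicity. Filling and non-atomicity, if required, hold here because Liouville currents are non-atomic and full-support. Everything else is a direct combination of Theorems~\ref{mainthm:counting}, \ref{mainthm:rinftyeich_sh} and~\ref{mainthm:rteich_dense}.
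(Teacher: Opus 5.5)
Your proposal is correct and follows the paper, which states this corollary as an immediate combination of Theorems~\ref{mainthm:counting}, \ref{mainthm:rinftyeich_sh} and~\ref{mainthm:rteich_dense}, with $\mathbb{R}\Teich^\infty(S)$ as the dense set. Your extra checks make explicit what the paper leaves implicit: that nonzero elements of $\mathbb{R}\Teich^\infty(S)$ are filling, and that, by Otal's rigidity, the distinct-renormalized-spectra hypothesis only excludes projectively equal pairs, which does not affect density.
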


\subsection{Organization}

Section~\ref{sec:background} reviews the necessary background on Gromov hyperbolicity, strong hyperbolicity, geodesic currents, pseudometrics, and cocycles. Section~\ref{sec:stronghyp} proves the characterization theorem for strong hyperbolicity of pseudometrics dual to geodesic currents (Theorem~\ref{thm:stronghyp_equivalent}). Section~\ref{sec:bolic} characterizes which pseudometrics dual to geodesic currents are strongly bolic (Theorem~\ref{mainthm:bolic}), and shows that, for filling currents, non-atomicity is equivalent to strong bolicity. These results are subsequently used in Section~\ref{sec:stronghyp}. Section~\ref{sec:neg_notdense} establishes the length spectrum relations satisfied by currents dual to $\CAT(0)$ metrics (Theorem~\ref{mainthm:Ptolemy_lengths}), proves that such currents are not dense (Corollary~\ref{maincor:negnotdense}), and constructs infinitely many invariant strongly hyperbolic geodesic metrics on $\wt{X}$ which are not $\CAT(0)$ (Corollary~\ref{maincor:sh_not_cat0}). Section~\ref{sec:stronglyhyp} proves the density results for strongly hyperbolic metrics, establishing Theorems~\ref{mainthm:rinftyeich_sh},~\ref{mainthm:rteich_dense}, and Corollary~\ref{maincor:density}. Section~\ref{sec:correlation} proves the correlation counting theorem (Theorem~\ref{mainthm:counting}). 

The appendix is divided into two parts. Section~\ref{sec:signed} proves injectivity of the marked length spectrum for signed geodesic currents and derives a sufficient criterion for a signed current to be a weighted signed multicurve. Section~\ref{sec:elliptic-modulus} establishes estimates for the function comparing hyperbolic Liouville currents associated to distinct points of Teichm\"uller space.

\subsection{Acknowledgments}
\label{sec:acknowledgments}
D. Mart\'inez-Granado thanks Stephen Cantrell, Giuseppe Martone, Bea Pozzetti, and Eduardo Reyes, Ser Peow Tan, Dylan Thurston and Tengren Zhang for helpful conversations. He specially thanks Eduardo Reyes for pointing out to the construction of strongly hyperbolic metrics which are not $\CAT(0)$. M. Jyothis would like to thank Giuseppe Martone and Jenya Sapir for helpful discussions. Support for D. Mart\'inez-Granado was provided by the Luxembourg National Research Fund (AFR/Bilateral-ReSurface 22/17145118), the National University of Singapore, and the Marie Skłodowska-Curie Action CurrGeo grant (101154865). He also thanks the Department of Mathematics at the National University of Singapore for their generous hospitality and for providing an excellent working environment during the period in which most of this work was carried out.

\section{Background}
\label{sec:background}

\subsection{Gromov hyperbolicity}

We say $d \colon X \times X \to \mathbb{R}$ is a \emph{pseudometric} on $X$ if, for every $x,y,z \in X$, it satisfies $d(x,x)=0$, $d(x,y) \geq 0$, $d(x,y)=d(y,x)$ and the \emph{triangular inequality} $d(x,z) \leq d(x,y) + d(y,z)$, but does not necessarily satisfy the condition that $d(x,y)=0$ implies $x=y$.
We say that a pseudometric $d$ on $X$ is \emph{$\delta$-Gromov hyperbolic} if there exist $\delta>0$ so that for every four tuple of points $x,y,z,w \in X$,
\begin{equation}
d(x,z)+d(y,w) \leq \max \{ d(x,y)+d(z,w), d(x,w) + d(y,z) \} + 2\delta.
\label{eq:gromov_hyp}
\end{equation}
This condition can be equivalently rewritten as
\[
(x|y)_w \geq \min \{ (x|z)_w, (z|y)_w \}-\delta,
\]
where $(x|y)_w$ denotes the \emph{Gromov product} of $x,y \in X$ based at $w \in X$, and is defined as
\[
(x|y)_w \coloneqq \frac{1}{2}\left( d(x,w)+d(y,w)-d(x,y)\right).
\]
Sometimes we will simply say $X$ is a \emph{hyperbolic space}.
Hyperbolic spaces can be compactified using their ideal \emph{Gromov boundary $\partial X$} consisting of equivalence classes of \emph{divergent sequences}. A sequence of elements $(x_n)_{n=0}^{\infty}$ diverges if $(x_n | x_m)_o$ for some $o \in X$, as $\min \{ n, m\}$ diverges to infinity. 
Two divergent sequences $(x_n)_{n=0}^{\infty}$ and $(y_n)_{n=0}^{\infty}$ are \emph{equivalent} if $(x_n | y_m)_o$ diverges as $\min \{n,m \}$, tends to infinity.

The Gromov product extends to points in $X \cup \partial X$, however this extension need not be continuous. The extension is defined as follows. Let

\[
(\xi | \eta)_o = \sup \liminf_{n \to \infty} (x_n | y_n)_o \colon \xi = [(x_n)], \eta = [(y_n)]\},
\]
for $\xi, \eta \in X \cup \partial X$, where if $\xi, \eta$ is in $X$, then we take $(x_n)$ as the constant sequence $x_n = \xi$ for all $n \geq 0$.

We say a (pseudo)-metric space $X$ is \emph{$\alpha$-roughly geodesic} if there exist a uniform constant $\alpha>0$ so that for every two points $x,y \in X$, there exist a finite sequence of points $(x_i)_{i=0,\cdots,n}$ in $X$ so that $x_0=x$ and $x_n=y$ satisfying
\[
i-j - \alpha \leq d(x_i,x_j) \leq i-j + \alpha
\]
for every $i \geq j$.
We say $X$ is \emph{roughly geodesic} if it is $\alpha$-roughly geodesic for some $\alpha$.

We say that a finitely generated group $\Gamma$ is \emph{Gromov hyperbolic} if some/any of its Cayley graphs with the edge-metric is a Gromov hyperbolic space.
We say a Gromov hyperbolic group is \emph{non-elementary} if $\partial \Gamma$ has at least 3 points (in which case it has infinitely many, in fact).

Given a pseudometric $d$ in $\Gamma$, we define the \emph{stable length} of $g \in \Gamma$, as follows.
\[
\ell_d(g) \coloneqq \lim_n \frac{d(o,g^n)}{n},
\]
where $o$ denotes the identity element of $\Gamma$. This is a conjugacy class invariant.

\subsection{Strongly hyperbolicity and bolicity}

Following Nica--\v{S}pakula~\cite{NicaSpakula2016}, we define the following.

\begin{definition}[$\epsilon$-strongly hyperbolic]
Given $\epsilon>0$, a metric space $X$ is said $\epsilon$-strongly hyperbolic if for every points $x,y,z,w \in X$, we have
\begin{equation}
e^{\frac{\epsilon}{2}(d(x,z) + d(y,w))} \leq e^{\frac{\epsilon}{2}(d(x,y) + d(z,w))} + e^{\frac{\epsilon}{2}(d(x,w) + d(y,z))}.
\label{eq:strong_hyp_epsilon}
\end{equation}
\label{def:epsilon_stronghyp}
\end{definition}
The notion of strong hyperbolicity implies Gromov hyperbolicity (\cite[Theorem~4.2]{NicaSpakula2016} and Proposition~\ref{prop:nica_spakula} below).

There is also the following equivalent definition, introduced by Nica--\v{S}pakula.

\begin{definition}[$(A_0,B_0,C_0)$-strongly hyperbolic]
Given positive constants $A_0,B_0$ and $C_0$, a metric space $X$ is said $(A_0,B_0,C_0)$-strongly hyperbolic if it is hyperbolic (according to Equation~\ref{eq:gromov_hyp}) and if for every points $x,y,z,w \in X$, we have that, if
\[
d(x,z) + d(y,w)-d(x,y)-d(z,w) > A \geq A_0,
\]
then
\[
d(x,z) + d(y,w)-d(x,w)-d(y,z) < B_0 e^{-C_0 \cdot A}.
\]
\end{definition}

In \cite[Lemma~6.2]{BS16:StrongHyp}, Nica--\v{S}pakula prove the following.

\begin{proposition}
\label{prop:nica_spakula}
\begin{enumerate}
    \item If $X$ is $\epsilon$-strongly hyperbolic, then:
    \begin{enumerate}
        \item $X$ is $\frac{\log(2)}{\epsilon}$-Gromov hyperbolic;
        \item $X$ is $(A_0(\epsilon),\, 4/\epsilon,\, \epsilon/2)$-strongly hyperbolic,
        where $A_0(\epsilon)$ is given implicitly.
    \end{enumerate}

    \item If $X$ is $(A_0,B_0,C_0)$-strongly hyperbolic with Gromov hyperbolicity constant $\delta$, then $X$ is $\epsilon$-strongly hyperbolic, where
    $\epsilon \coloneqq \min\left\{
        \frac{1}{2\delta},
        \frac{2\log(2)}{A_0}
    \right\}.$
\end{enumerate}
\end{proposition}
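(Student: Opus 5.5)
Throughout, fix a quadruple $x,y,z,w$ and write
\[
S_1=d(x,z)+d(y,w),\qquad S_2=d(x,y)+d(z,w),\qquad S_3=d(x,w)+d(y,z).
\]
Dividing the inequality \eqref{eq:strong_hyp_epsilon} by $e^{\frac{\epsilon}{2}S_1}$ shows that $\epsilon$-strong hyperbolicity is equivalent to
\begin{equation*}
1\le e^{-\frac{\epsilon}{2}a}+e^{-\frac{\epsilon}{2}b},\qquad a\coloneqq S_1-S_2,\quad b\coloneqq S_1-S_3 .
\end{equation*}
Swapping $y$ and $w$ fixes $S_1$ and exchanges $S_2$ with $S_3$, so all statements are symmetric in $(a,b)$. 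The plan is to prove each implication by elementary manipulation of this normalized inequality.

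\emph{Part (1a).} Bound the right-hand side of \eqref{eq:strong_hyp_epsilon} by $2e^{\frac{\epsilon}{2}\max\{S_2,S_3\}}$ and take logarithms. This gives $S_1\le \max\{S_2,S_3\}+\frac{2\log 2}{\epsilon}$, which is \eqref{eq:gromov_hyp} with $\delta=\frac{\log 2}{\epsilon}$.

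\emph{Part (1b).} Hyperbolicity is already given by (1a). Assume $a>A\ge A_0$. Then the normalized inequality yields
\[
e^{-\frac{\epsilon}{2}b}\ge 1-e^{-\frac{\epsilon}{2}a}>1-e^{-\frac{\epsilon}{2}A}.
\]
Taking logarithms and using $-\log(1-t)\le 2t$ for $t\in[0,\tfrac12]$, we get
\[
b<-\tfrac{2}{\epsilon}\log\bigl(1-e^{-\frac{\epsilon}{2}A}\bigr)\le \tfrac{4}{\epsilon}e^{-\frac{\epsilon}{2}A},
\]
provided $e^{-\frac{\epsilon}{2}A_0}\le\frac12$. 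So one may take $A_0=\frac{2\log 2}{\epsilon}$ (or anything larger), $B_0=4/\epsilon$ and $C_0=\epsilon/2$.

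\emph{Part (2).} By symmetry assume $a\le b$, and split into cases.
\begin{enumerate}
\item If $a\le 0$, the inequality is trivial.
\item If $0<a\le b\le A_0$, both exponentials are at least $e^{-\frac{\epsilon}{2}A_0}$. Their sum is then $\ge1$ as soon as $\epsilon\le\frac{2\log 2}{A_0}$; this explains the second entry in the minimum.
\item If $b>A_0$, apply the $(A_0,B_0,C_0)$ condition with the roles of $S_2,S_3$ exchanged. This gives $0<a<B_0e^{-C_0 b}$, and by hyperbolicity also $a\le 2\delta$. Using $e^{-t}\ge 1-t$, it suffices to show
\[
\tfrac{\epsilon}{2}B_0e^{-C_0 b}\le e^{-\frac{\epsilon}{2}b}\qquad\text{for all } b>A_0 .
\]
This holds if $\epsilon\le 2C_0$ and $\epsilon B_0\le 2e^{(C_0-\epsilon/2)A_0}$.
\end{enumerate}

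The main obstacle is this last case. The inequality there genuinely involves $B_0$ and $C_0$, whereas the stated $\epsilon=\min\{\frac{1}{2\delta},\frac{2\log 2}{A_0}\}$ does not. I would first try to absorb the dependence on $B_0,C_0$ by enlarging $A_0$, which the definition permits since the condition only becomes weaker as $A_0$ increases. Concretely, replace $A_0$ by some $A_0'\ge A_0$ with $B_0e^{-C_0A_0'}$ smaller than a threshold such as $\frac{\log 2}{\epsilon}$. The bound $a\le 2\delta$, together with $\epsilon\le\frac{1}{2\delta}$, gives $e^{-\frac{\epsilon}{2}a}\ge e^{-1/2}$, and this is what should control the intermediate range of $b$. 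If the bookkeeping does not close with exactly the stated constant, I would record $\epsilon$ as the minimum of the stated quantities and a constant depending on $B_0,C_0$. The qualitative equivalence is what the rest of the paper uses.
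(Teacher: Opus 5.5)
Part (1) of your proposal is correct and complete. Bounding the right-hand side of \eqref{eq:strong_hyp_epsilon} by twice the larger exponential gives (1a). The estimate $-\log(1-t)\le 2t$ on $[0,\frac12]$ gives (1b), with the explicit choice $A_0=\frac{2\log 2}{\epsilon}$. The paper gives no proof of part (1); it only cites Nica--\v{S}pakula.

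For part (2), the obstacle you ran into is not a matter of bookkeeping. The displayed constant $\epsilon=\min\{\frac{1}{2\delta},\frac{2\log 2}{A_0}\}$ is false, and no $\epsilon$ depending only on $\delta$ and $A_0$ can work. Take four points with
\[
d(x,w)=d(y,z)=1,\qquad d(x,y)=d(z,w)=D,\qquad d(x,z)=d(y,w)=D+1,\qquad D\ge 2.
\]

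\emph{Setup.} This is a metric with pair sums $2D+2$, $2D$, $2$. For quadruples with a repeated point, the two largest pair sums coincide. Hence the space is $1$-hyperbolic in the sense of \eqref{eq:gromov_hyp}.

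\emph{Verifying the $(A_0,B_0,C_0)$ condition.} For a repeated point, $S_1-S_2>0$ forces $S_1-S_3=0$, so the condition holds automatically. For the four distinct points with $A_0=4\log 2>2$, the hypothesis is triggered only when $(S_1-S_2,S_1-S_3)$ is $(2D,2)$ or $(2D-2,-2)$. Hence the space is $(4\log 2,\,2e^{2D},\,1)$-strongly hyperbolic, and the statement would give $\epsilon=\frac12$.

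\emph{Failure.} For $(S_1,S_2,S_3)=(2D+2,2D,2)$, your normalized inequality reads $1\le e^{-1/2}+e^{-D/2}$, which fails for $D\ge 2$. More generally, with $\delta=1$ and $A_0=4\log 2$ fixed, $1\le e^{-\epsilon}+e^{-\epsilon D}$ forces $\epsilon\to0$ as $D\to\infty$, while only $B_0$ grows.

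So your proposed repairs cannot close. Enlarging $A_0$ only lowers $\frac{2\log 2}{A_0}$. The bound $e^{-\frac{\epsilon}{2}a}\ge e^{-1/2}$ only handles $b\le\frac{2}{\epsilon}\log\frac{1}{1-e^{-1/2}}$, and beyond that the only available information is $a\le B_0e^{-C_0b}$. Your fallback is the right resolution.

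Your cases (i)--(iii) in fact prove the correct statement with $\epsilon=\min\{\frac{2\log 2}{A_0},2C_0,\frac{2}{B_0}\}$. Your condition $\epsilon B_0\le 2e^{(C_0-\epsilon/2)A_0}$ follows from $\epsilon\le\min\{2C_0,\frac{2}{B_0}\}$. In (iii) you get $a\le B_0e^{-C_0b}$ only after letting $A\uparrow b$, which is harmless.

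The paper's own reproduction of this direction, in the proof of Lemma~\ref{lem:abc_sh_boxes}, also needs $\epsilon\le\min\{2C_0,\frac{2}{B_0}\}$ in the large regime. So it too establishes a constant involving $B_0$ and $C_0$, not the displayed one.

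The two routes differ only in the small regime. The paper reduces to $\epsilon Z\le e^{-\epsilon W}$ via $t\le-\log(1-t)$ and bounds the smaller quantity by $\delta$; this is where $\frac{1}{2\delta}$ comes from. You split on whether the \emph{larger} of $a,b$ exceeds $A_0$, so in the small case both exponentials are at least $\frac12$ and hyperbolicity is never used. This shows the $\frac{1}{2\delta}$ term is superfluous; in the paper's small case one already has $Z\le W<A_0/2$. The dependence on $B_0$ and $C_0$, by contrast, is essential.
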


\begin{remark}
Observe that if $X$ is $\epsilon$-strongly hyperbolic, then it is also $\epsilon'$-strongly hyperbolic for every $\epsilon' \leq \epsilon$.
It thus makes sense to consider the supremum $\epsilon^*$ of the constants $\epsilon$ of $\epsilon$-strong hyperbolicity for a given metric space $X$. Note that, by the above, if $\delta^*$ is the infimum of the constants $\delta$ of $\delta$-hyperbolicity of $X$ (Equation~\ref{eq:gromov_hyp}), then
$\epsilon^* \leq \frac{1}{2\delta^*}$. 
\end{remark}

There is a related notion, weaker than that of strong hyperbolicity, namely, \emph{strong bolicity}, originally defined by Lafforgue~\cite{Lafforgue2002}. In the context of roughly geodesic spaces, strong bolicity can be formulated as follows~\cite[p.~960]{NicaSpakula2016}; this is the definition we will use.

\begin{definition}
We say a that a roughly geodesic space $X$ is \emph{strongly bolic} if $X$ satisfies 
\[
d(x,z) + d(y,w)-d(x,w)-d(y,z) \to 0
\]
as
\[
d(x,z) + d(y,w)-d(x,y)-d(z,w) \to \infty
\]
for every 4-tuple of points $x,y,z,w \in X$.
\label{def:strong_bolicity}
\end{definition}

From this definition, the following is straightforward (c.f. ~\cite[Theorem~1.1]{NicaSpakula2016}).

\begin{lemma} If $X$ is strongly hyperbolic and roughly geodesic, then $X$ is strongly bolic.
\label{lem:sh_implies_sb}
\end{lemma}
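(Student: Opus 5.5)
The plan is to go through the $(A_0,B_0,C_0)$-formulation of strong hyperbolicity given by Proposition~\ref{prop:nica_spakula}. Suppose $X$ is $\epsilon$-strongly hyperbolic. By part (1)(b) of that proposition, $X$ is $(A_0(\epsilon),4/\epsilon,\epsilon/2)$-strongly hyperbolic. Fix a $4$-tuple $x,y,z,w \in X$ and set
\[
A \coloneqq d(x,z)+d(y,w)-d(x,y)-d(z,w), \qquad D \coloneqq d(x,z)+d(y,w)-d(x,w)-d(y,z).
\]
Whenever $A > A_0(\epsilon)$, we then have $D < \frac{4}{\epsilon} e^{-\epsilon A/2}$. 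This upper bound tends to $0$ as $A\to\infty$.

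It remains to bound $D$ from below, which is needed because strong bolicity asks for $D \to 0$ and not merely $\limsup D \le 0$. First rewrite $D$ as
\[
D = A + d(x,y)+d(z,w)-d(x,w)-d(y,z).
\]
By Gromov hyperbolicity (Equation~\eqref{eq:gromov_hyp}) with constant $\delta=\log(2)/\epsilon$, from part (1)(a), we have
\[
d(x,z)+d(y,w) \le \max\{d(x,y)+d(z,w),\ d(x,w)+d(y,z)\}+2\delta.
\]
If $A>2\delta$, the maximum cannot be attained by the first term, so $d(x,w)+d(y,z) \ge d(x,z)+d(y,w)-2\delta$, that is, $D \le 2\delta$. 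This only bounds $D$ from above again, so a genuine lower bound has to come from elsewhere.

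The step I expect to be the main obstacle is showing that $D$ cannot be very negative when $A$ is large. I would handle it by applying the strong hyperbolicity inequality~\eqref{eq:strong_hyp_epsilon} to the permuted tuple $(x,w,z,y)$, swapping the roles of $y$ and $w$. In exponential form the two sums then compare as follows. Write $S_1=d(x,z)+d(y,w)$, $S_2=d(x,y)+d(z,w)$ and $S_3=d(x,w)+d(y,z)$, so that $A=S_1-S_2$ and $D=S_1-S_3$. The permuted inequality gives $e^{\epsilon S_3/2}\le e^{\epsilon S_1/2}+e^{\epsilon S_2/2}$, hence $e^{-\epsilon D/2}\le 1+e^{-\epsilon A/2}$. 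Taking logarithms yields
\[
D \ge -\frac{2}{\epsilon}\log\bigl(1+e^{-\epsilon A/2}\bigr),
\]
which tends to $0$ as $A\to\infty$. If this permuted inequality works as intended, it gives the lower bound directly and the detour through (1)(a) becomes unnecessary.

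The same argument, applied to the original tuple, gives the upper bound: $e^{\epsilon S_1/2}\le e^{\epsilon S_2/2}+e^{\epsilon S_3/2}$ yields $e^{\epsilon D/2}\le (1-e^{-\epsilon A/2})^{-1}$ for $A>0$, so $D\le -\frac{2}{\epsilon}\log(1-e^{-\epsilon A/2})\to 0$. This also avoids citing part (1)(b). Combining the two bounds, $|D|\to 0$ uniformly as $A\to\infty$, which is exactly Definition~\ref{def:strong_bolicity}. The roughly geodesic hypothesis is used only because the definition of strong bolicity is stated for roughly geodesic spaces; it plays no role in the estimates themselves.
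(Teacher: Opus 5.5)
Your argument is correct once one slip is fixed. The tuple $(x,w,z,y)$ does not swap the pairings you want. It has the same diagonal pairing $\{\{x,z\},\{y,w\}\}$ as $(x,y,z,w)$, so inequality~\eqref{eq:strong_hyp_epsilon} applied to it is just the original inequality with its two right-hand terms interchanged. To put $S_3=d(x,w)+d(y,z)$ on the left, use $(x,y,w,z)$ instead (swap $z$ and $w$). This gives $e^{\epsilon S_3/2}\le e^{\epsilon S_2/2}+e^{\epsilon S_1/2}$, and from there your lower bound $D\ge -\frac{2}{\epsilon}\log(1+e^{-\epsilon A/2})$ follows exactly as you wrote. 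Combined with the upper bound $D\le -\frac{2}{\epsilon}\log(1-e^{-\epsilon A/2})$ from the original tuple, this gives the uniform two-sided estimate.

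The paper gives no proof of this lemma. It only says the statement is straightforward from the definition and refers to Nica--\v{S}pakula. Your computation is a self-contained version of that straightforward verification. It uses only the four-point inequality for two of the three pairings, so the opening detour through Proposition~\ref{prop:nica_spakula}(1)(a) and (1)(b) can be deleted. It also gives an explicit rate: once $e^{-\epsilon A/2}\le 1/2$, your bounds yield $|D|\le \frac{4}{\epsilon}e^{-\epsilon A/2}$, which recovers the constants $(4/\epsilon,\epsilon/2)$ of that proposition. Finally, it shows that rough geodesicity is needed only so that the definition of strong bolicity applies.
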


\subsection{Space of (pseudo-)metrics} 
\label{subsec:pseudo_metrics_space}
For the following, we consider the references~\cite{Fur02:Coarse,OR22:SpaceMetric,R23:Thesis}.
For $\Gamma$ a non-elementary hyperbolic group, let $D(\Gamma)$ denote the space of all Gromov hyperbolic, left-invariant pseudometrics on $\Gamma$ that are quasi-isometric (q.i) to a word-metric. We consider pseudometrics up to rough similarity: two pseudometrics $d_1$ and $d_2$ on $\Gamma$ are said to be equivalent if there exist constants $A,K>0$ such that
\[
|K \cdot d_1 - d_2|<A.
\]
When $K=1$, we say $d_1,d_2$ are \emph{roughly isometric}.
We define $\mathcal{D}(\Gamma) \coloneqq D(\Gamma)/\sim$, where $\sim$ denotes the equivalence relation of rough similarity.
For any pseudometric $d \in D(\Gamma)$, and $g,h \in \Gamma$,
one has $d(x,y)=d(y^{-1}x,o) \geq \ell_d(y^{-1}x)$.
Since $\ell_d(y^{-1}x) >0$ whenever $y^{-1}x$ is non-torsion, the 
subgroup $\{ x \in \Gamma \mid d(o,x)=0\}$ is torsion, and hence
finite. In particular, if $\Gamma$ is torsion-free (such as when
$\Gamma=\pi_1(S)$), then every pseudometric on $D_\Gamma$ is an actual metric~\cite[Remark~4.1.3]{R23:Thesis}. An important invariant of $d \in D(\Gamma)$ is the \emph{entropy} of $d$, the \emph{positive} real number given by
\begin{equation}
h(d) \coloneqq \lim_{n \to \infty} \frac{\log |\{g \in \Gamma : \ell_d(g)<n\}|}{n}.
\label{eq:entropy_d}
\end{equation}
Examples of elements in $\mathcal{D}(\Gamma)$ include equivalence classes of word-metrics on $\Gamma$ induced by finite, symmetric generating sets, as well as pullback of metrics arising from geometric (i.e., proper and cocompact) actions on Gromov hyperbolic, roughly geodesic spaces. We will see below that the pseudometrics associated to geodesic currents also provide examples of elements in $\mathcal{D}(\Gamma)$.

The space $\mathcal{D}(\Gamma)$ can be equipped with a natural symmetric metric, defined as follows.
\begin{equation}
\Delta([d_1],[d_2]) \coloneqq \log \left(\sup_{[g] \in [\Gamma']} \frac{\ell_{d_2}([g])}{\ell_{d_1}([g])} \sup_{[g] \in [\Gamma']} \frac{\ell_{d_1}([g])}{\ell_{d_2}([g])} \right),
\label{eq:delta_metric}
\end{equation}
where $\Gamma'$ denotes the set of loxodromic elements of $\Gamma$.

\subsection{Geodesic currents}\label{ssec:currents}

Let $S$ be a closed, connected, orientable hyperbolic surface. Fix a hyperbolic metric $X$ on $S$ once and for all. The universal cover $\wt{X}$ of $X$ is isometric to the Poincar\'e disk with the standard hyperbolic metric. Let $\mathcal{G}(\wt{X})$ denote the space of unparametrized bi-infinite geodesics of $\wt{X}$. It is known that $\mathcal{G}(\wt{X})$ is homeomorphic to $(S^1 \times S^1 - \Delta)/\mathbb{Z}_2$. A \emph{geodesic current} is a locally finite, positive, $\pi_1(X)$-invariant Borel  measure on $\mathcal{G}(\wt{X})$. We will let $\mathcal{C}(X)$ denote the space of all geodesic currents on $X$. Examples of such objects are geodesic currents $\delta_c$ arising from a close geodesic $c$ in $X$: precisely, $\delta_c$ is obtained as the Dirac measure on the full lift of the closed geodesic to $\wt{X}$. Furthermore, every choice of hyperbolic metric $Y$ on $S$ induces a geodesic current $\mathcal{L}_Y$, called the \emph{(hyperbolic) Liouville current} via the hyperbolic cross-ratio on the boundary of at infinity of $\wt{Y}$.
Precisely, for every point in Teichm\"uller space represented by a marked hyperbolic surface $\mathcal{Y}\coloneqq[(Y,f)]$, where $f \colon X \to Y$ is a quasi-conformal homeomorphism, we obtain an equivariant quasiconformal homeomorphism
\[
\wt{f} \colon \wt{X} \to \wt{Y}.
\]
There is a $\pi_1(Y)$-invariant measure $L_Y$ on $\mathcal{G}(\wt{Y})$ (the space of geodesics of $\wt{Y})$), given by the logarithm of the hyperbolic cross ratio on $\partial \wt{Y}$.
We can push $L_Y$ forward to $\mathcal{G}(\wt{X})$ via $f^{-1}$, obtaining:
\[
\mathcal{L}_{\mathcal{Y}} \coloneqq (f^{-1})_* L_Y.
\]
Bonahon showed the map $\mathcal{Y} \mapsto \mathcal{L}_\mathcal{Y}$ gives a proper embedding of $\mathcal{T}(X)$ into $\mathcal{C}(X)$. In fact, the embedding is into $\mathbb{P}\mathcal{C}(X) \coloneqq (\mathcal{C}(X)-\{ 0\}) / \mathbb{R}_{>0}$, the \emph{projectivized space of geodesic currents}. 
Many other metric structures induce geodesic currents. See, for example,~\cite{Otal90:SpectreMarqueNegative,CFF92:RigidityNonPosCurvedRiem,HP97:RigidityNegCurvedCone,BL17:FlatRigidity,Erlandsson:WordLength}.

The space $\cal C(X)$ of geodesic currents is a convex cone in an infinite dimensional vector space. Bonahon \cite{Bonahon86:EndsHyperbolicManifolds} extended the geometric intersection pairing on closed curves to the space of geodesic currents. That is, he showed that there exists a non-negative, symmetric, bilinear and continuous pairing
\[
i\colon \cal C(X)\times\cal C(X)\to \bb R_{\geq 0}
\]
such that $i(\delta_c,\delta_d)$ equals the geometric intersection number of the closed geodesics $c$ and $d$.

Given a geodesic current $\nu$, we can use the intersection number to define its length spectrum $\ell_\nu\colon[\Gamma]\to\bb R_{\geq 0}$ as $\ell_\nu([g])=i(\nu,\delta_{[g]})$. The \emph{systole} of $\nu$ is then $
\rm{sys}(\nu):=\inf_{[g]\in[\Gamma]}\ell_\nu([g])$.
 \cite[Corollary~1.5]{BIPP21:Currents} shows that $\rm{sys}\colon\cal C(S)\to[0,\infty)$ is a continuous function. 

A geodesic current $\nu$ is {\em filling} if for every $\mu \neq 0$, we have $i(\nu, \mu) >0$. We denote the space of filling geodesic currents by $\cal C_f(X)$, and its projectivization by $\mathbb{P}\cal C_f(X)$.
We define the  \emph{entropy} of a filling geodesic current $\nu$ by
\begin{equation}
h(\nu)\coloneqq \lim_{n\to \infty}\frac{1}{n}\log |\{[g]\in[\Gamma]\mid \ell_{\nu}([g])<n\}|.
\label{eq:entropy_mu}
\end{equation}
Examples of filling currents are hyperbolic Liouville currents, or currents associated to filling closed curves.
Examples of non-filling currents are \emph{measured laminations}, i.e. geodesic currents $\alpha$ satisfying $i(\alpha,\alpha)=0$~\cite[Proposition~14]{Bonahon88:GeodesicCurrent}.
We will denote the space of measured laminations by $\mathcal{ML}(S)$.

\subsection{Dual pseudometric of a geodesic current}
\label{subsec:dual_current}
Let $Y$ be an arbitrary hyperbolic structure on the surface $S$.
Any geodesic current induces a pseudometric $d_{\mu}$ on $\wt{Y}$ as follows.

\begin{definition}[pseudometric]\label{def:current_pseudo} Given $x,y \in \wt{Y}$, define, following~\cite[Section~4]{BIPP21:Currents}
\[
d_{\mu}^Y(x,y) \coloneqq \frac{1}{2}\mu(G[x,y)) + \frac{1}{2}\mu(G(x,y]),
\]
where, in general, the set $G(I)$, for $I$ a geodesic segment in $\wt{Y}$, is the set of bi-infinite geodesics of $\wt{Y}$ intersecting $I$ transversely. In particular, $G[x,y))$ is the set of bi-infinite geodesics intersecting the geodesic segment $[x,y)$ transversely.  
\end{definition}

This pseudometric on $\wt{Y}$ satisfies several known and important properties, which we list here as a proposition.

\begin{proposition}
\label{prop:dmu}
Given a hyperbolic structure $Y$ on $S$ and a geodesic current $\mu$ in $\mathcal{C}(Y)$, the following are true:
\begin{enumerate}[label=(\alph*),ref=\thetheorem~(\alph*)]
\item\label{item:dmu_straight}
$d_\mu^{Y}$ is a \emph{straight pseudometric}: for any triple of points
$(x,y,z) \in l^3$, where $l$ is a hyperbolic geodesic in $\wt{Y}$,
ordered $x<y<z$ along $l$, we have
\[
d_{\mu}^Y(x,y)+ d_{\mu}^Y(y,z)=d_{\mu}^Y(x,z).
\]

\item\label{item:dmu_length_intersection}
The stable length of $d_{\mu}^Y$ is equal to the \emph{intersection number}
of $\mu$:
\begin{equation}
\ell_{d_{\mu}^Y}([g])=i(\mu, [g]).
\end{equation}

\item\label{item:dmu_hyperbolic}
$d_{\mu}^{Y}$ is \emph{Gromov hyperbolic} in the sense of
Equation~\eqref{eq:gromov_hyp}.

\item\label{item:dmu_qi}
If $\mu$ is a filling geodesic current, then $d_{\mu}^{Y}$ is
\emph{equivariantly quasi-isometric} to the hyperbolic metric $\wt{X}$
 and hence also quasi-isometric to any word metric on
$\pi_1(X)$; see~\cite[Theorem~C]{DRMG23:Duals} and also
\cite[Theorem~1.11]{COR22:Manhattan}.

\item\label{item:dmu_roughlygeodesic}
$d_\mu^{Y}$ is \emph{roughly geodesic}.
\end{enumerate}
\end{proposition}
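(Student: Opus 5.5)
Items (a), (b) and (e) can be checked directly from Definition~\ref{def:current_pseudo}, using the combinatorics of geodesics crossing segments. Item (c) needs an extra four-point argument, which I expect to be the main obstacle. For (d) I would simply invoke \cite[Theorem~C]{DRMG23:Duals} and \cite[Theorem~1.11]{COR22:Manhattan}.

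\emph{Straightness (a).} Let $x<y<z$ lie on a hyperbolic geodesic $l$. A bi-infinite geodesic transverse to $l$ meets $l$ in exactly one point. Hence $G[x,z)=G[x,y)\sqcup G[y,z)$, and likewise $G(x,z]=G(x,y]\sqcup G(y,z]$: a geodesic through $y$ lies in $G[y,z)$ and not in $G[x,y)$, and symmetrically for the other half-open convention. Additivity of $\mu$ then gives $d_\mu^Y(x,y)+d_\mu^Y(y,z)=d_\mu^Y(x,z)$. All quantities are finite, since the geodesics crossing a compact segment form a relatively compact subset of $\mathcal{G}(\wt{Y})$ and $\mu$ is locally finite.

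\emph{Stable length (b).}
\begin{itemize}
\item I would take $o$ on the axis $A_g$ of $g$. By (a), $d_\mu^Y(o,g^n o)=n\,d_\mu^Y(o,go)$.
\item $d_\mu^Y(o,go)$ is the $\mu$-mass of the geodesics transverse to a fundamental segment of $A_g$, with either half-open convention. By Bonahon's definition of $i$ as the mass of transverse pairs modulo $\Gamma$, this is $i(\mu,\delta_{[g]})$. Leaves equal to $A_g$ itself are non-transverse and are excluded on both sides.
\item Changing the basepoint changes $d_\mu^Y(o,g^n o)$ by at most $2d_\mu^Y(o,o')$, which is negligible after dividing by $n$.
\end{itemize}

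\emph{Uniform atom bound.} The next two items rely on the constant
\[
M:=\sup_{p\in\wt{Y}}\mu(\{\text{geodesics through } p\}).
\]
This is finite: by $\Gamma$-invariance it suffices to take $p$ in a compact fundamental domain $K$, and all such geodesics lie in the relatively compact set of geodesics meeting $K$.

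\emph{Rough geodesicity (e).} Along the hyperbolic segment $[x,y]$, the function $t\mapsto d_\mu^Y(x,\gamma(t))$ is monotone by (a). Its jumps come only from atoms through a single point, so each jump is at most $M$. I would choose points $x_i$ on $[x,y]$ at which this function first passes $i$. Then $d_\mu^Y(x_i,x_j)=i-j+O(M)$ by additivity, which gives roughly geodesic with $\alpha$ depending only on $M$.

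\emph{Hyperbolicity (c), the main obstacle.} I would first replace $d_\mu^Y(p,q)$ by the $\mu$-mass of geodesics separating $p$ from $q$. This changes distances by at most $2M$, because the only discrepancy comes from geodesics through the endpoints. For four points $x,y,z,w$, let
\begin{itemize}
\item $A$ be the mass of geodesics separating $\{x,y\}$ from $\{z,w\}$,
\item $B$ be the mass of geodesics separating $\{x,w\}$ from $\{y,z\}$,
\item $C$ be the mass of geodesics separating $\{x,z\}$ from $\{y,w\}$.
\end{itemize}
Geodesics isolating a single point contribute equally to all three pairwise sums. Hence the three sums are, up to a common term, $2B+2C$, $2A+2C$ and $2A+2B$, and the four-point condition reduces to a uniform bound on $\min\{A,B,C\}$.

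To bound this minimum, I would use planarity. A geodesic of type $A$ and one of type $B$ must cross each other in the region bounded by the four points. Then I would argue that, if both families carry large mass, a Gromov-hyperbolicity argument in $\wt{Y}$ produces a bounded region crossed by both families, and local finiteness together with cocompactness caps that mass. If this direct route becomes messy, I would fall back on quoting the hyperbolicity theorem of \cite[Section~4]{BIPP21:Currents}, which is the stated source of the construction.
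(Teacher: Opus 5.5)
Your arguments for (a), (b) and (e) are correct and more self-contained than the paper's, which simply cites \cite[Proposition~4.1 and Lemma~4.7]{BIPP21:Currents} and Lemmas~3.2 and~6.12 of \cite{COR22:Manhattan}. Your monotone-function argument for (e) even gives an explicit constant depending only on $M$, and (d) is cited exactly as in the paper. The gap is in (c), which the paper takes from \cite[Theorem~A]{DRMG23:Duals}, not from \cite{BIPP21:Currents}.

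First, your reduction is misstated. With your labels, the sums $d(x,y)+d(z,w)$, $d(x,w)+d(y,z)$ and $d(x,z)+d(y,w)$ equal $2B+2C$, $2A+2C$ and $2A+2B$ plus a common term. The four-point condition says the largest sum exceeds the second largest by at most $2\delta$, i.e.\ $\mathrm{mid}\{A,B,C\}-\min\{A,B,C\}$ must be bounded. A bound on $\min\{A,B,C\}$ says nothing. For four points in convex position the two diagonals cross, so the family separating their endpoints is empty and the minimum is always $0$. What must be shown is that no two of $A,B,C$ are simultaneously large.

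Second, the geometric step that would give this is only gestured at, and the mechanism you propose does not deliver it. Pairwise crossing of type-$A$ and type-$B$ geodesics does not confine either family to a bounded region. Suppose $x,y$ are close, $z,w$ are close, and the two pairs are at distance $L$. Then type-$A$ geodesics cross the long neck anywhere along it, no bounded region meets all of them, and $A$ is genuinely of order $L$ (e.g.\ for a Liouville current). The bound must come from funnelling of the \emph{other} two families:
\begin{enumerate}
\item Relabel so that the split $\{x,y\}|\{z,w\}$ minimizes $d_{\wt{Y}}(x,y)+d_{\wt{Y}}(z,w)$.
\item Every geodesic of type $B$ or $C$ separates $x$ from $y$ and $z$ from $w$. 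Hence it contains a segment joining a point of $[x,y]$ to a point of $[z,w]$.
\item By thinness of geodesic quadrilaterals in $\wt{Y}$, every such segment passes within a universal distance $R$ of a point $o$ depending only on the quadruple. (Approximate the hull of the four points by a tree whose internal edge separates $\{x,y\}$ from $\{z,w\}$.)
\item Hence $B,C\le\sup_{p}\mu\bigl(\{\ell: d_{\wt{Y}}(\ell,p)\le R\}\bigr)$. This is finite by local finiteness and cocompactness, exactly as for your constant $M$, and it bounds $\mathrm{mid}\{A,B,C\}$.
\end{enumerate}
Without this argument, or the citation to \cite[Theorem~A]{DRMG23:Duals}, item (c) is not established.
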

\begin{proof}
Item~\ref{item:dmu_straight} follows is proven in~\cite[Proposition~4.1]{BIPP21:Currents}.
Item~\ref{item:dmu_length_intersection} is \cite[Lemma~4.7]{BIPP21:Currents}.
Item~\ref{item:dmu_hyperbolic} is~\cite[Theorem~A]{DRMG23:Duals} (c.f.~\cite[Proposition~6.12]{CT25:Manhattan}).
Item~\ref{item:dmu_roughlygeodesic}~follows from~\cite[Lemma~3.2]{COR22:Manhattan} and~\cite[Lemma~6.12]{COR22:Manhattan}).
\end{proof}

Note that from \ref{item:dmu_length_intersection}, it follows that the definition of entropy of $d_\mu^{Y}$ (Def.~\ref{eq:entropy_d}) coincides with that of the entropy of the geodesic current $\mu$ (Def.~\ref{eq:entropy_mu}).

We can also see the class of a filling geodesic current $\mu$ as a point in $\mathcal{D}(\Gamma)$ somewhat canonically.  Given $\mathcal{Y} \coloneqq [(Y,f)] \in \Teich(S)$, where $f \colon X \to Y$ is a marking of $X$. Let $f_\# \colon \pi_1(X) \to \pi_1(Y)$ be the induced map at the level of fundamental groups, and $f_* \colon \C(X) \to \C(Y)$ be the induced homeomorphism at the level of geodesic currents.
This pseudometric naturally induces a class of metrics on $\Gamma=\pi_1(X)$.

\begin{definition}
For every $w \in \wt{Y}$, we can define a pseudometric on $\Gamma$ as follows
\[
(d_\mu^{\mathcal{Y}})_\Gamma(g,h) \coloneqq d_{f_*(\mu)}^Y(f_\#(g)w,f_\#(h)w).
\]
\label{def:pseudometric_G}
\end{definition}
From Proposition~\ref{item:dmu_length_intersection} it follows that
\[
\ell_{d_\mu^{\mathcal{Y}}}(g)=i(\mu, g)
\]
for every $\mathcal{Y} \in \Teich(S), g\in \pi_1(X)$.
In particular, $[(d_{\mu}^{Y_1})_\Gamma] \in \mathcal{D}(\Gamma)$, and the rough isometry class of $[(d_{\mu}^{Y_1})_\Gamma]$ and $[(d_{\mu}^{Y_2})_\Gamma]$ is the same for every $Y_1, Y_2 \in \Teich(S)$~\cite{DRMG23:Duals},\cite{COR22:Manhattan}. 

\begin{notation}
    Since the stable length of this metric does not depend on underlying $\mathcal{Y} \in \Teich(S)$, or the choice of basepoint $w \in \wt{Y}$, we will take $Y=X$ and the marking to be the identity.
In particular, we will sometimes omit the hyperbolic metric and write $d_\mu$ to refer to the pseudometric $d_\mu^X$ on $\wt{X}$.
\end{notation}

The following was first implicitly observed in~\cite{Sap23:Metric}, but, for completeness, we provide an explicit proof here (for related results see~\cite[Proposition~6.12]{CT25:Manhattan} and~\cite[Theorem~F]{DRMG23:Duals}).

\begin{proposition}
Let $\mathbb{P}\mathcal{C}_f(S)$ denote the space of projective filling currents equipped with the weak$^*$-topology, and let $\mathcal{D}(\Gamma)$ be equipped with the topology induced by the metric $\Delta$.
Then the map $\mathbb{P}\mathcal{C}_{f}(S) \to \mathcal{D}(\Gamma), [\mu] \mapsto [d_{\mu}]$ is an embedding.
\label{prop:embedding}
\end{proposition}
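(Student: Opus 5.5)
We need to show three things: the map $[\mu]\mapsto[d_\mu]$ is well defined, injective, and a homeomorphism onto its image. All three reduce to stable lengths, since both the weak$^*$ topology on $\PCf(S)$ and the metric $\Delta$ on $\mathcal{D}(\Gamma)$ are controlled by the length spectrum $[g]\mapsto i(\mu,\delta_{[g]})$. The key input is Proposition~\ref{item:dmu_length_intersection}, which gives $\ell_{d_\mu}([g])=i(\mu,\delta_{[g]})$.

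\textbf{Well-definedness.} For $\mu$ filling, Proposition~\ref{prop:dmu} shows that $d_\mu$ is Gromov hyperbolic and, by item (d), quasi-isometric to a word metric. Hence $[d_\mu]\in\mathcal{D}(\Gamma)$. Since $d_{t\mu}=t\,d_\mu$, the class is invariant under scaling $\mu$, so the map descends to projective classes.

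\textbf{Injectivity.} Suppose $[d_\mu]=[d_\nu]$, so $|K d_\mu-d_\nu|<A$. Taking stable lengths gives $K\,i(\mu,\delta_{[g]})=i(\nu,\delta_{[g]})$ for every $[g]$. Otal's theorem says currents are determined by their intersection with closed curves (marked length spectrum rigidity for currents). Therefore $\nu=K\mu$, and $[\mu]=[\nu]$.

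\textbf{Continuity.} Set
\[
\Delta([d_\mu],[d_\nu])=\log\Bigl(\sup_{[g]}\tfrac{i(\nu,\delta_{[g]})}{i(\mu,\delta_{[g]})}\cdot\sup_{[g]}\tfrac{i(\mu,\delta_{[g]})}{i(\nu,\delta_{[g]})}\Bigr).
\]
By homogeneity the suprema may be taken over $\delta_{[g]}$ normalized so that $i(\lambda,\delta_{[g]})=1$ for a fixed hyperbolic Liouville current $\lambda$. The set $\{\eta\in\C(S): i(\lambda,\eta)=1\}$ is compact, and the rational multiples of closed curves are dense in it. By continuity of $i$, both suprema therefore equal maxima of the continuous functions $\eta\mapsto i(\nu,\eta)/i(\mu,\eta)$ and its inverse over this compact set $K_\lambda$. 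These functions are well defined because filling currents have $i(\mu,\eta)>0$ on $K_\lambda$.

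If $\nu_n\to\nu$ and $\mu$ is filling, then $i(\nu_n,\cdot)\to i(\nu,\cdot)$ uniformly on $K_\lambda$. This follows from joint continuity of $i$ and compactness. Since $i(\mu,\cdot)$ is bounded away from $0$ on $K_\lambda$, the ratios converge uniformly, and hence $\Delta([d_{\nu_n}],[d_\nu])\to 0$. The same argument works after rescaling representatives, so the map is continuous on $\PCf(S)$.

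\textbf{Continuity of the inverse.} This is the main obstacle. Suppose $\Delta([d_{\mu_n}],[d_\mu])\to0$. Normalize so that $i(\mu_n,\lambda)=i(\mu,\lambda)=1$; the normalized currents then lie in a compact subset of $\C(S)$. The hypothesis gives constants $c_n\le C_n$ with $C_n/c_n\to1$ and
\[
c_n\, i(\mu,\eta)\le i(\mu_n,\eta)\le C_n\, i(\mu,\eta)\quad\text{for all }\eta,
\]
first for closed curves and then, by density and continuity, for all $\eta$.

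Take any subsequential limit $\mu'$ of $(\mu_n)$, and pass to a further subsequence along which $c_n\to c$ (the $c_n$ are bounded by testing against a fixed filling curve). The two-sided bound forces $i(\mu',\eta)=c\,i(\mu,\eta)$ for all $\eta$. In particular $\mu'$ is filling, and Otal's rigidity gives $\mu'=c\mu$. The normalization then forces $c=1$. Since every subsequential limit equals $\mu$, we get $\mu_n\to\mu$, so the inverse is continuous on the image and the map is an embedding.
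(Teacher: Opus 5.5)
Your proof is correct and follows essentially the same route as the paper: injectivity comes from Otal's marked length spectrum rigidity for currents, and continuity comes from rewriting the suprema in $\Delta$ as maxima of continuous ratios of intersection numbers over a compact set of normalized currents, using that filling currents pair positively with every nonzero current. Your two-sided bound $c_n\, i(\mu,\eta)\le i(\mu_n,\eta)\le C_n\, i(\mu,\eta)$, combined with compactness and Otal's theorem, proves continuity of the inverse more carefully than the paper does: the paper's appeal to continuity of $\Delta$ alone does not rule out a $\Delta$-convergent sequence degenerating weak$^*$ to a non-filling current.
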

\begin{proof}
The injectivity follows by a result of Otal~\cite[Th\'eor\`eme~1]{Otal90:SpectreMarqueNegative}. The continuity of the map and its inverse follows from the equivalence of the weak$^*$-topology and the subspace topology induced by the distance $\Delta$ on $\mathbb{P}\mathcal{C}_f(S)$ (Equation~\eqref{eq:delta_metric}). This in turn follows from the fact that $\Delta$ is continuous with respect to the weak$^*$ topology. Indeed, the suprema involved in the definition of $\Delta$ can be written as maxima of continuous functions indexed over the \emph{compact} space of projective geodesic currents~\cite[Corollary~5]{Bonahon88:GeodesicCurrent}. Indeed the function of type $\Cf(S) \times \Cf(S) \times \PC(S) \to \mathbb{R}$ given by
\[
(\mu_1,\mu_2,[\mu]) \mapsto \frac{\ell_{d_{\mu_2}}(\mu)}{\ell_{d_{\mu_1}}(\mu)}
\]
is continuous.
This follows by Proposition~\ref{item:dmu_length_intersection}, from the fact that $i(\cdot, \cdot)$ is a continuous, bilinear function on the space of geodesic currents~\cite[Proposition~4.5]{Bonahon86:EndsHyperbolicManifolds}, and $i(\mu_i,\cdot)>0$ since $\mu_i$ are filling for $i=1,2$.
\end{proof}

\subsection{From strongly hyperbolic to H\"older cocycles}
\label{subsec:Holderocycle}

Let $d$ be a strongly hyperbolic metric on a non-elementary Gromov hyperbolic group $\Gamma$ and fix $w\in\Gamma$.
The associated \emph{Busemann cocycle} is
\[
\beta_w(x,\xi)
=
\lim_{n\to\infty}d(x,\xi_n)-d(w,\xi_n),
\]
where $\xi=[(\xi_n)]\in\partial\Gamma$

The following properties are known.

\begin{lemma}\label{lem:strong-cocycle-holder}
If $d$ is strongly hyperbolic, then:
\begin{enumerate}
\item the limit defining $\beta_w$ exists and satisfies
\[
\beta_w(x,\xi)=d(w,x)-2(x|\xi)_w;
\]
and
\begin{equation}\label{lem:strong-cocycle-identity}
\beta_w(xy,\xi)
=
\beta_w(y,x^{-1}\xi)+\beta_w(xw,\xi). 
\end{equation}
\label{item:strong-cocycle-identities}
\item for every $w,x\in\Gamma$, the map
\[
\xi\mapsto \beta_w(x,\xi)
\]
is H\"older continuous with the same H\"older constant.
\label{item:strong-cocycle-holder}
\end{enumerate}
\end{lemma}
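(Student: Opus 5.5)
We prove both items from the $(A_0,B_0,C_0)$ form of strong hyperbolicity, which is available by Proposition~\ref{prop:nica_spakula}. Strong hyperbolicity makes the Gromov product $(x|y)_w$ extend continuously to the boundary. The plan is therefore: first show the limit exists, then deduce the formula and the cocycle identity, then prove the H\"older estimate.

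\emph{Existence of the limit.} Take a sequence $\xi_n\to\xi$ and write
\[
d(x,\xi_n)-d(w,\xi_n)=d(w,x)-2(x|\xi_n)_w .
\]
So it suffices to show that $(x|\xi_n)_w$ is Cauchy. Apply the $(A_0,B_0,C_0)$ condition to the quadruple $(x,\xi_n,w,\xi_m)$. Once $(\xi_n|\xi_m)_w$ is large, the quantity $d(x,\xi_n)+d(w,\xi_m)-d(x,\xi_m)-d(w,\xi_n)$ becomes exponentially small in $(\xi_n|\xi_m)_w-(x|\xi_n)_w$, up to a fixed constant. This is the standard Nica--\v{S}pakula estimate: the two "smaller" pairings differ by at most $B_0e^{-C_0A}$. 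The Cauchy property follows, and independence from the chosen sequence follows by interleaving two sequences. This gives $\beta_w(x,\xi)=d(w,x)-2(x|\xi)_w$, where the extended product is now an honest limit.

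\emph{Cocycle identity.} This is telescoping using left-invariance of $d$ on $\Gamma$ together with the equality $x^{-1}\xi=[(x^{-1}\xi_n)]$. Concretely,
\[
d(xy,\xi_n)-d(w,\xi_n)=\bigl[d(y,x^{-1}\xi_n)-d(x^{-1}w,x^{-1}\xi_n)\bigr]+\bigl[d(w,\xi_n)\text{-type terms}\bigr].
\]
One groups the terms so that each bracket converges to the corresponding Busemann function. If the basepoint bookkeeping in \eqref{lem:strong-cocycle-identity} (with $xw$ appearing) does not match exactly, I would use the general identity $\beta_w(x,\xi)=\beta_{w'}(x,\xi)-\beta_{w'}(w,\xi)$, which holds because the limits exist. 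With it one rewrites everything relative to a single basepoint and checks the identity.

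\emph{H\"older continuity.} Fix the visual metric $\rho_w(\xi,\eta)=e^{-\epsilon(\xi|\eta)_w}$. Strong hyperbolicity makes this a genuine metric on $\partial\Gamma$; this is Nica--\v{S}pakula's theorem, obtained from \eqref{eq:strong_hyp_epsilon} passed to the boundary. For $\xi,\eta$ we have
\[
|\beta_w(x,\xi)-\beta_w(x,\eta)|=2|(x|\xi)_w-(x|\eta)_w| .
\]
Apply the $(A_0,B_0,C_0)$ inequality to the quadruple $(x,\xi_n,w,\eta_n)$ and pass to the limit. This bounds the difference by $B_0e^{-C_0((\xi|\eta)_w-(x|\xi)_w)}\lesssim e^{C_0 d(w,x)}\rho_w(\xi,\eta)^{C_0/\epsilon}$ whenever $(\xi|\eta)_w$ exceeds $A_0+d(w,x)$. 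In the complementary range, $\rho_w(\xi,\eta)$ is bounded below, while $|\beta_w|\le d(w,x)$ is bounded, so the estimate holds trivially after enlarging the constant. The result is H\"older continuity with exponent $C_0/\epsilon$, which is uniform in $x$ and $w$. This is what "the same H\"older constant" should mean: the multiplicative constant depends on $d(w,x)$, but the exponent does not.

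\emph{Main obstacle.} The delicate step is the limiting argument in the H\"older estimate. One has to check that the exponential bound survives passage to boundary points uniformly, using that Gromov products converge exactly under strong hyperbolicity. Alternatively, one could cite~\cite[Example~4.12]{CMGR26:GreenMetrics} or Nica--\v{S}pakula directly for this step.
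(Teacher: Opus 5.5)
Your proof is correct. It differs from the paper mainly in that the paper does not prove this lemma at all: it cites the discussion after \cite[Definition~2.1]{CT24:Invariant} for item (1) and \cite[Example~4.12]{CMGR26:GreenMetrics} for item (2). You instead give a self-contained argument from the $(A_0,B_0,C_0)$ form of Proposition~\ref{prop:nica_spakula}, and the core computation is right. For the points $x,w,\xi_n,\xi_m$ the smallest pairing is $d(x,w)+d(\xi_n,\xi_m)$. Its gaps to the other two pairings are $2[(\xi_n|\xi_m)_w-(x|\xi_n)_w]$ and $2[(\xi_n|\xi_m)_w-(x|\xi_m)_w]$, both at least $2[(\xi_n|\xi_m)_w-d(w,x)]$. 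Applying the condition twice (once with $\xi_n,\xi_m$ swapped) therefore gives $|(x|\xi_n)_w-(x|\xi_m)_w|\le\tfrac{B_0}{2}e^{-2C_0((\xi_n|\xi_m)_w-d(w,x))}$.

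Three small repairs are needed.
\begin{enumerate}
\item It is the two \emph{larger} distance sums (equivalently, the two smaller Gromov products) that are exponentially close, not the two smaller pairings.
\item In the paper's convention the tested quantity is $d(x,z)+d(y,w)-d(x,y)-d(z,w)$. The ordering $(x,\xi_n,w,\xi_m)$ puts the \emph{smallest} sum in that slot, so reorder it, e.g.\ as $(x,w,\xi_m,\xi_n)$.
\item The cocycle identity needs no change of basepoint. Left-invariance gives exactly
\[
d(xy,\xi_n)-d(w,\xi_n)=\bigl[d(y,x^{-1}\xi_n)-d(w,x^{-1}\xi_n)\bigr]+\bigl[d(xw,\xi_n)-d(w,\xi_n)\bigr],
\]
and the two brackets converge to $\beta_w(y,x^{-1}\xi)$ and $\beta_w(xw,\xi)$.
\end{enumerate}

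There is a shorter route that avoids both the $(A_0,B_0,C_0)$ conversion and your case split. Rewritten with Gromov products based at $w$, inequality \eqref{eq:strong_hyp_epsilon} is precisely the triangle inequality
\[
e^{-\epsilon(x|z)_w}\le e^{-\epsilon(x|y)_w}+e^{-\epsilon(y|z)_w}.
\]
Hence $|e^{-\epsilon(x|\xi_n)_w}-e^{-\epsilon(x|\xi_m)_w}|\le e^{-\epsilon(\xi_n|\xi_m)_w}\to0$. This gives item (1) at once, since these values lie in $[e^{-\epsilon d(w,x)},1]$. Passing to the limit, $|e^{-\epsilon(x|\xi)_w}-e^{-\epsilon(x|\eta)_w}|\le\rho_w(\xi,\eta)$, and taking logarithms yields
\[
|\beta_w(x,\xi)-\beta_w(x,\eta)|\le \tfrac{2}{\epsilon}\,e^{\epsilon d(w,x)}\,\rho_w(\xi,\eta).
\]
So $\beta_w(x,\cdot)$ is actually Lipschitz for $\rho_w$: the exponent is uniform and only the multiplicative constant depends on $d(w,x)$, which matches your reading of ``same H\"older constant''.

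Your route has the merit of working directly from the $(A_0,B_0,C_0)$ formulation used elsewhere in the paper. The exponential route is shorter and gives the optimal exponent.

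In either version, the paper's definition requires H\"older continuity with respect to \emph{any} visual metric on $\partial\Gamma$. To cover this, add the standard fact that visual metrics of quasi-isometric hyperbolic metrics on $\Gamma$ are H\"older equivalent.
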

\begin{proof}
For the first item see the discussion after~\cite[Definition~2.1]{CT24:Invariant}; for the second one, see~\cite[Example~4.12]{CMGR26:GreenMetrics}.
\end{proof}

A \emph{H\"older cocycle} is a map $c \colon \Gamma \times \partial \Gamma \to \mathbb{R}$ satisfying
\[
c(gh,\xi)=c(g,h\xi) + c(h,\xi)
\]
for every $\xi \in \partial\Gamma$, and every $g,h \in \Gamma$, and such that there exists a uniform $\alpha \in (0,1)$ so that for every $g \in \Gamma$, the functions $c(g, \cdot) \colon \partial \Gamma \to \mathbb{R}$ is $\alpha$-H\"older with respect to any visual metric on $\partial \Gamma$.

The Busemann cocycle $\beta$ associated to a strongly hyperbolic metric $d$ induces a H\"older cocycle.
\begin{lemma}
Given a strongly hyperbolic metric $d$ with Busemann cocycle $\beta$, the map
\[
c_d(g,\xi) \coloneqq \beta_0(g^{-1},\xi)
\]
is a H\"older cocycle.
\end{lemma}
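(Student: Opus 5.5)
We need to verify two things for $c_d(g,\xi)\coloneqq\beta_0(g^{-1},\xi)$: the cocycle identity $c_d(gh,\xi)=c_d(g,h\xi)+c_d(h,\xi)$, and uniform H\"older continuity in $\xi$. The H\"older part is immediate from Lemma~\ref{lem:strong-cocycle-holder}\eqref{item:strong-cocycle-holder}. Applied with $w=0$ and $x=g^{-1}$, it shows that $\xi\mapsto\beta_0(g^{-1},\xi)$ is H\"older with an exponent $\alpha$ independent of $g$. So the main work is the algebraic identity.

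For the cocycle identity I would not rely on the formula \eqref{lem:strong-cocycle-identity} as printed, since its indexing looks off. Instead I would derive the identity from the limit definition in Lemma~\ref{lem:strong-cocycle-holder}\eqref{item:strong-cocycle-identities}, together with left-invariance of $d$ on $\Gamma$. Write $\xi=[(\xi_n)]$. Then
\[
c_d(gh,\xi)=\lim_n\bigl(d(h^{-1}g^{-1},\xi_n)-d(0,\xi_n)\bigr).
\]
Add and subtract $d(h^{-1},\xi_n)$ inside the limit. By left-invariance,
\[
d(h^{-1}g^{-1},\xi_n)-d(h^{-1},\xi_n)=d(g^{-1},h\xi_n)-d(0,h\xi_n).
\]
The action of $\Gamma$ on $\Gamma\cup\partial\Gamma$ extends continuously, so $(h\xi_n)$ represents $h\xi$. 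Because the limit defining $\beta_0$ exists and is independent of the representing sequence (this is what part~(1) of the lemma gives, via the closed formula $d(0,x)-2(x|\xi)_0$), the first difference converges to $\beta_0(g^{-1},h\xi)=c_d(g,h\xi)$. The remaining difference $d(h^{-1},\xi_n)-d(0,\xi_n)$ converges to $\beta_0(h^{-1},\xi)=c_d(h,\xi)$. Both limits exist, so the limit of the sum splits, giving the identity.

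The one delicate point, which I expect to be the main obstacle, is the independence of the Busemann limit from the representative sequence. In a merely Gromov hyperbolic space the limit is defined only up to $O(\delta)$, and a genuine cocycle requires an exact limit. Strong hyperbolicity is exactly what supplies this: the $(A_0,B_0,C_0)$ form from Proposition~\ref{prop:nica_spakula} makes the relevant four-point defect decay exponentially, so $d(x,\xi_n)-d(0,\xi_n)$ is Cauchy along any sequence converging to $\xi$. Two representatives of $\xi$ can be interleaved into a single such sequence, so they give the same limit. I would cite this from part~(1) of the lemma (i.e.\ \cite{CT24:Invariant}) rather than reprove it.

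Finally, for the H\"older condition to be meaningful, the $\alpha$ must be taken with respect to a visual metric on $\partial\Gamma$. Since any two visual metrics are H\"older equivalent, this changes only the exponent, and the change is uniform in $g$, so the uniformity is preserved.
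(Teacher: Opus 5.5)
Your proof is correct and is essentially the paper's argument: the paper gets the cocycle relation by applying the identity in Lemma~\ref{lem:strong-cocycle-holder}(1) with $w=o$, $x=h^{-1}$, $y=g^{-1}$ (so $xw=h^{-1}$), which is exactly your add-and-subtract computation via left-invariance, and it gets H\"older continuity from item (2), just as you do. That printed identity is not mis-indexed, so you could have cited it directly: unwinding the limits with left-invariance gives $\beta_w(y,x^{-1}\xi)+\beta_w(xw,\xi)=\lim_n\bigl(d(xy,\xi_n)-d(xw,\xi_n)\bigr)+\lim_n\bigl(d(xw,\xi_n)-d(w,\xi_n)\bigr)=\beta_w(xy,\xi)$ for every $w$.
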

\begin{proof}
We have
\[
c_d(g h, \xi) = \beta_o(h^{-1}g^{-1}, \xi) = \beta_o(g^{-1}, h \xi) + \beta_o(h^{-1},\xi) = c_d(g, h \xi) + c_d(h, \xi)
\]
where we used the first item of Lemma~\ref{lem:strong-cocycle-holder}.
Therefore, $c_d$ satisfies the cocycle relation. Finally, the second item of~\ref{lem:strong-cocycle-holder} finishes the proof.
\end{proof}
\section{Equivalent definitions of strong hyperbolicity for currents}
\label{sec:stronghyp}
In this section, we will consider five notions of strong hyperbolicity and show that for pseudometrics dual to geodesic currents, all of these conditions are equivalent. We abbreviate strongly hyperbolic by \emph{s.h.}.

\begin{theorem}
Let $\mu$ be a filling geodesic current on $X$.
The following statements are equivalent:
\[
\begin{array}{ll}
(1)~\emph{$\epsilon$-s.h.\ of $d_\mu^X$}
& (5)~\emph{$(A_0,B_0,C_0)$-s.h.\ of $d_\mu^X$} \\[0.3em]

(2)~\emph{$\epsilon$-s.h.\ with transversals}
& (6)~\emph{$(A_0,B_0,C_0)$-s.h.\ with transversals} \\[0.3em]

(3)~\emph{$\epsilon$-s.h.\ with boxes}
& (7)~\emph{$(A_0,B_0,C_0)$-s.h.\ with boxes} \\[0.3em]

(4)~\emph{$\epsilon$-s.h.\ with crossing pairs}
& (8)~\emph{$(A_0,B_0,C_0)$-s.h.\ with crossing pairs}
\end{array}
\]
\label{thm:stronghyp_equivalent}
\end{theorem}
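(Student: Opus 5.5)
The proof turns the four-point expressions in $d_\mu$ into $\mu$-measures of sets of geodesics, and then reduces from arbitrary points to transversals, to boxes, and finally to crossing pairs. The translation rests on the straightness of $d_\mu$ (Proposition~\ref{item:dmu_straight}) and on the definition $d_\mu(x,y)=\tfrac12\mu(G[x,y))+\tfrac12\mu(G(x,y])$. Take four points $x,y,z,w\in\wt X$ and the geodesic quadrilateral they span. Each bi-infinite geodesic crosses the boundary of this quadrilateral either $0$ or $2$ times, up to boundary effects that the half-open averaging absorbs. Counting crossings gives
\[
d(x,z)+d(y,w)-d(x,y)-d(z,w)=\mu(\text{geodesics separating }\{x,w\}\text{ from }\{y,z\}),
\]
with the analogous identity for the other pairing. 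Here a geodesic "separates" two pairs when it crosses both sides $[x,y]$ and $[z,w]$; one can check this is the same as separating $\{x,w\}$ from $\{y,z\}$. So both Gromov-type quantities in the s.h.\ inequalities are $\mu$-masses of the two sets of geodesics splitting the four points two-two. Moreover, the $\epsilon$-inequality \eqref{eq:strong_hyp_epsilon}, after dividing by its right-hand side, becomes
\[
1\le e^{-\frac\epsilon2 \mu(P_1)}+e^{-\frac\epsilon2 \mu(P_2)},
\]
where $P_1,P_2$ are the two "cross" sets. The $(A_0,B_0,C_0)$ condition becomes $\mu(P_1)>A\Rightarrow \mu(P_2)<B_0e^{-C_0A}$.

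I then define the intermediate notions along this chain. In (2)/(6) the sets $P_i$ are defined through two disjoint geodesic transversal segments; in (3)/(7) they are a box $B=I\times J$ and $B^\perp=I^c\times J^c$; in (4)/(8) they come from pairs of crossing geodesics. The implications $(1)\Rightarrow(2)\Rightarrow(3)\Rightarrow(4)$ go as follows. The first step is just the computation above. For the second, a box with endpoints $a,b,c,d$ on $S^1$ is an increasing union of sets $P_1$ coming from quadrilaterals whose vertices tend to $a,b,c,d$, and the same holds for $P_2\to B^\perp$. Monotone convergence of measures then lets us pass to the limit in both inequalities; the boundary geodesics may carry atoms, so I would take open or closed boxes consistently. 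The third step uses the fact that a crossing pair of geodesics determines four endpoints and hence a box. The reverse implications use monotonicity. Given arbitrary $x,y,z,w$, we have $P_1(x,y,z,w)\subseteq B$ and $P_2\subseteq B^\perp$ for a box built from the geodesic extensions of the sides. Since the function $(s,t)\mapsto e^{-s}+e^{-t}$ is decreasing, the box inequality implies the point inequality directly. For the $(A_0,B_0,C_0)$ versions the implication needs a small comparison between $\mu(P_1)$ and $\mu(B)$. Here I would use Gromov hyperbolicity (Proposition~\ref{item:dmu_hyperbolic}) and the fact that $\mu$ is filling, so that $d_\mu$ is quasi-isometric to $\wt X$ (Proposition~\ref{item:dmu_qi}). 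These let me choose approximating points and control the error by constants, which are then absorbed into $B_0$ and $A_0$.

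The horizontal equivalences $(k)\Leftrightarrow(k+4)$ follow by rerunning Proposition~\ref{prop:nica_spakula} inside each model. Its proof uses only the two quantities $\mu(P_1),\mu(P_2)$ and the hyperbolicity constant, all of which are available in every model. Alternatively, it is enough to prove $(1)\Leftrightarrow(5)$ by Proposition~\ref{prop:nica_spakula} and close the cycle through the vertical equivalences.

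The main obstacle is the crossing-count identity in the presence of atoms and degenerate configurations, such as geodesics through vertices or the points lying on one geodesic. The definition of $d_\mu$ averages the two half-open intervals precisely so that a geodesic through a vertex contributes $\tfrac12$ on each side, and I expect a careful case check to make these contributions cancel. A second possible issue is that the point-to-box reduction for the $(A_0,B_0,C_0)$ conditions loses constants non-uniformly. If so, I would instead route those implications through the $\epsilon$-versions using Proposition~\ref{prop:nica_spakula}.
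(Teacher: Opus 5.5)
Your proposal has a genuine gap at items (4) and (8). In the paper, ``$\epsilon$-s.h.\ with crossing pairs'' is a condition on the marked length spectrum (Lemma~\ref{lem:stronghyp_cross}): for all $a,b\in\pi_1(X)$ whose axes $\ora{a},\ora{b}$ cross,
\[
1\le e^{-\epsilon(\ell_\mu(a)+\ell_\mu(b)-\ell_\mu(ab))}+e^{-\epsilon(\ell_\mu(a)+\ell_\mu(b)-\ell_\mu(ab^{-1}))}.
\]
Item (8) is the corresponding $(A_0,B_0,C_0)$ implication (Lemma~\ref{lem:stronghyp_abc_cross}).

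You read (4) as the box inequality for boxes cut out by two crossing geodesics. Any four cyclically ordered points of $\partial\wt{X}$ are the endpoints of two crossing geodesics, so that reading is just (3) again, and your chain never touches $\ell_\mu$. But linking boxes to the length spectrum is the whole content of the fourth row; it is what makes strong hyperbolicity a length-spectrum property (Corollary~\ref{cor:stronghyp_roughsimilar}). It needs two ingredients your proposal lacks.
\begin{itemize}
\item \emph{Into (4).} You must express $\ell_\mu(a)+\ell_\mu(b)-\ell_\mu(ab)$ and $\ell_\mu(a)+\ell_\mu(b)-\ell_\mu(ab^{-1})$ through the $\mu$-masses of a pair of opposite double transversals built from points on the two axes. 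This uses straightness of $d_\mu$, $\ell_{d_\mu}=i(\mu,\cdot)$, and $\pi_1$-invariance. The paper imports this step from \cite[Lemma~8.13]{MGT25:Intersections}.
\item \emph{Out of (4).} You need an approximation argument. For instance, replace $a,b$ by $a^n,b^n$, so that $\ell_\mu(a^n)+\ell_\mu(b^n)-\ell_\mu(a^nb^{\pm n})$ converges, up to boundary terms, to twice the masses of the box spanned by the two axes and of its opposite. Then use density of endpoint quadruples of crossing axes, together with monotone continuity of $\mu$, to reach an arbitrary box. The paper imports this from \cite[Lemma~8.14]{MGT25:Intersections}.
\end{itemize}
Your ``reverse implications'' only give $(3)\Rightarrow(1)$, so nothing comes back from (4) or (8).

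There are also some smaller issues.
\begin{itemize}
\item \emph{The crossing-count identity.} For $x,y,z,w$ in convex position and in this cyclic order, counting crossings gives $d_\mu(x,z)+d_\mu(y,w)-d_\mu(x,y)-d_\mu(z,w)=2\mu(\{\text{geodesics crossing }[y,z]\text{ and }[w,x]\})$. So the pairing is $\{x,y\}\,|\,\{z,w\}$, not $\{x,w\}\,|\,\{y,z\}$, and there is a factor $2$; both are harmless after relabeling and rescaling $\epsilon$.
\item \emph{Non-convex quadruples and the choice of box.} For non-convex quadruples the quantity is a difference of two such masses, so only an upper bound survives. That suffices for the box-to-point direction, provided the box comes from extending the diagonals $[w,y]$ and $[x,z]$ as in Definition~\ref{def:box_extension}. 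Extending the sides need not give a box with one cross set in $B$ and the other in $B^\perp$.
\item \emph{Atoms through vertices.} These contributions do not cancel. An atom of mass $m$ through $x$ that leaves the quadrilateral through $(y,z)$ adds exactly $m$ to the quantity above, i.e.\ it has weight $\tfrac12$ in $2\mu(\cdot)$. Any half-open transversal counts it with weight $0$ or $1$. Sandwiching between open and closed sets would still rescue your monotonicity arguments. The paper's route is cleaner: a filling s.h.\ current is strongly bolic, hence non-atomic (Theorem~\ref{thm:bolic}, Corollary~\ref{cor:fillingsh_noatoms}). So $\mu(G[x])=0$ for all $x$ and the identity is exact. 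In the converse directions, the box inequality itself rules out atoms of a filling current by the same argument.
\item \emph{The $(A_0,B_0,C_0)$ transfer.} Moving these conditions between points and boxes needs no hyperbolicity or quasi-isometry comparison. Monotonicity gives one direction. The nested transversals of Lemma~\ref{lem:nested_transversals}, with continuity from below, give the other.
\end{itemize}
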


The conditions in the first line are the two equivalent characterizations of strong hyperbolicity by Nica--\v{S}pakula~\cite{NicaSpakula2016} applied to the pseudometric $d_\mu^X$ (Proposition~\ref{prop:nica_spakula}).

The conditions in the second line are, in some sense, a restatement of the definition of $\epsilon$-strong hyperbolicity and $(A_0,B_0,C_0)$-strong hyperbolicity for  pseudometrics dual to geodesic currents (Lemma~\ref{lem:sh_transversal_vs_distances}, \ref{lem:eps_sh_trans}, \ref{lem:sh_abc_trans}). They feature \emph{(double) transversals}, involving four points in the interior $\wt{X}$. The conditions in the third line use boxes of geodesics, and hence only depend on boundary data (Lemma~\ref{lem:sh_boxes}, \ref{lem:abc_sh_boxes}). Finally, the conditions in the fourth line are expressed in terms of marked length spectrum and show, in particular, that for pseudometrics arising from geodesic currents, strong hyperbolicity is an invariant of the rough isometry class among pseudometrics dual to geodesic currents (Lemma~\ref{lem:stronghyp_abc_cross}, Remark~\ref{rmk:roughisometry}).

We begin by introducing the notion of double transversals. The proof of \cref{thm:stronghyp_equivalent} is then obtained through a sequence of results ranging from \cref{lem:nested_transversals} to \cref{lem:stronghyp_abc_cross}.

\begin{definition}[double transversals]
Given two geodesic segments $I, J$ in $\wt{X}$, let $G(I,J)$ denote the set of geodesics intersecting both $I$ and $J$ transversely. 
Specifically, consider four points $x,y,w,z$ in $\wt{X}$. We define the \emph{double transversal} $G_{x,y,z,w} \coloneqq G([w,x), [y,z))$.
In general, because of atoms, one has to be careful about including interval endpoints. However, we will show that a filling strongly hyperbolic current $\mu$ has no atoms (Corollary~\ref{cor:fillingsh_noatoms}), and hence, in the setting we will be mostly interested about, the specific choice of endpoints will be irrelevant.

In what follows, we use a setup similar to that of~\cite[Lemma~8.15]{MGT25:Intersections}.
Compare Figure~\ref{fig:doubletransversal}.

  \begin{figure}
\centering{
\fontsize{9pt}{8pt}\selectfont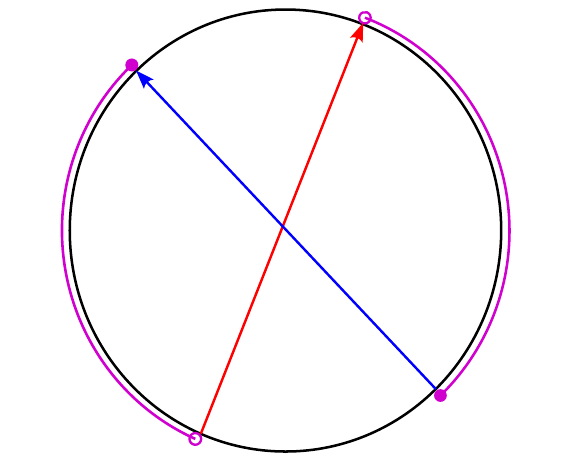}
\caption{Double transversal $G$, nested transversals $G_n$ and box $B_G$.}
\label{fig:doubletransversal}
\end{figure}

For a given choice $G=G_{x,y,z,w}$, we define $G^\perp \coloneqq G([x,y), [z,w))$.
Every transversal $G$ and its opposite transversal $G^{\perp}$ determine two \emph{box extensions}, i.e., two boxes of geodesics $B_G$ and $B_{G^{\perp}}$ obtained by extending the geodesic segments $(w,y)$ (resp. $(x,z)$) to bi-infinite geodesics $l$ (resp. $m$).
We have $G \subset B_G$, $G^{\perp}\subset B_{G^\perp}$, and
\[
(B_G)^{\perp}=B_{G^{\perp}}.
\]
\label{def:box_extension}
\end{definition}

Let $\partial \wt{X}$ denote the Gromov boundary at infinity of $\wt{X}$, which is homeomorphic to $S^1$. Given a double transversal $G$ we can construct two families of nested double transversals $G_n$ (resp. $G_n^{\perp}$) obtained by takings sequences $(y_n), (w_n)$ (resp. $(x_n),(z_n)$) of points $y_n , w_n \in l$ (resp. $x_n, z_n \in m$), so that $w_n \to l^+ \in \partial \wt{X}$ as $n\to \infty$, $y_n \to l^-\in \partial \wt{X}$ as $n\to \infty$ (resp. $z_n \to m^+ \in \partial \wt{X}$ as $n\to \infty$, $x_n \to m^- \in \partial \wt{X}$ as $n\to \infty$), and:
\begin{itemize} \item $w_0\coloneqq w$ and $y_0\coloneqq y$.
\item $w_n < w_{n+1}$ in the natural order in $l$ (resp. $z_n < z_{n+1}$ in the natural order in $m$).
\item $y_n > y_{n+1}$ in the natural order in $l$ (resp. $x_n > x_{n+1}$ in the natural order in $m$).
\end{itemize}
Define $G_n \coloneqq G_{x_n,y_n,z_n,w_n}$ and $G_n^\perp \coloneqq (G_n)^\perp$.
We then have the following.

\begin{lemma} For every geodesic current $\mu$, and for every $n \in \mathbb{N}$, we have
\begin{itemize}
\item $G_n \subset G_{n+1}$, 
\item $B_G=\cup_{n=1}^\infty G_n$ and $G_n^\perp \subset G_{n+1}^\perp$, 
\item $B_G^\perp=\cup_{n=1}^\infty G_n^\perp$.
\item $\lim_n \mu(G_n) = \mu(B_G)$, $\lim_n \mu(G_n^\perp) = \mu(B^\perp_G)$.
\end{itemize}
\label{lem:nested_transversals}
\end{lemma}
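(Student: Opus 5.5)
The plan is to reduce the statement to elementary set-theoretic monotonicity, followed by continuity of measures from below. First I fix the geometry. The points $w,y$ lie on the bi-infinite geodesic $l$ and the points $x,z$ on $m$. A geodesic belongs to $G_n=G([w_n,x_n),[y_n,z_n))$ exactly when it crosses both of these segments transversely. I will phrase membership in terms of endpoints on $\partial\wt{X}\cong S^1$. Crossing a segment $[a,b)$ transversely means the two endpoints of the geodesic separate the endpoints of the segment's extension, together with a condition that the crossing point lies in the segment. The boxes $B_G$ and $B_{G^\perp}$ are defined by pairs of arcs: those cut out on $S^1$ by the endpoints $l^\pm,m^\pm$, namely the limits of the moving vertices.

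\textbf{Step 1 (monotonicity).} Because $w_n<w_{n+1}$ and $y_n>y_{n+1}$ along $l$, and similarly for $x_n,z_n$ along $m$, the quadrilateral with vertices $w_n,x_n,y_n,z_n$ is contained in the next one. Each side of the $n$-th quadrilateral is then "shadowed" by the corresponding side of the $(n+1)$-st. So any geodesic entering through $[w_n,x_n)$ and exiting through $[y_n,z_n)$ must also cross the corresponding larger sides, which gives $G_n\subset G_{n+1}$. I will verify this by the endpoint description: the relevant arcs of $S^1$ determined by the geodesics $(w_n x_n)$ and $(y_n z_n)$ grow with $n$. The same argument applies verbatim to $G_n^\perp$.

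\textbf{Step 2 (exhaustion).} Let $\gamma\in B_G$, so its endpoints lie in the open arcs $I$ and $J$ determined by $l^\pm,m^\pm$. As $n\to\infty$, the geodesics through $w_n,x_n$ converge to the bi-infinite geodesic with endpoints $l^+,m^-$, and similarly for the other side. Hence the arcs they cut off on $S^1$ increase to the open arcs $I$ and $J$. Since $\gamma$'s endpoints lie in the open arcs, they lie in the $n$-th arcs for $n$ large, so $\gamma$ crosses both segments for $n\gg0$. This gives $\gamma\in G_n$ for large $n$. Conversely, each $G_n\subset B_G$, since crossing the two sides forces the endpoints into $I\times J$. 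Together these give $B_G=\bigcup_n G_n$, and likewise $B_G^\perp=\bigcup_n G_n^\perp$, using $(B_G)^\perp=B_{G^\perp}$.

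\textbf{Step 3 (limits).} Since $\mu$ is a positive Borel measure and the $G_n$ form an increasing sequence of Borel sets, continuity from below gives $\lim_n\mu(G_n)=\mu(B_G)$, and likewise for $G_n^\perp$. The limits may be infinite, but that is harmless; local finiteness only ensures finiteness for $\mu(G_n)$.

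The main obstacle is Step 2's boundary bookkeeping: the half-open conventions, and geodesics passing through a vertex or sharing an endpoint with $l$ or $m$. Because the boxes are open products and the vertices move strictly toward $l^\pm,m^\pm$, such geodesics are either excluded from both sides or eventually included. I would handle this by working with open arcs and checking that the half-open segment conventions in $G_n$ are compatible with those of $B_G$ for all large $n$.
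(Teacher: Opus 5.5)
Your overall plan is the paper's. The paper simply calls the set-theoretic items straightforward hyperbolic geometry and gets (4) from continuity of measures. Step 3 is fine, and in fact $\mu(B_G)<\infty$ since $\overline{I}\times\overline{J}$ is compact in $\mathcal{G}(\wt{X})$.

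The gap is in the geometry you propose for Steps 1 and 2. Write $e_n$ for the bi-infinite extension of $[w_n,x_n]$ and $f_n$ for that of $[y_n,z_n]$.
\begin{itemize}
\item The arcs cut off by $e_n$ do not increase to $I$. Since $e_n$ crosses $l$ at $w_n$ and $m$ at $x_n$, its endpoints lie in $(m^+,l^+)$ and $(m^-,l^-)$. So the arc on the side of $e_n$ away from the quadrilateral strictly contains $\overline{I}$ for every $n$, and converges to it from outside.
\item More seriously, having endpoints beyond $e_n$ and beyond $f_n$ only says that $\gamma$ crosses these full geodesics, not the segments. Take a geodesic with endpoints $a\in I$, $b\in J$ and $a$ very close to $l^+$. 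It crosses every $e_n$ and $f_n$, but it meets $l$ very close to $l^+$, hence beyond $w_0$, so it is not in $G_0$.
\end{itemize}
So the inference ``endpoints in the $n$-th arcs, hence $\gamma$ crosses both segments'' is invalid, and the endpoint description cannot verify Step 1 either. Nor does $Q_n\subset Q_{n+1}$ by itself stop a geodesic from leaving $Q_{n+1}$ through an adjacent side.

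The missing idea is to use the diagonals $l,m$ instead of the extended sides.
\begin{itemize}
\item Suppose $\gamma$ crosses the open sides $(w_n,x_n)$ and $(y_n,z_n)$. It meets $\partial Q_n$ only at those two points, so it separates $\{w_n,z_n\}$ from $\{x_n,y_n\}$. Hence it meets $l$ exactly once, inside $(y_n,w_n)$, and $m$ exactly once, inside $(x_n,z_n)$.
\item Moving $w_n,y_n$ outward along $l$ and $x_n,z_n$ outward along $m$ therefore keeps every vertex on its side of $\gamma$. This gives $G_n\subset G_{n+1}$, and the same separation puts the endpoints of $\gamma$ in $I\times J$.
\item Conversely, a $\gamma$ with endpoints in $I\times J$ separates $\{l^+,m^+\}$ from $\{l^-,m^-\}$ and meets $l,m$ at fixed points $p,q$. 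Once $p\in(y_n,w_n)$ and $q\in(x_n,z_n)$, it separates $\{w_n,z_n\}$ from $\{x_n,y_n\}$, so $\gamma\in G_n$.
\end{itemize}

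Your final claim about boundary geodesics is false as stated. The geodesic $l$ passes through $w_n$ and $y_n$, so it meets $[w_n,x_n)$ and $[y_n,z_n)$ transversally at their closed ends. Hence $l$ lies in every $G_n$, yet $l\notin I\times J$; similarly $m\in G_n^\perp$ for all $n$. With open boxes this gives $\lim_n\mu(G_n)\ge\mu(B_G)+\mu(\{l\})$. So (4) fails whenever $\mu$ has an atom on $l$, for example $\mu=\delta_c$ with $l$ a lift of $c$. This does not affect the paper's uses, which concern non-atomic currents, and the paper explicitly sets endpoint conventions aside. But the caveat should be stated, not argued away.
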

\begin{proof}
Items (1) - (3) are a straightforward hyperbolic geometry.
Item (4) follows from the previous items and semi-continuity of measures.
\end{proof}

The following result will be useful in rephrasing strong bolicity and strong hyperbolicity for pseudometrics arising from geodesic currents.

\begin{lemma}
    Let $\mu$ be a geodesic current, and $x,y,z,w \in \wt{X}$ so that $\mu(G[x])=\mu(G[y])=\mu(G[z])=\mu(G[w])=0$.
    Then 
    \[
\mu(G) =\frac{\epsilon}{2}(d_\mu(x,z) + d_\mu(y,w)- d_\mu(x,y) - d_\mu(z,w))
\]
and
\[
\mu(G^\perp) = \frac{\epsilon}{2}(d_\mu(x,z) + d_\mu(y,w)- d_\mu(x,w) - d_\mu(y,z)).
\]
\label{lem:transversal_vs_distances}
\end{lemma}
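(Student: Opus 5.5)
The plan is to integrate a pointwise crossing count against $\mu$ over $\mathcal{G}(\wt{X})$. The hypothesis $\mu(G[x])=\mu(G[y])=\mu(G[z])=\mu(G[w])=0$ removes every boundary effect. First, for each pair $a,b\in\{x,y,z,w\}$ the two half-open sets $G[a,b)$ and $G(a,b]$ in Definition~\ref{def:current_pseudo} differ from $G(a,b)$ only by geodesics through $a$ or $b$. Hence $d_\mu(a,b)=\mu(\mathcal{G}_{a|b})$, where $\mathcal{G}_{a|b}$ is the set of geodesics meeting none of the four points and separating $a$ from $b$ in $\wt{X}$. Geodesics containing a whole side or diagonal also pass through a vertex, so they are negligible. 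Likewise the choice of open or closed endpoints in $G=G([w,x),[y,z))$ and $G^\perp$ does not matter. So I will work on the full-measure set $\mathcal{G}_0$ of geodesics avoiding the four points.

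Each $g\in\mathcal{G}_0$ splits $\{x,y,z,w\}$ into two sides, and a geodesic meets a geodesic segment at most once. So $g$ separates $a$ from $b$ exactly when it crosses $[a,b]$ transversely. Define
\[
F(g)=\mathbf 1_{x|z}(g)+\mathbf 1_{y|w}(g)-\mathbf 1_{x|y}(g)-\mathbf 1_{z|w}(g),
\]
so that, by the previous paragraph, $\int F\,d\mu = d_\mu(x,z)+d_\mu(y,w)-d_\mu(x,y)-d_\mu(z,w)$. (Every term is finite because $\mu$ is locally finite.) I then evaluate $F$ on the eight possible partitions:
\begin{itemize}
\item for the trivial partition and the four partitions $\{a\}\mid\text{rest}$, a direct check gives $F=0$;
\item on $\{x,w\}\mid\{y,z\}$ all four indicators equal $1$, so $F=0$;
\item on $\{x,y\}\mid\{z,w\}$ we get $F=2$;
\item on $\{x,z\}\mid\{y,w\}$ we get $F=-2$.
\end{itemize}
The geodesics realizing $\{x,y\}\mid\{z,w\}$ are exactly those crossing both $[w,x)$ and $[y,z)$, i.e.\ $G$ up to a null set. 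Hence $\int F\,d\mu=2\mu(G)-2\mu(\mathcal{G}_{\{x,z\}|\{y,w\}})$.

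The main point to check is that the last set is empty. In the setup of Definition~\ref{def:box_extension} (Figure~\ref{fig:doubletransversal}), $x,y,z,w$ are the vertices of a convex geodesic quadrilateral in this cyclic order, with diagonals $[x,z]\subset m$ and $[w,y]\subset l$ crossing. A geodesic separating $\{x,z\}$ from $\{y,w\}$ would leave the diagonal $[x,z]$ entirely on one side. It would also leave $[y,w]$ entirely on the other side, since half-planes are convex. That contradicts the fact that the two diagonals intersect. So $\int F\,d\mu=2\mu(G)$, and
\[
\mu(G)=\tfrac12\bigl(d_\mu(x,z)+d_\mu(y,w)-d_\mu(x,y)-d_\mu(z,w)\bigr).
\]

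Swapping the roles of the sides gives the $G^\perp$ formula with no further work. Apply the same computation to the function obtained by replacing $\mathbf 1_{x|y}+\mathbf 1_{z|w}$ with $\mathbf 1_{x|w}+\mathbf 1_{y|z}$. It equals $2$ exactly on the partition $\{x,w\}\mid\{y,z\}$, i.e.\ on $G^\perp=G([x,y),[z,w))$, and $0$ on the other realizable partitions. The factor written $\frac{\epsilon}{2}$ in the statement should be read as $\frac12$ (this is what the counting argument produces, i.e.\ $\epsilon=1$). For a general $\epsilon$, the identity is applied to the rescaled current $\epsilon\mu$, whose dual pseudometric is $\epsilon\, d_\mu$; this is how the identity will enter the $\epsilon$-strong hyperbolicity computations.
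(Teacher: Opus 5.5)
Your proof is correct, but it takes a different route from the paper. The paper's entire proof is a citation of \cite[Lemma~6.2]{DRMG23:Duals} combined with Definition~\ref{def:current_pseudo}; you instead reprove that lemma directly by a Crofton-type separation count. Once the geodesics through $x,y,z,w$ are discarded as null, each $d_\mu(a,b)$ is the mass of the geodesics separating $a$ from $b$. Integrating the signed sum of separation indicators over the possible splittings of $\{x,y,z,w\}$ then reduces both identities to one geometric fact: no geodesic separates $\{x,z\}$ from $\{y,w\}$.

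What your argument buys is that it shows exactly which hypothesis does the work. That hypothesis is the crossing-diagonals configuration of Definition~\ref{def:box_extension}, which the statement of the lemma leaves implicit. Write $P_{ab|cd}$ for the geodesics in your full-measure set inducing the splitting $\{a,b\}\mid\{c,d\}$. For an arbitrary quadruple your computation gives $\mu(G)=\mu(P_{xy|zw})+\mu(P_{xz|yw})$, while $\frac12\bigl(d_\mu(x,z)+d_\mu(y,w)-d_\mu(x,y)-d_\mu(z,w)\bigr)=\mu(P_{xy|zw})-\mu(P_{xz|yw})$. So the identity genuinely needs $P_{xz|yw}=\emptyset$. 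The citation is shorter and hands all the endpoint bookkeeping to the reference.

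Your computation also pins down the constant. Reading $\frac{\epsilon}{2}$ as $\frac12$ is right: it matches the introduction's characterization $1\le e^{-\mu(B)}+e^{-\mu(B^\perp)}$ of $1$-strong hyperbolicity, and the fact that Liouville currents give equality there.

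Your closing remark about general $\epsilon$ does not rescue the literal statement, though. Applying the identity to $\epsilon\mu$ gives $\epsilon\mu(G)=\frac{\epsilon}{2}(\cdots)$, not $\mu(G)=\frac{\epsilon}{2}(\cdots)$. The correct conclusion is simply that the lemma holds with the factor $\frac12$. Consequently, when it is fed into the $\epsilon$-strong hyperbolicity inequality, the exponents come out as $\epsilon\mu(G)$ and $\epsilon\mu(G^\perp)$.
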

\begin{proof}
Given $x,y,z,w$ as above, the result follows by~\cite[Lemma~6.2]{DRMG23:Duals} and Definition~\ref{def:current_pseudo}.
\end{proof}

As a consequence, we have the following.

\begin{lemma}\label{lem:sh_transversal_vs_distances}
If $\mu$ is strongly hyperbolic and filling, the equalities in Lemma~\ref{lem:transversal_vs_distances} are satisfied.
\end{lemma}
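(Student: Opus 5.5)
The equalities of Lemma~\ref{lem:transversal_vs_distances} are already established under one hypothesis: the four points $x,y,z,w$ must satisfy $\mu(G[x])=\mu(G[y])=\mu(G[z])=\mu(G[w])=0$. So it suffices to show that for a filling, strongly hyperbolic current $\mu$, every point $p\in\wt{X}$ satisfies $\mu(G[p])=0$. Since $\mu$ is locally finite, $G[p]$ can have positive measure only if it contains an atom, i.e.\ a geodesic through $p$ charged by $\mu$. The plan is therefore to show that a filling strongly hyperbolic current has no atoms. This is precisely Corollary~\ref{cor:fillingsh_noatoms}, announced in Definition~\ref{def:box_extension}.

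The argument I would give goes through strong bolicity. First, $d_\mu$ is roughly geodesic by Proposition~\ref{item:dmu_roughlygeodesic}. Together with strong hyperbolicity, Lemma~\ref{lem:sh_implies_sb} then gives that $d_\mu$ is strongly bolic. Since $\mu$ is filling, Theorem~\ref{mainthm:bolic} (Corollary~\ref{cor:fillingsb_noatoms}) says strong bolicity is equivalent to $\mu$ being non-atomic. This completes the deduction, modulo the direction ``strongly bolic $\Rightarrow$ no atoms.''

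If I had to prove that direction directly, I would argue by contradiction. Suppose $\mu$ has an atom of mass $m>0$ on a geodesic $g$. Choose $x,w$ on one side of $g$ and $y,z$ on the other, so that $g\in G^\perp$, and push the points out along $g$ as in the nested transversals of Lemma~\ref{lem:nested_transversals}. This makes $\mu(G_n)\to\mu(B_G)$. Here $B_G$ is a box containing geodesics crossing $g$, and by fillingness it has infinite or growing mass. Meanwhile $\mu(G_n^\perp)\geq m$ for every $n$. By the formulas of Lemma~\ref{lem:transversal_vs_distances}, applied at generic points (atoms are countable, so such points are dense), the first Gromov-type quantity tends to $\infty$ while the second stays $\geq m$. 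This contradicts strong bolicity.

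\textbf{Main obstacle.} The delicate step is ensuring that $\mu(B_G)=\infty$, or at least that it is unbounded along the sequence. I would get this from fillingness: the full lift of a closed geodesic crossing $g$, translated by $\pi_1$, gives infinitely many disjoint translates of a set of positive measure inside the box. Alternatively I can use the divergence $d_\mu(w_n,y_n)\to\infty$, which holds by quasi-isometry to $\wt X$ (Proposition~\ref{item:dmu_qi}).

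Once atoms are excluded, the hypothesis of Lemma~\ref{lem:transversal_vs_distances} holds for all quadruples, and the equalities follow.
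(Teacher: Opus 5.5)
Your route is the paper's: $d_\mu$ is roughly geodesic (Proposition~\ref{item:dmu_roughlygeodesic}), so it is strongly bolic by Lemma~\ref{lem:sh_implies_sb}, so it is non-atomic by Corollary~\ref{cor:fillingsb_noatoms}. This chain is Corollary~\ref{cor:fillingsh_noatoms}, and you spell it out more explicitly than the paper's one-line proof. From there one gets $\mu(G[p])=0$ for every $p$, so Lemma~\ref{lem:transversal_vs_distances} applies to all quadruples.

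The justification you should replace is the one for ``no atoms $\Rightarrow \mu(G[p])=0$.'' Local finiteness alone does not give it: a non-atomic finite measure spread along the circle $G[p]$ is locally finite and charges $G[p]$. What makes the implication true is $\pi_1(X)$-invariance, and the paper cites \cite[Proposition~4.6]{DRMG23:Duals} for exactly this step. If you want a self-contained argument, use recurrence:
\begin{itemize}
\item $\mu\otimes dt$ descends to a finite flow-invariant measure on $T^1X$.
\item By Poincar\'e recurrence, $\mu$-almost every geodesic through $p$ also passes through some $gp$ with $g\neq 1$.
\item There are only countably many such geodesics, so $\mu(G[p])>0$ forces one of them to be an atom.
\end{itemize}

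In your optional direct proof of ``strongly bolic $\Rightarrow$ no atoms,'' pushing the points along the fixed lines of Lemma~\ref{lem:nested_transversals} only yields the bounded box $B_G$, which has finite mass. The points must instead converge to the endpoints of $g$, as in the proof of Theorem~\ref{thm:bolic}, so that the transversals exhaust all geodesics crossing $g$. That set has infinite mass by fillingness and invariance under the stabilizer of $g$, which is the fix your ``main obstacle'' paragraph points toward.
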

\begin{proof}
Since $\mu$ is strongly hyperbolic and filling, by Corollary~\ref{cor:fillingsb_noatoms}, $\mu$ has no atoms. Hence, by \cite[Proposition~4.6]{DRMG23:Duals}, $\mu(G[x])=0$ for every $x \in \wt{X}$. Hence, we are under the assumptions of Lemma~\ref{lem:transversal_vs_distances}.
\end{proof}

\begin{lemma}[$\epsilon$-strong hyperbolicity with transversals]
Let $\mu$ be filling.
$d_{\mu}$ is $\epsilon$-strongly hyperbolic if and only if $\mu$ satisfies, for every transversal $G=G_{x,y,z,w}$,
\[
1 \leq e^{-\frac{\epsilon}{2} \mu(G)} + e^{-\frac{\epsilon}{2} \mu(G^{\perp})}
\]
i.e. $\mu$ satisfies \emph{$\epsilon$-strong hyperbolicity with transversals}.
\label{lem:eps_sh_trans}
\end{lemma}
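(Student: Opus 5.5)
Divide the strong hyperbolicity inequality \eqref{eq:strong_hyp_epsilon} by its left-hand side. This rewrites it in terms of two "defects":
\[
1 \le e^{-\frac{\epsilon}{2}\left(d_\mu(x,z)+d_\mu(y,w)-d_\mu(x,y)-d_\mu(z,w)\right)} + e^{-\frac{\epsilon}{2}\left(d_\mu(x,z)+d_\mu(y,w)-d_\mu(x,w)-d_\mu(y,z)\right)}.
\]
Lemma~\ref{lem:transversal_vs_distances} identifies these defects with transversal measures. I read its normalization as $\mu(G)=\tfrac12(\cdots)$, so that the $\epsilon$-scaling is carried by the exponentials as in the statement. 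Whenever the four points avoid atoms through them, each exponent is exactly $-\frac{\epsilon}{2}\mu(G)$, resp.\ $-\frac{\epsilon}{2}\mu(G^\perp)$, where $G=G_{x,y,z,w}$. The plan is therefore to prove both directions by this substitution, taking care with points $x$ for which $\mu(G[x])>0$.

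\textbf{($\Rightarrow$).} Assume $d_\mu$ is $\epsilon$-strongly hyperbolic. Since $\mu$ is filling, Lemma~\ref{lem:sh_transversal_vs_distances} (via Corollary~\ref{cor:fillingsb_noatoms} and \cite[Proposition~4.6]{DRMG23:Duals}) gives $\mu(G[p])=0$ for every $p\in\wt X$. Hence Lemma~\ref{lem:transversal_vs_distances} applies to every quadruple. Substituting into the rewritten inequality gives $1\le e^{-\frac{\epsilon}{2}\mu(G)}+e^{-\frac{\epsilon}{2}\mu(G^\perp)}$ for every transversal $G$.

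\textbf{($\Leftarrow$).} Assume the transversal inequality holds. First I show that $\mu$ has no atoms. Suppose a geodesic $g$ had $\mu(\{g\})=c>0$. Choose $x,y,z,w$ so that $g$ crosses both $[w,x)$ and $[y,z)$, and also both $[x,y)$ and $[z,w)$. This is possible: take the four points near $g$, alternating sides along it, in a thin quadrilateral whose diagonals cross close to $g$. Then $\mu(G)\ge c$ and $\mu(G^\perp)\ge c$, so the right-hand side is at most $2e^{-\epsilon c/2}$. That bound is less than $1$ once $c$ is large, but not in general.

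So instead I use the $\pi_1$-invariance. The atom projects to a closed geodesic $\gamma$, and its full lift is atomic. If $\gamma$ has a self-intersection, or more generally if two lifts cross at a point $p$, I take all four points clustered near $p$. Then $\mu(G)$ and $\mu(G^\perp)$ are each at least $c$, and iterating lifts crossing repeatedly pushes both measures up. I expect this step to be the main obstacle.

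A cleaner route may be the following. Take quadruples converging to the configuration, apply the inequality, and use Lemma~\ref{lem:nested_transversals}: letting the points run to infinity gives $1\le e^{-\frac{\epsilon}{2}\mu(B)}+e^{-\frac{\epsilon}{2}\mu(B^\perp)}$ for boxes. For a box straddling a simple atom $g$, choose a hyperbolic element $h$ whose axis crosses $g$. Boxes adapted to $h^n$ contain $n$ translates of the atom on both $B$ and $B^\perp$. This forces $1\le 2e^{-\epsilon nc/2}$, a contradiction. Filling guarantees that such an $h$ exists. Once $\mu$ has no atoms, \cite[Proposition~4.6]{DRMG23:Duals} again gives $\mu(G[p])=0$ for all $p$. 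Lemma~\ref{lem:transversal_vs_distances} then converts the transversal inequality back into \eqref{eq:strong_hyp_epsilon} for all $x,y,z,w$.

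If the atom exclusion proves delicate, the fallback is an approximation argument. Compare $d_\mu$ with the half-open-interval quantities: choose points $p'$ close to $p$ lying off the countably many atomic lift geodesics, which is possible because these form a countable union of geodesics. By continuity of the two sides of \eqref{eq:strong_hyp_epsilon} in the points, and since $d_\mu$ is defined symmetrically as an average of half-open masses, the inequality for generic quadruples should pass to all quadruples up to the jumps. Those jumps would then need to be controlled.
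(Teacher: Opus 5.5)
Your forward direction is the paper's argument. Corollary~\ref{cor:fillingsb_noatoms} rules out atoms, and Lemma~\ref{lem:transversal_vs_distances} then converts the strong hyperbolicity inequality into the transversal one. For the converse you are right that atoms must be excluded first: the paper's one-line appeal to Lemma~\ref{lem:sh_transversal_vs_distances} presupposes strong hyperbolicity at that point. However, none of your attempts actually excludes them.

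Your first attempt needs a single geodesic crossing all four segments. A geodesic meets the boundary of a convex quadrilateral at most twice, so this forces a self-crossing labelling. That lies outside the convex configurations where Lemma~\ref{lem:transversal_vs_distances} holds and to which the transversal condition refers.

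The ``cleaner route'' fails for a structural reason. Every geodesic of $B$ crosses every geodesic of $B^\perp$, whereas the translates $h^k g$ of a lift of a simple closed geodesic are pairwise disjoint. So they can never populate both $B$ and $B^\perp$, and the bound $1\le 2e^{-\epsilon nc/2}$ is never reached. It also misplaces the filling hypothesis. An $h$ whose axis crosses $g$ exists on any closed surface; what filling must supply is positive $\mu$-mass crossing the atom. Some such input is unavoidable: for $\mu=\delta_c$ with $c$ simple, one of $\mu(G),\mu(G^\perp)$ always vanishes, so the transversal inequality holds although $\mu$ is atomic.

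The missing idea is that only one side has to grow. The inequality gives $e^{-\frac{\epsilon}{2}\mu(G^\perp)}\ge 1-e^{-\frac{\epsilon}{2}\mu(G)}$, so $\mu(G^\perp)\to 0$ whenever $\mu(G)\to\infty$. It therefore suffices to find transversals $G_n$ with $\mu(G_n)\to\infty$ and $\mu(G_n^\perp)\ge c$. Here is how:
\begin{enumerate}
\item Let $\wt{\gamma}$ be an atomic lift of mass $c$ with primitive stabilizer $b$.
\item Filling gives $i(\mu,\gamma)>0$. Hence there is a small box $\beta$ of geodesics crossing $\wt{\gamma}$ with $\mu(\beta)>0$ and pairwise disjoint translates $b^i\beta$.
\item Take $x_n,y_n$ far toward $b^+$ and $z_n,w_n$ far toward $b^-$, all close to $\wt{\gamma}$, with $x_n,w_n$ on one side and $y_n,z_n$ on the other.
\item Then $\wt{\gamma}\in G_n^\perp$ and $\bigcup_{|i|\le n} b^i\beta\subset G_n$. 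So the right-hand side is at most $e^{-\frac{\epsilon}{2}(2n+1)\mu(\beta)}+e^{-\frac{\epsilon}{2}c}<1$ for large $n$.
\end{enumerate}
This is the first half of the proof of Theorem~\ref{thm:bolic}, run directly on transversal masses.

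After that your plan goes through, once you also reduce arbitrary quadruples to convex ones. If $(x,z),(y,w)$ are not the diagonals, the inequality is trivial. If one point lies inside the triangle of the other three, push it away from its left-hand partner, along the geodesic through both, past the opposite side. By straightness and the triangle inequality this only makes the inequality harder, and the new quadruple is convex.

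Finally, you read $\mu(G)=\tfrac12(\cdots)$, which is the correct normalization, but then the exponent is $-\epsilon\mu(G)$, not $-\tfrac{\epsilon}{2}\mu(G)$. The substitution therefore proves the criterion $1\le e^{-\epsilon\mu(G)}+e^{-\epsilon\mu(G^\perp)}$, which matches the box criterion in the introduction. The displayed $\tfrac{\epsilon}{2}$, like the stray factor in Lemma~\ref{lem:transversal_vs_distances}, is a normalization slip, and you should not claim it matches exactly.
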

\begin{proof} 
This follows immediately by Lemma~\ref{lem:sh_transversal_vs_distances}. \end{proof}

A similar proof as that of Lemma~\ref{lem:eps_sh_trans} shows the following.

\begin{lemma}[$(A_0,B_0,C_0)$-strong hyperbolicity with transversals]
Let $\mu$ be filling.
Then $d_{\mu}$ is $(A_0,B_0,C_0)$-strongly hyperbolic if and only if $\mu$ satisfies that there exist positive constants $A_0,B_0,C_0$ such that for every $x,y,z,w \in X$ and corresponding transversal $G=G_{x,y,z,w}$,
if
\[
2\mu(G) > A
\]
then
\[
2\mu(G^{\perp})< B_0 e^{-C_0 \cdot A}.
\] In this case, we say $\mu$ satisfies \emph{$(A_0,B_0,C_0)$-strong hyperbolicity with transversals}
\label{lem:sh_abc_trans}
\end{lemma}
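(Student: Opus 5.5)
The plan mirrors the proof of Lemma~\ref{lem:eps_sh_trans}: translate the four-point quantities in the definition of $(A_0,B_0,C_0)$-strong hyperbolicity into measures of double transversals, using Lemma~\ref{lem:transversal_vs_distances}. Gromov hyperbolicity of $d_\mu$, which is part of the $(A_0,B_0,C_0)$ definition, always holds by Proposition~\ref{item:dmu_hyperbolic}. So only the exponential-decay implication needs translating. For four points $x,y,z,w\in\wt{X}$, put $G=G_{x,y,z,w}$ and
\[
P\coloneqq d_\mu(x,z)+d_\mu(y,w)-d_\mu(x,y)-d_\mu(z,w), \qquad Q\coloneqq d_\mu(x,z)+d_\mu(y,w)-d_\mu(x,w)-d_\mu(y,z).
\]
When no atom passes through the four points, Lemma~\ref{lem:transversal_vs_distances} (normalized so that the relevant identities read $P=2\mu(G)$ and $Q=2\mu(G^\perp)$, matching the factor $2$ in the statement) turns ``$P>A\ge A_0 \Rightarrow Q<B_0e^{-C_0A}$'' into ``$2\mu(G)>A\ge A_0 \Rightarrow 2\mu(G^\perp)<B_0e^{-C_0A}$''. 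Both directions then follow by reading this equivalence quadruple by quadruple, with the same constants.

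The key step is justifying the hypothesis $\mu(G[p])=0$ at the four points. In the forward direction, $d_\mu$ is $(A_0,B_0,C_0)$-strongly hyperbolic, so by Proposition~\ref{prop:nica_spakula} it is $\epsilon$-strongly hyperbolic for some $\epsilon$. It is roughly geodesic by Proposition~\ref{item:dmu_roughlygeodesic}, hence strongly bolic by Lemma~\ref{lem:sh_implies_sb}. Since $\mu$ is filling, Corollary~\ref{cor:fillingsb_noatoms} shows $\mu$ has no atoms, and then \cite[Proposition~4.6]{DRMG23:Duals} gives $\mu(G[p])=0$ for every $p$. This is exactly the argument of Lemma~\ref{lem:sh_transversal_vs_distances}, which I will invoke verbatim.

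The converse is the main obstacle, because we only know the transversal condition and do not yet know that $\mu$ is non-atomic. I would first try to derive non-atomicity directly from the transversal condition. Suppose some geodesic $g$ carries an atom. Since $\mu$ is filling, some atom-free geodesic crosses $g$. Choose $x,y,z,w$ so that $G^\perp$ always contains $g$ while $\mu(G)$ grows without bound by stretching the corresponding segments along $l$ and $m$, as in Lemma~\ref{lem:nested_transversals}. Then $\mu(G^\perp)$ stays at least the atom weight, contradicting the exponential bound, so $\mu$ is non-atomic.

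If that construction is hard to arrange, the fallback is an approximation argument. Points through which atoms pass form a countable union of geodesics, which is a null set, so generic quadruples avoid them. Moreover $d_\mu$ differs from its values at nearby generic points by arbitrarily small amounts, since the half-open conventions in Definition~\ref{def:current_pseudo} control boundary atoms. Passing to limits in the strict inequalities would then need a small loss, which can be absorbed by slightly adjusting $A_0$ and $B_0$.
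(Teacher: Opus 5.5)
Your forward direction is the paper's argument. The paper proves this lemma the same way as Lemma~\ref{lem:eps_sh_trans}, i.e.\ via Lemma~\ref{lem:sh_transversal_vs_distances}: filling plus strongly hyperbolic forces non-atomicity, so $P=2\mu(G)$ and $Q=2\mu(G^\perp)$ for every quadruple. You also correctly see that the converse requires deriving non-atomicity from the transversal condition itself, which the paper's one-line ``similar proof'' leaves implicit. But neither of your arguments for that step works, so the converse has a gap.

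\emph{First argument.} Stretching along the fixed geodesics $l,m$ as in Lemma~\ref{lem:nested_transversals} only gives $\mu(G_n)\uparrow\mu(B_G)$. Here $B_G$ is a box with four distinct ideal vertices, so it has compact closure in $\mathcal{G}(\wt{X})$ and $\mu(B_G)<\infty$. Hence $\mu(G_n)$ stays bounded and the exponential bound is never triggered. Unbounded mass must come from invariance of $\mu$ under the stabilizer of the atomic geodesic, which you never use. Also, filling does not give an ``atom-free geodesic crossing $g$'': for $\mu=\delta_\gamma$ with $\gamma$ a filling closed curve, all mass is atomic. Filling only gives positive mass crossing $g$.

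\emph{Fallback.} This rests on a false continuity claim. If $x$ lies on an atomic geodesic of weight $\kappa$ transverse to $[x,y]$, that atom contributes $\kappa/2$ to $d_\mu(x,y)$. For points arbitrarily close to $x$ it contributes $\kappa$ on one side and $0$ on the other. The symmetric convention of Definition~\ref{def:current_pseudo} averages the two one-sided values; it does not make $d_\mu$ continuous. So strict inequalities cannot be passed from generic to non-generic quadruples ``with a small loss''.

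The missing idea is the translation trick from the first half of the proof of Theorem~\ref{thm:bolic}.
\begin{enumerate}
\item An atom lies on a lift $\tilde g$ of a closed geodesic. Otherwise infinitely many translates of equal positive mass would meet a compact set, contradicting local finiteness. So $\tilde g$ is the axis of a primitive $b\in\pi_1(S)$.
\item Filling gives $i(\mu,\delta_{[b]})>0$. Hence there is a compact box $B$ of geodesics crossing a short subsegment of $\tilde g$, with $\mu(B)>0$ and the translates $b^i(B)$ pairwise disjoint.
\item Pick $u,u'$ on opposite sides of $\tilde g$. For a suitable constant $c$ set $x_n=b^{n+c}u$, $w_n=b^{-n-c}u$, $y_n=b^{n+c}u'$, $z_n=b^{-n-c}u'$.
\item The transversal $G_n=G_{x_n,y_n,z_n,w_n}$ contains $\bigcup_{|i|\le n}b^i(B)$, so $\mu(G_n)\ge(2n+1)\mu(B)\to\infty$. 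Meanwhile $G_n^\perp$ contains $\tilde g$, so $\mu(G_n^\perp)\ge\kappa>0$.
\end{enumerate}
This contradicts the transversal condition directly; no genericity of the vertices is needed. So $\mu$ is non-atomic, every quadruple satisfies the identities, and the converse follows with the same constants.
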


Given a box of geodesics $B$, recall that $B^{\perp}$ is the opposite box of geodesics, obtained by considering the complementary intervals determining $B$.

\begin{lemma}[$\epsilon$-strong hyperbolicity with boxes]
A geodesic current is $\epsilon$-strongly hyperbolic if and only, given a positive real number $\epsilon>0$, we have
\[
1 \leq e^{-\frac{\epsilon}{2} \mu(B)} + e^{-\frac{\epsilon}{2} \mu(B^{\perp})}
\]
for every bounded box of geodesics $B$
i.e. $\mu$ satisfies \emph{$\epsilon$-strong hyperbolicity with boxes}
\label{lem:sh_boxes}
\end{lemma}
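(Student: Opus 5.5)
The plan is to deduce the box version from the transversal version, Lemma~\ref{lem:eps_sh_trans}, by a monotone limiting argument in each direction. Throughout, write $F(s,t)\coloneqq e^{-\frac{\epsilon}{2}s}+e^{-\frac{\epsilon}{2}t}$. This function is decreasing in each variable separately, and it is continuous.

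\emph{Transversals imply boxes.} Let $B$ be a bounded box. Choose a double transversal $G$ whose box extension is $B_G=B$: take two bi-infinite geodesics $l,m$ whose endpoints are the endpoints of the intervals defining $B$, and pick points $w,y\in l$ and $x,z\in m$. Then form the nested transversals $G_n$ and $G_n^\perp$ as in Lemma~\ref{lem:nested_transversals}. For each $n$, the transversal inequality gives $1\le F(\mu(G_n),\mu(G_n^\perp))$. By Lemma~\ref{lem:nested_transversals}, $\mu(G_n)\to\mu(B)$ and $\mu(G_n^\perp)\to\mu(B^\perp)$. Passing to the limit using continuity of $F$ yields $1\le F(\mu(B),\mu(B^\perp))$.

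One point needs care. The box extension requires $l$ and $m$ to cross, and a bounded box has both $\mu(B)$ and $\mu(B^\perp)$ finite, because its defining intervals are disjoint closed arcs. If needed, I will adjust the endpoint conventions (open or closed arcs), using the non-atomicity furnished by Corollary~\ref{cor:fillingsh_noatoms} in the filling strongly hyperbolic case. Alternatively, I will approximate boxes by slightly smaller boxes and again pass to the limit.

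\emph{Boxes imply transversals.} Let $G=G_{x,y,z,w}$ be a double transversal. Since $G\subset B_G$ and $G^\perp\subset B_{G^\perp}=(B_G)^\perp$, we have $\mu(G)\le\mu(B_G)$ and $\mu(G^\perp)\le\mu(B_G^\perp)$. Because $F$ is decreasing in each variable, this gives
\[
1\le F(\mu(B_G),\mu(B_G^\perp))\le F(\mu(G),\mu(G^\perp)),
\]
which is the transversal condition. Combining this with Lemma~\ref{lem:eps_sh_trans} gives the stated equivalence with $\epsilon$-strong hyperbolicity of $d_\mu$.

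There is one subtlety in applying Lemma~\ref{lem:eps_sh_trans}: that lemma assumes $\mu$ is filling and has no atoms. In the boxes$\Rightarrow$s.h.\ direction we do not yet know that $\mu$ has no atoms. To handle this, I will show directly that the box inequality forces $\mu(G[x])=0$ for every $x\in\wt X$. An atom carried by a geodesic $g$ produces arbitrarily thin boxes around $g$: such a box $B$ contains $g$, so $\mu(B)\ge\mu(\{g\})>0$, while $\mu(B^\perp)$ stays large. Made precise, a box can be chosen so that $\mu(B^\perp)$ is large while $\mu(B)$ stays bounded below, which contradicts the box inequality. Alternatively, I will work with Lemma~\ref{lem:transversal_vs_distances} at generic points $x,y,z,w$ avoiding atoms, and extend the inequality to all points by density together with the straightness of $d_\mu$.

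I expect the main obstacle to be exactly this handling of atoms and endpoint conventions. The monotonicity and limit arguments themselves are routine.
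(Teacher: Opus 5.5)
Your proposal is correct and follows the paper's proof. For one direction you use nested double transversals $G_n$ exhausting $B$ together with continuity of $(s,t)\mapsto e^{-\frac{\epsilon}{2}s}+e^{-\frac{\epsilon}{2}t}$; for the other you use $G\subset B_G$, $G^\perp\subset (B_G)^\perp$ and monotonicity.

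Your extra care about atoms is justified. The paper's converse goes through Lemma~\ref{lem:eps_sh_trans}, whose proof obtains non-atomicity from strong hyperbolicity itself, so that step is circular as written. Your thin-box argument fills this in, but only when $\mu$ is filling, which is the implicit standing hypothesis here: an atomic lift $g$ of a closed geodesic $c$ then has $i(\mu,\delta_c)>0$, so invariance under the stabilizer of $g$ gives $\mu(B^\perp)\to\infty$ while $\mu(B)\ge\mu(\{g\})$. For a non-filling current such as a simple closed curve, the box inequality holds even though there are atoms.
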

\begin{proof}
We first prove that strong hyperbolicity implies this inequality. 
Let $G$ denote the transversal determined by three points $x,y,z,w$.
Strong hyperbolicity can be phrased as, for any transversal $G$, having
\[
1 \leq e^{-\frac{\epsilon}{2} \mu(G)} + e^{-\frac{\epsilon}{2} \mu(G^{\perp})}.
\]
Taking a sequence of transversals $G_n$ that approximate the box $B$ we get the inequality.
For the other direction, suppose there exists a transversal $G$ of points $x,y,z,w$ so that
\[
1 > e^{-\frac{\epsilon}{2} \mu(G)} + e^{-\frac{\epsilon}{2} \mu(G^{\perp})}
\]
Consider the box extensions $B=B_G$ and $B^{\perp}=B_G^{\perp}$ of the transversals $G$ and $G^\perp$, respectively (recall Definition~\ref{def:box_extension}).
Note that $G \subset B$ and $G^{\perp} \subset B^{\perp}$. Hence, 
\[
1 > e^{-\frac{\epsilon}{2} \mu(G)} + e^{-\frac{\epsilon}{2} \mu(G^{\perp})} \geq e^{-\frac{\epsilon}{2} \mu(B)} + e^{-\frac{\epsilon}{2} \mu(B^{\perp})} 
\]
contradicting the condition.
\end{proof}

Finally, we have
\begin{lemma}[$(A_0,B_0,C_0)$-strong hyperbolicity with boxes]
Let $\mu$ be filling.
$d_{\mu}$ is $(A_0,B_0,C_0)$-strongly hyperbolic if and only if $\mu$ satisfies that there exist positive real constants $A_0,B_0,C_0$ so 
that if
\[
2\mu(B) > A
\]
then
\[
2\mu(B^{\perp})< B_0 e^{-C_0 \cdot A},
\]
for every box of geodesics $B$.
In this case, we say that $\mu$ satisfies \emph{$(A_0,B_0,C_0)$-strong hyperbolicity with boxes}
\label{lem:abc_sh_boxes}
\end{lemma}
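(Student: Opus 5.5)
The plan is to reduce to Lemma~\ref{lem:sh_abc_trans} and mirror the proof of Lemma~\ref{lem:sh_boxes}, with an extra monotonicity step, since the box condition is a threshold-type implication rather than an inequality between continuous expressions. The first step is to record that the $(A_0,B_0,C_0)$-condition with transversals (or with boxes) is equivalent to the following statement for all pairs $(P,P^\perp)$ of the relevant type:
\[
\text{if } 2\mu(P^\perp)\geq B_0e^{-C_0A} \text{ then } 2\mu(P)\leq A,
\]
which holds for every $A\geq A_0$. Equivalently, writing $\phi(t)=-\frac{1}{C_0}\log\frac{t}{B_0}$, this says $2\mu(P)\le \max\{A_0,\phi(2\mu(P^\perp))\}$ whenever $2\mu(P^\perp)>0$. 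The right-hand side is nonincreasing in $\mu(P^\perp)$, which is what makes passage to limits and to larger sets possible.

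For boxes $\Rightarrow$ transversals: given a transversal $G=G_{x,y,z,w}$ with $2\mu(G)>A\ge A_0$, take the box extensions $B=B_G$ and $B^\perp=B_{G^\perp}$ from Definition~\ref{def:box_extension}. Since $G\subset B$, we get $2\mu(B)\ge 2\mu(G)>A$. The box hypothesis then gives $2\mu(B^\perp)<B_0e^{-C_0A}$, and since $G^\perp\subset B^\perp$, also $2\mu(G^\perp)<B_0e^{-C_0A}$. Lemma~\ref{lem:sh_abc_trans} then yields $(A_0,B_0,C_0)$-strong hyperbolicity of $d_\mu$.

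For transversals $\Rightarrow$ boxes: let $B$ be a bounded box with $2\mu(B)>A\ge A_0$. Realize $B=B_G$ for a double transversal $G$ by choosing $l,m$ with the appropriate endpoints, and take the nested families $G_n\uparrow B$ and $G_n^\perp\uparrow B^\perp$ of Lemma~\ref{lem:nested_transversals}, so that $\mu(G_n)\to\mu(B)$ and $\mu(G_n^\perp)\to\mu(B^\perp)$. For $n$ large we have $2\mu(G_n)>A$, hence $2\mu(G_n^\perp)<B_0e^{-C_0A}$, and letting $n\to\infty$ gives $2\mu(B^\perp)\le B_0e^{-C_0A}$.

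The main obstacle is that this limit yields only a non-strict inequality. To recover the strict one, use the slack in $A$: since $2\mu(B)>A$, pick $A'$ with $2\mu(B)>A'>A$. Running the same argument with $A'$ gives $2\mu(B^\perp)\le B_0e^{-C_0A'}<B_0e^{-C_0A}$. One also has to make sure that every bounded box arises as some $B_G$, including the degenerate cases where interval endpoints are shared. Here the filling/no-atom input from Corollary~\ref{cor:fillingsb_noatoms} is used through Lemma~\ref{lem:sh_transversal_vs_distances}, which makes endpoint conventions irrelevant. If needed, one can also shrink $B$ slightly inside itself, using continuity of $\mu$ from below to handle boundary geodesics.
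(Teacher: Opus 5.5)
Your proof is correct. The direction ``$d_\mu$ strongly hyperbolic $\Rightarrow$ box condition'' is the same as the paper's: approximate $B$ and $B^\perp$ from inside by the nested transversals of Lemma~\ref{lem:nested_transversals}, then apply Lemma~\ref{lem:sh_abc_trans}. Your choice of $A'\in(A,2\mu(B))$ to recover the strict inequality after passing to the limit is a detail the paper leaves implicit.

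For the converse you take a genuinely different and shorter route. The hypothesis of the $(A_0,B_0,C_0)$ condition transfers in the right direction under the inclusions $G\subset B_G$ and $G^\perp\subset B_{G^\perp}=(B_G)^\perp$. So the box condition, applied to box extensions, gives the transversal condition with the \emph{same} constants, and Lemma~\ref{lem:sh_abc_trans} finishes. This is the inclusion argument the paper uses for the reverse direction of Lemma~\ref{lem:sh_boxes}.

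The paper instead turns the box $(A_0,B_0,C_0)$ condition into $\epsilon$-strong hyperbolicity with boxes by redoing the Nica--\v{S}pakula computation. It uses the Gromov hyperbolicity constant $\delta$ of $d_\mu$ for boxes with $\mu(B)<A_0/2$, and then implicitly returns through Lemma~\ref{lem:sh_boxes} and Proposition~\ref{prop:nica_spakula}. The constants it ends with are therefore $(A_0(\epsilon),4/\epsilon,\epsilon/2)$, not the original ones. The paper's detour buys a direct box-level implication from (7) to (3) of Theorem~\ref{thm:stronghyp_equivalent}; yours buys economy and preserved constants.

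One aside applies equally to both proofs. Your remark that no-atom input comes from Corollary~\ref{cor:fillingsb_noatoms} via Lemma~\ref{lem:sh_transversal_vs_distances} is legitimate in the forward direction, where $d_\mu$ is assumed strongly hyperbolic. In the converse direction that route would be circular. However, the box condition itself rules out atoms of a filling $\mu$. The union of $b^{ki}$-translates of a box crossing an atomic geodesic $\gamma$, as in the proof of Theorem~\ref{thm:bolic}, lies in boxes of unbounded mass whose opposite box contains $\gamma$. Hence the identity of Lemma~\ref{lem:transversal_vs_distances} is available there too.
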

\begin{proof}
Assume $d_{\mu}$ is $(A,B,C)$-strongly hyperbolic. Take a box $B$ and, following Lemma~\ref{lem:nested_transversals}, approximate its endpoints from the interior by four sequences of points $x_n,y_n,z_n,w_n$ in such a way that the corresponding transversals $G_{n}=G_{x_n,y_n,z_n,w_n}$ form a nested sequence and $\cup_n G_n = B$.
Then, by Lemma~\ref{lem:sh_abc_trans}, the result follows.
Suppose on the other hand the assumption is met.
We will show that $\epsilon$-strong hyperbolicity in terms of boxes is met. The proof follows verbatim that of the first implication of \cite[Lemma~6.2]{BS16:StrongHyp}.
We reproduce it here for completeness.
The $\epsilon$-strong hyperbolicity in terms of boxes amounts to showing that, for some $\epsilon>0$, we have,
for every box $B$,
\[
1 \leq \exp(-\epsilon W)+\exp(-\epsilon Z)
\]
where $W \coloneqq \mu(B)$, and $Z \coloneqq \mu(B^{\perp})$.
We may assume that $W,Z\geq 0$, and $W\geq Z$. We rewrite the desired inequality as $\epsilon Z\leq -\log (1-\exp(-\epsilon W))$. As $a\leq -\log(1-a)$ for $a\in [0,1)$, it suffices to obtain $\epsilon Z\leq \exp(-\epsilon W)$. When $W\geq A_0/2$, this can be achieved for $\epsilon \leq \min\{2C_0,2/B_0\}$ by using $(A_0,B_0,C_0)$-strong hyperbolicity in terms of boxes. When $W<A_0/2$, we use the assumption that $d_\mu$ is $\delta$-Gromov hyperbolic: $B_0=\min\{A_0,B_0\}\leq \delta$, where $\delta$ is a hyperbolicity constant of $d_\mu$. This condition follows by approximating every box $B$ by transversals $G_n$, for which the inequality holds. So it suffices to have $\epsilon\delta\leq \exp(-\epsilon A_0/2)$, and this can be achieved for, say, $\epsilon\leq \min\{1/(2\delta), (2\log 2)/A_0\}$.
\end{proof}

Finally, we give a characterization of strong hyperbolicity of $d_\mu$ in terms of its marked length spectrum. 

\begin{lemma}
Let $\mu$ be a filling current on $X$, and $d_\mu$ its dual pseudometric on $\wt{X}$.
Then $d_\mu$ is $\epsilon$-strongly hyperbolic if and only if for every $a,b \in \pi_1(X)$ whose hyperbolic axes $\ora{a},\ora{b}$ cross, we have
\begin{equation}
1 \leq e^{-\epsilon(\ell_{d_\mu}(a)+\ell_{d_\mu}(b)-\ell_{d_\mu}(ab))} + e^{-\epsilon(\ell_{d_\mu}(a)+\ell_{d_\mu}(b)-\ell_{d_\mu}(ab^{-1}))}.
\label{eq:stronghyp_cross}
\end{equation}
In this case, we say that $\mu$ satisfies \emph{$\epsilon$-strong hyperbolicity with crossing pairs}
\label{lem:stronghyp_cross}
\end{lemma}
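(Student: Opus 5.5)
We reduce the statement to the box formulation of Lemma~\ref{lem:sh_boxes}. For crossing axes $\ora{a},\ora{b}$ with attracting/repelling endpoints $a^\pm,b^\pm$, the four endpoints alternate on $\partial\wt{X}$. So they define a box $B_{a,b}$ of geodesics separating $\{a^-,b^-\}$ from $\{a^+,b^+\}$, with opposite box $B_{a,b}^\perp$. The first step is to express the length-spectrum quantities as box measures. Choose a fundamental domain on each axis starting at the intersection point $p=\ora{a}\cap\ora{b}$, namely $[p,ap]\subset\ora{a}$ and $[p,bp]\subset \ora{b}$. Then compare $d_\mu(b^{-1}p, ap)$ with $\ell_\mu(ab)$. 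Iterating, we look at the quadrilateral with vertices $b^{-n}p,\,p,\,a^np$ and similar, and use straightness (Proposition~\ref{item:dmu_straight}) along each axis. We also use the standard fact that $\ell_\mu(g)=\lim_n d_\mu(p,g^np)/n$.

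Concretely, we apply Lemma~\ref{lem:transversal_vs_distances} to $x=a^{-n}p$, $z=a^np$ on $\ora{a}$ and $y=b^{-n}p$, $w=b^np$ on $\ora{b}$. This expresses $\mu(G_n)$ and $\mu(G_n^\perp)$ as $\tfrac12$ times combinations of distances. The distances along the axes equal $n\ell_\mu(a)$, $n\ell_\mu(b)$ by straightness and invariance, and the diagonal distances are approximately $d_\mu(b^{-n}p,a^np)$. Here we use the identity $\ell_\mu(a^nb^n)\approx d_\mu(b^{-n}p,a^np)$ up to bounded error, since $a^nb^n$ has axis passing near $p$ for crossing axes. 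More precisely, the piecewise geodesic through $\dots,b^{-n}p,p,a^np,\dots$ is a $\mu$-quasi-axis, and equality of lengths holds via Bonahon's intersection formula: $i(\mu,a^nb^n)$ counts $\mu$-mass of geodesics crossing a fundamental domain. Letting $n\to\infty$, Lemma~\ref{lem:nested_transversals} gives $\mu(B_{a,b})$ and $\mu(B^\perp_{a,b})$ as limits of $\ell(a^n)+\ell(b^n)-\ell(a^nb^{\pm n})$ type expressions. A cleaner route, which I would try first, is exact: I expect $\mu(B_{a,b})$ to relate to $\ell(a)+\ell(b)-\ell(ab)$ through a fundamental-domain decomposition of the box under the group generated by $a,b$. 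Precisely, the geodesics crossing both $\ora a$ and $\ora b$ in the appropriate quadrant, taken modulo identification, account for $\ell(a)+\ell(b)-\ell(ab)$ via the formula $i(\mu,a)+i(\mu,b)-i(\mu,ab)=$ the mass of geodesics crossing both the $a$-segment and $b$-segment in the "cancelling" orientation, counted once.

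Given these identifications, the direction ($\Rightarrow$) follows from the box inequality of Lemma~\ref{lem:sh_boxes}, applied to transversals $G$ whose measures are exactly $2(\ell(a)+\ell(b)-\ell(ab^{\pm1}))$. This is the normalization matching $e^{-\frac{\epsilon}{2}\mu(G)}$ with $e^{-\epsilon(\cdots)}$. For ($\Leftarrow$), we need density. Every box $B=I\times J$ can be approximated, from inside and outside, by boxes $B_{a,b}$ with $a,b\in\pi_1(X)$, since pairs of fixed points of group elements are dense in $\partial\wt X\times\partial\wt X$. We also use that $\mu$ has no atoms on the relevant boundaries, or else take monotone limits as in Lemma~\ref{lem:nested_transversals}. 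The function $(s,t)\mapsto e^{-\frac\epsilon2 s}+e^{-\frac\epsilon2 t}$ is monotone decreasing, so the needed inequality passes to limits with $\mu(B_{a_k,b_k})\to\mu(B)$ and $\mu(B^\perp_{a_k,b_k})\to \mu(B^\perp)$ from above. We achieve this by choosing the approximating boxes slightly larger, with endpoints decreasing to those of $B$, and using outer regularity, since closed boxes are compact in $\mathcal G(\wt X)$.

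The main obstacle is the exact identity relating $\ell(a)+\ell(b)-\ell(ab)$ to the $\mu$-mass of a single box (or transversal), with the correct factor. Without atom control, endpoint conventions matter. I would prove the identity first for $\mu=\delta_c$ by counting crossings of lifts of $c$ with the axes of $a$, $b$, $ab$ in a fundamental domain of $\langle ab\rangle$. I would then extend it by linearity and continuity of $i$ to all currents, replacing $a,b$ by high powers if necessary and passing to the limit $n\to\infty$.
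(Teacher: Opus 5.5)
You have a genuine gap. The paper proves no identity here. It takes ($\Rightarrow$) from \cite[Lemma~8.13]{MGT25:Intersections} (transversal inequality implies the crossing-pair inequality) and ($\Leftarrow$) from \cite[Lemma~8.14]{MGT25:Intersections} (crossing-pair inequality implies the box inequality). Your architecture is similar, but the step everything rests on is false: an exact identity expressing $\ell(a)+\ell(b)-\ell(ab^{\pm1})$, for a \emph{single} crossing pair, as a fixed multiple of the $\mu$-mass of one box or transversal.

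Test it on a hyperbolic Liouville current, where $d_\mu=d_{\mathbb H^2}$ and $\cosh(\ell_{ab}/2)=\cosh(\ell_a/2)\cosh(\ell_b/2)+\sinh(\ell_a/2)\sinh(\ell_b/2)\cos\phi$. With orthogonal axes and $\ell_a=\ell_b=\ell$, this gives $\ell(a)+\ell(b)-\ell(ab)=2\ell-2\operatorname{arccosh}(\cosh^2(\ell/2))$. That quantity is strictly increasing in $\ell$ and strictly below its limit $2\log 2=2\mathcal L(B_{a,b})$.

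Only an asymptotic, Otal-type statement holds: $\ell(a^n)+\ell(b^n)-\ell(a^nb^{n})\to 2\mu(B)$. Here $B$ is the box of geodesics separating $\{a^+,b^-\}$ from $\{a^-,b^+\}$, and $B^\perp$ plays this role for $a^nb^{-n}$. So the box mass is \emph{half} the length defect. The normalization $\mu(G)=2(\cdots)$, which you chose to make the displays match, is off by a factor of four even asymptotically. In fact the two sides of the lemma correspond to different values of $\epsilon$. $d_{\mathbb H^2}$ is $1$-strongly hyperbolic, yet along $(a^n,b^n)$ the right-hand side of the crossing-pair inequality tends to $\cos^{4\epsilon}(\phi/2)+\sin^{4\epsilon}(\phi/2)$, which is $\ge 1$ only for $\epsilon\le\tfrac12$.

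Neither of your mechanisms produces the identity either.
\begin{itemize}
\item The corner count is off. By the triangle formula, the mass of geodesics crossing both $[b^{-1}p,p)$ and $[p,ap)$ is $\tfrac12\bigl(\ell(a)+\ell(b)-d_\mu(p,bap)\bigr)$, and in general $d_\mu(p,bap)>\ell(ab)$.
\item The bounded-error route loses the whole quantity. The points $b^{-n}p,p,a^np$ are corners of the broken geodesic $\dots,b^{-n}p,p,a^np,a^nb^np,\dots$, and they stay at a definite distance from the axis of $a^nb^n$. Hence $d_\mu(b^{-n}p,a^np)-\ell(a^nb^n)\not\to0$; for the Liouville current it tends to $\mathcal L(B)$, which is half the limit you are after.
\end{itemize}
Even the limit identity requires exact cancellation counting at the corners of that broken geodesic. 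Each period has two corners, and the geodesics cutting them exhaust $B$.

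Consequently only ($\Leftarrow$) survives, and only in asymptotic form.
\begin{itemize}
\item Apply the hypothesis to the pairs $(a^n,b^n)$, which still cross. Let $n\to\infty$ to get the box inequality, with $\epsilon$ rescaled, for boxes whose corners are fixed points.
\item Then approximate a general box. Here you cannot approach both $B$ and $B^\perp$ from above, because enlarging $B$ shrinks $B^\perp$. Instead use open boxes and lower semicontinuity of $\mu$ on open sets, along any convergent choice of corners.
\end{itemize}

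For ($\Rightarrow$), the asymptotic identity yields the crossing-pair inequality only in the limit along $(a^n,b^n)$, not for the pair $(a,b)$ itself. You need a one-sided comparison at finite level, an inequality rather than an identity. One candidate is $\ell(a)+\ell(b)-\ell(ab)\le 2\mu(B)$ together with $\ell(a)+\ell(b)-\ell(ab^{-1})\le 2\mu(B^\perp)$, i.e.\ monotonicity in $n$ of the length defect, which is what the example above exhibits. That finite-level implication is exactly what the paper imports from \cite[Lemma~8.13]{MGT25:Intersections}, and nothing in your proposal supplies it.
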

\begin{proof}
By~\cite[Lemma~8.13]{MGT25:Intersections}, it follows that $\epsilon$-strong hyperbolicity with transversals implies Equation~\ref{eq:stronghyp_cross} for every crossing pair $a,b$.
By~\cite[Lemma~8.14]{MGT25:Intersections}, it follows that Equation~\ref{eq:stronghyp_cross} implies $\epsilon$-strong hyperbolicity with boxes. This proves the equivalence.
\end{proof}

A similar proof shows

\begin{lemma}
Let $\mu$ be a filling current on $X$, and let $d_\mu$ be its dual pseudometric on $\widetilde{X}$.
Then $d_\mu$ is $(A_0,B_0,C_0)$-strongly hyperbolic if and only if for every $a,b \in \pi_1(X)$ whose hyperbolic axes $\ora{a},\ora{b}$ cross, the following holds:

if
\begin{equation}
\ell_{d_\mu}(a) + \ell_{d_\mu}(b) - \ell_{d_\mu}(ab) > A \geq A_0,
\end{equation}
then
\begin{equation}
\ell_{d_\mu}(a) + \ell_{d_\mu}(b) - \ell_{d_\mu}(ab^{-1}) < B_0 e^{-C_0 A}.
\label{eq:stronghyp_abc_cross}
\end{equation}
In this case, we say that $\mu$ satisfies \emph{$(A_0, B_0, C_0)$-strong hyperbolicity with crossing pairs}
\label{lem:stronghyp_abc_cross}
\end{lemma}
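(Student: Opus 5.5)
We follow the template of Lemma~\ref{lem:stronghyp_cross}: we pass through the box and transversal characterizations already established, Lemmas~\ref{lem:sh_abc_trans} and~\ref{lem:abc_sh_boxes}. The dictionary between crossing pairs and boxes is the one from~\cite[Lemmas~8.13, 8.14]{MGT25:Intersections}. If $a,b$ have crossing axes $\ora{a},\ora{b}$, let $p$ be their intersection point. Consider the four points $x=a^{-1}p$, $y=p$, $z=bp$ and the appropriate translate $w$ along the axes. Using straightness (Proposition~\ref{item:dmu_straight}) and Proposition~\ref{item:dmu_length_intersection}, the quantities $\ell_{d_\mu}(a)+\ell_{d_\mu}(b)-\ell_{d_\mu}(ab)$ and $\ell_{d_\mu}(a)+\ell_{d_\mu}(b)-\ell_{d_\mu}(ab^{-1})$ equal $2\mu(B)$ and $2\mu(B^\perp)$ (up to the normalization used in Lemma~\ref{lem:stronghyp_cross}). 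Here $B$ is a box determined by fundamental-domain intervals of $\partial\wt{X}$ bounded by the endpoints of the axes of $a,b$ and their translates.

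\textbf{Forward direction.} Assume $d_\mu$ is $(A_0,B_0,C_0)$-s.h. By Lemma~\ref{lem:abc_sh_boxes}, boxes satisfy the implication $2\mu(B)>A \Rightarrow 2\mu(B^\perp)<B_0e^{-C_0A}$. Applying the dictionary above to the box associated to the crossing pair $(a,b)$ gives \eqref{eq:stronghyp_abc_cross} directly. A direct alternative route also works: take the four points above and use Lemma~\ref{lem:sh_abc_trans}. Any atom issue is harmless here, since the implication only becomes easier when $\mu(G)$ is smaller.

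\textbf{Converse.} Assume the crossing-pair condition. We first show it implies $(A_0',B_0',C_0')$-s.h.\ with boxes, and then invoke Lemma~\ref{lem:abc_sh_boxes}. Take an arbitrary box $B=I\times J$ with $2\mu(B)>A$. As in~\cite[Lemma~8.14]{MGT25:Intersections}, approximate $B$ from inside by boxes $B_{a,b}$ coming from crossing pairs. Because endpoints of hyperbolic elements are dense in $S^1\times S^1$, we can choose $a,b$ whose axis endpoints approach the corners of $B$ so that $B_{a,b}\subset B$ and $B_{a,b}^\perp\supset$ a large sub-box exhausting $B^\perp$. Inner regularity gives $\mu(B_{a,b})\to\mu(B)$, and the complementary approximation gives $\mu(B_{a,b}^\perp)\to\mu(B^\perp)$; in practice one needs the approximation from the correct side for each box simultaneously. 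For $a,b$ close enough, $2\mu(B_{a,b})>A$ still holds, so the hypothesis yields $2\mu(B_{a,b}^\perp)<B_0e^{-C_0A}$. Passing to the limit then gives the non-strict inequality for $B^\perp$. Replacing $B_0$ by $2B_0$ restores strictness, and the resulting constants still satisfy Lemma~\ref{lem:abc_sh_boxes}.

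The main obstacle is the monotonicity mismatch in this approximation. We need $B_{a,b}$ to sit inside $B$ so that the hypothesis threshold $2\mu(B_{a,b})>A$ is met. At the same time, we need $B_{a,b}^\perp$ to eventually capture almost all of $B^\perp$. These two requirements are compatible because $(B_{a,b})^\perp$ grows as $B_{a,b}$ shrinks toward $B$ from the inside. This is exactly the complementary nesting used in Lemma~\ref{lem:nested_transversals}, where $B_G$ and $B_G^\perp$ are exhausted simultaneously by $G_n$ and $G_n^\perp$.

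The approximation does need to handle possible atoms on the boundary of $B$. Since we only claim a non-strict inequality in the limit, open boxes and continuity from below suffice. If a box $B$ has an atom on its boundary, we approximate by boxes avoiding that boundary: $\mu(\partial B)$ is countably supported on leaves, so only countably many boxes are affected. Strict inequality for the remaining boxes and density then give the general case, with the constant adjusted as above.
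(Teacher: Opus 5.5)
Your architecture matches the paper's. The paper proves this lemma exactly as Lemma~\ref{lem:stronghyp_cross}: it obtains crossing pairs from the transversal version, and the box version from crossing pairs, via \cite[Lemmas~8.13 and~8.14]{MGT25:Intersections}.

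The step that fails is your dictionary. For a \emph{single} crossing pair, $\ell_{d_\mu}(a)+\ell_{d_\mu}(b)-\ell_{d_\mu}(ab^{\pm1})$ is not twice the mass of a box and its opposite box. To see what it is, let $x,z\in\ora{a}$ be the points at hyperbolic distance half the hyperbolic translation length of $a$ from $p$, with $x$ before $p$ and $z$ after $p$, and define $y,w\in\ora{b}$ in the same way. Writing $a$ and $b^{\pm1}$ as products of two half-turns shows that $ab$ is the product of the half-turns about $z$ and $y$, and $ab^{-1}$ is the product of the half-turns about $z$ and $w$. So the axes of $ab$ and $ab^{-1}$ pass through these points, and straightness plus $\pi_1$-invariance give the exact identities
\[
\ell_{d_\mu}(a)=d_\mu(x,z),\quad \ell_{d_\mu}(b)=d_\mu(y,w),\quad \ell_{d_\mu}(ab)=d_\mu(y,z)+d_\mu(x,w),\quad \ell_{d_\mu}(ab^{-1})=d_\mu(x,y)+d_\mu(z,w).
\]
By Lemma~\ref{lem:transversal_vs_distances}, the two quantities are therefore $2\mu(G^\perp)$ and $2\mu(G)$ for the \emph{finite} double transversal $G=G_{x,y,z,w}$. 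These masses are in general strictly smaller than those of the boxes $B_G$, $B_G^\perp$ spanned by $a^\pm,b^\pm$; this already happens for Liouville currents. The points $a^{-1}p,\,p,\,bp$ you propose do not produce exact identities of this kind. Hence the key step of your converse, ``$2\mu(B_{a,b})>A$, so the hypothesis yields $2\mu(B_{a,b}^\perp)<B_0e^{-C_0A}$'', does not follow: the hypothesis for $(a,b)$ only controls $G$ and $G^\perp$.

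The missing idea is to pass to powers. For every $n$, both $(a^n,b^n)$ and $(b^n,a^{-n})$ are crossing pairs; the second swaps the roles of $ab$ and $ab^{-1}$. Their quadrilaterals give nested double transversals $G_n\subset G_{n+1}$ and $G_n^\perp\subset G_{n+1}^\perp$, which exhaust $B_{a,b}:=B_G$ and $B_{a,b}^\perp$ (Lemma~\ref{lem:nested_transversals}). Letting $n\to\infty$ in the hypothesis gives, for $B'\in\{B_{a,b},B_{a,b}^\perp\}$: if $2\mu(B')>A\ge A_0$, then $2\mu(B'^\perp)\le B_0e^{-C_0A}$.

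Your monotonicity argument then finishes, and more simply than you describe. If $B'\subset B$ with $2\mu(B')>A$, then $B^\perp\subset B'^\perp$ directly, so $2\mu(B^\perp)\le B_0e^{-C_0A}$. You never need $\mu(B'^\perp)\to\mu(B^\perp)$.

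With the correct identities, the forward direction is just the four-point $(A_0,B_0,C_0)$ condition applied to $(x,w,z,y)$; no boxes are needed.

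Finally, your treatment of boundary atoms does not work as written. If $\partial B$ has positive mass, inner approximation only recovers the mass of the interior of $B$. Outer approximations control a \emph{smaller} opposite box. So density cannot reach such $B$. What is actually needed is that the box inequality, on boxes with $\mu$-null boundary, forces a filling $\mu$ to be non-atomic; you can argue this as in Theorem~\ref{thm:bolic}. Non-atomicity is also required to return from boxes to $d_\mu$ through Lemma~\ref{lem:transversal_vs_distances}.
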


\begin{remark}\label{rmk:roughisometry}
In fact, the above two characterizations together with~Theorem~\ref{thm:intersections}, show, in particular, that strong hyperbolicity is a property of the rough isometry class of $d_\mu$ on $\wt{X}$, \emph{provided one restricts to length pseudometrics on $\wt{X}$}.
Indeed, the condition on $\ora{a},\ora{b}$ crossing  is not an invariant across \emph{all} metrics induced from actions of $\Gamma$ on Gromov hyperbolic geodesic spaces $X$.
For example, let $\rho \colon \pi_1(S) \to \PSL(2,\mathbb{C})$ be a \emph{quasiFuchsian} representation, i.e., a representation whose orbit map is a quasi-isometry.
Let $\phi \colon S^1 \to \Lambda_\rho \subset S^2$ be the induced boundary map. Here $\Lambda_\rho$ denotes the limit set of the representation in $S^2$, which is an image of a round circle by a quasi-symmetric map.
If $\rho$ is not Fuchsian, i.e., then by~\cite[Proposition~3.2]{FF22:Quasifuchsian}, the endpoints $\phi(a^+), \phi(a^-)$ and $\phi(b^+),\phi(b^-)$ need not be linked in $\Lambda_\rho$.
\end{remark}
Now we put everything together to prove Theorem~\ref{thm:stronghyp_equivalent}.

\begin{proof}[Proof of Theorem~\ref{thm:stronghyp_equivalent}]
The equivalence between (1) and (5) follows from the work of Nica--\v{S}pakula (Lemma~\ref{prop:nica_spakula}). 
The equivalence (1) and (2) is Lemma~\ref{lem:eps_sh_trans}.
The equivalence (1) and (3) is Lemma~\ref{lem:sh_boxes}.
The equivalence (1) and (4) is Lemma~\ref{lem:stronghyp_cross}.
The equivalence (5) and (6) is Lemma~\ref{lem:sh_abc_trans}.
The equivalence (5) and (7) is Lemma~\ref{lem:abc_sh_boxes}.
The equivalence (5) and (8) is Lemma~\ref{lem:stronghyp_abc_cross}.
\end{proof}

As a corollary of Lemma~\ref{lem:stronghyp_cross}, we have

\begin{corollary}
Let $\mu$ be a filling geodesic current, and $d_\mu^X$ is dual pseudometric for some choice of hyperbolic structure $X$ on $S$. Then
\[
d_\mu^X \text{ is } \epsilon\text{-strongly hyperbolic }
\iff
\forall Y \in \Teich(S), d_\mu^Y \text{ is } \epsilon\text{-strongly hyperbolic}.
\]
\label{cor:stronghyp_roughsimilar}
\end{corollary}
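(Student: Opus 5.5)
The plan is to go through the crossing-pairs characterization of Lemma~\ref{lem:stronghyp_cross}, because its criterion is stated purely in terms of the marked length spectrum of $\mu$ and the combinatorics of crossing axes in $\pi_1(S)$. Neither of these depends on the auxiliary hyperbolic structure used to build the dual pseudometric.

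Fix $Y\in\Teich(S)$ with marking $f\colon X\to Y$. First, apply Lemma~\ref{lem:stronghyp_cross} on $X$. It says that $d_\mu^X$ is $\epsilon$-strongly hyperbolic if and only if inequality~\eqref{eq:stronghyp_cross} holds for every pair $a,b\in\pi_1(X)$ whose hyperbolic axes in $\wt X$ cross. Second, apply the same lemma on $Y$ to the current $f_*\mu\in\C(Y)$. It gives that $d_{f_*\mu}^Y$, which is the pseudometric $d_\mu^Y$ of Definition~\ref{def:pseudometric_G}, is $\epsilon$-strongly hyperbolic if and only if the same inequality holds for all pairs $a',b'\in\pi_1(Y)$ with crossing axes in $\wt Y$. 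The lemma applies on $Y$ because $f_*\mu$ is still filling: $f_*$ is a homeomorphism of currents that preserves intersection numbers.

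Next I would match the two families of inequalities term by term, using $a'=f_\#(a)$ and $b'=f_\#(b)$. For the lengths, Proposition~\ref{item:dmu_length_intersection} together with the remark after Definition~\ref{def:pseudometric_G} gives
\[
\ell_{d_{f_*\mu}^Y}(f_\#(g))=i(f_*\mu,f_*\delta_g)=i(\mu,\delta_g)=\ell_{d_\mu^X}(g)
\]
for every $g$. Since $f_\#$ is a homomorphism, this applies in particular to $g=a$, $b$, $ab$ and $ab^{-1}$, so every exponent in~\eqref{eq:stronghyp_cross} is unchanged.

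The remaining point, and the only one needing an argument, is that $f_\#$ preserves the crossing relation. I would lift $f$ to the equivariant quasi-isometry $\wt f\colon\wt X\to\wt Y$. Its boundary extension $\partial\wt f\colon S^1\to S^1$ is an equivariant homeomorphism, so it sends the fixed points $a^\pm$ to the fixed points $f_\#(a)^\pm$. Being a homeomorphism of the circle, it preserves or reverses cyclic order, and therefore preserves linking of pairs of points. Two axes cross exactly when their endpoint pairs are linked, so $\ora a,\ora b$ cross in $\wt X$ if and only if the axes of $f_\#(a),f_\#(b)$ cross in $\wt Y$. Equivalently, crossing is a topological property of the free homotopy classes on $S$, since it is detected by $i(\delta_a,\delta_b)>0$ for primitive elements.

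Since $f_\#$ is a bijection that carries crossing pairs to crossing pairs and preserves all the lengths in~\eqref{eq:stronghyp_cross}, the two families of inequalities coincide. This gives the equivalence, with the same $\epsilon$.
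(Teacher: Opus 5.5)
Your argument is correct and is essentially the paper's: the corollary is stated there as an immediate consequence of Lemma~\ref{lem:stronghyp_cross}, and you supply the two facts the paper leaves implicit. These are that the lengths $i(\mu,\cdot)$ do not depend on the hyperbolic structure, and that the equivariant boundary homeomorphism induced by the marking preserves linking of axis endpoints. One small correction: your aside that crossing is detected by $i(\delta_a,\delta_b)>0$ is inaccurate, since positive intersection only says that some translate of $\ora{b}$ crosses $\ora{a}$, not that the specific axes of $a$ and $b$ cross; your boundary-homeomorphism argument does not need this aside.
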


\section{A characterization of strongly bolic currents}
\label{sec:bolic}

We say a geodesic current $\mu$ is \emph{strongly bolic} if its dual pseudometric $d_{\mu}$ is strongly bolic (Definition~\ref{def:strong_bolicity}).
We characterize when a dual space of a current is strongly bolic. For filling geodesic currents, this will be equivalent to the geodesic current having no atoms. We will use this to deduce the main point of this section: strongly hyperbolic filling geodesic currents are non-atomic (Corollary~\ref{cor:fillingsh_noatoms}). The arguments of the main result in this section follow closely~\cite[Proposition~6.9]{DRMG23:Duals}.

We will use the following decomposition theorem for geodesic currents, stated in a form tailored to our purposes.

\begin{theorem}[{\cite[Theorem~1.2(1)]{BIPP21:Currents} (c.f.~\cite[Proposition~A]{EM18:Ergodic})}]
Let $\mu$ be a geodesic current on closed surface $X$. Then there exists a decomposition
\begin{equation}\label{eq:current_decomposition}
\mu \;=\; \nu \;+\; \lambda,
\tag{1.1}
\end{equation}
where, for a finite indexing sets $\mathcal{V}_\mu$, $\mathcal{E}_\mu$, we have:
\begin{itemize}
\item $\nu \coloneqq \sum_{v \in \mathcal V_\mu} \mu_v$, and each $\mu_v$ is a geodesic current whose support in $\mathcal{G}(X)$ projects to a subsurface $\Sigma_v \subset X$, and $\Sigma_v^\circ \cap \Sigma_{v'}^\circ=\emptyset$ whenever $v \neq v'$, where $\Sigma_v^\circ$ denotes the interior of $\Sigma_v$.
\item $\lambda \coloneqq \sum_{c \in \mathcal E_\mu} \lambda_c \, \delta_c$, each $\delta_c$ is the current associated to a simple closed geodesic $c$, and $\lambda_c \geq 0$.
\end{itemize}
In particular, the supports of $\lambda$ and $\nu$ are disjoint.
Moreover, the current $\lambda$ satisfies that if $s$ is a simple closed geodesic in the support of $\mu$ so that $\ora{c} \notin \supp(\nu)$, then $\ora{c} \in \supp(\lambda)$.  We call the set $\mathcal{E}_\mu$ the set of special geodesics, which consists of disjoint simple closed geodesics.
\label{thm:structural_dec}
\end{theorem}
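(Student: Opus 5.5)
The statement is quoted from \cite[Theorem~1.2(1)]{BIPP21:Currents}. I would recover it from the structure of $\supp(\mu)\subset\mathcal{G}(\wt{X})$ by a ``crossing graph'' argument, in the spirit of~\cite[Proposition~A]{EM18:Ergodic}.

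\emph{Step 1: crossing classes.} Let $\bar{\mathcal{S}}\subset \mathcal{G}(X)$ be the projection of $\supp(\mu)$ to $X$; it is a closed union of complete geodesics. Declare two geodesics of $\bar{\mathcal{S}}$ equivalent if they are joined by a finite chain of geodesics of $\bar{\mathcal{S}}$, each transversely crossing the next. A geodesic that crosses no other geodesic of $\bar{\mathcal{S}}$ is its own class. Since $\supp(\mu)$ is $\pi_1$-invariant and closed, each class $K$ is a closed set in $\mathcal{G}(X)$. Let $\mu_K$ be the restriction of $\mu$ to the lift of $K$; this is again a geodesic current.

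\emph{Step 2: sort the classes.} Call a class \emph{special} if it consists of a single simple closed geodesic $c$ crossed by nothing else in the support. Then $\mu_K=\lambda_c\delta_c$ with $\lambda_c>0$, because a closed leaf that is isolated from crossings must carry an atom. Every other class $K$ either contains a non-simple geodesic or is a minimal lamination without closed leaves. In both cases I set $\Sigma_K$ to be the subsurface filled by $K$: the regular neighbourhood of $\bigcup K$, completed by discs and once-punctured discs, with geodesic boundary.

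\emph{Step 3: disjointness and finiteness.} Take two distinct classes $K\neq K'$. No geodesic of $K$ crosses a geodesic of $K'$, so each boundary curve of $\Sigma_K$ is disjoint from $K'$, and vice versa. I expect this to give $\Sigma_K^\circ\cap\Sigma_{K'}^\circ=\emptyset$, by a standard argument on filled subsurfaces. The subsurfaces $\Sigma_K$ are then essential and have pairwise disjoint interiors. Likewise, the special geodesics are pairwise disjoint simple closed curves that avoid the interiors of the $\Sigma_K$. Counting Euler characteristic together with the bound $3g-3$ on the number of disjoint simple closed geodesics shows there are finitely many classes. Setting $\nu=\sum_{K}\mu_K$ and $\lambda=\sum_c\lambda_c\delta_c$ gives $\mu=\nu+\lambda$ with disjoint supports.

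The ``moreover'' clause follows directly from this construction. Suppose $c$ is a simple closed geodesic in $\supp(\mu)$ with $\ora{c}\notin\supp(\nu)$. Then $c$ lies in no non-special class, so it forms a special class and hence $\ora{c}\in\supp(\lambda)$.

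\emph{Main obstacle.} The delicate step is Step 3. One must show that closures of crossing classes behave well, in particular that limits of chains do not glue together different classes. One must also show that a non-special class genuinely fills a subsurface whose interior contains no leaf of another class. Leaves of a class may accumulate onto a simple closed geodesic that is a boundary curve of $\Sigma_K$. I would handle this with the collar lemma: any geodesic entering a collar transversally must cross the core curve. This forces such accumulation curves either to lie in $\partial\Sigma_K$ or to be crossed, and hence to belong to $K$. If this becomes too technical, I would simply invoke~\cite[Theorem~1.2(1)]{BIPP21:Currents} as stated.
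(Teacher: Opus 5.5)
The paper gives no proof of this statement; it imports it from \cite[Theorem~1.2(1)]{BIPP21:Currents}, which is also your fallback. Your own sketch, however, has a genuine gap already in Step~1: the crossing-chain relation does not see the lamination part of $\mu$. Leaves of a geodesic lamination never cross one another. So if, say, $\mu$ is a measured lamination supported on a minimal lamination that is not a closed geodesic, then under your rule every leaf is its own class. There are uncountably many such classes, and each has $\mu$-measure zero: a recurrent leaf cannot carry an atom of a locally finite transverse measure. Hence every $\mu_K$ vanishes, $\sum_K\mu_K=0\neq\mu$, and no Euler characteristic count can give finiteness. Your description in Step~2 of a non-special class as ``a minimal lamination without closed leaves'' is incompatible with your own definition.

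The missing step is to split the support before forming crossing classes. Let $U$ be the set of geodesics of $\supp(\mu)$ that are crossed transversely by no geodesic of $\supp(\mu)$. This set is closed and $\pi_1$-invariant, since crossing is an open condition. So $U$ projects to a geodesic lamination, and $\mu|_U$ is a measured lamination. By the structure theory of laminations, $\mu|_U$ is a finite sum over minimal components. Each component is either a weighted simple closed geodesic, which gives a term of $\lambda$, or a minimal lamination filling a subsurface, which goes into $\nu$. Only $\supp(\mu)\setminus U$ should be partitioned into crossing classes. Each of those classes contains a crossing pair, so it fills a subsurface of negative Euler characteristic, and it is to these classes that the argument of Step~3 should be applied.

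Two further claims are false as written: classes need not be closed, and an uncrossed closed leaf of $\supp(\mu)$ need not carry an atom. For a counterexample, let $c$ be a boundary curve of a subsurface $\Sigma$ and fix $a\in\pi_1(\Sigma)$ not commuting with $c$. Let $\mu=\sum_n t_n\delta_{\gamma_n}$, where $\gamma_n$ is the closed geodesic representing $c^nac^n$ and $t_n\to0$ fast enough for local finiteness.
\begin{itemize}
\item The axes of $c^nac^n$ converge to the axis of $c$, so $c\in\supp(\mu)$.
\item Nothing in $\supp(\mu)$ crosses $c$, so $\{c\}$ is its own class.
\item This class lies in the closure of the classes of the $\gamma_n$, and $\mu$ gives it no mass.
\end{itemize}
This is harmless for the statement itself, since $\lambda_c=0$ is allowed and $c\in\supp(\nu)$. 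But it means your argument must work with Borel classes rather than closed ones. Moreover, for $\mu+\delta_c$ your construction puts $c$ in the closed supports of both $\nu$ and $\lambda$. So the ``disjoint supports'' clause should be read, and proved, as saying that $\nu$ and $\lambda$ are carried by disjoint invariant sets.
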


We now prove the following.

\begin{theorem}
Let $\mu$ be a geodesic current decomposing as $\nu + \lambda$ where $\lambda$ is the simple multi-curve corresponding to the special geodesics in the decomposition of $\mu$. Then $\mu$ is strongly bolic if and only if $\nu$ is a non-atomic geodesic current.
\label{thm:bolic}
\end{theorem}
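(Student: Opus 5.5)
The plan is to translate strong bolicity into a statement about double transversals, via Lemma~\ref{lem:transversal_vs_distances}, and then analyse how mass of $\mu$ can concentrate on $G^\perp$ while $G$ grows. Up to the factor $\frac{\epsilon}{2}$, and for generic points $x,y,z,w$ with $\mu(G[\cdot])=0$, the two expressions in Definition~\ref{def:strong_bolicity} are $\mu(G)$ and $\mu(G^\perp)$ for $G=G_{x,y,z,w}$. Non-generic points can be handled by perturbation, since $d_\mu$ is straight (Proposition~\ref{item:dmu_straight}) and the atoms of $G[x]$ only contribute bounded corrections. Strong bolicity of $\mu$ therefore becomes the following statement: for every sequence of double transversals $G_n$ with $\mu(G_n)\to\infty$, we have $\mu(G_n^\perp)\to 0$. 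I would prove both directions using this reformulation together with the decomposition $\mu=\nu+\lambda$ of Theorem~\ref{thm:structural_dec}.

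\emph{($\Leftarrow$)} Assume $\nu$ is non-atomic, and suppose toward a contradiction that there are $G_n$ with $\mu(G_n)\to\infty$ but $\mu(G_n^\perp)\ge c>0$. Using $\pi_1(X)$-invariance, translate so that $G_n^\perp$ meets a fixed compact fundamental domain. The segments $[x_n,y_n)$ and $[z_n,w_n)$ defining $G_n^\perp$ must then become long and close to each other along most of their length, since otherwise $\mu(G_n)$ would stay bounded. Consequently $G_n^\perp$ shrinks, in the Hausdorff sense on $\mathcal{G}(\wt X)$, to a family of geodesics asymptotic to a single limiting geodesic $g$, and mass $\ge c$ accumulates near $g$. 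For the $\lambda$ part, geodesics of the simple multicurve crossing both long nearby segments would force $\mu(G_n)$ to be small, because the crossing of a special geodesic $c$ with the segments contributes to $G_n$ only boundedly through its own transversals. I expect this to need care: one has to exploit that $\lambda$-leaves are disjoint from the support of $\nu$, so the region where $G_n$ picks up unbounded mass must come from $\nu$. For the $\nu$ part, the compactness plus semicontinuity argument of \cite[Proposition~6.9]{DRMG23:Duals} would give $\nu(\{g\})\ge c$, contradicting non-atomicity. This limiting step is the main obstacle. The cross-ratio-like degeneration must be shown to concentrate on one geodesic, rather than spreading along a leaf of $\mathcal{E}_\mu$, and it is precisely there that the special geodesics are excluded, since they are isolated and disjoint from $\supp\nu$.

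\emph{($\Rightarrow$)} Assume $\nu$ has an atom. By invariance and local finiteness, the atom is carried by a closed geodesic $\gamma$ in $\supp\nu$ that is not one of the special geodesics. Theorem~\ref{thm:structural_dec} then shows that $\gamma$ is either non-simple or crosses other mass of $\nu$ inside $\Sigma_v$, so some geodesic $\eta$ in the support of $\nu$ crosses a lift $\tilde\gamma$ transversally; the same holds for all of its translates. Place $x_n,z_n$ on one side of $\tilde\gamma$ and $y_n,w_n$ on the other, chosen near $\tilde\gamma$ and pushed apart along its direction, by translating with powers of the element $g$ whose axis is $\tilde\gamma$. The translates $g^k\eta$ all cross and contribute uniformly, so $\mu(G_n)\to\infty$. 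Meanwhile $G_n^\perp$ always contains $\tilde\gamma$, so $\mu(G_n^\perp)\ge\nu(\{\tilde\gamma\})>0$, and strong bolicity fails. Finally, I would check that the rough geodesicity required in Definition~\ref{def:strong_bolicity} holds by Proposition~\ref{item:dmu_roughlygeodesic}.
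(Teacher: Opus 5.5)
Your direction ($\Rightarrow$) is essentially the paper's argument. In ($\Leftarrow$), however, there is a genuine gap at exactly the step you call the main obstacle, and the geometry you describe there is inverted.

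When $\mu(G_n)\to\infty$, it is the segments $[w_n,x_n)$ and $[y_n,z_n)$ defining $G_n$ that become long and fellow-travel. The segments $[x_n,y_n)$ and $[z_n,w_n)$ defining $G_n^\perp$ are the two short ends of that thin strip; if they were long and close, $G_n^\perp$ would not shrink at all. Moreover, translating so that $G_n^\perp$ merely meets a fixed compact set does not force concentration on one geodesic. If the end $[x_n,y_n)$ stays near that set, $G_n^\perp$ converges to a pencil of geodesics sharing one ideal endpoint.

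The paper proceeds as follows. It normalizes the centers of the box extensions $B_n=B_{G_n}$ (the crossing points of their diagonals) into a compact set. It then extracts a collapsing pair of adjacent endpoints $d_n,a_n\to\xi$ and produces a pencil $P(\xi,(b',c'))$ of positive $\mu$-mass. Next it invokes \cite[Lemma~2.6]{DRMG23:Duals} to obtain an atom with endpoint $\xi$. Since $\nu$ is non-atomic, this atom is a lift of a special geodesic, and it is crossed by a family of geodesics of positive mass, which is impossible.

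With that center normalization you can in fact obtain your single-geodesic picture directly. Both diagonals pass through a compact set, so if only one pair of adjacent endpoints collapsed, the diagonals would become asymptotic and their crossing point would escape. Hence both pairs of endpoints of $B_n^\perp$ collapse, to distinct points $\xi\neq\zeta$, and $B_n^\perp$ shrinks to $g=(\xi,\zeta)$. Local finiteness then gives $\mu(\{g\})\ge c$ and $g\in G_n^\perp$ for large $n$.

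For $\lambda$, the relevant property is not that $\supp\lambda$ and $\supp\nu$ are disjoint in $\mathcal{G}(\wt X)$. What matters is that no geodesic of $\supp\mu$ crosses a special geodesic transversally. If a special geodesic lies in $G_n^\perp$, every geodesic of $G_n$ crosses it, so $\mu(G_n)=0$ outright (not merely ``small''), contradicting $\mu(G_n)\to\infty$.

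In ($\Rightarrow$) there are two smaller issues. First, placing $x_n,z_n$ on one side of $\tilde\gamma$ and $y_n,w_n$ on the other makes $x_ny_nz_nw_n$ a crossed quadrilateral. Then $\tilde\gamma$ separates $\{x_n,z_n\}$ from $\{y_n,w_n\}$ and enters $d_\mu(x,z)+d_\mu(y,w)-d_\mu(x,w)-d_\mu(y,z)$ with coefficient $-2$. The identity of Lemma~\ref{lem:transversal_vs_distances} holds only for points in convex position in the cyclic order $x,y,z,w$, so it does not yield $\mu(G_n^\perp)\ge\nu(\{\tilde\gamma\})$ in the way you use it. The conclusion happens to survive: with $x_n,y_n$ pushed to one end and $z_n,w_n$ to the other, the quantity tends to $-2\mu(\{\tilde\gamma\})\neq 0$. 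The intended configuration, however, has $x_n,w_n$ on one side and $y_n,z_n$ on the other, with $x_n,y_n$ tending to one endpoint of $\tilde\gamma$ and $z_n,w_n$ to the other. Second, a single crossing geodesic $\eta$ need not carry mass. As in the paper, take a box $B$ with $\nu(B)>0$ whose translates $g^{ki}B$ are pairwise disjoint, so that $\mu(G_n)\ge(2n+1)\nu(B)$.
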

\begin{proof}
Use Theorem~\ref{thm:structural_dec} to write
\[
\mu=\nu+\lambda
\]
where $\lambda$ is the simple multi-curve consisting of the special (simple closed) geodesics of $\mu$, and thus $\nu$ has no special geodesics in its support.

We first prove that if $X_\mu$ is strongly bolic, then $\nu$ is non-atomic.

Suppose, toward a contradiction, that $\nu$ has an atom. Let $\gamma$ be an atomic geodesic in the support of $\nu$, and let $b\in \pi_1(S)$ be the primitive element stabilizing $\gamma$, with endpoints $b^+,b^-\in \partial \widetilde X$. Since $\gamma$ is not special, it intersects some other geodesic in the support of $\nu$. Hence we may choose a box $B$ in the space of geodesics such that
\[
\nu(B)>0
\]
and such that the translates $b^{ki}(B)$, $i\in \mathbb Z$, are pairwise disjoint for some $k>0$.

For each $n\ge 1$, define
\[
B_n:=\bigcup_{i=-n}^n b^{ki}(B).
\]
Since the union is disjoint and $\nu$ is $\pi_1(S)$-invariant,
\[
\nu(B_n)=(2n+1)\nu(B)\xrightarrow[n\to\infty]{}\infty.
\]

Now choose points $x_n,y_n,z_n,w_n\in  \widetilde X$, so that
\[
x_n,y_n\to b^+,
\qquad
z_n,w_n\to b^-,
\]
with each point satisfying  $\mu(G[x_n])=\mu(G[y_n])=\mu(G[z_n])=\mu(G[w_n])=0$, and so that the associated double transversal
\[
G_n:=G_{x_n,y_n,z_n,w_n}
\]
contains $B_n$, while the opposite double transversal $G_n^\perp$ contains $\gamma$. Then
\[
\mu(G_n)\ge \nu(B_n)\xrightarrow[n\to\infty]{}\infty,
\]
whereas
\[
\mu(G_n^\perp)\ge \nu(\{\gamma\})>0.
\]
By Lemma~\ref{lem:transversal_vs_distances}, this contradicts strong bolicity. Therefore $\nu$ must be non-atomic.

We now prove the converse. Assume that
\[
\mu=\nu+\lambda,
\]
where $\nu$ is non-atomic and $\lambda$ is the simple multi-curve of special geodesics. We show that $X_\mu$ is strongly bolic.

Suppose, toward a contradiction, that $X_\mu$ is not strongly bolic. The goal is to show $\nu$ is atomic. Since $X_\mu$ is roughly geodesic, the failure of strong bolicity yields a sequence of quadruples
\[
(x_n,y_n,z_n,w_n)\in (\widetilde X)^4
\]
 so that $\mu(G[x_n])=\mu(G[y_n])=\mu(G[z_n])=\mu(G[w_n])=0$, such that if
\[
G_n:=G_{x_n,y_n,z_n,w_n},
\qquad
G_n^\perp:=G^\perp_{x_n,y_n,z_n,w_n},
\]
then
\[
\mu(G_n)\xrightarrow[n\to\infty]{}\infty
\]
but
\[
\mu(G_n^\perp)\to r
\]
for some finite $r>0$.

Let $B_n:=B_{G_n}$ be the box associated to $G_n$. Then $G_n\subset B_n$, so
\[
\mu(B_n)\xrightarrow[n\to\infty]{}\infty.
\]
On the other hand, $G_n^\perp\subset B_n^\perp$, hence
\[
\mu(B_n^\perp)\ge \mu(G_n^\perp)\ge r/2
\]
for all sufficiently large $n$.

For each $n$, let $m(B_n)\in \widetilde X$ be the center of the box $B_n$, namely the intersection point of the geodesics joining the opposite endpoints of $B_n$. Since the action of $\pi_1(S)$ on $\widetilde X$ is cocompact, after replacing $B_n$ by a suitable translate, we may assume that $m(B_n)$ lies in a fixed compact set $K\subset \widetilde X$ for all $n$. By $\pi_1(S)$-invariance of $\mu$, the above asymptotic properties of $\mu(B_n)$ and $\mu(B_n^\perp)$ are unchanged.

Write
\[
B_n=[a_n,b_n)\times[c_n,d_n),
\]
where $a_n,b_n,c_n,d_n\in \partial \widetilde X$ are in counterclockwise order. Since $\mu(B_n)\to\infty$, the boxes $B_n$ leave every compact subset of the space of geodesics. Because the centers $m(B_n)$ remain in the fixed compact set $K$, it follows that one pair of adjacent endpoints must collapse. Passing to a subsequence, we may therefore assume
\[
d_{\partial \widetilde X}(d_n,a_n)\to 0.
\]
By compactness of $\partial \widetilde X$, after passing to a further subsequence we may assume
\[
a_n,d_n\to x,\qquad b_n\to b,\qquad c_n\to c.
\]

We claim that $x\neq b$ and $x\neq c$. Indeed, if one of these equalities held, then three vertices of $B_n$ would converge to the same point of $\partial \widetilde X$. In that case the center $m(B_n)$ would escape every compact set of $\widetilde X$, contradicting $m(B_n)\in K$. Thus $x,b,c$ are distinct, possibly with $b=c$.

Choose a smaller interval $(b',c')\subset (b,c)$ if $b\neq c$, or a small interval around the common limit point if $b=c$, so that for all sufficiently large $n$,
\[
(b',c')\subset (b_n,c_n).
\]
If $a_n,d_n$ converge to $x$ from different sides, define
\[
B_n'':=(d_n,a_n)\times (b',c').
\]
Otherwise, say that $d_n, a_n$ converge to $x$ in the ccw order. Then, up to subsequence, either $d_n < a_n < x$ or $a_n < d_n < x$. In the first  case, we define 
\[
B_n'':=(d_n,x]\times (b',c'),
\]
and, in the second,
\[
B_n'':=(a_n,x]\times (b',c').
\]
The cw case is handled similarly, where $B_n''$ will have the left endpoints swapped.

Then for all large $n$,
\[
B_n\subset B_n'',
\]
and, after passing to a further subsequence if necessary, we may arrange that
\[
(d_{n+1},a_{n+1})\subset (d_n,a_n),
\]
so that
\[
B_{n+1}''\subset B_n''
\]
for all $n$.

Hence the sets $B_n''$ form a nested sequence of boxes, and their intersection is the pencil
\[
P(x,(b',c')):=\{\text{geodesics with one endpoint }x\text{ and the other in }(b',c')\}.
\]
By continuity from above,
\[
\mu(B_n'')\longrightarrow \mu\bigl(P(x,(b',c'))\bigr).
\]
Since $\mu(B_n)\to\infty$ and $B_n\subset B_n''$, in particular $\mu(B_n'')$ is bounded below by a positive constant for all large $n$. Therefore
\[
\mu\bigl(P(x,(b',c'))\bigr)>0.
\]

By~\cite[Lemma~2.6]{DRMG23:Duals} implies that positive $\mu$-mass on a pencil forces the existence of an atom of $\mu$ with one endpoint equal to $x$. Since $\nu$ is non-atomic, every atom of $\mu$ must come from $\lambda$. Thus $\mu$ has an atomic geodesic $\gamma$ belonging to the support of $\lambda$.
However, since $\gamma \in B_n'' \subset B_n$, it follows that $\gamma$ must intersect transversely the geodesics in $B_n^{\perp}$. But $\mu(B_n^{\perp})>0$ by assumption, so it follows that $\gamma$ the projection of $\gamma$ to $S$ has a self-intersection, which is a contradiction, since the support of $\lambda$ consists of simple closed geodesics.

\end{proof} 

    \begin{figure}[h!]
\centering{
\resizebox{130mm}{!}{\Huge{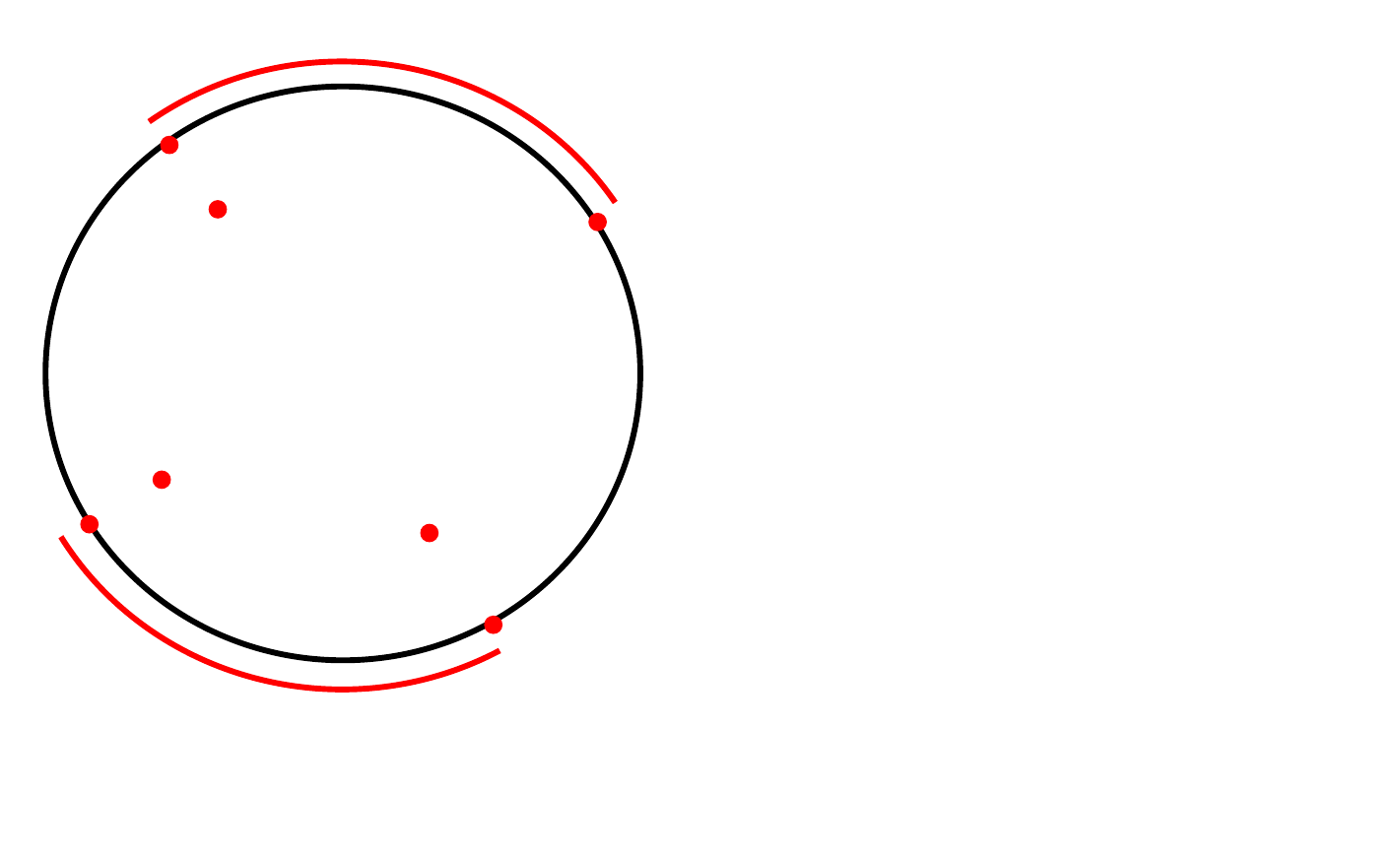}}
    \caption{ }\label{fig:opposite}
}
\end{figure}

\begin{corollary}
    Let $\mu$ be a filling geodesic current. Then $\mu$ is strongly bolic if and only if $\mu$ does not have any atoms.
\label{cor:fillingsb_noatoms}
\end{corollary}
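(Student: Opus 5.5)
The plan is to deduce the corollary from Theorem~\ref{thm:bolic}, using the decomposition $\mu=\nu+\lambda$ of Theorem~\ref{thm:structural_dec}. By Theorem~\ref{thm:bolic}, $\mu$ is strongly bolic if and only if $\nu$ is non-atomic. It therefore suffices to show that, when $\mu$ is filling, $\nu$ is non-atomic if and only if $\mu$ is non-atomic. Equivalently, the special part $\lambda$ contributes no atoms, so $\lambda=0$.

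One direction is immediate. Since $\nu\le\mu$, every atom of $\nu$ is an atom of $\mu$, so if $\mu$ is non-atomic then so is $\nu$. Hence $\mu$ is strongly bolic.

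For the converse, assume $\mu$ is strongly bolic, so $\nu$ is non-atomic. The only possible atoms of $\mu$ are then the lifts of the special simple closed geodesics $c\in\mathcal{E}_\mu$ with $\lambda_c>0$. I will show $\mathcal{E}_\mu$ carries no mass by contradiction. Suppose $\lambda_c>0$ for some special geodesic $c$. Because $\mu$ is filling, $i(\mu,\delta_c)>0$, so some geodesic in $\supp(\mu)$ crosses $c$ transversely. The components of $\lambda$ are pairwise disjoint simple closed geodesics, so $i(\lambda,\delta_c)=0$. Therefore the crossing geodesic lies in $\supp(\nu)$, and $c$ is crossed by $\supp(\nu)$.

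The main obstacle is turning this crossing into a contradiction with the structure theorem. I would use two properties of the decomposition in Theorem~\ref{thm:structural_dec}: the supports of $\nu$ and $\lambda$ are disjoint, and special geodesics are exactly the simple closed geodesics of $\supp(\mu)$ not lying in (equivalently, not crossed by) $\supp(\nu)$. In BIPP's construction the special geodesics are the boundary curves between the subsurfaces $\Sigma_v$ together with isolated simple curves, and no geodesic of $\supp(\nu)$ crosses them transversely. This contradicts the crossing found above, so $\lambda=0$ and $\mu=\nu$ is non-atomic.

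If I cannot extract that the special geodesics are uncrossed by $\supp(\nu)$ directly from the statement of Theorem~\ref{thm:structural_dec}, I would instead rerun the first half of the proof of Theorem~\ref{thm:bolic} with the atom $\tilde c$ of $\lambda$. Take $b$ to be the primitive element stabilising $\tilde c$. Because $\tilde c$ is crossed by $\supp(\nu)$, there is a box $B$ with $\mu(B)>0$ whose translates $b^{ki}(B)$ are pairwise disjoint. Double transversals $G_n\supset\bigcup_{|i|\le n}b^{ki}(B)$ then satisfy $\mu(G_n)\to\infty$ while $\mu(G_n^\perp)\ge\lambda_c>0$. By Lemma~\ref{lem:transversal_vs_distances} this contradicts strong bolicity of $d_\mu$. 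That argument only used that the atom is crossed by positive mass, not that it is non-special.
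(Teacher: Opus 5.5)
Your proposal is correct and follows essentially the paper's route. For a filling current the special multicurve $\lambda$ of Theorem~\ref{thm:structural_dec} must vanish, because special geodesics $c$ satisfy $i(\mu,\delta_c)=0$ and filling forbids this; hence $\mu=\nu$, and Theorem~\ref{thm:bolic} gives the equivalence. You justify $\lambda=0$ more carefully than the paper, which only asserts it. Your fallback, rerunning the first half of the proof of Theorem~\ref{thm:bolic} on an atom of $\lambda$ that is crossed by positive $\mu$-mass, is also valid and avoids relying on the precise wording of the structure theorem as stated in the paper.
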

\begin{proof}
   Let $\mu = \nu + \lambda$ be the structural decomposition of the geodesic current $\mu$ as in Theorem~\ref{thm:structural_dec}. Since $\mu$ is filling we have $\mu = \nu$ and $\lambda=0$. Then Theorem~\ref{thm:bolic} implies $\mu$ has no atoms.
\end{proof}
An immediate consequence of this is the following.
\begin{corollary}
A filling strongly hyperbolic current does not have any atoms.
\label{cor:fillingsh_noatoms}
\end{corollary}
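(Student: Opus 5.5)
The plan is to combine Lemma~\ref{lem:sh_implies_sb} with Corollary~\ref{cor:fillingsb_noatoms}. Let $\mu$ be a filling geodesic current whose dual pseudometric $d_\mu=d_\mu^X$ is strongly hyperbolic. The argument needs only two checks: that $d_\mu$ falls within the hypotheses of Lemma~\ref{lem:sh_implies_sb}, and that the resulting strong bolicity is the notion used in Corollary~\ref{cor:fillingsb_noatoms}.

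First, by Proposition~\ref{prop:dmu}\ref{item:dmu_roughlygeodesic}, $d_\mu$ is roughly geodesic. Lemma~\ref{lem:sh_implies_sb} then applies and shows that $d_\mu$ is strongly bolic in the sense of Definition~\ref{def:strong_bolicity}.

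One subtlety is that Lemma~\ref{lem:sh_implies_sb} is stated for metric spaces, while $d_\mu$ is a priori only a pseudometric on $\wt{X}$. I will handle this in one of two ways.
\begin{itemize}
\item Pass to the metric quotient of $\wt{X}$ obtained by identifying points at $d_\mu$-distance zero. Both strong hyperbolicity and strong bolicity are conditions on four-tuples of distances, so they pass unchanged between the pseudometric and its quotient.
\item Alternatively, check directly that the proof of Lemma~\ref{lem:sh_implies_sb} never uses positivity of the distance.
\end{itemize}
Concretely, the implication is Proposition~\ref{prop:nica_spakula}(1)(b). Once $d(x,z)+d(y,w)-d(x,y)-d(z,w)>A$, the quantity $d(x,z)+d(y,w)-d(x,w)-d(y,z)$ is bounded by $\frac{4}{\epsilon}e^{-\epsilon A/2}$, which tends to $0$ as $A\to\infty$.

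Hence $\mu$ is a strongly bolic current. Since $\mu$ is filling, Corollary~\ref{cor:fillingsb_noatoms} shows that $\mu$ has no atoms, which completes the argument.

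I expect no real obstacle here. The only care needed is the metric-versus-pseudometric bookkeeping just described. Beyond that, one should confirm that the strong bolicity obtained from the Nica--\v{S}pakula estimate is exactly the four-point formulation in Definition~\ref{def:strong_bolicity}, which is the form used in the proof of Theorem~\ref{thm:bolic} through Lemma~\ref{lem:transversal_vs_distances}.
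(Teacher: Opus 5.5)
Your proposal is correct and matches the paper's proof: you get rough geodesicity of $d_\mu$ from Proposition~\ref{prop:dmu}, then strong bolicity from Lemma~\ref{lem:sh_implies_sb}, then absence of atoms from Corollary~\ref{cor:fillingsb_noatoms}, since $\mu$ is filling. Your metric-versus-pseudometric bookkeeping is a harmless extra check that the paper leaves implicit.
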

\begin{proof}
By~\Cref{item:dmu_roughlygeodesic}, $d_\mu$ is roughly geodesic. By~Lemma~\ref{lem:sh_implies_sb}, $d_\mu$ is strongly bolic, and hence by Corollary~\ref{cor:fillingsb_noatoms}, $\mu$ has no atoms.
\end{proof}

\subsection{Example of strongly bolic and non-strongly hyperbolic current}
\label{subsec:bolic_nonstronglyhyp_CAT}

One might ask whether every strongly bolic current is necessarily strongly hyperbolic. 
The answer is negative: we construct a strongly bolic current which is not $\epsilon$-strongly hyperbolic for any $\epsilon>0$.

\begin{proposition} Let $S$ be endowed with a locally $\CAT(0)$ metric $X$ whose lift to $\widetilde X$ is a $\CAT(0)$ metric containing a flat strip. Then $\wt{X}$ is not $\epsilon$-strongly hyperbolic for any $\epsilon>0$.
\end{proposition}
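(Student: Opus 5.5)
Since $\wt{X}$ contains a flat strip, it contains an isometrically embedded Euclidean rectangle $[0,L]\times[0,w]$ for every $L>0$, with $w>0$ the fixed width of the strip. The plan is to test the $\epsilon$-strong hyperbolicity inequality \eqref{eq:strong_hyp_epsilon} on the four corners of such a rectangle and show it fails once $L$ is large, for any fixed $\epsilon>0$. Since the embedding is isometric, all six pairwise distances are Euclidean. We need not involve geodesic currents or the boundary characterizations of Theorem~\ref{thm:stronghyp_equivalent}.

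Label the corners in cyclic order: $x=(0,0)$, $y=(0,w)$, $z=(L,w)$, $w'=(L,0)$. We use $w'$ for the fourth point to avoid a clash with the width $w$. The distances are
\[
d(x,y)=d(z,w')=w,\qquad d(x,w')=d(y,z)=L,\qquad d(x,z)=d(y,w')=\sqrt{L^2+w^2}.
\]
Plugging into \eqref{eq:strong_hyp_epsilon}, the left side is $e^{\epsilon\sqrt{L^2+w^2}}$, while the right side is $e^{\epsilon w}+e^{\epsilon L}$.

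Expanding $\sqrt{L^2+w^2}=L+\frac{w^2}{2L}+O(L^{-3})$ gives a left side of $e^{\epsilon L}\bigl(1+\frac{\epsilon w^2}{2L}+O(L^{-2})\bigr)$. The excess of the left side over $e^{\epsilon L}$ is therefore about $\frac{\epsilon w^2}{2L}e^{\epsilon L}$, which tends to infinity, whereas the remaining term $e^{\epsilon w}$ on the right is a constant. Hence the inequality fails for all sufficiently large $L$.

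A cleaner way to present the same estimate is to divide by $e^{\epsilon L}$. The inequality becomes
\[
e^{\epsilon(\sqrt{L^2+w^2}-L)}\le 1+e^{\epsilon(w-L)},
\]
and using $e^t\ge 1+t$ together with $\sqrt{L^2+w^2}-L=\frac{w^2}{\sqrt{L^2+w^2}+L}\ge \frac{w^2}{3L}$ for $L\ge w$, the left side is at least $1+\frac{\epsilon w^2}{3L}$. So it suffices to have $\frac{\epsilon w^2}{3L}>e^{\epsilon w}e^{-\epsilon L}$, which holds for $L$ large because exponential decay beats $1/L$.

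The only point needing care is the existence of arbitrarily long isometric flat rectangles of fixed width. This is what a flat strip $\mathbb{R}\times[0,w]$ isometrically embedded in $\wt{X}$ provides, by the flat strip theorem or directly by the hypothesis. Everything else is the elementary asymptotic above.
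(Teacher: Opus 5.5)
Your proposal is correct and follows essentially the same route as the paper. Both arguments apply the inequality to the four corners of a long Euclidean rectangle in the flat strip, obtaining $e^{\epsilon\sqrt{L^2+w^2}}\le e^{\epsilon L}+e^{\epsilon w}$, and then compare a $1/L$-order lower bound for $\sqrt{L^2+w^2}-L$ against the exponentially small term $e^{\epsilon(w-L)}$; your $e^t\ge 1+t$ step is the paper's $\log(1+t)<t$ step in another form.
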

\begin{proof}
Let the flat strip be represented by
\[
S \cong \mathbb{R}\times [0,x] \subset \mathbb{E}^2
\]
for some $x>0$; see~\cite[Chapter II.2.13]{BH11:NonPosCurvature}. For each $y>0$, consider the Euclidean rectangle
\[
[0,y]\times [0,x]\subset S,
\]
with vertices
\[
p=(0,0), \qquad q=(y,0), \qquad r=(y,x), \qquad s=(0,x).
\]
Suppose, for contradiction, that $\widetilde X$ is $\epsilon$-strongly hyperbolic for some $\epsilon>0$. Applying the $\epsilon$-strongly hyperbolic inequality (Equation~\ref{eq:strong_hyp_epsilon}) to the quadruple $(p,q,r,s)$ gives
\[
e^{\epsilon\sqrt{x^2+y^2}} \leq e^{\epsilon y}+e^{\epsilon x}.
\]
Taking logarithms and using the inequality $\log(1+t)<t$ for every $t>0$, we obtain
\[
\sqrt{x^2+y^2}-y
<
\frac{1}{\epsilon}e^{\epsilon(x-y)}.
\]
On the other hand, since $\sqrt{x^2+y^2}\leq x+y$, it follows that
\[
\sqrt{x^2+y^2}-y
\geq
\frac{x^2}{x+2y}.
\]
Hence
\[
\frac{x^2}{x+2y}
<
\frac{1}{\epsilon}e^{\epsilon(x-y)}.
\]
For fixed $\epsilon,x>0$, the left-hand side decays only on the order of $1/y$, whereas the right-hand side decays exponentially fast as $y\to\infty$. Therefore this inequality fails for all sufficiently large $y$, contradicting $\epsilon$-strong hyperbolicity.

We conclude that $\widetilde X$ is not $\epsilon$-strongly hyperbolic for any $\epsilon>0$.
\end{proof}

Such metrics arise, for example, from singular Euclidean structures associated to holomorphic quadratic differentials with closed trajectories~\cite[Section~5.3]{Hub08:Teich}, such as Jenkins--Strebel differentials determined by simple multicurves.

\section{Negatively curved metrics are not dense}
\label{sec:neg_notdense}
We prove that negatively curved Riemannian metrics are not dense in the space of geodesic currents.

A metric space $(X,d)$ is called \emph{Ptolemy} if for every four points $x,y,z,w \in X$, the following inequality is satisfied
\[
d(x,z)d(y,w) \leq d(x,w)d(y,z) + d(w,z)d(x,y).
\]
Since every four point configuration in a
 $\CAT(0)$-space admits a subembedding into the Euclidean plane (see~\cite[Page 164]{BH11:NonPosCurvature} or \cite[Section~2]{FLS07:Ptolemy}), we have the folowing.
 
\begin{lemma}
Every $\CAT(0)$ metric space is Ptolemy.
\label{lem:cat0_ptolemy}
\end{lemma}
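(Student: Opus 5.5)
The plan is to reduce the Ptolemy inequality for a general quadruple to the Euclidean case, using the characterization of $\CAT(0)$ through four-point comparison configurations. Let $(X,d)$ be $\CAT(0)$ and fix $x,y,z,w\in X$. Following~\cite[Page 164]{BH11:NonPosCurvature} and~\cite[Section~2]{FLS07:Ptolemy}, the first step is to produce a \emph{subembedding} of the quadruple in the order $(x,y,z,w)$. This means four points $\bar x,\bar y,\bar z,\bar w\in\mathbb{E}^2$ that realize the four side lengths exactly:
\[
|\bar x-\bar y|=d(x,y),\quad |\bar y-\bar z|=d(y,z),\quad |\bar z-\bar w|=d(z,w),\quad |\bar w-\bar x|=d(w,x),
\]
and that dominate the diagonals:
\[
d(x,z)\le|\bar x-\bar z|,\qquad d(y,w)\le|\bar y-\bar w|.
\]

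To build it, I would take the geodesic quadrilateral with vertices $x,y,z,w$. If it is degenerate, meaning one diagonal can be chosen so that the comparison triangles lie on opposite sides, I glue the comparison triangles $\bar\Delta(x,y,z)$ and $\bar\Delta(x,z,w)$ along $[\bar x,\bar z]$ to get a convex quadrilateral. The $\CAT(0)$ inequality for distances from a point on $[x,z]$, together with the triangle inequality through the point where $[\bar y,\bar w]$ crosses $[\bar x,\bar z]$, then gives $d(y,w)\le|\bar y-\bar w|$; the other diagonal is equal by construction. If the glued figure is not convex, I would apply Alexandrov's lemma~\cite[I.2.16]{BH11:NonPosCurvature} to straighten it, which yields the required domination. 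This case analysis is the main obstacle. Rather than redo it, I would cite the four-point condition $\CAT(0)\Rightarrow$ subembedding directly from~\cite{BH11:NonPosCurvature}, since the paper already points to it.

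The second step is the Euclidean Ptolemy inequality. For any four points of $\mathbb{E}^2=\mathbb{C}$ one has the identity
\[
(\bar x-\bar z)(\bar y-\bar w)=(\bar x-\bar y)(\bar z-\bar w)+(\bar x-\bar w)(\bar y-\bar z).
\]
Taking moduli and applying the triangle inequality gives
\[
|\bar x-\bar z||\bar y-\bar w|\le |\bar x-\bar y||\bar z-\bar w|+|\bar x-\bar w||\bar y-\bar z|.
\]

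Finally I combine the two steps. The left-hand side of Ptolemy in $X$ is at most $|\bar x-\bar z||\bar y-\bar w|$ by the diagonal domination. The right-hand side equals the Euclidean right-hand side, because the sides are preserved. Hence
\[
d(x,z)d(y,w)\le d(x,y)d(z,w)+d(x,w)d(y,z),
\]
which is exactly the Ptolemy inequality for $X$.
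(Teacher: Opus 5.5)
Your proof is correct and takes the same route as the paper, which simply invokes the fact that every four-point configuration in a $\CAT(0)$ space admits a subembedding into $\mathbb{E}^2$ (Bridson--Haefliger, p.~164) and leaves the Euclidean Ptolemy inequality implicit. Your complex-number identity supplies that Euclidean step correctly. The direction of the subembedding (sides preserved, diagonals dominated) is exactly what is needed to transfer the inequality back to $X$.
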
 

A length metric $\rho$ on $S$ is called \emph{Ptolemy} if its pullback to $\wt{S}$, the universal cover of $S$, is Ptolemy. 

\begin{lemma}
Let $\rho$ be a locally $\CAT(0)$ length metric on a closed surface $S$. The induced pullback metric on $\wt{S}$ is $\CAT(0)$.
\end{lemma}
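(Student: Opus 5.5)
The plan is to deduce this from the Cartan--Hadamard theorem for metric spaces (Bridson--Haefliger, Theorem~II.4.1), applied to the lifted metric on $\wt{S}$. Write $\tilde\rho$ for the pullback of $\rho$ along the universal covering $p\colon \wt{S}\to S$. By definition, $\tilde\rho(\tilde x,\tilde y)$ is the infimum of the $\rho$-lengths of $p$-images of paths joining $\tilde x$ to $\tilde y$. With this metric, $p$ becomes a local isometry and $\tilde\rho$ is a length metric. The version of Cartan--Hadamard we need says: a complete, connected length space of non-positive curvature (i.e.\ locally $\CAT(0)$) has a universal cover which, with the induced length metric, is $\CAT(0)$. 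Once its hypotheses are checked for $(S,\rho)$, the conclusion is exactly the claim.

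The hypotheses are checked in three steps.
\begin{enumerate}
\item Completeness. Since $S$ is compact, $(S,\rho)$ is complete, provided $\rho$ induces the usual topology of $S$. This is part of what is meant by a length metric on the surface.
\item Local $\CAT(0)$. This holds by assumption: every point of $S$ has a neighbourhood which is $\CAT(0)$ in the induced metric.
\item Lifting to $\wt{S}$. Because $p$ is a local isometry, each point of $\wt{S}$ has a neighbourhood isometric to a $\CAT(0)$ neighbourhood downstairs, so $\tilde\rho$ is also locally $\CAT(0)$. Completeness of $\tilde\rho$ follows because $\pi_1(S)$ acts on $\wt{S}$ cocompactly by isometries: there is a compact fundamental domain, and a $\tilde\rho$-Cauchy sequence can be translated so that it stays in a compact set.
\end{enumerate}
Finally, $\wt{S}$ is simply connected, so Cartan--Hadamard applies directly to $(\wt{S},\tilde\rho)$ and shows it is $\CAT(0)$.

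The main obstacle is the choice of metric on $\wt{S}$. Cartan--Hadamard concerns the induced length metric, not an arbitrary pullback, so we must make sure that the ``pullback metric'' in the statement is the lifted length metric described above. If $\rho$ is only given as a metric, we first need that it is a length metric, which is assumed. The upgrade from local to global $\CAT(0)$ is the real content: it goes through the Alexandrov patchwork argument and the fact that local geodesics in a simply connected, non-positively curved space are unique in their homotopy class and hence globally minimizing. We do not reprove this; we cite it. Combined with Lemma~\ref{lem:cat0_ptolemy}, this shows that locally $\CAT(0)$ length metrics on $S$ are Ptolemy in the sense used in this section.
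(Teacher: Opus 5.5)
Your argument is correct and is the standard one. The paper states this lemma without proof, implicitly relying on the Cartan--Hadamard theorem (Bridson--Haefliger, Theorem~II.4.1) exactly as you do. Your step (3) is not needed: applying Cartan--Hadamard directly to the compact, hence complete, length space $(S,\rho)$ already shows that $\wt{S}$ with the lifted length metric is $\CAT(0)$. This also lets you drop the completeness argument for $\tilde\rho$, which as written is loose, since it implicitly assumes properness of $\tilde\rho$.
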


Otal proved in \cite{Otal90:SpectreMarqueNegative} that every negatively curved Riemannian metric $\rho$ on $X$ has an associated geodesic current $\mathcal{L}_{\rho}$ with the property that $i(\mathcal{L}_{\rho},g)=\ell_{\rho}(g)$. We will call the geodesic current $\mathcal{L}_{\rho}$ satisfying the above \emph{the dual current to $\rho$}. There are any metrics on $S$ with dual currents.

\begin{enumerate}
\item Non-positively curved Riemannian metrics
\cite[Theorem~A]{CFF92:RigidityNonPosCurvedRiem}.
\item Negatively curved Riemannian metrics with cones of angle $\geq 2\pi$ \cite[Theorem~A]{HP97:RigidityNegCurvedCone}.
\item Singular Euclidean metrics with conical singularities of angle at least $2\pi$ arising from quadratic differentials \cite[Lemma~9]{DLR10:DegenerationFlatMetrics}.
\item More generally, singular Euclidean conical metrics of angle $\geq 2\pi$ \cite[Proposition~3.3]{BL17:RigidityFlat}, \cite{Con18:MarkedNonpos}.
\item Singular Euclidean conical Finsler metrics with conical singularities at least $\geq 2\pi/k$ for $k \in \mathbb{N}$ \cite[Theorem~1.2]{PozzettiShi26:FinslerTranslation}.
\end{enumerate}

 In fact, many more notions of curve length are dual to geodesic currents: from recent work of D. Thurston and the second author~\cite[Corollary~B]{MGT25:Intersections}, we have the following result generalizing all of the above.
  
\begin{theorem}
For every length pseudometric $d_\rho$ on $S$, there exists a geodesic current $\mathcal{L}_{\rho}$ so that
\begin{equation}
\ell_{\rho}(\gamma)=i(\mathcal{L}_\rho,\gamma)
\label{eq:dual_current}
\end{equation}
for every closed curve $\gamma$ in $S$.
\label{thm:intersections}
\end{theorem}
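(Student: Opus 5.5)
The plan is to build the current $\mathcal{L}_\rho$ directly from the lifted pseudometric $\tilde\rho$ on $\wt{S}$. I would mimic the construction of the Liouville current from the hyperbolic cross-ratio, using the four-point expression that already appears in Lemma~\ref{lem:transversal_vs_distances} as the definition of the measure of a box. Throughout I identify $\partial\wt{S}$ with $\partial\wt{X}\cong S^1$ using the fixed hyperbolic structure $X$.

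\textbf{Step 1: cross-ratio on the boundary.} For four distinct cyclically ordered points $a,b,c,d\in\partial\wt{X}$, set
\[
[a,b,c,d]_\rho\coloneqq\lim\tfrac12\bigl(\tilde\rho(x,z)+\tilde\rho(y,w)-\tilde\rho(x,w)-\tilde\rho(y,z)\bigr),
\]
where $x\to a$, $y\to b$, $z\to c$, $w\to d$ within $\wt{S}$. Proposed sub-steps:
\begin{itemize}
\item The positivity input is planarity. Since $\tilde\rho$ is a length pseudometric on a topological disk, any path from $x$ to $z$ meets any path from $y$ to $w$ when the four points are linked on the boundary. Near-geodesic paths crossing at a point $p$ then give the quadrilateral inequalities
\[
\tilde\rho(x,z)+\tilde\rho(y,w)\ \ge\ \tilde\rho(x,w)+\tilde\rho(y,z)
\]
and the analogous one with the other diagonal pair, up to arbitrarily small error.
\item The expression is monotone along nested approximating quadruples, exactly as for the transversals $G_n$ of Lemma~\ref{lem:nested_transversals}.
\item Monotonicity together with the Gromov hyperbolicity and quasi-isometry of $\tilde\rho$ to $\wt{X}$ should give existence of the limit and independence of the approximating sequences. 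This uses cocompactness, and assumes the length pseudometric is, up to collapsing, quasi-isometric to $\wt{X}$.
\end{itemize}

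\textbf{Step 2: from cross-ratio to a current.} Define $\mu(B)\coloneqq[a,b,c,d]_\rho$ for the box $B=[a,b)\times[c,d)$.
\begin{itemize}
\item Nonnegativity comes from Step~1.
\item Finite additivity under subdividing one side of a box is the telescoping identity for the four-point expression.
\item $\pi_1(S)$-invariance follows from equivariance of $\tilde\rho$.
\item Countable additivity (regularity) should follow from continuity of the limit in the boundary points away from collapse; atoms are allowed, so I only need outer regularity, handled via the nested-box limits.
\item Carathéodory extension then yields a locally finite invariant Borel measure $\mathcal{L}_\rho$ on $\mathcal{G}(\wt{X})$.
\end{itemize}

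\textbf{Step 3: the duality \eqref{eq:dual_current}.} I would compare $d_{\mathcal{L}_\rho}$ with $\tilde\rho$ on points of a hyperbolic axis.
\begin{itemize}
\item For a primitive $g$ with axis $l$, tile $l$ by fundamental segments $[p,gp]$. By Lemma~\ref{lem:transversal_vs_distances} applied to degenerate quadruples, the $\mathcal{L}_\rho$-mass of geodesics crossing $[p,gp]$ transversely equals $\tilde\rho$-data up to boundary error.
\item Summing over $n$ translates and dividing by $n$ gives $i(\mathcal{L}_\rho,g)=\lim_n\tilde\rho(p,g^np)/n=\ell_\rho(g)$, using Proposition~\ref{item:dmu_length_intersection}. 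For length metrics the stable length equals the length of the shortest closed curve in the free homotopy class.
\item Non-primitive classes follow by homogeneity.
\end{itemize}

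\textbf{Main obstacle.} I expect the hardest part to be Step~1's existence and well-definedness of the limit, and its continuity, for a general length pseudometric. $\tilde\rho$ need not be uniquely geodesic, may be degenerate, and has no curvature control. My first attempt would rely only on planarity plus coarse hyperbolicity: show the four-point expression is a Cauchy net by bounding variations through the crossing-path argument. If this fails, the fallback is the abstract route of~\cite{MGT25:Intersections}. There one verifies that $\ell_\rho$ satisfies the smoothing (convex union) and quasi-smoothing inequalities and is additive on multicurves, and then invokes their extension-to-currents theorem to obtain a continuous linear functional $i(\mathcal{L}_\rho,\cdot)$.
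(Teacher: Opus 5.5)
The paper does not prove this statement itself; it is quoted from \cite[Corollary~B]{MGT25:Intersections}. Your direct construction has a genuine gap, exactly at the step you flag. Nothing in Step~1 shows that $\frac12\bigl(\tilde\rho(x,z)+\tilde\rho(y,w)-\tilde\rho(x,w)-\tilde\rho(y,z)\bigr)$ converges independently of how $x,y,z,w$ approach $a,b,c,d$.

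\emph{What your tools actually give.} Monotonicity holds only if you push the points outward along fixed $\tilde\rho$-geodesic diagonals. If $x$ lies on a $\tilde\rho$-geodesic from $x'$ to $z$, the triangle inequality shows the expression does not decrease when $x$ is replaced by $x'$. So you get limits along those particular sequences only. Gromov hyperbolicity then bounds the discrepancy between different approximating sequences by an additive $O(\delta)$, not by something tending to $0$.

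\emph{Why independence is not automatic.} Sequence-independence amounts essentially to uniqueness of Busemann functions at boundary points, and planarity plus coarse hyperbolicity do not give it. As stated it is even false. Take $d_{\delta_\gamma}$ with $\gamma$ a filling closed geodesic; it is quasi-isometric to $\wt X$ and has length spectrum $i(\delta_\gamma,\cdot)$. If $a$ is an endpoint of a lift of $\gamma$ whose other endpoint lies in $(c,d)$, letting $x\to a$ on the two sides of that lift changes the limit by $1$. For a general $\tilde\rho$ you have no control over where such discrepancies occur. The paper itself only asserts that $d_\rho$ and $d_{\mathcal{L}_\rho}$ are \emph{roughly} isometric (via \cite{CT25:Manhattan}), and that moves four-point expressions by bounded, not vanishing, amounts.

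\emph{What depends on it.} Everything downstream rests on this well-definedness:
\begin{itemize}
\item the telescoping identity needs the same approximating point for the subdivision point in two boxes that have different diagonals;
\item invariance and countable additivity need uniform control of the double limits;
\item in Step~3, Lemma~\ref{lem:transversal_vs_distances} concerns $d_\mu$ for a current $\mu$, not $\tilde\rho$, so identifying $\mathcal{L}_\rho$-masses with $\tilde\rho$-distances up to $o(n)$ is precisely what is missing.
\end{itemize}
Your quasi-isometry hypothesis also excludes degenerate length pseudometrics covered by the statement, such as $d_{\delta_c}$ for a simple closed curve $c$, whose dual current is not filling.

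The fallback does not close the gap either. The extension theorem driven by smoothing and quasi-smoothing gives a continuous homogeneous extension of $\ell_\rho$ to $\mathcal{C}(S)$, and additivity on multicurves makes it additive. But a continuous, positive, additive functional on currents need not be of the form $i(\mu,\cdot)$. For example, weighting hyperbolic length by a positive continuous function on $T^1X$ gives such functionals, and in general they violate the smoothing inequality that every $i(\mu,\cdot)$ satisfies. Producing a positive $\pi_1(S)$-invariant measure on $\mathcal{G}(\wt X)$ that represents $\ell_\rho$ is exactly the content of \cite[Corollary~B]{MGT25:Intersections}, and your proposal does not supply it.
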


Length pseudometrics on $S$ include any general Riemannian metric, Finsler metric, or any metric conformal to those. These results should convince the reader the assumption of having a dual geodesic current---which will appear in the statement of the next result---is a mild hypothesis.

Given $a, b \in \pi_1(S)$ with corresponding hyperbolic axes $\ora{a},\ora{b}$, we say $a,b$ have \emph{crossing axes} if the endpoints of the axes $\ora{a},\ora{b}$ are cyclically oriented as $(b^-, a^-, b^+, a^+)$.

\begin{theorem}
Let $\rho$ be a Ptolemy metric on $S$ which has a dual geodesic geodesic current $\mathcal{L}_\rho$. For any pair of $a, b \in \pi_1(S)$ with crossing axes, we have, for any $n \in \mathbb{N}$,
\[
\ell_{\rho}(a^n) \ell_{\rho}(b^n) \leq \frac{\ell_{\rho}(a^nb^n)^2}{4} +  \frac{\ell_{\rho}(a^nb^{-n})^2}{4}.
\]
\label{thm:Ptolemy_lengths}
\end{theorem}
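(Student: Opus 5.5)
Replace $a,b$ by $a^n,b^n$; these still have crossing axes, with the same axes. So it suffices to prove the case $n=1$, namely
\[
\ell_\rho(a)\ell_\rho(b)\le \tfrac14\ell_\rho(ab)^2+\tfrac14\ell_\rho(ab^{-1})^2 .
\]
The plan is to apply the Ptolemy inequality in $\wt S$ to a quadruple of orbit points placed around the crossing point of the axes. We then divide by the appropriate power of $m$ and let $m\to\infty$, so that distances become stable lengths.

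\textbf{Choice of quadruple.} Fix a basepoint $p\in\wt S$ (near where the axes of $a,b$ cross) and $m\in\mathbb N$. Take the four points
\[
x=a^{-m}p,\quad y=b^{-m}p,\quad z=a^{m}p,\quad w=b^{m}p.
\]
Because the axes cross, these are cyclically ordered as $b^-,a^-,b^+,a^+$ at infinity: $x,z$ lie toward $a^\mp$ and $y,w$ toward $b^\mp$. So the diagonals are $d(x,z)$ and $d(y,w)$. Ptolemy gives
\[
d(x,z)d(y,w)\le d(x,y)d(z,w)+d(x,w)d(y,z).
\]

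\textbf{Asymptotics of each term.} First, $d(x,z)=d(p,a^{2m}p)=2m\,\ell_\rho(a)+o(m)$, and similarly $d(y,w)=2m\,\ell_\rho(b)+o(m)$. Next, $d(z,w)=d(a^mp,b^mp)=d(p,a^{-m}b^mp)$. I would like this to be $\approx m\,\ell_\rho(ab^{-1})$-type growth, but it is not literally a power of a single element. The fix is to choose the quadruple more cleverly, namely along a periodic broken path. The sides should be segments on which the group element acting is a power of $ab$ or $ab^{-1}$.

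Concretely, I would use the $n=1$ statement with $(a,b)$ replaced by $(a^k,b^k)$ and the chain trick. Use points $q_j=(ab)^j p$. The quantities $d(p,(ab)^jp)\sim j\,\ell(ab)$, while the side lengths of quadrilaterals built from $a$- and $b$-steps are at most sums of $\ell(a)$ and $\ell(b)$ contributions. The cleaner route is to work directly with the dual current. Since $\rho$ is a length metric with dual current $\mathcal L_\rho$ (Theorem~\ref{thm:intersections}), and for length metrics $d_\rho$ is roughly isometric to $d_{\mathcal L_\rho}$, the identities from \cite[Lemma~8.13]{MGT25:Intersections} (used in Lemma~\ref{lem:stronghyp_cross}) express $\ell(a)+\ell(b)-\ell(ab)$ and $\ell(a)+\ell(b)-\ell(ab^{-1})$ as $\mathcal L_\rho$-masses of the two opposite double transversals at the crossing. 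Taking the quadrilateral $x,y,z,w$ whose sides are fundamental segments, the straightness of $d_{\mathcal L}$ (Proposition~\ref{item:dmu_straight}) gives the following, up to bounded error:
\begin{itemize}
\item the diagonals are $\ell(a)$ and $\ell(b)$;
\item the pairs of opposite sides sum to $\ell(ab)$ and $\ell(ab^{-1})$ respectively.
\end{itemize}

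\textbf{Conclusion via AM--GM.} Ptolemy on a quadrilateral with sides $s_1,s_2,s_3,s_4$ (in cyclic order) gives diagonal product $\le s_1s_3+s_2s_4$. Using $s_1s_3\le (s_1+s_3)^2/4$ and $s_2s_4\le(s_2+s_4)^2/4$ yields exactly
\[
\ell(a)\ell(b)\le \ell(ab)^2/4+\ell(ab^{-1})^2/4,
\]
which explains the factor $1/4$. Bounded errors are removed by running this for $a^m,b^m$, dividing by $m^2$, and using homogeneity of the $1/4$ bound. The main obstacle is the middle step: producing a quadrilateral in $\wt S$ whose diagonals realize $\ell(a^m),\ell(b^m)$ and whose opposite-side sums realize $\ell(a^mb^m),\ell(a^mb^{-m})$ up to $o(m)$. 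I would try first to place the vertices at the points where the axis of $a^mb^m$ (resp. $a^mb^{-m}$) fellow-travels the axes of $a$ and $b$, using the crossing to control these positions.
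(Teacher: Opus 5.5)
Your overall strategy matches the paper's: Ptolemy on a quadrilateral, AM--GM on pairs of opposite sides, comparison with the straight pseudometric $d_{\mathcal L_\rho}$, then scaling. There are, however, two genuine gaps.

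\textbf{The quadrilateral is never produced.} The bulleted identities are asserted for an unspecified ``quadrilateral whose sides are fundamental segments'', and you flag this yourself as the main obstacle. The missing idea is to look at \emph{sums} of opposite sides rather than single sides. The paper applies Ptolemy to $(y,x,b^ny,a^nx)$ with $x\in\ora{a}$ and $y\in\ora{b}$. By invariance,
\[
d(y,x)+d(b^ny,a^nx)=d(y,x)+d(x,a^{-n}b^ny),\qquad d(x,b^ny)+d(a^nx,y)=d(a^nx,y)+d(y,b^{-n}x).
\]
These are lengths of broken paths from a point to its image under $a^{-n}b^n$, respectively under $b^{-n}a^{-n}=(a^nb^n)^{-1}$. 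Hence they are always $\ge\ell(a^nb^{-n})$ and $\ge\ell(a^nb^n)$.

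The upper bound you need holds when these broken paths lie on the hyperbolic axes of those elements, because straightness of $d_{\mathcal L_\rho}$ then turns them into exact stable lengths. This fails for arbitrary $x\in\ora{a}$, $y\in\ora{b}$, but it can be arranged exactly:
\begin{itemize}
\item Let $p=\ora{a}\cap\ora{b}$, and take $x,y$ at hyperbolic distance $\tfrac n2\ell_X(a)$ and $\tfrac n2\ell_X(b)$ from $p$, towards $a^-$ and $b^-$.
\item With $H_q$ the half-turn about $q$, one has $a^n=H_pH_x=H_{a^nx}H_p$ and $b^n=H_pH_y$.
\item Hence $a^{-n}b^n=H_xH_y$ and $a^nb^n=H_{a^nx}H_y$, so $y,x,a^{-n}b^ny$ and $a^nx,y,b^{-n}x$ are consecutive points on the respective axes.
\end{itemize}
With $p$ the crossing point, your abandoned first quadruple is exactly this configuration for the pair $(a^{2m},b^{2m})$. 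For $d=d_{\mathcal L_\rho}$ this choice gives diagonals $\ell(a^n),\ell(b^n)$ and opposite-side sums $\ell(a^nb^{-n}),\ell(a^nb^n)$ with no error.

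\textbf{The error removal fails, and cannot be repaired as proposed.} Ptolemy holds for $d_\rho$, while the identities hold for $d_{\mathcal L_\rho}$. Using the rough-isometry constant $A$, and $d_\rho(x,a^nx)\ge\ell(a^n)$ on the left, you only get
\[
\ell(a^n)\ell(b^n)\le\bigl(\tfrac12\ell(a^nb^{-n})+A\bigr)^2+\bigl(\tfrac12\ell(a^nb^n)+A\bigr)^2 .
\]
There is no homogeneity to exploit: $\ell(a^mb^{\pm m})\neq m\,\ell(ab^{\pm1})$. For crossing axes the defect $\ell(a^m)+\ell(b^m)-\ell(a^mb^{\pm m})$ stays bounded, since it is controlled by $\mathcal L_\rho$-masses of double transversals at the crossing, which increase to finite box masses. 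So dividing by $m^2$ and letting $m\to\infty$ yields only $\ell(a)\ell(b)\le\tfrac12(\ell(a)+\ell(b))^2$, which is vacuous.

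The same boundedness shows that the stated inequality holds for \emph{every} filling current once $n$ is large. All its content therefore sits at bounded $n$, exactly where $A$ cannot be absorbed. The paper's proof ends with the same division by $n^2$ and passage to the limit, so it is subject to the same objection. Eliminating $A$ needs a genuinely different input, for instance a Ptolemy inequality for $d_{\mathcal L_\rho}$ itself; this is automatic when $\rho$ is hyperbolic, since one may take $X=\rho$. Neither your argument nor the paper's supplies such an input.
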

\begin{proof}
 Since $\wt{S}$ equipped with $\wt{\rho}$ is Ptolemy, we can apply the Ptolemy relation as follows.

For any $x,y \in \wt{X}$, any $a, b \in \pi_1(X)$, and any $n \in \mathbb{N}$,
apply the Ptolemy relation to the four tuple of elements $y,x, b^n y, a^n x$, obtaining
\begin{equation}
d_{\rho}(x,a^n x)d_{\rho}(y,b^n y) \leq d_{\rho}(y,x) d_{\rho}(a^n x,b^n y) +  d_{\rho}(y,a^n x) d_{\rho}(x,b^n y) 
\label{eq:ptolemy}
\end{equation}
On the other hand, by hypothesis, there exists a geodesic current $\mathcal{L}_\rho$ on $\mathcal{G}(\wt{X})$, for some hyperbolic metric $\wt{X}$, so that
\[
i(\mathcal{L}_\rho, [g]) = \ell_\rho([g])
\]
for every $g \in \pi_1(X)$.
In particular, if $d_{\mathcal{L}_\rho}^{X}$ is the pseudometric induced on $\wt{X}$ by the geodesic current $\mathcal{L}_\rho$ on  $\mathcal{G}(\wt{X})$, we have that
\[
\ell_{d_{\mathcal{L}_\rho}^{X}}([g])=\ell_\rho([g]),
\]
and, hence, by~\cite[Theorem~1.2]{CT25:Manhattan}, $d_{\mathcal{L}_\rho}^{X}$ and $d_\rho$ are roughly isometric, i.e.,
there exist a uniform constant $A>0$ so that for every $p, q \in \wt{X}$,
\begin{equation}
|d_{\mathcal{L}_\rho}^{X}(p,q)-d_\rho(p,q)|<A.
\label{eq:roughly_eq}
\end{equation}
Let $\ora{a}$ and $\ora{b}$ be hyperbolic axes of $a$ and $b$, respectively.
Then, combining Equation~\ref{eq:ptolemy} and Equation~\ref{eq:roughly_eq}, we get the inequality
\begin{equation}
\begin{aligned}
(d_{\mathcal{L}_\rho}^{X}(x,a^n x)-A)(d_{\mathcal{L}_\rho}^{X}(y,b^n y)-A)
&\leq (d_{\mathcal{L}_\rho}^{X}(y,x)+A)( d_{\mathcal{L}_\rho}^{X}(a^n x,b^n y)+A) \\
&\quad + (d_{\mathcal{L}_\rho}^{X}(y,a^n x)+A)(d_{\rho}(x,b^n y)+A).
\end{aligned}
\label{eq:ptolemy2}
\end{equation}

Let $\ora{a},\ora{b}$ denote the hyperbolic axes of $a,b \in \pi_1(X)$, and suppose $\ora{a},\ora{b}$ cross. Assume, furthermore, $x \in \ora{a}$ and $y \in \ora{b}$.

    \begin{figure}[h]
\fontsize{9pt}{8pt}\selectfont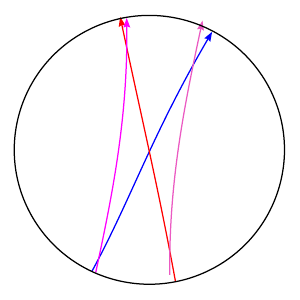
    \caption{ }\label{fig:ptolemy}
\end{figure}

Compare Figure~\ref{fig:ptolemy} for what follows. 
Using~\cite[Lemma~8.12]{MGT25:Intersections}, we get, 
\[
d_{\mathcal{L}_\rho}^{X}(a^n x ,b^n y) = d_{\mathcal{L}_\rho}^{X}( x ,A^n b^n y)
\]
Also, since for any numbers $r,s$, we have $rs \leq (r+s)^2/4$, we can obtain
\begin{equation}
d_{\mathcal{L}_\rho}^{X}(y,x) d_{\mathcal{L}_\rho}^{X}(a^n x,b^n y) \leq (d_{\mathcal{L}_\rho}^{X}(x,y) + d_{\mathcal{L}_\rho}^{X}(a^n x,b^n y))^2/4 = d_{\mathcal{L}_\rho}^{X}(x,a^{-n}b^n x)^2/4
\label{eq:term1}
\end{equation}
where we used that $d_{\mathcal{L}_\rho}^{X}$ is a straight pseudometric (recall Definition~\ref{def:current_pseudo}.
Similarly,
\begin{equation}
d_{\mathcal{L}_\rho}^{X}(y,a^n x) d_{\mathcal{L}_\rho}^{X}(x,b^n y) \leq (d_{\mathcal{L}_\rho}^{X}(y,a^n x)+ d_{\mathcal{L}_\rho}^{X}(a^n x,a^n b^n y) )^2/4 = d_{\mathcal{L}_\rho}^{X}(y,a^{n}b^n y)^2/4
\label{eq:term2}
\end{equation}

Combining Equations~\ref{eq:term1} and~\ref{eq:term2} with Equation~\ref{eq:ptolemy2}, we get the inequality
\begin{equation*}
\begin{aligned}
(d_{\mathcal{L}_\rho}^{X}(x,a^n x)-A)(d_{\mathcal{L}_\rho}^{X}(y,b^n y)-A)
&\leq d_{\mathcal{L}_\rho}^{X}(y,x)\, d_{\mathcal{L}_\rho}^{X}(a^n x,b^n y) \\
&\quad + A d_{\mathcal{L}_\rho}^{X}(a^n x,b^n y)
+ A d_{\mathcal{L}_\rho}^{X}(y,x) \\
&\quad + d_{\mathcal{L}_\rho}^{X}(y,a^n x)\, d_{\rho}(x,b^n y) \\
&\quad + A d_{\rho}(x,b^n y)
+ A d_{\mathcal{L}_\rho}^{X}(y,a^n x) + A^2
\end{aligned}
\label{eq:ptolemy3}
\end{equation*}

Upon dividing both sides of the inequality by $n^2$ and letting $n$ go to infinity, the term
\[
\frac{A d_{\mathcal{L}_\rho}^{X}(a^n x,b^n y)
+ A d_{\mathcal{L}_\rho}^{X}(y,x) + Ad_{\rho}(x,b^n y)+Ad_{\mathcal{L}_\rho}^{X}(y,a^n x) + A^2}{n^2}
\]
on the right hand side goes to $0$, and the desired inequality in terms of stable lengths is obtained using the fact that $\ell_{d_{\mathcal{L}_\rho^X}}([g])=\ell_\rho([g])$.
\end{proof}

By Theorem~\ref{thm:intersections}, every length metric $\rho$ on $S$ determines a dual geodesic current $\mathcal{L}_\rho$.
We denote by $\operatorname{DPtolem}(S)$ the subset of $\C(S)$ consisting of those currents arising from Ptolemaic length metrics, namely
\[
\operatorname{DPtolem}(S)
\coloneqq
\left\{
\mathcal{L}_\rho \in \C(S)
\;\middle|\;
\begin{array}{l}
\text{there exists a Ptolemaic length metric } \\ \rho \text{ on } S 
\text{ such that } \mathcal{L}_\rho \text{ is dual to } \ell_\rho
\end{array}
\right\}.
\]

\begin{lemma}
Let $\gamma$ be a closed curve with at least one self-intersection
and $d_{\gamma}$ the pseudometric on $\wt{X}$ of its associated geodesic current. Then there exists a crossing pair $a,b \in \pi_1(S)$, $n \in \mathbb{N}$ so that
\[
i(\gamma, a^n) i(\gamma,b^n) -\frac{i(\gamma,a^nb^n)^2}{4} -  \frac{i(\mu,a^nb^{-n})^2}{4}>1.
\]
\label{lem:notptolemaic}
\end{lemma}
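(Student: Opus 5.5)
The plan is to construct the crossing pair directly from a self-intersection of $\gamma$. The idea is to make $i(\gamma,a^n)$ and $i(\gamma,b^n)$ grow linearly in $n$ with large slopes, while $i(\gamma,a^nb^{\pm n})$ grows with slopes small enough that the quadratic form in the statement is eventually positive. Writing $\alpha=i(\gamma,a)$, $\beta=i(\gamma,b)$ and $\sigma_\pm=\lim_n i(\gamma,a^nb^{\pm n})/n$, it suffices to find a crossing pair with
\[
\alpha\beta-\tfrac14\sigma_+^2-\tfrac14\sigma_-^2>0 .
\]
The left-hand side of the lemma is then $n^2$ times this positive constant plus $o(n^2)$, so it exceeds $1$ for $n$ large. (In the statement, $i(\mu,\cdot)$ should read $i(\gamma,\cdot)$.) By Proposition~\ref{prop:dmu}\ref{item:dmu_length_intersection}, all these quantities are stable lengths of $d_\gamma$, and $i(\gamma,a^n)=n\,i(\gamma,a)$. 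The limits $\sigma_\pm$ exist by subadditivity along the straight pseudometric $d_\gamma$, and I would compute them with the same lemma used in the proof of Theorem~\ref{thm:Ptolemy_lengths}, namely \cite[Lemma~8.12]{MGT25:Intersections}, with base points on the crossing axes.

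Step 1 is the geometric input. In contrast with Liouville currents, where crossing forces $\ell(a^nb^n)\approx n\ell(a)+n\ell(b)-O(1)$ and the Ptolemy inequality holds, for the atomic current $\delta_\gamma$ we can choose $a,b$ so that many lifts of $\gamma$ crossing the axis of $a$ are \emph{the same} lifts that cross the axis of $b$. Such a lift crosses both $\ora a$ and $\ora b$ near their intersection point. Geodesics crossing both axes inside a fundamental domain of the crossing configuration are exactly the ones that may fail to cross the axis of $a^nb^n$ (respectively $a^nb^{-n}$). In the language of Lemma~\ref{lem:transversal_vs_distances}, $i(\gamma,a^nb^n)$ and $i(\gamma,a^nb^{-n})$ equal $n(\alpha+\beta)$ minus twice the $\delta_\gamma$-mass of the relevant double transversals $G_n$, $G_n^\perp$. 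These masses grow linearly, with slopes $c_\pm$ measuring how many $\gamma$-arcs per period run between the two axes on each side. So $\sigma_\pm=\alpha+\beta-2c_\pm$, and the goal becomes
\[
\alpha\beta>\tfrac14(\alpha+\beta-2c_+)^2+\tfrac14(\alpha+\beta-2c_-)^2 .
\]

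Step 2 is the construction. Let $p$ be a self-intersection of $\gamma$, and split $\gamma$ at $p$ into two loops $u,v$, so that $\gamma\simeq uv$. Set $a=u^k w$ and $b=v^k w'$, where $w,w'$ are short corrections (or simply replace $u,v$ by suitable conjugates) chosen so that the axes of $a$ and $b$ cross at a lift of $p$. The intended effect is that, for $k$ large, essentially every crossing of $\gamma$ with $\ora a$ is accompanied by a nearby parallel crossing with $\ora b$. That would force $c_\pm$ to be comparable to $\alpha$ and $\beta$, making both $\sigma_\pm$ small relative to $\alpha+\beta$, say $\sigma_\pm\le(1-\eta)(\alpha+\beta)$ for some fixed $\eta>0$. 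With $\alpha\approx\beta$ this already beats the threshold, which for $\alpha=\beta$ and $\sigma_+=\sigma_-$ is $\sigma_\pm<\sqrt2\,\alpha=\tfrac{\sqrt2}{2}(\alpha+\beta)$, up to the factor $\eta$ needed to reach it. An alternative worth trying first, because it is cleaner, is to pass via Scott's LERF theorem \cite{Scott78:LERF} to a finite cover where a lift of $\gamma$ is simple. Then pick $a,b$ in the cover whose relevant intersection counts are computed by a train-track or bigon count, and push the inequality back down using the transfer map.

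The main obstacle is Step 2: certifying a crossing pair for which the savings $c_\pm$ are simultaneously large on \emph{both} sides, $a^nb^n$ and $a^nb^{-n}$. For a simple multicurve the savings on one side vanish, which is why the self-intersection is essential. I would first test the construction on the figure-eight curve on a pair of pants, computing the three slopes $\alpha,\beta,\sigma_\pm$ by hand, and then argue that the local picture at a self-intersection embeds in general.
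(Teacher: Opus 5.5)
Your reduction to the regime $n\to\infty$ with $a,b$ fixed cannot work. The false step is the claim in Step~1 that the double-transversal masses grow linearly in $n$; for a fixed pair they are bounded.

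Up to translation, the lifts of $\gamma$ responsible for the savings cross both $\ora{a}$ and $\ora{b}$. So they lie in the box $(b^-,a^-)\times(b^+,a^+)$ or the box $(a^-,b^+)\times(a^+,b^-)$. Since $a^\pm,b^\pm$ are four distinct points, these boxes have compact closure in $\mathcal{G}(\wt{X})$ and contain finitely many lifts, a number independent of $n$. Equivalently, $\frac1n\delta_{a^nb^{\pm n}}\to\delta_a+\delta_b$ (the mechanism of Lemma~\ref{lem:nonsimple_dense}), so continuity of $i$ gives $\sigma_\pm=\alpha+\beta$ for every current and every pair. Your limiting quantity is therefore
\[
\alpha\beta-\tfrac14\sigma_+^2-\tfrac14\sigma_-^2=\alpha\beta-\tfrac12(\alpha+\beta)^2=-\tfrac12(\alpha^2+\beta^2)\le 0 .
\]
For any fixed crossing pair, the left-hand side of the lemma equals $-\tfrac{n^2}{2}(\alpha^2+\beta^2)+o(n^2)$. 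In other words, the inequality of Theorem~\ref{thm:Ptolemy_lengths} holds automatically for large $n$ for \emph{every} current, so no choice of $a,b$ rescues the reduction.

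Step~2 hits the same wall. With $a=u^kw$, $b=v^kw'$ and $k\to\infty$, the axes of $a$ and $b$ follow the axes of $u$ and $v$, which are distinct geodesics and diverge. So only boundedly many lifts of $\gamma$ cross both axes, while $\alpha$ and $\beta$ grow linearly in $k$; the ``nearby parallel crossings'' you hope for do not occur. The LERF alternative does not address this, since the obstruction is independent of the current.

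The missing idea is to work at a fixed $n$, with a finite count, in a configuration where the two axes run close to each other. In the symmetric case $K=i(\gamma,a^n)=i(\gamma,b^n)$, write $i(\gamma,a^nb^{\pm n})=2K-2m_\pm$. Then the expression in the lemma is exactly $2Km_+-m_+^2-(K-m_-)^2$.

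So you do not need both savings to be large, as Step~2 aims for. You need three things:
\begin{itemize}
\item almost every lift crossing one window also crosses the other inside the same box, i.e.\ $K-m_-$ is bounded;
\item at least one lift lies in the opposite box, i.e.\ $1\le m_+\ll K$;
\item $K$ is large enough to absorb the errors.
\end{itemize}
A box whose endpoints have cross-ratio in a compact range carries uniformly boundedly many lifts, by cocompactness. Hence many shared lifts force the axes of $a$ and $b$ to fellow-travel each other over a long stretch.

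This is what the paper does. It takes a lift $\wt{\gamma}$ and a bundle of lifts $g^k\wt{\gamma}$, $0\le k\le k_n$, all crossing $\wt{\gamma}$; this is where the self-intersection is used. It then chooses the crossing pair depending on $n$ so that $y,x,b^ny,a^nx$ sit at the corners of this bundle. The resulting count is $i(\gamma,a^n)=i(\gamma,b^n)=k_n+1$, $i(\gamma,a^nb^n)=2k_n$ and $i(\gamma,a^nb^{-n})=2$, i.e.\ $m_+=1$ and $m_-=k_n$. This gives $(k_n+1)^2-k_n^2-1=2k_n>1$.
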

\begin{proof}
For the following, compare Figure~\ref{fig:filling}.
Since $\gamma$ has a self-intersection, there exists a lift $\wt{\gamma}$ and an element $g \in \pi_1(S)$ so that
$g^k \cdot \wt{\gamma}$ crosses $\wt{\gamma}$ for every $k \in \mathbb{Z}$.
By density of axes of hyperbolic elements, pick $a,b$ crossing and so that $y, x, b^n y, a^n x$ appear in the corners of a bundle of the lifts of $g^{k_n} \wt{\gamma}$ and $\wt{\gamma}$, where here $k_n$ depends on $n$ (in fact, if the translation length of $g$ is very long, compared to $a,b$, one might have to take $n$ larger than 1).
Now, we note:
\[
i(\gamma, a^n) = i(\gamma, b^n) = k_n +1
\]
and
\[
i(\gamma, a^nb^n) = 2 k_n
\]
and
\[
i(\gamma, a^nb^{-n}) = 2
\]
Thus, we have
\[
(k_n+1)^2 - (2k_n)^2/4+ 2^2/4 - k_n^2- 1 = 2k_n>1.
\]
\end{proof}

\begin{corollary}
Let $S$ be a closed surface.  Then $\operatorname{DPtolem}(S)$ is not dense in $\C(S)$.
\label{cor:notdense}
\end{corollary}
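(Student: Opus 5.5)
The plan is to show that $\operatorname{DPtolem}(S)$ is contained in a closed subset $\mathcal{P}\subset \C(S)$ that misses the current $\delta_\gamma$ of some closed curve $\gamma$ with a self-intersection. Since $\delta_\gamma\in\C(S)$, this rules out density.

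\textbf{Step 1: the closed set.} For each crossing pair $a,b\in\pi_1(S)$ and each $n\in\mathbb{N}$, consider the function
\[
F_{a,b,n}(\mu)\coloneqq i(\mu,a^n)\,i(\mu,b^n)-\frac{i(\mu,a^nb^n)^2}{4}-\frac{i(\mu,a^nb^{-n})^2}{4}.
\]
It is continuous on $\C(S)$ because Bonahon's intersection pairing $i(\cdot,\cdot)$ is continuous. Set
\[
\mathcal{P}\coloneqq\bigcap_{a,b,n}F_{a,b,n}^{-1}\bigl((-\infty,0]\bigr).
\]
As an intersection of closed sets, $\mathcal{P}$ is closed.

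\textbf{Step 2: $\operatorname{DPtolem}(S)\subset\mathcal{P}$.} Let $\mathcal{L}_\rho\in\operatorname{DPtolem}(S)$. Then $\rho$ is a Ptolemaic length metric with dual current $\mathcal{L}_\rho$, so $i(\mathcal{L}_\rho,g)=\ell_\rho(g)$ for every $g$. Theorem~\ref{thm:Ptolemy_lengths} then gives $F_{a,b,n}(\mathcal{L}_\rho)\le 0$ for every crossing pair $(a,b)$ and every $n$. Consequently the closure of $\operatorname{DPtolem}(S)$ is also contained in $\mathcal{P}$.

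\textbf{Step 3: a current outside $\mathcal{P}$.} Take $\gamma$ to be a closed curve with at least one self-intersection; such curves exist on any closed surface of genus at least two. Lemma~\ref{lem:notptolemaic} provides a crossing pair $a,b$ and an $n$ with $F_{a,b,n}(\delta_\gamma)>1>0$, so $\delta_\gamma\notin\mathcal{P}$. By continuity of $F_{a,b,n}$, the set
\[
U\coloneqq\{\mu: F_{a,b,n}(\mu)>0\}
\]
is an open neighborhood of $\delta_\gamma$ disjoint from $\operatorname{DPtolem}(S)$. Hence $\operatorname{DPtolem}(S)$ is not dense. The same argument also shows that no projective rescaling rescues density, since $F_{a,b,n}$ is homogeneous of degree $2$.

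\textbf{Main obstacle.} The delicate input is Lemma~\ref{lem:notptolemaic} itself. It requires a crossing pair whose intersection counts with $\gamma$ are computed correctly. If that computation were in doubt, I would choose $a,b$ explicitly: let $a$ be the element whose axis crosses $\wt\gamma$ and its translates $g^k\wt\gamma$ in the prescribed pattern, and let $b$ be another such element, then count the crossings of each axis with the lifts of $\gamma$ directly.

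Given the earlier results, the argument itself is a formal consequence of continuity of the intersection pairing.
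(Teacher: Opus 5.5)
Your proposal is correct and is essentially the paper's proof: the inequalities of Theorem~\ref{thm:Ptolemy_lengths} define a weak$^*$-closed set that contains $\operatorname{DPtolem}(S)$, and Lemma~\ref{lem:notptolemaic} supplies a self-intersecting curve $\gamma$ with $\delta_\gamma$ outside that set. The differences are only in presentation: you justify closedness directly from the continuity of Bonahon's intersection form (where the paper cites Otal), and you exhibit an explicit open neighborhood $U$ of $\delta_\gamma$ that misses $\operatorname{DPtolem}(S)$.
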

\begin{proof}
By Theorem~\ref{thm:intersections}, let $\mathcal{L}_{\rho_n}$ be any sequence of geodesic currents in $\operatorname{DPtolem}(S)$. If $\mu$ is a limit point of such sequence in weak$^*$ topology, then by~Theorem~\ref{thm:Ptolemy_lengths}, and~\cite[Th\'eor\`eme~2]{Otal90:SpectreMarqueNegative}, it must satisfy the condition in Theorem~\ref{thm:Ptolemy_lengths}, i.e., 
\begin{equation}
i(\mu, a^n) i(\mu,b^n) \leq \frac{i(\mu,a^nb^n)^2}{4} +  \frac{i(\mu,a^nb^{-n})^2}{4}.
\label{eq:ptolemaic_length}
\end{equation}
for any pair of $a, b \in \pi_1(S)$ with crossing axes (where $B$ denotes the inverse of $b$), and for any $n \in \mathbb{N}$,

    \begin{figure}[h]
\fontsize{9pt}{8pt}\selectfont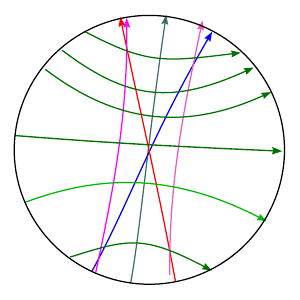
    \caption{ }\label{fig:filling}
\end{figure}

However, by Lemma~\ref{lem:notptolemaic}, the pseudometric associated to a filling curve $\gamma$ is does not satisfy condition~\eqref{eq:ptolemaic_length}.
\end{proof}

We have the following immediate corollary.
Let $\DNonPos(S)$ denote the subspace of geodesic currents dual to locally $\CAT(0)$ length metrics on $S$.

\begin{corollary}
$\DNonPos(S)$ is not dense in the space of geodesic currents.
\label{cor:neg_notdense}
\end{corollary}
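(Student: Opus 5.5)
We deduce Corollary~\ref{cor:neg_notdense} from Corollary~\ref{cor:notdense} by showing that
\[
\DNonPos(S)\subseteq \operatorname{DPtolem}(S).
\]
A subset of a non-dense set is non-dense, so this inclusion suffices. For extra care, we instead show that the closure of $\DNonPos(S)$ misses a nonempty open set. This is the set identified in the proof of Corollary~\ref{cor:notdense}: currents violating inequality~\eqref{eq:ptolemaic_length} for some crossing pair $a,b$ and some $n$.

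\textbf{Step 1: the inclusion.} Let $\mu\in\DNonPos(S)$, so $\mu=\mathcal{L}_\rho$ for a locally $\CAT(0)$ length metric $\rho$ on $S$. By the lemma preceding Theorem~\ref{thm:intersections}, the pullback $\wt\rho$ on $\wt S$ is $\CAT(0)$. By Lemma~\ref{lem:cat0_ptolemy}, $\wt\rho$ is therefore Ptolemy, so $\rho$ is a Ptolemaic length metric. Its dual current exists by Theorem~\ref{thm:intersections}, hence $\mathcal{L}_\rho\in\operatorname{DPtolem}(S)$. In particular, Theorem~\ref{thm:Ptolemy_lengths} applies to $\rho$.

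\textbf{Step 2: the open set.} Fix a closed curve $\gamma$ with a self-intersection. Lemma~\ref{lem:notptolemaic} provides a crossing pair $a,b$ and an integer $n$ such that the quantity
\[
F(\nu)\coloneqq i(\nu,a^n)\,i(\nu,b^n)-\tfrac14 i(\nu,a^nb^n)^2-\tfrac14 i(\nu,a^nb^{-n})^2
\]
satisfies $F(\delta_\gamma)>1$. Since $i$ is continuous, $F$ is continuous on $\C(S)$, so $U=\{F>0\}$ is an open neighborhood of $\delta_\gamma$. By Step~1 and Theorem~\ref{thm:Ptolemy_lengths}, $F\le 0$ on $\DNonPos(S)$, and by continuity also on its closure. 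Hence the closure of $\DNonPos(S)$ misses $U$, and $\DNonPos(S)$ is not dense.

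\textbf{Main obstacle.} The only delicate point is Lemma~\ref{lem:notptolemaic}, namely producing a crossing pair whose intersection counts with $\gamma$ violate the quadratic inequality. We take that lemma as given. Everything else is formal: a chain of inclusions together with continuity of the intersection form. Negatively curved and non-positively curved Riemannian metrics are locally $\CAT(0)$, so the corollary covers those cases as well.
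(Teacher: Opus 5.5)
Your proposal is correct and follows the paper's route. The paper treats this as an immediate consequence of Corollary~\ref{cor:notdense} via the inclusion $\DNonPos(S)\subseteq\operatorname{DPtolem}(S)$: a locally $\CAT(0)$ length metric lifts to a $\CAT(0)$ metric, which is Ptolemy by Lemma~\ref{lem:cat0_ptolemy}. Your Step~2 repeats the proof of Corollary~\ref{cor:notdense} (continuity of $i$ plus Lemma~\ref{lem:notptolemaic}), making the closed set $\{F\le 0\}$ explicit where the paper argues with limit points; this is a slightly cleaner packaging of the same argument.
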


$\DNonPos(S)$ includes, for example, geodesic currents dual to negatively curved 
Riemannian metrics.

This contradicts~\cite[Lemma~2.8]{Ham99:Cocycles}, where it is claimed that the Liouville currents dual to negatively curved Riemannian metrics are dense in the space of geodesic currents.
We briefly indicate where the argument breaks down.

On~\cite[Page~108]{Ham99:Cocycles} it is asserted that if geodesic currents $\alpha,\beta$ satisfy
\[
i(\alpha,\gamma)\le i(\beta,\gamma)
\]
for every closed curve $\gamma$, then $\alpha \ll \beta$, i.e.,\ $\alpha$ is absolutely continuous with respect to $\beta$.
However, this implication does not hold in general.

Indeed, let $\beta$ be a figure-eight curve on a genus~2 surface, and let $\alpha$ be the multicurve obtained by resolving its self-intersection into two disjoint simple closed curves.
Then $i(\alpha,\gamma)\le i(\beta,\gamma)$ for every closed curve $\gamma$, while $\alpha$ is not absolutely continuous with respect to $\beta$.

This implication is used in an essential way in the proof of~\cite[Lemma~2.8]{Ham99:Cocycles}, and therefore the argument does not establish the claimed result.
We also note that several subsequent results in~\cite{Ham99:Cocycles}, including~\cite[Theorem~C]{Ham99:Cocycles} (as well as~\cite[Theorems~2.8,~2.9]{Ham99:Cocycles}), rely on this lemma. Nevertheless, we believe those results should hold true by different methods.

\subsection{Examples of strongly hyperbolic geodesic metrics which are not $\CAT(0)$}

Motivated by the preceding discussion, we conclude with a brief comparison of the classes of strongly hyperbolic and Ptolemaic metrics, and show that our methods allow us to construct infinitely many invariant geodesic metrics on $\wt{S}$ which are strongly hyperbolic but not $\CAT(0)$. 

First, we note that it is easy to construct examples of $\CAT(0)$ spaces (and hence Ptolemaic spaces) which are not strongly hyperbolic (Section~\ref{subsec:bolic_nonstronglyhyp_CAT}). On the other hand, any $\CAT(-k)$ metric space with $k>0$ is both strongly hyperbolic~\cite[Theorem~5.1]{NicaSpakula2016} and $\CAT(0)$, hence, Ptolemaic (Lemma~\ref{lem:cat0_ptolemy}). Theorem~\ref{thm:Ptolemy_lengths} allows us to construct infinitely many strongly hyperbolic metrics which are geodesic and not $\CAT(0)$. 

\begin{corollary}
There exists a dense family $\mathcal F$ of geodesic currents such that, for every $\mu\in\mathcal F$, the associated $\pi_1(S)$-invariant pseudometric $d_\mu$ on $\wt S$ is a strongly hyperbolic geodesic metric but not Ptolemaic, and hence not $\CAT(0)$.
\label{cor:sh_not_cat0}
\end{corollary}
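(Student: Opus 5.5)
The plan is to combine three earlier ingredients: the density of $\mathbb{R}\Teich^\infty(S)$ (Theorem~\ref{mainthm:rteich_dense}), the strong hyperbolicity of its elements (Theorem~\ref{mainthm:rinftyeich_sh}), and the quantitative failure of the Ptolemy length inequality for non-simple curves (Lemma~\ref{lem:notptolemaic}). Since the Ptolemy constraint of Theorem~\ref{thm:Ptolemy_lengths} is a \emph{closed} condition in the weak$^*$ topology, its failure is an open condition. I would therefore set
\[
\mathcal F \coloneqq \mathbb{R}\Teich^\infty(S)\cap \mathcal U,
\]
where $\mathcal U\subset \C(S)$ is the set of currents $\mu$ for which some crossing pair $a,b$ and some $n$ satisfy
\[
i(\mu,a^n)\,i(\mu,b^n) > \tfrac14 i(\mu,a^nb^n)^2+\tfrac14 i(\mu,a^nb^{-n})^2 .
\]

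The first step is to show that $\mathcal U$ is open and dense. Openness is immediate from the continuity of $i(\cdot,\delta_g)$ for each fixed $g$. For density, given any $\nu\in\C(S)$, take the pair $a,b,n$ produced by Lemma~\ref{lem:notptolemaic} for a self-intersecting curve $\gamma$. The defect function $D(\mu)$ (left side minus right side) is a quadratic form in $\mu$. Evaluating along $\nu+t\delta_\gamma$ gives a polynomial in $t$ whose leading coefficient is $D(\delta_\gamma)>1>0$. Hence $D(\nu+t\delta_\gamma)>0$ for all but finitely many $t$, in particular for arbitrarily small $t>0$, which shows $\nu$ is a limit of points of $\mathcal U$. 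Since $\mathbb{R}\Teich^\infty(S)$ is dense and $\mathcal U$ is open and dense, the intersection $\mathcal F$ is dense.

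The second step verifies the required properties for $\mu\in\mathcal F$. Strong hyperbolicity follows from Theorem~\ref{mainthm:rinftyeich_sh}. To see that $d_\mu$ is a genuine geodesic metric, I would use the structure of these currents. Each $\mu\in\mathbb{R}\Teich^\infty$ with some $\lambda_i>0$ is a positive multiple of a transfer of Liouville currents, so it has full support and no atoms. Consequently $d_\mu$ is a metric, since every nondegenerate segment is crossed by a set of geodesics of positive measure. Geodesicity should follow because $d_\mu$ is straight (Proposition~\ref{item:dmu_straight}) and continuous, so hyperbolic geodesic segments realize $d_\mu$-distances. (Elements with all $\lambda_i=0$ are the zero current, which lies outside $\mathcal U$.)

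For non-Ptolemaicity, suppose $d_\mu$ were Ptolemaic. Being a geodesic invariant metric, it descends to a Ptolemaic length metric $\rho$ on $S$ with $\ell_\rho(g)=i(\mu,g)$, so $\mu$ is dual to $\rho$. Theorem~\ref{thm:Ptolemy_lengths} would then force $D(\mu)\le 0$ for every crossing pair and every $n$, contradicting $\mu\in\mathcal U$. By Lemma~\ref{lem:cat0_ptolemy}, $d_\mu$ is therefore not $\CAT(0)$ either.

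I expect the main obstacle to be the geodesic-metric claim. It needs the equality $d_\mu(x,z)=d_\mu(x,y)+d_\mu(y,z)$ along segments together with continuity of $d_\mu$ with respect to the hyperbolic topology. Both should follow from non-atomicity of Liouville currents, but the details must be checked through the transfer map.
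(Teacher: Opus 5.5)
\textbf{Gap in the density of $\mathcal U$.} Write $D=D_{a,b,n}$ for the defect attached to the pair that Lemma~\ref{lem:notptolemaic} supplies for $\gamma$. Then $t\mapsto D(\nu+t\delta_\gamma)=D(\nu)+\beta t+D(\delta_\gamma)t^2$, for some $\beta\in\mathbb R$, is indeed a quadratic with positive leading coefficient. But this only makes it positive for $|t|$ \emph{large}: a nonzero polynomial has finitely many zeros, not finitely many non-positive values. If $D(\nu)<0$, continuity gives $D(\nu+t\delta_\gamma)<0$ for all small $t$, so your perturbation produces no point of $\mathcal U$ near $\nu$. The values of $t$ where the quadratic is positive give, after rescaling to $\nu/t+\delta_\gamma$, currents accumulating at $\delta_\gamma$, not at $\nu$. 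Since $(a,b,n)$ is tied to $\gamma$ rather than to $\nu$, nothing prevents $D(\nu)<0$. For instance, currents dual to Ptolemaic metrics (such as $\mathcal L_X$) satisfy $D\le 0$ for every crossing pair by Theorem~\ref{thm:Ptolemy_lengths}, and these are exactly the points where density of $\mathcal U$ is in question.

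\textbf{How to repair it.} The missing ingredient is the one the paper uses: weighted non-simple closed curves are dense in $\C(S)$ (Lemma~\ref{lem:nonsimple_dense}). By Lemma~\ref{lem:notptolemaic} and the homogeneity $D(c\mu)=c^2D(\mu)$, each of them lies in $\mathcal U$. So $\mathcal U$ contains a dense set and is therefore open and dense, and $\mathcal F=\mathbb{R}\Teich^\infty(S)\cap\mathcal U$ is dense as you claim. The paper phrases the same step as follows: approximate a weighted non-simple curve by $\mu_i\in\mathbb{R}\Teich^\infty(S)$ via Theorem~\ref{mainthm:rteich_dense}; the strict inequality then persists for large $i$ by continuity of intersection numbers.

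With that repair the rest of your outline is fine. Strong hyperbolicity and non-Ptolemaicity are obtained as in the paper. Your sketch of geodesicity is a reasonable substitute for the paper's citation of \cite[Proposition~3.6]{DRMG23:Duals}: full support gives positivity of $d_\mu$, vanishing of $\mu$ on the geodesics through a point gives continuity along hyperbolic segments, and straightness then yields a $d_\mu$-isometric parametrization of each segment.
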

\begin{proof}
By Theorem~\ref{mainthm:rteich_dense}, let $\mu_i \in \mathbb{R}^\infty \Teich(S)$ be a sequence of geodesic currents converging to a non-simple curve $\gamma$.
By Lemma~\ref{lem:notptolemaic}, there exist a crossing pair $a,b \in \pi_1(S)$, $n \in \mathbb{N}$ so that for large enough $i$, we have
\[
i(\mu_i, a^n) i(\mu_i,b^n) -\frac{i(\mu_i,a^nb^n)^2}{4} +  \frac{i(\mu_i,a^nb^{-n})^2}{4}>1.
\]
Hence, by Lemma~\ref{thm:Ptolemy_lengths}  $d_{\mu_i}$ is not Ptolemaic.
The $d_{\mu_i}$ are metrics (and not just pseudometrics), since the geodesic currents $\mu_i$ have full support.
The fact that the $d_{\mu_i}$ are geodesic follows from \cite[Proposition~3.6]{DRMG23:Duals}. Finally, since by Lemma~\ref{lem:nonsimple_dense} weighted non-simple closed curves are dense in the space of geodesic currents, the result follows.
\end{proof}

In particular, these metrics have distinct marked length spectra (since they arise from distinct geodesic currents).

The following claim used in the proof above is straighforward, but we include it for completeness.
\begin{lemma}
\label{lem:nonsimple_dense}
Weighted non-simple closed curves are dense in the space of geodesic currents.
\end{lemma}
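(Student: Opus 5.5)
The plan is to combine the classical density of weighted closed curves with a small perturbation that replaces any simple curve by a non-simple one. By Bonahon~\cite{Bonahon88:GeodesicCurrent}, positive real multiples of closed curves $t\,\delta_c$, $t>0$, form a dense subset of $\C(S)$. It therefore suffices to show that every weighted closed curve $t\,\delta_c$ is a weak$^*$ limit of weighted non-simple closed curves. If $c$ is already non-simple there is nothing to do, so the case to treat is $c$ simple.

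For $c$ simple, choose a closed curve $\beta$ crossing $c$ essentially, for instance a simple closed curve with $i(c,\beta)\geq 1$. Such a curve exists because $S$ has genus at least two, so no essential simple curve is disjoint from everything. Represent $c$ and $\beta$ by elements of $\pi_1(S)$ based at a common point near an intersection point, and consider the closed curves $\gamma_n$ represented by $c^n\beta$. The standard fact (used, for example, in Bonahon's proof that closed curves are dense) is that
\[
\frac{1}{n}\,\delta_{c^n\beta}\longrightarrow \delta_c
\]
in the weak$^*$ topology. I would justify this by checking it against intersection numbers: $i(\delta_{c^n\beta},\alpha)/n\to i(c,\alpha)$ for every closed curve $\alpha$. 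By Otal's theorem (\cite[Th\'eor\`eme~2]{Otal90:SpectreMarqueNegative}, as invoked in Corollary~\ref{cor:notdense}), this convergence of length functions, together with the uniform bound on total mass that it provides, yields weak$^*$ convergence. A more direct alternative is to note that the geodesic representative of $c^n\beta$ fellow-travels $n$ copies of $c$ except along a bounded segment, so its lifts converge on compact boxes.

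The remaining point is that $\gamma_n$ is non-simple for large $n$. Intersection numbers are continuous, so $i(\gamma_n,\gamma_n)/n^2\to i(c,c)=0$, which is no use. Instead I would use that
\[
i(\gamma_n,c)=i(\beta,c)\geq 1.
\]
The geodesic of $\gamma_n$ winds around $c$ about $n$ times and also crosses $c$. A lift of $\gamma_n$ that crosses a lift $\wt c$ of $c$ is then crossed by its translates along $\wt c$, and this produces a transverse self-intersection once $n\geq 2$. Concretely, $\gamma_n$ is the Dehn twist $T_c^n$ applied to a curve related to $\beta$, up to homotopy, and one can exhibit an explicit self-intersection in an annular neighborhood of $c$. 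This combinatorial verification is the step I expect to need the most care, since one must rule out that the geodesic representative happens to be simple. If that argument proves awkward, I would switch to $\gamma_n=c^n\beta^2$ or $c^n\beta c^n\beta^{-1}$. The limit is unchanged after dividing by the appropriate $n$, and non-simplicity is then immediate, because a primitive class that is a proper power-like product of crossing curves, such as $c\beta c\beta^{-1}$ (the commutator of crossing simple curves), is never simple.

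Finally, $t\,\gamma_n/n\to t\,\delta_c$, which proves the lemma.
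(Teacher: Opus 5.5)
Your approximation step is fine, but the non-simplicity step has a genuine gap. For the most natural choice of $\beta$, your main construction in fact produces simple curves. If $i(c,\beta)=1$ and you base both curves at their intersection point, then $c^n\beta$ is freely homotopic to $T_c^{\pm n}(\beta)$. This is the Dehn twist description you mention yourself. A Dehn twist of a simple closed curve is simple, so $\frac1n\delta_{c^n\beta}$ is a sequence of weighted \emph{simple} curves for every $n$.

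The lift argument you sketch points the wrong way. Put $\widetilde c$ on the imaginary axis of the upper half-plane, with $c(z)=\lambda z$ and $\lambda>1$. A lift $\ell$ crossing $\widetilde c$ has endpoints $x<0<y$. Its translate $c^k\ell$ has endpoints $\lambda^k x,\lambda^k y$. These endpoint pairs are nested rather than interleaved, so $\ell$ and $c^k\ell$ are disjoint. Crossing $c$ is exactly what lets a curve wind many times around $c$ without self-intersecting.

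The fallbacks are not justified either. First, $c\beta c\beta^{-1}$ is not the commutator. Second, the commutator $c\beta c^{-1}\beta^{-1}$ of two simple curves meeting once is the boundary of a one-holed torus, which \emph{is} simple. So your claim that such commutators are never simple is false. The candidate $c^n\beta^2$ is left with no argument at all.

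The convergence part of your proposal is fine. It uses Otal's rigidity together with compactness of $\{\mu : i(\mu,\alpha_0)\le C\}$ for a filling $\alpha_0$.

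What you need is the opposite configuration, which is what the paper uses. Take $\beta$ simple and \emph{disjoint} from $c$, joined to it by an arc, and approximate by $\frac1n\delta_{\beta c^n}$. The approximant now enters and leaves the collar of $c$ on the same side, so $i(\beta c^n,c)=0$. A lift $\ell$ fellow-travelling $\widetilde c$ for about $n$ periods then has endpoints $0<x<y$ on one side of $\widetilde c$, with $y/x$ of order $\lambda^n$. Its translate $c\ell$ has endpoints $\lambda x,\lambda y$. These interleave with $x,y$ as soon as $y/x>\lambda$, which gives a transverse self-intersection for all large $n$.

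If you prefer to keep a crossing curve with $i(c,\beta)=1$, the curve $c^n\beta c^n\beta^{-1}$ does work, but for a different reason than the one you gave. Its homology class in the one-holed torus neighbourhood of $c\cup\beta$ is $2n[c]$, which is non-zero and non-primitive. Non-peripheral simple closed curves in that subsurface always have primitive homology classes, so this curve cannot be simple.
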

\begin{proof} By~\cite[Proposition~4.4]{Bonahon86:EndsHyperbolicManifolds}, up to scaling, closed curves are dense in the space of geodesic currents. Hence, it suffices to approximate any simple closed curve $\alpha$ via non-simple closed curves $\beta_n$. This is achieved by taking another simple closed curve $\gamma=[g]$ in $S$ not intersecting $\alpha=[h]$, where $g$ or  $h \in \pi_1(S)$ can be interchanged by $g^{-1}$ or $h^{-1}$ to make $gh^n$ non-simple, and considering the family of weighted non-simple closed curves $\beta_n \coloneqq gh^n/n \to h$ in the weak$^*$-topology.
\end{proof}
Thus, while the classes of Ptolemaic and strongly hyperbolic metrics intersect, neither is contained in the other.

We do not know of other examples of strongly hyperbolic geodesic metrics which are not $\CAT(0)$ on $S$.
The obvious examples of strongly hyperbolic metrics (\cite{NicaSpakula2016}) fail to be $\CAT(0)$ or else are $\CAT(-k)$ for $k>0$:
\begin{itemize}
\item Green metrics, which are typically not geodesic (only roughly geodesic).
\item $\CAT(-k)$ metrics are strongly hyperbolic (by the above discussion).
\end{itemize}

It would be interesting to know the  answer to the following question, which is not precluded by our construction.

\begin{question}
Are the metrics $d_{\mu}$ on $\Gamma=\pi_1(S)$ arising in Corollary~\ref{cor:sh_not_cat0} roughly isometric to any metrics induced by cobounded proper actions of $\Gamma$ on $\CAT(0)$ spaces?
\end{question}

\section{Strongly hyperbolic currents are dense}
\label{sec:stronglyhyp}

In this section we prove that strongly hyperbolic currents are dense in $\mathcal{C}(X)$. To prove this, we will consider a regular finite sheeted covering map $\pi \colon Y \to X$ and an associated transfer map $\Pi: \mathcal{C}(Y) \rightarrow C(X)$. Recall that a covering map is said to be \emph{regular} if the action of its deck group is transitive (equivalently, by the Galois correspondence, the corresponding subgroup is normal).

The transfer map is a natural map to consider for any finite-degree covering of hyperbolic surfaces.

\begin{definition}\label{def3.1}
Suppose $X, Y$ are hyperbolic surfaces.
Let $\pi \colon Y \to X$ be a finite-degree covering map of degree $n$. We define the \emph{transfer map} associated to $\pi$ and denoted by $\Pi: \mathcal{C}(Y) \rightarrow \mathcal{C}(X)$ as 
\[
\Pi(\mu) = \frac{1}{n} \sum_{i=1}^n (g_i)_* \mu
\]
where $\mu \in \mathcal{C}(Y)$ and the set $\{g_1, g_2 \dots, g_n\}$ is a \emph{transversal} of $\pi_1(Y) \leq \pi_1(X)$, i.e., a set of elements in $\pi_1(Y)$ so that $\pi_1(X)$ can be partitioned into left cosets as as $\cup_{i=1}^n g_i \pi_1(X)$. For any box $B$ in $G(\wt{X})$, $(g_i)_*(\mu)(B) = \mu((g_i)^{-1}(B))$. We choose $g_1$ to be the identity element in $\pi_1(X)$. Note that the definition of $\Pi$ depends on the choice of transversal, even though it is not included in the notation.
\end{definition}

Consider a quasiconformal diffeomorphism $f \colon X_1 \to X_2$ between conformally hyperbolic Riemann surfaces,
and lift it to a map $\wt{f} \colon \wt{X_1} \to \wt{X_2}$ between their universal covers.
The Beurling-Ahlfors Theorem~\cite[Corollary~4.9.4]{Hub08:Teich} says that $\wt{f}$ has a continuous extension to the compactification obtained by adding the boundary at infinity
$\wt{f} \colon \wt{X_1} \cup \partial \wt{X_1} \to \wt{X_2} \cup \partial \wt{X_2}$.
We say a box of geodesics $B \subset G(\wt{X_1})$ is \emph{symmetric} if $L_{\wt{X_1}}(B)=\log(2)$.
A homeomorphism $\wt{f} \colon \partial \wt{X_1} \to \partial \wt{X_2}$ is \emph{quasi-symmetric} if the supremum
\[
M(\wt{f}) \coloneqq \sup_{B \text{ symmetric}} \frac{L_{X}(\wt{f}(B))}{\log(2)},
\]
as $B$ ranges over all symmetric boxes $B \subset G(\wt{X_1})$ is finite. 
We let $M(f)$ denote the \emph{quasi-symmetric constant of $f$}. See~\cite[Section 3.3]{BS21:Infinite} for details (in particular, the above definition is equivalent to other usual definitions of quasi-symmetry).

\begin{proposition}[{\cite[Proposition~15]{BS21:Infinite}}]
If a homeomorphism $f \colon \partial \wt{X_1} \to \partial \wt{X_2}$ is quasisymmetric, there exists a homeomorphism $\omega \colon [0, \infty) \to [0, \infty)$ depending only on the quasisymmetric constant of $f$ so that
\[
L_{X_2}(f(B)) \leq \omega(L_{X_1}(B))
\]
for every box of geodesics $B$.
\label{prop:ineq}
\end{proposition}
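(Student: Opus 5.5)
The plan is to reduce to a single one-parameter family of boxes by $\PSL(2,\mathbb{R})$-normalization, and then to define $\omega$ by a supremum over a compact family of normalized quasisymmetric maps.

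\textbf{Step 1: normalization.} The hyperbolic Liouville measure of a box $B=[a,b)\times[c,d)$ is the logarithm of the cross ratio of its four endpoints. It is invariant under Möbius transformations of $\partial\wt{X_i}\cong S^1$. Hence every box is the image under some $A\in\PSL(2,\mathbb{R})$ of a standard box $B_s$ with endpoints, say, $(-1,0,t(s),\infty)$ in the upper half-plane model, where $L(B_s)=s$. If $f$ is quasisymmetric, then $A'\circ f\circ A$ has the same constant $M(f)$, because symmetric boxes are permuted by Möbius maps. It also satisfies $L(A'fA(B_s))=L(f(A B_s))$. So it suffices to bound $L(f(B_s))$ uniformly over the class $\mathcal{Q}_M$ of $M$-quasisymmetric maps fixing $-1,0,\infty$.

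\textbf{Step 2: compactness of $\mathcal{Q}_M$.} This is the main obstacle. I would follow the Beurling--Ahlfors argument.
\begin{enumerate}
\item Iterate the symmetric-box condition on dyadic subdivisions of intervals. This gives a uniform modulus of continuity (indeed a Hölder bound) for maps in $\mathcal{Q}_M$ on compact subsets of $\mathbb{R}$, with control near $\infty$ via the normalization.
\item By Arzelà--Ascoli, any sequence in $\mathcal{Q}_M$ has a subsequence converging uniformly to a monotone map fixing $-1,0,\infty$.
\item To see the limit is a homeomorphism, use the identity $e^{-L(B)}+e^{-L(B^\perp)}=1$ for Liouville measures (Bonahon). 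For a symmetric box $B$, the box $B^\perp$ is also symmetric. The upper bound $L(f(B^\perp))\le M\log 2$ therefore gives a positive lower bound on $L(f(B))$. So images of symmetric boxes stay uniformly non-degenerate, which rules out collapsing intervals.
\item By continuity of cross ratios, the limit is again in $\mathcal{Q}_M$.
\end{enumerate}

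\textbf{Step 3: defining $\omega$.} Set $\omega_0(s)\coloneqq\sup_{f\in\mathcal{Q}_M}L(f(B_s))$.
\begin{itemize}
\item \emph{Finiteness.} The map $f\mapsto L(f(B_s))$ is continuous on the compact set $\mathcal{Q}_M$, since the four endpoints stay distinct for homeomorphisms. So $\omega_0(s)<\infty$.
\item \emph{Monotonicity.} $B_s\subset B_{s'}$ for $s\le s'$, so $\omega_0$ is nondecreasing.
\item \emph{Limit at $0$.} As $s\to0$, the standard box degenerates: the interval $(0,t(s))$ shrinks to $0$ while the other endpoints stay fixed. Uniform equicontinuity from Step 2 makes the image intervals shrink uniformly, so the image cross ratios tend to $1$ uniformly, and $\omega_0(s)\to0$.
\end{itemize}
Finally, replace $\omega_0$ by a homeomorphism majorant. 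For instance, take $\omega(s)\coloneqq s+\frac1s\int_s^{2s}\omega_0$ for $s>0$ and $\omega(0)\coloneqq 0$. This is continuous and strictly increasing, dominates $\omega_0$ (since $\omega_0$ is monotone), and tends to infinity. It depends only on $M$, and Step 1 then gives $L_{X_2}(f(B))\le\omega(L_{X_1}(B))$ for every box $B$.
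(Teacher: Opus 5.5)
Your proof is correct, and it takes a genuinely different route from the one the paper relies on.

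The paper does not reprove the statement; it cites Bonahon--\v{S}ari\'c. As the remark after the statement and Appendix~\ref{sec:elliptic-modulus} indicate, that argument:
\begin{enumerate}
\item extends $f$ to a $K$-quasiconformal map of the disk, with $K$ depending only on the quasisymmetric constant;
\item observes that the Liouville mass of a box is a strictly increasing function of the conformal modulus $\eta$ of the quadrilateral with the same four vertices;
\item uses the $K$-quasi-invariance of moduli to obtain the explicit $\omega=\eta^{-1}(K\eta)$.
\end{enumerate}

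You replace all of this by M\"obius normalization plus a normal-family argument. This avoids quasiconformal extension and extremal length entirely. The cost is that you get $\omega$ only as a supremum, with no growth information. The explicit form is what the paper actually exploits: Corollary~\ref{cor:omega-regimes} extracts from it the linear bound $\omega(t)\le Ct$ for $t\ge 1$, which is needed in Theorem~\ref{thm:comb_liouville}.

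Your framework recovers this linear bound cheaply, and in fact makes most of Step~2 unnecessary. Set $\rho=2^M-1$. The two symmetric boxes with vertices $x-h,x,x+h,\infty$ give $\rho^{-1}\le \frac{f(x+h)-f(x)}{f(x)-f(x-h)}\le\rho$. Hence, for $f\in\mathcal{Q}_M$, you get $f(1)\le\rho$ and $(1+\rho^{-1})f(x)\le f(2x)\le 2^M f(x)$ for $x>0$. Since $\omega_0(s)=\sup_{f}\log\bigl(1+f(e^s-1)\bigr)$, these estimates directly give $\omega_0(s)\le Ms+\log(1+2^M\rho)$, and also $\omega_0(s)\to 0$ at a power rate. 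No appeal to Arzel\`a--Ascoli, or to the limit map being a homeomorphism, is needed.

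Finally, make the convention for $B_s$ explicit. Read literally, the box $[a,b)\times[c,d)$ with endpoints $(-1,0,t,\infty)$ is $[-1,0)\times[t,\infty)$. Its mass is $\log(1+1/t)$, which degenerates as $t\to\infty$. Your nestedness and shrinking claims in Step~3 instead require $B_s=(0,t(s))\times(-\infty,-1)$ with $t(s)=e^s-1$.
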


By the proof of \cite[Proposition~15]{BS21:Infinite}, there exists an increasing homeomorphism $\eta \colon [0,\infty) \to [0, \infty)$, and $\omega(x) \coloneqq \eta^{-1}(M\cdot \eta(x))$, where $M\geq 1$ is a constant depending only on the quasisymmetric constant of $f$.
In fact, using estimates on elliptic integrals, the behaviour of $\omega$ can be estimated explicitly, as we show in Appendix~\ref{sec:elliptic-modulus}. We will use this to prove the following result.

\begin{theorem}
Given a closed hyperbolic surface $X$, any finite positive linear combination of hyperbolic Liouville currents
\[
a_1 \mathcal{L}_{Y_1} + \cdots + a_n \mathcal{L}_{Y_n}
\]
is strongly hyperbolic.
\label{thm:comb_liouville}
\end{theorem}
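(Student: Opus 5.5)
We will check strong hyperbolicity of $\mu \coloneqq \sum_{j=1}^n a_j \mathcal{L}_{Y_j}$ through the boundary characterization of Theorem~\ref{thm:stronghyp_equivalent}, namely $(A_0,B_0,C_0)$-strong hyperbolicity with boxes (Lemma~\ref{lem:abc_sh_boxes}). Since $\mu$ is filling (each Liouville current is), that lemma applies. We must find constants such that, for every box $B \subset G(\wt{X})$, if $2\mu(B)$ is large then $2\mu(B^\perp)$ is exponentially small in $\mu(B)$. For a single Liouville current $\mathcal{L}_{X}$ this is an exact identity: Bonahon's cross-ratio relation gives $e^{-\mathcal{L}_X(B)}+e^{-\mathcal{L}_X(B^\perp)}=1$. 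Hence $\mathcal{L}_X(B^\perp)=-\log(1-e^{-\mathcal{L}_X(B)})\le 2e^{-\mathcal{L}_X(B)}$ once $\mathcal{L}_X(B)\ge 1$.

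The key step is to compare each $\mathcal{L}_{Y_j}$ with the reference $\mathcal{L}_X$ on boxes. By construction, $\mathcal{L}_{Y_j}(B)=L_{Y_j}(\wt{f_j}(B))$, where $\wt{f_j}$ is the boundary extension of a lift of a quasiconformal marking. This extension is quasisymmetric, and so are its inverse and the maps between any two of the $Y_j$. Proposition~\ref{prop:ineq}, applied to $\wt{f_j}$ and to $\wt{f_j}^{-1}$, gives homeomorphisms $\omega_j$ with
\[
\omega_j^{-1}(\mathcal{L}_X(B))\le \mathcal{L}_{Y_j}(B)\le \omega_j(\mathcal{L}_X(B)).
\]
What is needed is a refinement of how $\omega_j$ behaves at $0$ and at $\infty$. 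Using the explicit form $\omega=\eta^{-1}(M\eta(\cdot))$ and the elliptic-modulus estimates of Appendix~\ref{sec:elliptic-modulus}, I expect power-type bounds:
\[
\omega(t)\le C t^{1/K} \text{ for small } t, \qquad \omega^{-1}(t)\ge t/K - C \text{ for large } t.
\]
Here $K\ge 1$ depends only on the quasisymmetric constant. The second bound says that large cross-ratio logs are distorted only linearly, which is the classical quasi-invariance of the modulus of quadrilaterals, $\mathrm{mod}\sim \frac{1}{\pi}\log$ of the cross-ratio for long quadrilaterals. The first bound corresponds to the Hölder behavior of quasisymmetric maps for thin quadrilaterals.

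Now fix a box $B$ with $\mu(B)$ large. Some $j$ satisfies $a_j\mathcal{L}_{Y_j}(B)\ge \mu(B)/n$. Applying the bound at $\infty$ through $\mathcal{L}_X$, we get $\mathcal{L}_{X}(B)\ge c\,\mu(B)-C$, and then every $\mathcal{L}_{Y_i}(B)\ge c'\mu(B)-C'$. By the exact identity for each $Y_i$, this gives $\mathcal{L}_{Y_i}(B^\perp)\le 2e^{-c'\mu(B)+C'}$. Summing with weights $a_i$ yields $\mu(B^\perp)\le B_0e^{-C_0\mu(B)}$. Equivalently, one can pass directly through $\mathcal{L}_X(B^\perp)$, which is exponentially small, and then use the Hölder bound near $0$ to get $\mathcal{L}_{Y_i}(B^\perp)\lesssim e^{-\mathcal{L}_X(B)/K}$. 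The proof then concludes with Lemma~\ref{lem:abc_sh_boxes}, using that $d_\mu$ is Gromov hyperbolic by Proposition~\ref{prop:dmu}.

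The main obstacle is the quantitative control of the distortion function $\omega$: Proposition~\ref{prop:ineq} by itself only gives some homeomorphism. We need linear growth at infinity, or equivalently power decay at zero, so that exponential smallness survives the comparison. I would first try the linear bound at infinity via Teichmüller's extremal-length distortion, $\mathrm{mod}(f(Q))\le K\,\mathrm{mod}(Q)$, combined with the asymptotics of the elliptic modulus $\mu(r)\sim\log(4/r)$ relating quadrilateral modulus to $\mathcal{L}(B)$. The Hölder estimate near zero would then be used only as a cross-check.
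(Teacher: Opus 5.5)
Your proposal is correct and follows essentially the paper's argument: $(A_0,B_0,C_0)$-strong hyperbolicity with boxes, a dominant index $k$ with $a_k\mathcal{L}_{Y_k}(B)\ge \mu(B)/n$, linear growth at infinity of the quasisymmetric distortion $\omega=\eta^{-1}(M\eta(\cdot))$, and exponential decay of each Liouville current on $B^\perp$. The linear growth you single out as the main obstacle is exactly Corollary~\ref{cor:omega-regimes}, which the paper derives from the elliptic-modulus asymptotics of Appendix~\ref{sec:elliptic-modulus}.

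The differences are cosmetic. You route the comparison through the reference current $\mathcal{L}_X$ rather than through the pairwise maps $\wt{f_{kj}}$. You obtain the decay from Bonahon's exact identity $e^{-\mathcal{L}(B)}+e^{-\mathcal{L}(B^\perp)}=1$, whereas the paper invokes abstract strong hyperbolicity constants $(A_j,B_j,C_j)$ for each $\mathcal{L}_{Y_j}$.
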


\begin{proof}
Let $f_i \colon X \to Y_i$ be the marking for $Y_i$, and $\wt{f_i} \colon \partial \wt{X} \to \partial \wt{Y_i}$ its equivariant quasisymmetric lift to the boundary. Recall that we view the Liouville current $L_{Y_i}$ of $Y_i$ as a current $\mathcal{L}_{Y_i}$ on $X$ by setting
\[
\mathcal{L}_{Y_i}(B)\coloneqq (\wt{f_i}^{-1})_*L_{Y_i}(B)=L_{Y_i}(\wt{f_i}(B)).
\]

For $j,k\in\{1,\dots,n\}$, set
\[
\wt{f_{kj}}\coloneqq \wt{f_j}\circ \wt{f_k}^{-1}\colon Y_k\to Y_j.
\]
Since $\wt{f_{kj}}$ is a composition of quasisymmetric homeomorphisms, it is quasisymmetric. Therefore, by Proposition~\ref{prop:ineq}, there exists an increasing homeomorphism $\omega_{kj}\colon [0,\infty)\to[0,\infty)$ such that
\[
\omega_{kj}^{-1}\bigl(\mathcal{L}_{Y_j}(B)\bigr)
\le
\mathcal{L}_{Y_k}(B)
\le
\omega_{kj}\bigl(\mathcal{L}_{Y_j}(B)\bigr)
\]
for every box $B$.

Moreover, by Corollary~\ref{cor:omega-regimes}, for each pair $(k,j)$ there exists a constant $M_{kj}\ge 1$ (depending on the quasisymmetric constant of $f_{kj}$) such that
\[
\omega_{kj}(t)\le M_{kj} t
\qquad\text{for every } t\ge 1.
\]
Let
\[
M\coloneqq \max_{1\le j,k\le n} M_{kj}.
\]
Then
\[
\omega_{kj}(t)\le Mt
\qquad\text{for every } t\ge 1
\]
and every $j,k$.

Now set
\[
\mu\coloneqq \sum_{i=1}^n a_i \mathcal{L}_{Y_i}.
\]
Suppose that
\[
A\le \mu(B)=\sum_{i=1}^n a_i \mathcal{L}_{Y_i}(B).
\]
Then there exists some index $k$ such that
\[
\frac{A}{n a_k}\le \mathcal{L}_{Y_k}(B).
\]
Fix $j\in\{1,\dots,n\}$. Using the comparison inequality above, we get
\[
\frac{A}{n a_k}
\le
\mathcal{L}_{Y_k}(B)
\le
\omega_{kj}\bigl(\mathcal{L}_{Y_j}(B)\bigr),
\]
and therefore
\[
\omega_{kj}^{-1}\!\left(\frac{A}{n a_k}\right)\le \mathcal{L}_{Y_j}(B).
\]

Since $\omega_{kj}(t)\le Mt$ for every $t\ge 1$, it follows that for every $t\ge M$,
\[
\omega_{kj}^{-1}(t)\ge \frac{t}{M}.
\]

Let
\[
a_{\max}\coloneqq \max_{1\le i\le n} a_i.
\]
If
\[
A\ge M n a_{\max},
\]
then we get
\[
\omega_{kj}^{-1}\!\left(\frac{A}{n a_k}\right)\ge \frac{A}{M n a_k}\ge \frac{A}{M n a_{\max}}.
\]
Hence, for every $j$,
\[
\mathcal{L}_{Y_j}(B)\ge \frac{A}{M n a_{\max}}.
\]

Now let $(A_j,B_j,C_j)$ be strong hyperbolicity constants for $\mathcal{L}_{Y_j}$. Choose
\[
A_0\coloneqq \max\!\left\{M n a_{\max},\, M n a_{\max} A_1,\dots, M n a_{\max} A_n\right\}.
\]
Then whenever $A\ge A_0$, we have
\[
\frac{A}{M n a_{\max}}\ge A_j
\qquad\text{for every } j,
\]
and therefore strong hyperbolicity of $\mathcal{L}_{Y_j}$ gives
\[
\mathcal{L}_{Y_j}(B^\perp)
\le
B_j \exp\!\left(- C_j \frac{A}{M n a_{\max}}\right)
\]
for every $j=1,\dots,n$.

Summing over $j$, we obtain
\[
\mu(B^\perp)
=
\sum_{j=1}^n a_j \mathcal{L}_{Y_j}(B^\perp)
\le
\sum_{j=1}^n a_j B_j \exp\!\left(- C_j \frac{A}{M n a_{\max}}\right).
\]
If we now set
\[
B_0\coloneqq \sum_{j=1}^n a_j B_j,
\qquad
C_0\coloneqq \frac{1}{M n a_{\max}} \min_{1\le j\le n} C_j,
\]
then for every $A\ge A_0$,
\[
\mu(B^\perp)\le B_0 e^{-C_0A}.
\]
Thus $\mu$ satisfies the $(A_0, B_0, C_0)$-strong hyperbolicity condition for boxes.
\end{proof}

\begin{proposition}\label{prop2.7}
The transfer map $\Pi \colon \mathcal{C}(Y) \to \mathcal{C}(X)$ has the following properties:
\begin{enumerate}
\item Is continuous when $\mathcal{C}(Y)$ and $\mathcal{C}(X)$ are both equipped with the weak$^*$ topology.
\item Sends non-atomic currents in $\C(Y)$ to non-atomic currents in $\C(X)$.
\item Sends filling currents in $\C(Y)$ to filling currents in $\C(X)$.
\item Sends any hyperbolic Liouville current $\mathcal{L}_{Y}$ in $\C(Y)$ to a strongly hyperbolic current $\Pi(\mathcal{L}_{Y}(B))$.
\item Is 1-Lipschitz when both $\mathcal{C}_{f}(Y)$ and $\mathcal{C}_{f}(X)$ are equipped with their respective $\Delta_Y$ and $\Delta_X$ metrics. In particular, it is (uniformly) continuous when the spaces are equipped with the corresponding metric topologies.
\end{enumerate}
\label{prop:push}
\end{proposition}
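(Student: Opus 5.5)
The plan is to prove the five items separately, working throughout on the level of $\pi_1(X)$-invariant measures on $\mathcal{G}(\wt{X})=\mathcal{G}(\wt{Y})$. The two spaces of geodesics coincide because $\wt{Y}=\wt{X}$ and $\pi_1(Y)\le\pi_1(X)$ has finite index. A current on $Y$ is a $\pi_1(Y)$-invariant Radon measure, and $\Pi(\mu)=\frac1n\sum_i (g_i)_*\mu$ is the averaging over coset representatives. I will first check that this is $\pi_1(X)$-invariant. Left multiplication by $g\in\pi_1(X)$ permutes the cosets $g_i\pi_1(Y)$, and $(g_ih)_*\mu=(g_i)_*\mu$ for $h\in\pi_1(Y)$, so the sum is invariant and independent of the transversal.

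\textbf{Item (1).} Each pushforward $(g_i)_*$ is weak$^*$-continuous, being composition with a fixed homeomorphism of $\mathcal{G}(\wt{X})$ tested against compactly supported continuous functions. A finite average of continuous maps is continuous.

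\textbf{Items (2) and (3).} For (2), an atom of $\Pi(\mu)$ at a geodesic $\gamma$ would give $\mu(g_i^{-1}\gamma)>0$ for some $i$, which is an atom of $\mu$. For (3), I will use the intersection formula for transfers: for $\nu\in\mathcal{C}(X)$ with lift $\pi^*\nu\in\mathcal{C}(Y)$ (the same measure viewed as $\pi_1(Y)$-invariant), one has $i_X(\Pi\mu,\nu)=\frac1n i_Y(\mu,\pi^*\nu)$. This follows by computing intersection as the measure of pairs of crossing geodesics modulo the diagonal action, where a fundamental domain for $\pi_1(Y)$ is $n$ translates of one for $\pi_1(X)$. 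If $\mu$ is filling and $\nu\neq0$, then $\pi^*\nu\ne0$, so $i(\Pi\mu,\nu)>0$. The main technical care is in this unfolding computation, and I will do it at the level of closed curves, where $i(\Pi\mu,\delta_{[g]})$ is computed from $\mu$-measures of transversals to the lifts of $g$, and then extend by density and continuity.

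\textbf{Item (4).} Here $\Pi(\mathcal{L}_Y)=\frac1n\sum_i (g_i)_*\mathcal{L}_Y$. Each $(g_i)_*\mathcal{L}_Y$ is the Liouville current of a hyperbolic structure on $\wt{X}$ invariant under the conjugate $g_i\pi_1(Y)g_i^{-1}$, not necessarily under $\pi_1(X)$. The proof of Theorem~\ref{thm:comb_liouville} only used, box by box, the comparison functions $\omega_{kj}$ from Proposition~\ref{prop:ineq} and the linear bound of Corollary~\ref{cor:omega-regimes}. These apply to the boundary homeomorphisms $g_i\circ \wt f\circ g_j^{-1}$ relating the different translates, which are quasisymmetric with a uniform constant, since $g_i,g_j$ are Möbius. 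The same argument then gives $(A_0,B_0,C_0)$-strong hyperbolicity with boxes. Lemma~\ref{lem:abc_sh_boxes}, together with filling from (3), converts this to strong hyperbolicity of $d_{\Pi\mathcal{L}_Y}$. This is the main obstacle: one must check that Theorem~\ref{thm:comb_liouville} really does not use $\pi_1(X)$-invariance of the individual summands, only of their sum. That holds if Lemma~\ref{lem:abc_sh_boxes} is applied to the sum $\mu$.

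\textbf{Item (5).} Use the formula from (3) with $\nu=\delta_{[g]}$. For $g\in\pi_1(X)$, $\pi^*\delta_{[g]}$ is a positive combination of the closed curves $\delta_{[h_j]}$ in $Y$ given by the lifts. Hence $\ell_{\Pi\mu}([g])=\frac1n\sum_j c_j\,\ell_\mu([h_j])$ with $c_j\ge0$ independent of $\mu$. For filling $\mu_1,\mu_2$, a ratio of such sums is bounded by the maximum of the termwise ratios. Therefore
\[
\sup_{[g]}\frac{\ell_{\Pi\mu_2}([g])}{\ell_{\Pi\mu_1}([g])}\le \sup_{[h]}\frac{\ell_{\mu_2}([h])}{\ell_{\mu_1}([h])},
\]
and likewise with the roles swapped. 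This gives $\Delta_X(\Pi\mu_1,\Pi\mu_2)\le\Delta_Y(\mu_1,\mu_2)$.
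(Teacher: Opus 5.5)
Your proposal is correct. For (1), (2) and (5) it is essentially the paper's argument. The paper checks (2) via vanishing of pencil masses $\mu(P(a))$ rather than directly on atoms. It proves (5) with the same mediant inequality, writing $i_Y((g_i)_*\mu,\gamma)=i_Y(\mu,(g_i^{-1})_*\gamma)$ and then enlarging the supremum to all curves of $Y$.

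The genuine difference is in (3) and (4). The paper views $\Pi(\mu)$ as a current on $Y$. For (3) it uses $i_X(\Pi\mu,\nu)=\frac{1}{n^2}\sum_i i_Y((g_i)_*\mu,\nu)$. For (4) it regards $\Pi(\mathcal{L}_Y)$ as a positive combination of Liouville currents of points of $\Teich(Y)$ and applies Theorem~\ref{thm:comb_liouville} on $Y$. Both steps need each $(g_i)_*\mu$ to be $\pi_1(Y)$-invariant, i.e.\ the cover must be regular, which that section assumes.

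You instead stay on $\wt X$. For (3) you use the unfolding identity $i_X(\Pi\mu,\nu)=\frac1n\, i_Y(\mu,\pi^*\nu)$; this holds because a fundamental domain for $\pi_1(Y)$ on crossing pairs is $\bigsqcup_i g_i^{-1}F_X$. For (4) you rerun Theorem~\ref{thm:comb_liouville} box by box for the translates $(g_i)_*\mathcal{L}_Y$. You then invoke Lemma~\ref{lem:abc_sh_boxes} only for the $\pi_1(X)$-invariant sum, which is filling by (3) and non-atomic by (2). This costs a little more bookkeeping, but it works for arbitrary finite covers, as in the definition of $\mathbb{R}\Teich^\infty$. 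Along the way it also shows that $\Pi$ does not depend on the choice of left transversal.

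Two details should be stated explicitly when you write it up:
\begin{enumerate}
\item The map comparing $(g_k)_*\mathcal{L}_Y$ with $(g_j)_*\mathcal{L}_Y$ is $\wt f\circ g_k^{-1}g_j\circ \wt f^{-1}$, not $g_i\circ\wt f\circ g_j^{-1}$. It is still quasisymmetric, and only finitely many such maps occur.
\item Each translate satisfies the box condition with universal constants. This is because the identity $e^{-L(B)}+e^{-L(B^\perp)}=1$ is preserved by pullback under orientation-preserving circle homeomorphisms, so no invariance of the individual summands is needed there either.
\end{enumerate}
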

\begin{proof}
\begin{enumerate}

\item Follows from the definition.\\

\item For any point $a \in \partial \wt{Y}$, $P(a)$ is a pencil of geodesics in $\wt{Y}$  which has one end point in $a$. A geodesic current $\mu \in \C(Y)$ is non-atomic if and only if $\mu(P(a)) = 0$ for all $a \in \partial \wt{Y}$ ~\cite[Proposition 8.2.8]{Martelli16:IntroGeoTop}. Note that any $g_{i}$ in $\pi_{1}(Y)$ acts on $\wt{Y}$ by isometries and takes pencils to pencils. Now, if $\mu \in \C(Y)$ is a non-atomic current then for any transversal $g_{i}$ we have 
\[
(g_i)_*\mu(P(a))=\mu(g_i^{-1}(P(a))) = \mu(P(g_i^{-1}(a)))= 0
\]
As a result $\Pi(\mu)$ is non-atomic.\\

\item Let $\mu \in \mathcal{C}(Y)$ be a filling current and let $\nu \in \mathcal{C}(X)$. Note that $\mathcal{C}(X) \subset \mathcal{C}(Y)$, since $\pi_1(Y) \leq \pi_1(X)$. Therefore we can think of $\frac{1}{n} \sum_{i=1}^n (g_i)_* \mu$ and $\nu$ as currents in $\mathcal{C}(Y)$. By ~\cite[Lemma~2.22]{Hv24:Commensurating} we get 
\[
    n \cdot i_X\left(\frac{1}{n} \sum_{i=1}^n (g_i)_* \mu, \nu\right) =  i_Y\left(\frac{1}{n} \sum_{i=1}^n {(g_i)_* \mu}, {\nu}\right)
\]
This is because intersection between two geodesics in $X$ lifts to $n$-copies of that intersection in $Y$.
\[
    i_X\left(\frac{1}{n} \sum_{i=1}^n (g_i)_* \mu, \nu\right) = \frac{1}{n^2} \cdot \sum_{i=1}^n  i_Y( {(g_i)_* \mu}, \nu)
\]
 Since $\mu$ is a filling geodesic current in $Y$, the right hand side is non zero.\\

\item Given a Liouville current $\mathcal{L}_Y$, let $\mu_Y \coloneqq \Pi(\mathcal{L}_Y)$.
Note that $d_{\mu_Y}$ is the same regardless of if $\mu_Y$ is considered as a geodesic current in $\C(X)$ or as a current $\wt{\mu_Y}$ in $\C(Y)$.
Note that $\wt{\mu_Y}$ is a linear
combination of Liouille currents, hence, by Theorem~\ref{thm:comb_liouville}, it is strongly hyperbolic.

\item It follows from a straighforward computation:
\begin{align*}
\Delta_X(\Pi(\mu), \Pi(\nu))
&= \log \sup_{\gamma \in \mathcal{C}(X)}
   \frac{i_X\!\left(\sum_i (g_i)_\ast \mu, \gamma\right)}
        {i_X\!\left(\sum_i (g_i)_\ast \nu, \gamma\right)}
 + \log \sup_{\gamma \in \mathcal{C}(X)}
   \frac{i_X\!\left(\sum_i (g_i)_\ast \nu, \gamma\right)}
        {i_X\!\left(\sum_i (g_i)_\ast \mu, \gamma\right)} \\
&= \log \sup_{\gamma \in \mathcal{C}(X)}
   \frac{\sum_i i_Y((g_i)_\ast \mu, \gamma)}
        {\sum_i i_Y((g_i)_\ast \nu, \gamma)}
 + \log \sup_{\gamma \in \mathcal{C}(X)}
   \frac{\sum_i i_Y((g_i)_\ast \nu, \gamma)}
        {\sum_i i_Y((g_i)_\ast \mu, \gamma)} \\
&\leq \log \sup_{\gamma \in \mathcal{C}(X)}
   \max_i \frac{i_Y((g_i)_\ast \mu, \gamma)}
                {i_Y((g_i)_\ast \nu, \gamma)}
 + \log \sup_{\gamma \in \mathcal{C}(X)}
   \max_i \frac{i_Y((g_i)_\ast \nu, \gamma)}
                {i_Y((g_i)_\ast \mu, \gamma)} \\
&= \log \sup_{\gamma \in \mathcal{C}(X)}
   \max_i \frac{i_Y(\mu, (g_i^{-1})_\ast\gamma)}
                {i_Y(\nu, (g_i^{-1})_\ast\gamma)}
 + \log \sup_{\gamma \in \mathcal{C}(X)}
   \max_i \frac{i_Y(\nu, (g_i^{-1})_\ast\gamma)}
                {i_Y(\mu, (g_i^{-1})_\ast\gamma)} \\
&\leq \log \sup_{\eta \in \mathcal{C}(Y)}
   \frac{i_Y(\mu, \eta)}
        {i_Y(\nu, \eta)}
 + \log \sup_{\eta \in \mathcal{C}(Y)}
   \frac{i_Y(\nu, \eta)}
        {i_Y(\mu, \eta)}.
\end{align*}
In the first inequality, we used the mediant inequality,
and in the last inequality we used that we are taking a supremum over a bigger set of closed curves, since every curve in $X$ lifts to a curve in $Y$.

\end{enumerate}
\end{proof}

We have the following classical result of Peter Scott.

\begin{theorem}[{Scott~\cite[Theorem~3.3]{Sco78:SurfaceGeometric} (c.f. \cite{Sco85:SurfaceGeometric_Erratum})}]
Let $X$ be a closed hyperbolic surface and let $\gamma$ be a primitive closed geodesic on $X$.
Then there exists a finite cover $\pi:Y\to X$ such that $\gamma$ admits a simple closed lift to $Y$.
\label{thm:scott}
\end{theorem}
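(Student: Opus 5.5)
The plan is to deduce the statement from subgroup separability of cyclic subgroups of $\Gamma=\pi_1(X)$, namely Scott's LERF theorem~\cite{Scott78:LERF}. LERF turns the problem into excluding finitely many group elements from a finite-index subgroup.

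\emph{Setup.} Let $g\in\Gamma$ represent $\gamma$ and let $\ell\subset\wt{X}$ be its axis. Since $\Gamma$ is torsion-free and $g$ is primitive, the stabilizer of $\ell$ in $\Gamma$ is exactly $\langle g\rangle$. Self-intersections of $\gamma$ correspond to translates $h\ell$ with $h\notin\langle g\rangle$ that cross $\ell$ transversely.

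\emph{Finitely many double cosets.} The relevant set
\[
\{h\in\Gamma : h\ell \text{ crosses } \ell,\ h\notin\langle g\rangle\}
\]
is a union of finitely many double cosets $\langle g\rangle h_i\langle g\rangle$, $i=1,\dots,m$. To see this, replace $h$ by $g^a h g^b$. Then the crossing point can be moved into a fixed compact fundamental segment $K\subset\ell$ for $\langle g\rangle$. At the same time, $h\ell$ can be made to meet $K$ at the crossing point, which lies on both $\ell$ and $h\ell$. Proper discontinuity of the action then leaves only finitely many elements $h$ with $h\ell\cap K\neq\emptyset$ and $h\ell\neq\ell$.

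\emph{Separation and conclusion.} Since $\langle g\rangle$ is separable in $\Gamma$, there is a finite-index subgroup $H\le\Gamma$ with $\langle g\rangle\le H$ and $h_1,\dots,h_m\notin H$. Let $Y=H\backslash\wt{X}$, a finite cover of $X$. The image of $\ell$ in $Y$ is a closed geodesic $\ell/(H\cap\mathrm{Stab}(\ell))=\ell/\langle g\rangle$, and it maps to $\gamma$ with degree one. Suppose this lift had a transverse self-intersection. Then there would be some $h\in H$ with $h\notin\langle g\rangle$ and $h\ell$ crossing $\ell$. By the previous step, $h=g^a h_i g^b$ for some $i$, $a$ and $b$. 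This would force $h_i=g^{-a}hg^{-b}\in H$, a contradiction. Hence the lift is simple.

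The main obstacle is the separability of $\langle g\rangle$ itself, and I would cite Scott~\cite{Scott78:LERF} for it. For a sketch, I would use Scott's geometric argument. First, pass to a finite-index subgroup of a right-angled pentagon reflection group; separability is inherited in both directions for finite-index subgroups. Next, take the convex union of tiles in a neighborhood of the core of the annulus $\langle g\rangle\backslash\wt{X}$. This gives a compact convex polygon $P$ bounded by reflection lines, and $P$ is a fundamental domain for $\langle g\rangle$ acting on a convex region. Reflecting in the sides of $P$ then yields a finite-index subgroup that contains $g$ and omits the finitely many prescribed elements $h_i$, each of which moves $P$ off itself.
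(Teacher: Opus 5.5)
Your argument is correct and is essentially the standard deduction of Scott's result from the separability of $\langle g\rangle$: finitely many double cosets $\langle g\rangle h_i\langle g\rangle$ of crossing translates, then a finite-index $H\supseteq\langle g\rangle$ that avoids the $h_i$. The paper relies on exactly this by citing Scott without proof.

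Two small corrections:
\begin{enumerate}
\item In the finiteness step, the condition $h\ell\cap K\neq\emptyset$ holds for infinitely many $h$, since it is invariant under $h\mapsto hg^b$. What you want is to first normalize $h$ on the right by a power of $g$ so that $hK\cap K\neq\emptyset$, and then apply proper discontinuity.
\item In your optional separability sketch, the finite-index subgroup should be generated by $g$ together with the reflections in the boundary lines of the $\langle g\rangle$-invariant convex union of tiles $\widetilde C$ containing the axis of $g$, not by reflections in all sides of a fundamental polygon $P$. The $h_i$ are then excluded because $h_i\widetilde C$ overlaps $\widetilde C$ while $h_i\notin\langle g\rangle$, not because they move $P$ off itself.
\end{enumerate}
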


We will need the following immediate variation.

\begin{lemma}
Let $X$ be a closed hyperbolic surface, and let $\gamma$ be a primitive closed geodesic on $X$.
Then there exists a finite regular cover
\[
\pi:Y\to X
\]
such that \emph{every} component of $\pi^{-1}(\gamma)$ is a simple closed geodesic
\label{lem:scott_regular}
\end{lemma}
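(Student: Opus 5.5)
The plan is to start from Theorem~\ref{thm:scott} and pass to the normal core of the subgroup it produces. Let $\Gamma=\pi_1(X)$, and let $g\in\Gamma$ be a primitive element representing $\gamma$. Theorem~\ref{thm:scott} gives a finite-index subgroup $H\le\Gamma$ with $g\in H$ such that the closed geodesic of $Y_H=\wt X/H$ represented by $g$ is simple; in other words, the axis $\wt\gamma$ of $g$ satisfies $h\wt\gamma\cap\wt\gamma=\emptyset$ for every $h\in H$ with $h\wt\gamma\neq\wt\gamma$. Set $N\coloneqq\bigcap_{k\in\Gamma}kHk^{-1}$. This is a finite intersection of conjugates of a finite-index subgroup, so $N$ is normal and of finite index. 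Let $Y\coloneqq\wt X/N$ be the corresponding finite regular cover.

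Next I would identify the components of $\pi^{-1}(\gamma)$ in $Y$. They are the images of the lifts $k\wt\gamma$, $k\in\Gamma$, and the stabilizer of $k\wt\gamma$ in $N$ is $N\cap\langle kgk^{-1}\rangle$. Because $N$ has finite index, this stabilizer is nontrivial, so each component is a closed geodesic. A component is simple exactly when, for every $n\in N$, either $nk\wt\gamma=k\wt\gamma$ or $nk\wt\gamma\cap k\wt\gamma=\emptyset$. Conjugating by $k$, this becomes the same dichotomy for $k^{-1}nk\in N\subset H$ acting on $\wt\gamma$. Since $N$ is normal, $k^{-1}nk\in N\le H$, so the simplicity condition for $H$ gives the dichotomy.

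One point needs care: the lift of $\gamma$ to $Y$ may only be a multiple cover of a simple geodesic, because $g$ itself need not lie in $N$. The precise claim to check is therefore that the image of $k\wt\gamma$ in $Y$ is a simple closed geodesic, that is, the embedded quotient $k\wt\gamma/(N\cap\langle kgk^{-1}\rangle)$. I will check that disjointness of distinct $N$-translates gives an embedding, and that primitivity of $g$ makes the stabilizer cyclic. This holds because in a torsion-free surface group the stabilizer of an axis is a maximal cyclic group, which is $\langle g\rangle$ when $g$ is primitive.

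I expect the main obstacle to be the stabilizer bookkeeping, namely ensuring that the component is an embedded circle and not a geodesic with transverse self-crossings. This reduces to showing that any $n\in N$ with $nk\wt\gamma=k\wt\gamma$ lies in $\langle kgk^{-1}\rangle$. I would handle it using maximality of cyclic subgroups in $\Gamma$, and note that a pair of lifts $k\wt\gamma$, $nk\wt\gamma$ intersecting transversely in $\wt X$ is exactly the configuration excluded by the $H$-simplicity of $\wt\gamma$.
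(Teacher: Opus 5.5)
Your argument is correct and is essentially the paper's proof. The paper also passes to a finite regular cover factoring through Scott's cover, i.e.\ a normal finite-index subgroup $N\le H$ such as your normal core, and then concludes from two facts: lifts of a simple curve under a covering are simple, and the deck group acts transitively on the lifts of $\gamma$. Your conjugation step $k^{-1}nk\in N\subseteq H$ is exactly these two facts written at the level of $\pi_1$. (Minor wording: the stabilizer of an axis in a torsion-free Fuchsian group is always cyclic, and primitivity only identifies it with $\langle g\rangle$. Also, each component $k\widetilde{\gamma}/(N\cap\langle kgk^{-1}\rangle)$ is itself a simple closed geodesic covering $\gamma$ with some degree, so there is no multiple-cover issue to worry about.)
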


\begin{proof}
By Theorem~\ref{thm:scott}, there exists a finite cover $p:Y_0\to X$ in which $\gamma$ admits a simple closed lift $\gamma_0\subset Y_0$. Passing to a finite regular cover $\pi:Y\to X$ that factors through $p$, we obtain a covering map $q:Y\to Y_0$.

Let $\wt{\gamma}_0$ be any lift of $\gamma_0$ to $Y$. Since $q$ is a covering map and $\gamma_0$ is simple, the curve $\wt{\gamma}_0$ is simple. As $\pi$ is regular, the deck group acts transitively on the components of $\pi^{-1}(\gamma)$, so every component is a deck translate of $\wt{\gamma}_0$, and hence simple.
\end{proof}

We can finally prove \Cref{mainthm:rteich_dense} and \Cref{mainthm:rinftyeich_sh}. For the definition of $\mathbb{R}\Teich^{\infty}(X)$, refer to Equation~\eqref{eq:rteichinfty}.
\begin{theorem}
The subspace of geodesic currents $\mathbb{R}\Teich^{\infty}(X)$:
\begin{enumerate}
    \item consists of strongly hyperbolic geodesic currents;
    \item is dense in $\C(X)$.
\end{enumerate}
\label{thm:sh_dense}
\end{theorem}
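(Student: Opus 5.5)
For part (1), I would reduce to Theorem~\ref{thm:comb_liouville} after lifting to the cover, as in the proof of Proposition~\ref{prop:push}(4). Take an element $\mu=\Pi_Y\bigl(\sum_i \lambda_i\mathcal{L}_{Y_i}\bigr)$. Each pushforward $(g_i)_*\mathcal{L}_{Y_j}$ is again a hyperbolic Liouville current on $Y$, namely that of the marked surface with marking precomposed by the deck-type map $g_i$. When the cover is regular this is literally a change of marking. In general I would replace $Y$ by a regular cover factoring through it, since $\Pi$ is compatible with composition of covers and Liouville currents pull back to Liouville currents. Viewing $\mu$ as a $\pi_1(Y)$-invariant current on $\mathcal G(\wt X)=\mathcal G(\wt Y)$, it is then a finite positive combination of Liouville currents. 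By Theorem~\ref{thm:comb_liouville}, it satisfies $(A_0,B_0,C_0)$-strong hyperbolicity with boxes. Boxes are the same subsets of $\mathcal{G}(\wt X)$ whether $\mu$ is regarded on $X$ or on $Y$, and $\mu$ is filling by Proposition~\ref{prop:push}(3). Lemma~\ref{lem:abc_sh_boxes} together with Theorem~\ref{thm:stronghyp_equivalent} then gives that $d_\mu$ is strongly hyperbolic.

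For part (2), the set $\mathbb{R}\Teich^\infty(X)$ is a cone: it is closed under positive scaling, and closed under sums after passing to a common regular cover, using $\Pi_{Y}=\Pi_{Y_0}\circ\Pi_{Y\to Y_0}$. By~\cite[Proposition~4.4]{Bonahon86:EndsHyperbolicManifolds}, positive multiples of closed geodesics are dense. It therefore suffices to approximate $\delta_\gamma$ for a single primitive closed geodesic $\gamma$; the non-primitive case follows by scaling. By Lemma~\ref{lem:scott_regular}, choose a finite regular cover $\pi\colon Y\to X$ in which every component $\gamma_1,\dots,\gamma_m$ of $\pi^{-1}(\gamma)$ is simple. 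Then I would check directly, using a transversal and the definition of $\Pi$, that
\[
\Pi\Bigl(\sum_k \delta_{\gamma_k}\Bigr)=c\,\delta_\gamma
\]
for an explicit constant $c>0$. This holds because the full lift of $\gamma$ to $\wt X$ is the union of the full lifts of the $\gamma_k$.

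Each simple closed geodesic $\gamma_k$ on $Y$ is a limit of rescaled Liouville currents. Pinching along $\gamma_k$, or, more standardly, taking Thurston's compactification with $t_s\,\mathcal{L}_{Z_s}\to \delta_{\gamma_k}$ along a Fenchel--Nielsen twist ray or a pinching ray (Bonahon~\cite{Bonahon88:GeodesicCurrent}), gives hyperbolic structures $Z_{k,s}$ on $Y$ and scalars $t_{k,s}\to0$ with $t_{k,s}\mathcal{L}_{Z_{k,s}}\to\delta_{\gamma_k}$ weak$^*$. Summing over $k$ and applying $\Pi$, which is weak$^*$ continuous by Proposition~\ref{prop:push}(1), we get elements of $\mathbb{R}\Teich^\infty(X)$ converging to $c\,\delta_\gamma$. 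Rescaling gives density.

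The main obstacle is the bookkeeping in identifying $\Pi(\sum_k\delta_{\gamma_k})$ with a multiple of $\delta_\gamma$, and in making part (1) rigorous when $Y\to X$ is not regular, where the $(g_i)_*$ are not deck transformations. I would handle both by always passing to a regular cover first. For the approximation of a simple curve by Liouville currents, I would cite Bonahon's description of the boundary of $\Teich$ in $\mathbb{P}\mathcal C$ as $\mathbb{P}\mathcal{ML}$, which contains every simple closed curve.
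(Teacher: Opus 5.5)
Your proposal is correct and follows the paper's proof. For density, you use weighted primitive closed curves, Lemma~\ref{lem:scott_regular} to make every lift simple in a regular cover, approximation of each simple lift by rescaled Liouville currents on $Y$, and weak$^*$ continuity of the transfer map; for part (1), you view the pushforward as a positive combination of Liouville currents on the cover and apply Theorem~\ref{thm:comb_liouville} with the box characterization. Two of your steps are details the paper leaves implicit: passing to the normal core when $Y\to X$ is not regular (otherwise the $(g_i)_*\mathcal{L}_{Y_j}$ need not be $\pi_1(Y)$-invariant), and checking that $\Pi\bigl(\sum_k\delta_{\gamma_k}\bigr)=\delta_\gamma$, so $c=1$ with the paper's normalization.
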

\begin{proof}
First we claim that the subset \[
\{ \lambda \cdot \delta_{\gamma} : \lambda \geq 0, \gamma \text{ primitive closed geodesic}\},\]
as $\lambda$ ranges over all non-negative real numbers and $\gamma$ over all primitive closed geodesics in $X$,
is dense in $\C(X)$.
 This follows immediately from the fact that if in the above set the primitivity condition is lifted, the resulting subset is still dense in the space of geodesic currents. This follows by~\cite[Proposition~4.4]{Bonahon86:EndsHyperbolicManifolds} and since, for any primitive curve $\gamma$ represented by an element $g \in \pi_1(S)$, the non-primitive curve $\gamma^n$ represented by $[g^n]$ corresponds to the geodesic current $n \delta_{\gamma}$, where $\delta_{\gamma}$ is the geodesic current associated to $\gamma$.
 Hence, it suffices to show that any primitive closed geodesic $\gamma$ can be approximated, in the weak$^*$-topology, by a sequence of strongly hyperbolic currents.
If $\gamma$ is a simple closed geodesic, then there exist a sequence of scaled hyperbolic Liouville currents $a_i L_i \to \gamma$~\cite[Proposition~14]{Bonahon88:GeodesicCurrent}, so the result follows.
If $\gamma$ is non-simple, by Lemma~\ref{lem:scott_regular}, there exists a finite regular cover $\pi \colon Y \to X$ such that $\pi^{-1}(\gamma)=\sum_{i=1}^n s_i$, where each $s_i$ is a simple closed geodesic in $Y$. We approximate each $s_i$ by a sequence $a_j^i \mathcal{L}_j^i$ of scaled hyperbolic Liouville currents on $Y$ converging to $s_i$.
 Thus, by Theorem~\ref{thm:comb_liouville}
 $\mathcal{L}_j \coloneqq \sum_{i=1}^n a_i^j \mathcal{L}_i^j$ is strongly hyperbolic, since each $\mathcal{L}_i^j$ is a Liouville current.
Applying the transfer map $\Pi$ associated to the finite cover $\pi$, by Proposition~\ref{prop:push} (4), we get a sequence of strongly hyperbolic currents $\Pi(\mathcal{L}_j)$ converging to $\gamma$ as desired. This proves both items of the statement.
\end{proof}

\section{The correlation counting result}
\label{sec:correlation}

\subsection{Background on thermodynamical formalism and reparameterizations}
We recall the constructions and results that will be needed later. Standard references are~\cite{Bowen1972,Pollicott1987,Sambarino2014,ledrappier1994structure}.

\subsubsection{Translation flows}

Let $\Gamma$ be a non-elementary Gromov hyperbolic group. The translation flow on $\partial^2\Gamma\times\mathbb R$ is defined as
\[
\partial^2\Gamma
=
\{(x,y)\in\partial\Gamma\times\partial\Gamma:x\neq y\},
\qquad
\varphi_t(x,y,s)=(x,y,s-t).
\]
Given a H\"older cocycle $c$ (Section~\ref{subsec:Holderocycle}), the group $\Gamma$ acts on the left as follows
\[
c:\Gamma\times\partial\Gamma\to\mathbb R,
\qquad
\gamma(x,y,t)
=
(\gamma x,\gamma y,t-c(\gamma,y)).
\]
We denote by $M_c$ the quotient of $\partial^2\Gamma\times\mathbb R$ by this action.

\begin{theorem}[Sambarino~\cite{Sambarino2014}]
\label{translationMetricAnosov}
If $c$ has non-negative periods and finite positive entropy, then the action above is proper and cocompact, and the translation flow on $M_c$ is a topologically mixing, metric Anosov flow, admitting a symbolic coding with H\"older roof functions and H\"older conjugate to a H\"older reparametrization of the geodesic flow of a negatively curved surface.
\end{theorem}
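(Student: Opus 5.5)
This is Sambarino's theorem, which the paper cites rather than proves, so I would reconstruct the standard argument in three stages: properness and cocompactness of the action, reparametrization of a known Anosov flow, and mixing.

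\emph{Step 1: properness and cocompactness.} Since $c$ has non-negative periods and finite positive entropy, I would first show that the periods are uniformly comparable to word length. Concretely, there should exist $a,b>0$ with $a\,|g|_{\mathrm{st}}-b\le \ell_c(g)$, where $\ell_c(g)=c(g,g^+)$ and $|g|_{\mathrm{st}}$ is the stable word length of $g$. The route I would take is a Livšic-type normalization. Finite entropy forces $c$ to be cohomologous to a cocycle whose values are coarsely positive along quasi-geodesic rays. From that normalization, the action on $\partial^2\Gamma\times\mathbb R$ has compact fundamental domains, obtained from a fundamental domain of $\Gamma$ acting on $\partial^2\Gamma\times\mathbb R$ via a Busemann-type cocycle. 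Properness would follow from the same linear lower bound.

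\emph{Step 2: reparametrization.} Next I would fix the geodesic flow of a hyperbolic metric on a surface with fundamental group $\Gamma$; when $\Gamma$ is not a surface group, Mineyev's flow space plays the same role. Its $\Gamma$-cover is $\partial^2\Gamma\times\mathbb R$ with the action given by the Busemann cocycle $c_0$. Given two Hölder cocycles $c$ and $c_0$, I would construct a Hölder positive function $f$ on $T^1X$ whose integrals over periodic orbits equal $\ell_c$. This again uses positivity of the periods together with Livšic's theorem. The reparametrized flow by $f$ is then Hölder conjugate to the translation flow on $M_c$. Metric Anosov and symbolic coding with Hölder roof functions then transfer from the geodesic flow, through Bowen–Ratner Markov partitions, to its Hölder reparametrizations.

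\emph{Step 3: topological mixing.} Mixing should follow from the non-arithmeticity of the length spectrum $\{\ell_c(g)\}$. I expect this to be the main obstacle, since it requires showing that the periods do not all lie in a discrete subgroup $a\mathbb Z$. My first attempt would use the cross-ratio and cocycle identity $\ell_c(g^nh^n)-\ell_c(g^n)-\ell_c(h^n)\to$ (a cross-ratio value), which varies continuously in the boundary points and hence takes a continuum of values. That would be incompatible with a lattice.
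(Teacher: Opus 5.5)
You have a genuine gap. Your Step~2 has the right architecture: it is Sambarino's argument, which the paper only cites, and the two inputs it needs appear in the paper as Theorem~\ref{thm:Ledrappier} and Lemma~\ref{lem:PosEntropy}. But those two inputs are exactly what your sketch is missing, and they are the substance of the theorem.

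\textbf{Steps 1 and 2.} Liv\v{s}ic's theorem does not produce a H\"older function with prescribed periods; it only says that functions with equal periods are cohomologous. The existence of $F_c$ on $T^1X$ with $\int_{[\gamma]}F_c=\ell_c(\gamma)$ is Ledrappier's theorem. So is the fact the conjugacy actually needs: the correspondence is a bijection on cohomology classes, so $c$ is cohomologous \emph{as a cocycle} to the cocycle $c_F$ obtained by integrating a positive representative $F$ along geodesic rays. Only then can you write down the $\Gamma$-equivariant homeomorphism. You get it by integrating $\widetilde F$ along flow lines in Hopf coordinates, then shifting $(x,y,t)\mapsto(x,y,t+U(y))$ to pass from $c_F$ to $c$.

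Moreover, the hypothesis is that periods are \emph{non-negative}, and even positivity of each period would not yield a positive representative. A representative $F>0$ forces $\ell_c(\gamma)\ge(\min F)\,\ell_X(\gamma)$, i.e.\ periods uniformly comparable to hyperbolic length. This uniformity is precisely what finiteness of $h_c$ provides. Your Step~1 asserts it (``finite entropy forces $c$ to be cohomologous to a cocycle \dots{} coarsely positive'') without any mechanism.

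Step~1 also fails as an independent route. A lower bound on periods only controls $c(g^n,g^+)=n\ell_c(g)$. Properness instead requires $|c(\gamma,y)|\to\infty$ along all $\gamma$ for which $(x,y)$ and $(\gamma x,\gamma y)$ lie in a fixed compact subset of $\partial^2\Gamma$, and there $y$ is generally not fixed by $\gamma$. In the standard proof, properness and cocompactness are not proved separately. They are transported from $T^1\widetilde X$ through the equivariant homeomorphism of Step~2.

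\textbf{Step 3.} Reducing mixing to non-arithmeticity uses the standard dichotomy for transitive hyperbolic flows (topologically mixing unless a constant-roof suspension), which you should invoke explicitly. The cross-ratio argument then has three further holes.
\begin{enumerate}
\item The limit of $\ell_c(g^nh^n)-\ell_c(g^n)-\ell_c(h^n)$ does exist for any H\"older cocycle. But to see it as a \emph{continuous function} of $(g^-,g^+,h^-,h^+)$ you need a Gromov product for $c$ and its dual cocycle $\bar c$ (with $\ell_{\bar c}(\gamma)=\ell_c(\gamma^{-1})$). That is further Ledrappier--Sambarino machinery you take for granted.
\item ``Continuous, hence a continuum of values'' needs connectedness of the configuration space, which is exactly where $\partial\Gamma\cong S^1$ must enter. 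For a free group acting on a regular tree, the Busemann cocycle is locally constant (hence H\"older), all periods are integers, and the flow is not mixing.
\item You must rule out that the function is constant, which you do not argue.
\end{enumerate}

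A shorter, surface-specific route is available once Step~2 is in place. Suppose all periods lay in $a\mathbb Z$. Liv\v{s}ic's theorem for $S^1$-valued cocycles gives a continuous $u\colon T^1X\to S^1$ with $u(\phi_t v)=e^{2\pi i\int_0^t F(\phi_s v)\,ds/a}\,u(v)$. Hence $u_*[\tau]=\ell_c(\gamma)/a>0$ for the closed orbit $\tau$ of $\gamma$. Let $\bar\tau$ be the flipped orbit, i.e.\ the orbit of $\gamma^{-1}$. Then $[\tau]+[\bar\tau]$ projects to $0$ in $H_1(X)$, so it is torsion in $H_1(T^1X;\mathbb Z)$ and $u_*$ kills it. This forces $\ell_c(\gamma)+\ell_c(\gamma^{-1})=0$, contradicting positivity.
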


In particular, the translation flow admits  Liv\v{s}ic theory, equilibrium states, and the thermodynamic formalism.

\subsubsection{Reparametrization functions}

Given a positive H\"older function $f$, one defines the H\"older reparametrization $\Phi^f$ of a flow $\Phi$ by time change~\cite[\S2]{Sambarino2014}. The periodic orbits are unchanged, and if $\tau$ is periodic then its new period is
\begin{equation}
\label{eqn:NewPeriod}
\lambda(f,\tau)=\int_\tau f.
\end{equation}

The following criterion relates positivity and entropy.

\begin{lemma}[Ledrappier~\cite{ledrappier1994structure}, Sambarino~\cite{Sambarino2014}]
\label{lem:PosEntropy}
Let $f$ be a H\"older function with non-negative periods on a topologically transitive metric Anosov flow. The following are equivalent:
\begin{enumerate}
    \item $f$ is cohomologous to a positive H\"older function;
    \item $f$ has uniformly positive periods;
    \item the entropy $h_f$ is finite and positive.
\end{enumerate}
\end{lemma}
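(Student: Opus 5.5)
The statement is Lemma~\ref{lem:PosEntropy}: for a Hölder $f$ with non-negative periods on a topologically transitive metric Anosov flow, positive cohomology, uniformly positive periods, and finite positive entropy are equivalent. I would prove the cycle of implications (1)$\Rightarrow$(2)$\Rightarrow$(1) and (1)$\Leftrightarrow$(3).

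\textbf{(1)$\Rightarrow$(2).} Periods are cohomology invariants, so we may assume $f>0$, hence $f\ge m>0$ by compactness. Then $\lambda(f,\tau)=\int_\tau f\ge m\,p(\tau)$, where $p(\tau)$ is the original period. Since periods of a metric Anosov flow are bounded below, the new periods are uniformly bounded below. More usefully, $\lambda(f,\tau)/p(\tau)\ge m$ for all $\tau$, and this ratio form is the one I take as the meaning of "uniformly positive periods".

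\textbf{(2)$\Rightarrow$(1).} Assume $\int_\tau f\ge m\,p(\tau)$ for all periodic $\tau$. By density of periodic-orbit measures among invariant probability measures (Anosov closing / specification for transitive metric Anosov flows), $\int f\,d\nu\ge m$ for every invariant probability $\nu$. I would then apply the standard argument (Livšic-type, as in Sambarino's Lemma): the function $g=f-m/2$ has $\int g\,d\nu>0$ for all invariant $\nu$. By compactness of the space of invariant measures, the ergodic averages $\frac1T\int_0^T g\circ\phi_s\,ds$ are uniformly positive for $T\ge T_0$. Set
\[
\tilde f(x)=\frac1T\int_0^T f(\phi_s x)\,ds .
\]
This function is cohomologous to $f$ via the transfer function $V(x)=\frac1T\int_0^T\!\!\int_0^s f(\phi_u x)\,du\,ds$, it is Hölder, and it is positive. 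I expect this step to be the main obstacle, specifically the uniformity argument from measures to time averages. I would argue it by contradiction, taking weak$^*$ limits of empirical measures along orbit segments.

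\textbf{(1)$\Rightarrow$(3).} With $f>0$ Hölder, the reparametrized flow $\Phi^f$ is again a topologically transitive metric Anosov flow. Its periods are $\lambda(f,\tau)$, comparable to $p(\tau)$ up to the multiplicative constants $\min f$ and $\max f$. Counting orbits with $\lambda\le T$ is therefore sandwiched between counts of original periods $\le T/\max f$ and $\le T/\min f$, so $h_f\in[h/\max f,\,h/\min f]$, which is finite and positive. Alternatively, I would use Abramov's formula together with the variational principle, where $h_f$ is the unique $s$ with $P(-sf)=0$.

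\textbf{(3)$\Rightarrow$(2).} Suppose the periods are not uniformly positive. Then there are $\tau_k$ with $\lambda(f,\tau_k)/p(\tau_k)\to0$, and a limit invariant measure $\nu$ with $\int f\,d\nu=0$; non-negativity gives $\int f\,d\nu'\ge0$ for all invariant $\nu'$. The pressure $P(-sf)\ge h_\nu-s\int f\,d\nu=h_\nu$ for all $s$. If $h_\nu>0$, then $P(-sf)$ never vanishes, so $h_f=\infty$. If $h_\nu=0$, I would perturb $\nu$, using specification, to measures $\nu_\varepsilon$ with $h_{\nu_\varepsilon}>0$ and $\int f\,d\nu_\varepsilon$ small relative to $h_{\nu_\varepsilon}$. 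The conclusion is then that the growth of orbits with $\int_\tau f\le T$ is superexponential, hence infinite entropy, contradicting (3). This perturbation is secondary; I would try to cite Ledrappier's structure result directly.
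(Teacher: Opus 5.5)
You should know the paper gives no proof of this lemma; it is quoted from Ledrappier and Sambarino, so your argument has to stand on its own. Your implications (1)$\Rightarrow$(2), (2)$\Rightarrow$(1) and (1)$\Rightarrow$(3) are the standard ones and are fine. In particular, the step you worried about in (2)$\Rightarrow$(1) works as you describe, by taking weak$^*$ limits of the empirical measures $\frac{1}{T_k}\int_0^{T_k}\delta_{\phi_s x_k}\,ds$.

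The genuine gap is in (3)$\Rightarrow$(2) when $h_\nu=0$. This is not a side case but the typical one, for instance when $f$ vanishes along a periodic orbit. What you need there is, for every $s>0$, an invariant measure $m$ with $h_m>s\int f\,dm$. Your ``perturbation of $\nu$'' restates this without proving it, and its most natural form fails outright. For $\nu_\varepsilon=(1-\varepsilon)\nu+\varepsilon m$, affinity of entropy gives $\int f\,d\nu_\varepsilon/h_{\nu_\varepsilon}=\int f\,dm/h_m$ for every $\varepsilon$, so the ratio never improves. A specification argument would need orbit segments along which the integral of $f$ is uniformly bounded, not merely $o(L)$, and you do not produce them. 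A smaller point: the step ``$P(-sf)$ never vanishes, hence $h_f=\infty$'' is not supplied by Lemma~\ref{lem:PressueZero}, which presupposes $f>0$.

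\emph{The missing idea} is to work with the whole pressure curve rather than with $\nu$. Put $q(s)=P(-sf)$.
\begin{enumerate}
\item Since $\int f\,dm\ge0$ for every invariant $m$, the variational principle $q(s)=\sup_m\bigl(h_m-s\int f\,dm\bigr)$ shows that $q$ is non-increasing. Also $q(s)\ge h_\nu-s\int f\,d\nu=h_\nu\ge 0$.
\item If $q(s_0)=0$ for some $s_0$, then $q\equiv0$ on $[s_0,\infty)$. This contradicts real-analyticity of $q$ (via the symbolic coding, as recalled in Section~\ref{sec:corrthm}) together with $q(0)=h_{\mathrm{top}}>0$.
\item Alternatively, it contradicts strict convexity of $q$ when $f$ is not cohomologous to a constant. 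If $f$ is cohomologous to a constant, that constant is $\int f\,d\nu=0$, all periods vanish, and $h_f=\infty$ trivially.
\end{enumerate}
Hence $q(s)>0$ for all $s$, with no case split on $h_\nu$.

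To finish, use the periodic-orbit formula for pressure (Bowen; Parry--Pollicott via the coding): $P(-sf)>0$ forces $\sum_\tau e^{-s\lambda(f,\tau)}=\infty$. On the other hand, if $h_f<\infty$, this series converges for every $s>h_f$, since $\#\{\tau:\lambda(f,\tau)\le t\}$ grows at exponential rate $h_f<s$. Hence $h_f=\infty$, contradicting (3).
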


We will also use Ledrappier's correspondence between H\"older cocycles and H\"older functions.

\begin{theorem}[Ledrappier~\cite{ledrappier1994structure}]
\label{thm:Ledrappier}
Let $X$ denote a choice of negatively curved Riemannian metric on $S$.
Every H\"older cocycle $c:\Gamma\times\partial\Gamma\to\mathbb R$
determines a H\"older function $F_c:T^1X\to\mathbb R$
satisfying
$\ell_c(\gamma)=\int_{[\gamma]}F_c$
for all $\gamma\in\Gamma-\{e\}$, and this correspondence induces a bijection on cohomology classes.
\end{theorem}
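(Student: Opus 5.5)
The statement is Ledrappier's correspondence (Theorem~\ref{thm:Ledrappier}). I would prove it by passing through the unit tangent bundle viewed as a quotient of $\partial^2\Gamma\times\mathbb{R}$.

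\textbf{Identifying $T^1X$ with a translation-flow quotient.} Since $X$ is negatively curved, its geodesic flow is conjugate to the translation flow on $M_{c_0}$, where $c_0$ is the Busemann cocycle of $\wt{X}$. Concretely, a unit vector $v$ is sent to its endpoints $(v^-,v^+)$ together with the Busemann coordinate $\beta_o(\pi(v),v^+)$. This is the Hopf parametrization, which is H\"older because the boundary map and Busemann functions are H\"older in negative curvature.

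\textbf{From cocycle to function.} Given a H\"older cocycle $c$, I would build a function $\wt{F}$ on $\partial^2\Gamma\times\mathbb{R}$ whose integral along a flow line from time $0$ to $t$ equals
\[
c\text{-increment}\;+\;\text{coboundary error}.
\]
The standard way is to choose a H\"older, compactly supported bump function $\psi\geq 0$ on a fundamental domain, with $\sum_\gamma \psi\circ\gamma = 1$. One then averages the cocycle increments $c(\gamma,y)-c_0(\gamma,y)$ against $\psi$ to obtain a $\Gamma$-invariant H\"older function.

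Equivalently, I would seek a H\"older map $G$ on $\partial^2\Gamma\times\mathbb{R}$ with
\[
G(\gamma x,\gamma y,t-c_0(\gamma,y)) = G(x,y,t) + c(\gamma,y) - \ldots
\]
and then set $F=\partial_t G$ along the flow, after smoothing along flow lines so that the derivative exists. Concretely, I take $G(x,y,t)=\sum_\gamma \psi(\gamma^{-1}(x,y,t))\,(\ldots)$, and $F$ is its flow derivative. This $F$ is $\Gamma$-invariant, so it descends to $T^1X$.

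For a periodic orbit of $\gamma$ with axis $(\gamma^-,\gamma^+)$, integrating $F$ over one period telescopes to $c(\gamma,\gamma^+)$. This quantity equals the period $\ell_c(\gamma)$, as needed.

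\textbf{Bijection on cohomology classes.} Injectivity follows from Liv\v{s}ic's theorem. If $\int_{[\gamma]}F_c=\int_{[\gamma]}F_{c'}$ for all $\gamma$, then $F_c-F_{c'}$ is a H\"older coboundary $\partial_t u$. Lifting $u$ produces a H\"older $U$ on $\partial\Gamma$ with $c-c'=U\circ\gamma-U$, so $c$ and $c'$ are cohomologous. Well-definedness goes the other way: cohomologous cocycles give cohomologous functions by the same construction.

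Surjectivity runs in reverse. Given a H\"older $F$, set
\[
c_F(\gamma,y)=\int_{\text{from } (x,y,0)}^{\gamma(x,y,0)} \wt{F},
\]
the integral along the flow line to $y$ measured via Busemann coordinates. The exponential convergence of asymptotic geodesics makes this independent of $x$, H\"older in $y$, and a cocycle.

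\textbf{Main obstacle.} The hardest step is the H\"older regularity of $c_F$ in $y$, and dually of $F_c$. It requires exponential contraction along stable leaves, which is exactly where negative curvature enters. I would handle it by comparing the two geodesics ending at $y$ and at a nearby $y'$, which fellow-travel for time about $\log(1/d(y,y'))$. The H\"older estimate on $F$ then bounds the accumulated difference by a geometric series.
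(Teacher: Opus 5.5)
The paper gives no proof of this statement; it only cites Ledrappier. Your architecture is the standard one: Hopf coordinates, an equivariant primitive built from a partition of unity, Busemann-type integrals for the converse, and Liv\v{s}ic for the cohomology statement. The cocycle-to-function step works once the elided formula is written out. Set $G(v)=\sum_{\gamma}\psi(\gamma^{-1}v)\,c(\gamma,\gamma^{-1}v^+)$; reindexing $\gamma=\eta\gamma'$ and using the cocycle identity gives $G(\eta v)=G(v)+c(\eta,v^+)$.

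The genuine gap is in injectivity. The lift $\widetilde u$ is $\Gamma$-invariant, so by itself it cannot produce a coboundary. What you actually obtain is $H:=G_c-G_{c'}-\widetilde u$. It has zero flow derivative, so it is a function on $\partial^2\Gamma$ with $H(\gamma x,\gamma y)-H(x,y)=(c-c')(\gamma,y)$. That is a coboundary over $\partial^2\Gamma$, not over $\partial\Gamma$, and the real content of ``equal periods imply cohomologous cocycles'' is that $H(x,y)$ does not depend on $x$. This needs an argument you do not give:
\begin{itemize}
\item $(x,y,t)$ and $(x',y,t)$ lie on one strong stable leaf, so $d(\varphi_s v,\varphi_s v')\to 0$;
\item $\widetilde u$ is uniformly continuous;
\item $G_c$ is not $\Gamma$-invariant, but $G_c(w)-G_c(w')$ is invariant whenever $w^+=w'^+$. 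So you may translate $\varphi_s v$ into a fixed compact set and use uniform continuity of $G_c$ there.
\end{itemize}
Only after this do you get $H(x,y)=U(y)$ with $U$ H\"older.

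The converse construction is also not well posed as written. The points $(x,y,0)$ and $\gamma(x,y,0)=(\gamma x,\gamma y,-c_0(\gamma,y))$ lie on flow lines with different forward endpoints, so there is no flow-line integral between them. Any version using a single flow line $(x,y)$, integrating between the horospheres centered at $y$ through $\gamma^{-1}o$ and through $o$, depends on $x$ through a stable-holonomy correction, so ``independent of $x$'' is false.

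The correct definition compares two asymptotic rays:
\[
c_F(\gamma,\xi)=\lim_{z\to\xi}\bigl(\textstyle\int_{\gamma^{-1}o}^{z}\widetilde F-\int_{o}^{z}\widetilde F\bigr),
\]
with the integrals taken along geodesic segments. Then:
\begin{itemize}
\item the limit exists and is H\"older in $\xi$ by the exponential fellow-travelling you describe;
\item the cocycle identity follows from additivity in the base points together with $\Gamma$-invariance of $\widetilde F$;
\item placing $o$ on the axis of $\gamma$ gives $c_F(\gamma,\gamma^+)=\int_{[\gamma]}F$;
\item changing $o$ alters $c_F$ only by a coboundary.
\end{itemize}
Liv\v{s}ic then gives $[F_{c_F}]=[F]$, which is the surjectivity you want.
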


The following consequence is proved exactly as in~\cite[Proposition~6.11]{DM23:Correlation}.

\begin{lemma}
\label{lem: rep}
Let $c_1,c_2$ be H\"older cocycles. Then there exists a positive H\"older function
$f_{c_1}^{c_2}:M_{c_1}\to\mathbb R_{>0}$
such that for every periodic orbit $\tau$ corresponding to $[\gamma]\in[\Gamma]$,
\[\lambda(f_{c_1}^{c_2},\tau)
=
\ell_{c_2}([\gamma]).\]
\end{lemma}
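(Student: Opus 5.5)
Given H\"older cocycles $c_1,c_2$ (with non-negative periods and finite positive entropy, as for the Busemann cocycles of strongly hyperbolic metrics), I will construct $f_{c_1}^{c_2}$ by passing through a fixed negatively curved model. The plan follows~\cite[Proposition~6.11]{DM23:Correlation}.

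First, by Theorem~\ref{translationMetricAnosov}, the translation flow on $M_{c_1}$ is H\"older conjugate to a H\"older reparametrization of the geodesic flow $\phi_t$ on $T^1X$, for a fixed negatively curved metric $X$ on $S$. Concretely, there is a H\"older homeomorphism $h\colon M_{c_1}\to T^1X$ and a positive H\"older function $r_1$ on $T^1X$ such that $h$ conjugates the translation flow to $\phi^{r_1}$. The conjugacy matches the periodic orbit of $M_{c_1}$ labelled by $[\gamma]$ with the closed geodesic in the class $[\gamma]$. By Theorem~\ref{thm:Ledrappier}, we may take $r_1$ cohomologous to $F_{c_1}$, so that $\int_{[\gamma]} r_1=\ell_{c_1}([\gamma])$.

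Second, Theorem~\ref{thm:Ledrappier} applied to $c_2$ gives a H\"older function $F_{c_2}$ on $T^1X$ with $\int_{[\gamma]}F_{c_2}=\ell_{c_2}([\gamma])$, the integral taken with respect to the unit-speed geodesic flow. The periods of $c_2$ are non-negative and its entropy is finite and positive. Lemma~\ref{lem:PosEntropy} therefore lets us replace $F_{c_2}$ by a cohomologous positive H\"older function, which we still call $F_{c_2}$, without changing any periods.

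Third, we transport $F_{c_2}$ to $M_{c_1}$ while correcting for the time change. Define
\[
f_{c_1}^{c_2}\coloneqq \frac{F_{c_2}\circ h}{r_1\circ h}.
\]
This is positive and H\"older because $r_1>0$ is H\"older and $h$ is H\"older. Let $\tau$ be the periodic orbit of the translation flow corresponding to $[\gamma]$, with period $\ell_{c_1}([\gamma])$. Its image under $h$ is the closed geodesic of period $\ell_X([\gamma])$ traversed by $\phi^{r_1}$. For the reparametrized flow, $ds_{\mathrm{new}}=r_1\,ds_{\mathrm{old}}$, so by the change of variables,
\[
\lambda(f_{c_1}^{c_2},\tau)=\int_\tau f_{c_1}^{c_2}=\int_{[\gamma]}\frac{F_{c_2}}{r_1}\,r_1\,ds=\int_{[\gamma]}F_{c_2}=\ell_{c_2}([\gamma]).
\]

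The main obstacle is bookkeeping rather than a new idea. One has to check the time-change convention carefully, so that dividing by $r_1$, and not multiplying, is correct. One also has to verify that the conjugacy of Theorem~\ref{translationMetricAnosov} matches periodic orbits according to their labels $[\gamma]$ with consistent orientation, which is implicit in Sambarino's construction. Finally, if positivity of $c_2$'s periods were not assumed, Lemma~\ref{lem:PosEntropy} would fail, so that hypothesis must be imported from the strongly hyperbolic setting.
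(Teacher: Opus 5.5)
Your proposal is correct and is essentially the argument the paper relies on. The paper's proof only cites \cite[Proposition~6.11]{DM23:Correlation}, which does what you do: it uses Sambarino's conjugacy to realize the translation flow on $M_{c_1}$ as the reparametrization of the geodesic flow by a positive representative $r_1$ of Ledrappier's $F_{c_1}$, and sets $f_{c_1}^{c_2}=(F_{c_2}/r_1)\circ h$, with $F_{c_2}$ made positive via Lemma~\ref{lem:PosEntropy}. Your time-change computation $dt=r_1\,ds$, which justifies dividing by $r_1$, is correct. You were also right to add the hypotheses of non-negative periods and finite positive entropy. The statement as written omits them, but they are needed both for $M_{c_1}$ to be defined and for a positive function with periods $\ell_{c_2}$ to exist.
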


\subsubsection{Pressure and Liv\v{s}ic cohomology}

For a H\"older function $f:M_c\to\mathbb R$, denote by $P(f)$ its pressure.

\begin{lemma}[Bowen--Ruelle~\cite{BR75:AxiomA}, Sambarino~\cite{Sambarino2014}]
\label{lem:PressueZero}
If $f$ is positive, then
\[
P(-hf)=0
\]
if and only if $h$ is the topological entropy of the reparametrized flow $\Phi^f$.
\end{lemma}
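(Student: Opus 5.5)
This is the standard Bowen–Ruelle characterization of topological entropy via pressure for a time change; I would take it from the thermodynamic formalism for the metric Anosov translation flow of Theorem~\ref{translationMetricAnosov}. The plan has three steps: write the entropy of the reparametrized flow as a variational problem, use Abramov's formula to turn it into a pressure equation, and then use monotonicity of pressure to get uniqueness.

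\emph{Variational step.} Let $\Phi$ be the flow on $M_c$ and $f>0$ Hölder. There is a bijection between $\Phi$-invariant probability measures $m$ and $\Phi^f$-invariant probability measures $m^f$, namely $dm^f = f\,dm/\int f\,dm$. Abramov's formula gives $h_{m^f}(\Phi^f) = h_m(\Phi)/\int f\,dm$. By the variational principle for the reparametrized flow, which is again metric Anosov by Theorem~\ref{translationMetricAnosov}, its topological entropy is
\[
h_f=\sup_m \frac{h_m(\Phi)}{\int f\,dm}.
\]

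\emph{Pressure step.} By definition,
\[
P(-hf)=\sup_m\Big(h_m(\Phi)-h\int f\,dm\Big).
\]
Since $\int f\,dm>0$, the inequality $h_m(\Phi)-h\int f\,dm\le 0$ is equivalent to $h_m(\Phi)/\int f\,dm\le h$. Taking suprema shows that $P(-hf)\le 0$ holds if and only if $h\ge h_f$.

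\emph{Uniqueness step.} The function $h\mapsto P(-hf)$ is continuous, convex and strictly decreasing. Strict decrease comes from $f\ge \min f>0$ together with the Lipschitz bound $|P(g)-P(g')|\le\|g-g'\|_\infty$, which give
\[
P(-h'f)\le P(-hf)-(h'-h)\min f \quad\text{for } h'>h.
\]
Hence $P(-hf)$ has at most one zero. We also have $P(0)=h_{\mathrm{top}}(\Phi)>0$, and $P(-hf)\to-\infty$ as $h\to\infty$, so a zero exists. By the previous step this zero must equal $h_f$. Conversely, if $h=h_f$, then $P(-hf)\le0$, and the sup is attained by compactness, since the entropy map is upper semicontinuous for expansive flows. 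An equilibrium state $m$ for $-h_f f$ is a measure of maximal entropy for $\Phi^f$ after transport, so $P(-h_ff)=0$.

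The main obstacle is attainment of the supremum, which relies on upper semicontinuity of $m\mapsto h_m$. Theorem~\ref{translationMetricAnosov} supplies the symbolic coding with Hölder roof, so this expansivity input is available. Alternatively, attainment can be avoided: continuity and strict monotonicity of $h\mapsto P(-hf)$, combined with the sign characterization from the pressure step, already force the unique zero to be $h_f$.
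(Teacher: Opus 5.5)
Your proof is correct and follows the standard argument behind the cited references. The paper gives no proof of its own and defers to Bowen--Ruelle and Sambarino, where the lemma comes from Abramov's formula $h_{m^f}(\Phi^f)=h_m(\Phi)/\int f\,dm$ combined with the variational principle.

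One small point: your ``Conversely'' paragraph is circular as written, because an equilibrium state for $-h_f f$ is a maximal-entropy measure for $\Phi^f$ only once you already know $P(-h_f f)=0$. That paragraph is not needed, though. The sign characterization $P(-hf)\le 0\iff h\ge h_f$, together with continuity and strict monotonicity of $h\mapsto P(-hf)$, already identifies the unique zero as $h_f$.
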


Two H\"older functions are Liv\v{s}ic cohomologous if and only if they have the same periods on all periodic orbits~\cite{Livsic1972}. In particular, pressure depends only on the Liv\v{s}ic cohomology class.
\subsection{The correlation theorem}
\label{sec:corrthm}

In this section we prove the correlation theorem for pairs of strongly hyperbolic currents. The argument follows the strategy of Sharp~\cite{Sharp98:Hyperbolic} and Dai--Martone~\cite{DM23:Correlation}: one realizes the two length spectra as periods of a flow and of a H\"older reparametrization function, and then applies the Lalley--Sharp counting theorem.

\subsubsection{The Lalley--Sharp counting theorem}

Fix a H\"older cocycle $c$, let $\Phi$ denote the associated translation flow on $M_c$, and let
\[
f:M_c\to\mathbb R
\]
be a H\"older continuous function.

The pressure function
\[
t\mapsto P(tf)
\]
is real analytic and strictly convex whenever $f$ is not cohomologous to a constant~\cite[Prop.~4.12]{PP90:Zeta}. Moreover,
\begin{equation}
\label{eq:deriv pressure}
\frac{d}{dt}P(tf)
=
\int f\,d\mu_{tf},
\end{equation}
where $\mu_{tf}$ denotes the equilibrium state of $tf$.

We denote by $J(f)$ the image of the derivative of the pressure function. For each $a\in J(f)$, let $t_a$ be the unique real number satisfying
\[
P'(t_af)=a,
\]
and write
\[
\mu_a=\mu_{t_af}.
\]

Let $c$ be a H\"older cocycle on $\Gamma$.
A flow $\varphi$ on $T^1 M_c$ is said to be \emph{weakly mixing} if its periods
do not generate a discrete subgroup of $\mathbb{R}$, i.e. there exist no non-zero real number $a$ so that
so that $a \ell_\varphi([\gamma]) \in \mathbb{Z}$ for all $[\gamma] \in [\Gamma]$.
Two H\"older cocycles $c_1,c_2$ are \emph{dependent} if
there exist $a_1,a_2 \in \mathbb{R}$, not both equal to zero, so that
\[
a_1 \ell_{c_1}([\gamma]) + a_2 \ell_{c_2}([\gamma]) \in \mathbb{Z},
\]
for all $[\gamma] \in [\Gamma]$. Otherwise, we say $c_1,c_2$ are \emph{independent}.

Two positive H\"older continuous functions
$f,g \colon T^1 M_c \to \mathbb{R}$ are
called \emph{dependent} if there exist $a_1,a_2$ real numbers not both 
equal to zero and a complex valued $C^1$ function
$u \colon T^1 M_c \to S^1$ so that
\[
a_1 f_1 + a_2 f_2 = \frac{1}{2\pi i} \frac{u'}{u}
\]
where $u'$ denotes the derivative of $u$ along the flow, i.e., $\left.\frac{\partial}{\partial t}\right|_{t=0} \bigl(u \circ \varphi_t\bigr)$.
Otherwise, we say $f_1,f_2$ are \emph{independent}.
In particular, $f$ is said to be (in)dependent if $f$ and the constant function $g \equiv 1$ are (in)dependent.

\begin{lemma}
Consider two H\"older cocycles $c_1,c_2$.
If $c_1,c_2$ are independent, then $f_{c_1}^{c_2}$ (as in Lemma~\ref{lem: rep}) is independent.
\label{lem:f_is_indep}
\end{lemma}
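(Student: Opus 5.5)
We argue by contraposition: suppose $f\coloneqq f_{c_1}^{c_2}$ is dependent, i.e. there exist $a_1,a_2\in\mathbb{R}$, not both zero, and a $C^1$ function $u\colon M_{c_1}\to S^1$ with
\[
a_1 f + a_2 = \frac{1}{2\pi i}\frac{u'}{u}.
\]
We will deduce that $c_1,c_2$ are dependent. The mechanism is the standard one from Dai--Martone: integrate the identity along periodic orbits of the translation flow on $M_{c_1}$.

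Let $\tau$ be the periodic orbit corresponding to $[\gamma]\in[\Gamma]$. By Sambarino's theorem (Theorem~\ref{translationMetricAnosov}), its period for the unreparametrized translation flow is $\ell_{c_1}([\gamma])$. By Lemma~\ref{lem: rep}, we have $\int_\tau f = \lambda(f,\tau)=\ell_{c_2}([\gamma])$. Integrating the right-hand side over one period $T=\ell_{c_1}([\gamma])$, write $u(\varphi_t p)=e^{2\pi i\theta(t)}$ with $\theta$ a $C^1$ lift. Then $u'/u = 2\pi i\,\theta'$, so
\[
\int_\tau \frac{1}{2\pi i}\frac{u'}{u} = \theta(T)-\theta(0)\in\mathbb{Z},
\]
because $\varphi_T p=p$ forces $e^{2\pi i\theta(T)}=e^{2\pi i\theta(0)}$. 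Hence
\[
a_1\ell_{c_2}([\gamma]) + a_2\ell_{c_1}([\gamma])\in\mathbb{Z}\quad\text{for all }[\gamma]\in[\Gamma].
\]
This is exactly dependence of $c_1,c_2$ (with the roles of the coefficients swapped), which contradicts the hypothesis. Therefore $f$ is independent.

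The only step needing care is the bookkeeping of the two notions of period. The period of $\tau$ under the translation flow on $M_{c_1}$ must be $\ell_{c_1}([\gamma])$, which follows from the construction of the $\Gamma$-action $\gamma(x,y,t)=(\gamma x,\gamma y,t-c_1(\gamma,y))$: the orbit of $(\gamma^-,\gamma^+,s)$ closes after time $c_1(\gamma,\gamma^+)=\ell_{c_1}(\gamma)$. Meanwhile, the integral of $f$ against the flow-time measure along $\tau$ is $\lambda(f,\tau)$, as in Equation~\eqref{eqn:NewPeriod}.

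A second point is that the definition of dependence is stated on "$T^1M_c$". I would interpret it on $M_{c_1}$ with derivative along the translation flow, and I expect this identification to be the main (purely notational) obstacle. If needed, one could transport $f$ via the H\"older conjugacy of Theorem~\ref{translationMetricAnosov}, which preserves periodic orbits and their periods, so the argument above is unchanged.
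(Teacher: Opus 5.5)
Your proof is correct and takes the same approach as the paper, which just cites \cite[Remark~2.11]{DM23:Correlation}. That remark contains exactly your argument: integrate $a_1 f + a_2 = \frac{1}{2\pi i}\frac{u'}{u}$ over each periodic orbit of the translation flow on $M_{c_1}$, where the flow period is $\ell_{c_1}([\gamma])$ and $\lambda(f,\tau)=\ell_{c_2}([\gamma])$, to obtain $a_1\ell_{c_2}([\gamma])+a_2\ell_{c_1}([\gamma])\in\mathbb{Z}$. Reading $T^1M_c$ in the definition as $M_{c_1}$, with the derivative taken along the flow, is the intended interpretation.
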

\begin{proof}
See~\cite[Remark~2.11]{DM23:Correlation}.
\end{proof}

Until now, we have only relied on general considerations about H\"older continuous cocycles and their associated flow reparameterizations. The following is the only part of this dynamical construction where we use (signed) geodesic currents.

\begin{lemma}[Independence lemma]\label{lem:indep} Consider strongly hyperbolic currents $\mu_1,\mu_2$. If there exist $a,b\in\bb R$ such that $a\ell_{\mu_1}([\gamma])+b\ell_{\mu_2}([\gamma])\in\bb Z$ for all $[\gamma]\in[\Gamma]$, then $a=b=0$.
\end{lemma}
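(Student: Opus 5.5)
Suppose $a\ell_{\mu_1}([\gamma])+b\ell_{\mu_2}([\gamma])\in\mathbb Z$ for every $[\gamma]\in[\Gamma]$. By Proposition~\ref{item:dmu_length_intersection}, this reads $i(a\mu_1+b\mu_2,\delta_\gamma)\in\mathbb Z$. Set $\sigma\coloneqq a\mu_1+b\mu_2$. This is a \emph{signed} geodesic current, and its marked length spectrum $\ell_\sigma([\gamma])=i(\sigma,\delta_\gamma)$ takes only integer values. The plan is to feed this into the appendix criterion (Section~\ref{sec:signed}) for a signed current to be a weighted signed multicurve, and then to contradict non-atomicity.

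\textbf{Step 1: $\sigma$ is a signed multicurve.} The first step shows that integrality forces $\sigma$ to be atomic. I expect to use the appendix criterion in the following way. For crossing pairs $a',b'$ and powers $n$, apply Lemma~\ref{lem:transversal_vs_distances} together with the formulas of~\cite[Lemma~8.13--8.14]{MGT25:Intersections}, which were already used in Lemma~\ref{lem:stronghyp_cross}. These express $\sigma$-masses of double transversals and boxes through combinations such as
\[
\ell_\sigma(a'^n)+\ell_\sigma(b'^n)-\ell_\sigma(a'^nb'^n),
\]
and every such combination is an integer. Letting boxes shrink by continuity, the $\sigma$-mass of every box becomes a limit of integers along nested sequences. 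Hence the "measure" $\sigma$ takes values in a discrete set on a generating family of boxes, so it must be a finite sum of integer-weighted Dirac masses on closed geodesics. This is exactly the sufficient criterion of Section~\ref{sec:signed}; the injectivity of the marked length spectrum for signed currents then identifies $\sigma$ with that multicurve.

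\textbf{Step 2: contradiction with non-atomicity.} Strongly hyperbolic currents are filling, since their dual pseudometrics lie in $\mathcal D(\Gamma)$. By Corollary~\ref{cor:fillingsh_noatoms}, both $\mu_1$ and $\mu_2$ are non-atomic, so $a\mu_1+b\mu_2$ gives zero mass to every single geodesic. A signed multicurve with nonzero weights gives nonzero mass to some geodesic. It follows that $\sigma=0$, that is, $a\mu_1=-b\mu_2$.

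\textbf{Step 3: concluding $a=b=0$.} Suppose $(a,b)\neq(0,0)$. Then $\mu_1,\mu_2$ are positive multiples of each other, which requires $-b/a>0$, and so $\ell_{\mu_1}=c\,\ell_{\mu_2}$ for some $c>0$. This contradicts the standing hypothesis of Theorem~\ref{mainthm:counting} that $\mu_1,\mu_2$ have distinct renormalized length spectra, since proportional spectra have equal renormalizations. If the lemma is meant without that hypothesis, the case $a\mu_1=-b\mu_2$ needs an extra argument. In that case $a\ell_{\mu_1}=0$ identically, because $a\ell_{\mu_1}=-b\ell_{\mu_2}$ and the original integrality assumption then gives only $0\in\mathbb Z$. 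I would therefore restate the conclusion as the non-proportionality needed in Lemma~\ref{lem:f_is_indep}, or impose the distinctness hypothesis explicitly.

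The main obstacle is Step~1. The difficulty is making rigorous that an integer-valued spectrum forces a non-atomic signed current to vanish. The cleanest route I would try first is direct. Since $\sigma$ is non-atomic, box masses vary continuously as one endpoint of the box moves along $\partial\wt X$, and they tend to $0$ for degenerate boxes. On the other hand, the integer-valued expressions above approximate these box masses arbitrarily well. A continuous function that is both approximated by integers and tends to $0$ must vanish identically, so $\sigma(B)=0$ for all boxes and hence $\sigma=0$. The delicate point is the uniformity of the approximation from crossing-pair data to box masses in~\cite[Lemma~8.14]{MGT25:Intersections}: the argument needs those errors to go to zero rather than merely stay bounded.
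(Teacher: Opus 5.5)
Your route is the paper's. The paper applies Proposition~\ref{prop:signed_multicurves} to make $a\mu_1+b\mu_2$ a signed weighted multicurve and rules out a nontrivial one by non-atomicity. It then concludes $a=b=0$ because $\mu_1,\mu_2$ ``are not scalar multiples of each other'', which is a hypothesis absent from the statement. Your Step~3 isolates this correctly. Proportional currents give $a\ell_{\mu_1}+b\ell_{\mu_2}\equiv 0$ for some $(a,b)\neq(0,0)$, so they are an honest counterexample and no ``extra argument'' can handle that case. The hypothesis has to be imposed, as the distinct-spectra assumption of Theorem~\ref{thm:main} does.

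Your direct version of Step~1 is also sound, and the uniformity you worry about is not needed. For a fixed non-atomic $\sigma$, the integer crossing-pair combinations converge to (a fixed multiple of) the box mass, by continuity of $\sigma^{\pm}$ along the increasing transversals of Lemma~\ref{lem:nested_transversals}. Continuity of box masses in the endpoints, density of crossing-pair quadruples, and connectedness then give $\sigma=0$. In the non-atomic case this is slightly more robust than the appendix proof. That proof needs a box with $\sigma(B)\neq 0$, not merely $|\sigma|(B)>0$, and continuity supplies such a box directly.

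The one incorrect step is the opening claim of Step~2, that strongly hyperbolic currents are filling ``since their dual pseudometrics lie in $\mathcal D(\Gamma)$''. Membership in $\mathcal D(\Gamma)$ requires filling, and strong hyperbolicity does not provide it. Take a simple closed geodesic $c$. Then $d_{\delta_c}$ is pulled back from the dual simplicial tree, so it is $0$-hyperbolic. The two largest of the three pair sums therefore always coincide, and $d_{\delta_c}$ is $\epsilon$-strongly hyperbolic for every $\epsilon>0$. Yet $\delta_c$ is atomic and not filling, and $\mu_1=\delta_c$ with $(a,b)=(1,0)$ violates the lemma, since $i(\delta_c,\gamma)\in\mathbb Z$. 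So, exactly as with non-proportionality, filling must be added as an explicit hypothesis. The paper uses it silently too: its non-atomicity claim really rests on Corollary~\ref{cor:fillingsh_noatoms}, and the proof of Theorem~\ref{thm:main} works with filling currents. With both hypotheses stated, your argument is complete.
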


\begin{proof}
By Proposition~\ref{prop:signed_multicurves}, it follows that $a \mu_1+ b \mu_2$ is a signed weighted multi-curve, i.e.
\[
M=\sum_{i=1}^n a_i \gamma_i.
\]
Since $\alpha$ and $\beta$ are not scalar multiples of each other, $M$ is trivial if and only if $a=0$ and $b=0$.
If $M$ is non-trivial, then $a \neq 0$ or $b \neq 0$. But since $M$ is a weighted multi-curve, it is purely atomic and its support is discrete. On the other hand, the support of $a \mu_1 + b \mu_2$, being a linear combination of currents with no atoms (by Theorem~\ref{thm:bolic}), has no atoms, which is a contradiction.
\end{proof}

The following theorem is due independently to Lalley and Sharp.

\begin{theorem}[{Lalley~\cite[Theorem~I]{Lalley97:AxiomA}, Sharp~\cite[Theorem~1]{Sharp92:PrimeOrbit}}]
\label{prop:lalley}
Let $f:M_c\to\mathbb R$ be an independent H\"older continuous function and let $a\in J(f)$. Then, for every fixed $\varepsilon>0$, there exists a constant $C=C(f,\varepsilon)>0$ such that
\[
\#\{
\tau:
\lambda(\tau)\in(x,x+\varepsilon),
\
\lambda(f,\tau)\in(ax,ax+\varepsilon)
\}
\sim
C\frac{e^{h(\mu_a)x}}{x^{3/2}}.
\]
\end{theorem}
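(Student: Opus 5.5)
This is the Lalley--Sharp counting theorem, quoted from the literature, so the plan is to recall how its proof goes rather than reprove it from scratch. I would follow Lalley's strategy for suspension flows over subshifts of finite type.

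\emph{Step 1: symbolic coding.} By Theorem~\ref{translationMetricAnosov}, the translation flow on $M_c$ is a topologically mixing metric Anosov flow with a Markov coding. Periodic orbits $\tau$ then correspond, up to a negligible boundary error, to periodic orbits of a suspension over a mixing subshift $\sigma\colon\Sigma\to\Sigma$ with Hölder roof $r$. Pull $f$ back to a Hölder function $F$ on $\Sigma$ by integrating along the roof, so that the pair $(\lambda(\tau),\lambda(f,\tau))$ becomes the Birkhoff sums $(S_n r, S_n F)$ over a periodic point of period $n$.

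\emph{Step 2: tilted transfer operators.} Rewrite the counting function through the two-variable Fourier--Laplace transform
\[
\sum_\tau e^{-s\lambda(\tau)-w\lambda(f,\tau)}.
\]
This sum is governed by the Ruelle operators $\mathcal L_{-sr-wF}$. Tilt at the parameter $t_a$ with $P'(t_af)=a$, so that the equilibrium state $\mu_a$ becomes the reference measure. By the Ruelle--Perron--Frobenius theorem and analytic perturbation theory, the leading eigenvalue is analytic in $(s,w)$ near the real point. Independence of $f$ (the non-lattice condition) guarantees that for real parts at the critical point and imaginary parts bounded away from zero, the spectral radius is strictly below $1$. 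This gives analytic continuation of the transform, with a singularity only at the real point.

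\emph{Step 3: local limit theorem.} Carry out a saddle-point / local central limit analysis in the two variables, conditioning on $S_nr\in(x,x+\varepsilon)$ and $S_nF\in(ax,ax+\varepsilon)$. The exponential rate is $h(\mu_a)$, obtained by Legendre duality: it equals $P(t_af)-t_aa$ in the normalized parametrization. The Gaussian factor in the one "free" direction contributes $x^{-1/2}$. The standard $1/x$ from prime-orbit counting, where each orbit is counted once rather than with multiplicity proportional to its length, contributes the other factor, giving $x^{-3/2}$ overall. The constant $C$ collects $\varepsilon$, the variance of the pressure Hessian, and the normalization.

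\emph{Main obstacle.} The hardest step is the uniform spectral gap for $\mathcal L_{-sr-wF}$ with complex parameters, needed to control the integral away from the origin. This is exactly where independence of $f$ enters, via the Dolgopyat/Pollicott-type argument that a nontrivial peripheral eigenvalue would produce a cohomological relation $a_1 f + a_2 = \frac{1}{2\pi i}u'/u$. Since the statement is quoted verbatim from Lalley~\cite{Lalley97:AxiomA} and Sharp~\cite{Sharp92:PrimeOrbit}, in the paper I would simply cite these works after checking their hypotheses here: the flow is a mixing Axiom A/metric Anosov flow with Markov coding (Theorem~\ref{translationMetricAnosov}), and $f$ is Hölder and independent (Lemma~\ref{lem:f_is_indep}).
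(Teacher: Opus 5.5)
Your proposal matches the paper, which gives no proof of this theorem and only cites Lalley and Sharp. Like you, the paper just checks the hypotheses: the translation flow on $M_c$ is a weakly mixing metric Anosov flow with H\"older symbolic coding (Theorem~\ref{translationMetricAnosov}), and $f$ is independent (Lemmas~\ref{lem:indep} and~\ref{lem:f_is_indep}). Your sketch of the literature proof is reasonable, with one caveat: no Dolgopyat-type bound for large imaginary parts is needed, and such bounds are not readily available for H\"older reparametrizations; for fixed-width windows one smooths with kernels whose Fourier transforms have compact support, so the non-lattice condition only has to give spectral radius $<1$ on compact sets of nonzero frequencies.
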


Since the translation flows considered below are weakly mixing metric Anosov flows with H\"older symbolic codings, the Lalley--Sharp theorem applies in our setting.

\begin{remark}
\label{rmk:C_secondderivative}
The constant $C=C(f,\varepsilon)$ is comparable to $\varepsilon^2$ and depends on the second derivative of the pressure function. See~\cite[Thm.~5]{Lalley97:AxiomA}, \cite{Sharp98:Hyperbolic}, and~\cite[Theorem~1.2]{BL98:AxiomA}.
\end{remark}

\subsubsection{Pressure intersection}

Let $c_1,c_2$ be H\"older cocycles and let
\[
f=f_{c_1}^{c_2}:M_{c_1}\to\mathbb R_{>0}
\]
be the reparametrization function from Lemma~\ref{lem: rep}.

\begin{definition}
The \emph{pressure intersection} of $c_1$ and $c_2$ is
\[
\mathbf I(c_1,c_2)
=
\int f\,d\mu_{\Phi^{c_1}},
\]
where $\mu_{\Phi^{c_1}}$ denotes the Bowen--Margulis measure of the translation flow on $M_{c_1}$.

The \emph{renormalized pressure intersection} is
\[
\mathbf J(c_1,c_2)
=
\frac{h(c_2)}{h(c_1)}
\mathbf I(c_1,c_2).
\]
\end{definition}

By Liv\v{s}ic theory, these quantities do not depend on the choice of reparametrization function.

\begin{proposition}[{\cite[Proposition~3.8]{BCLS15:Pressure}}]
\label{thm:Jrigidity}
For every pair of H\"older cocycles $c_1,c_2$,
\[
\mathbf J(c_1,c_2)\geq 1,
\]
with equality if and only if
\[
L_{c_1}=L_{c_2},
\]
where $L_c=h(c)\ell_c$ denotes the renormalized length spectrum of the cocycle $c$ (and where $\ell_c=c(g,g^+)$).
\end{proposition}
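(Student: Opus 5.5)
The plan is the standard thermodynamic route of Bridgeman--Canary--Labourie--Sambarino. Write $\Phi$ for the translation flow on $M_{c_1}$, which is topologically mixing and metric Anosov by Theorem~\ref{translationMetricAnosov}, and put $f=f_{c_1}^{c_2}$, a positive H\"older function from Lemma~\ref{lem: rep}. By Lemma~\ref{lem:PressueZero} we have $P(-h(c_1)\cdot 1)=0$, since the unreparametrized flow has entropy $h(c_1)$, and $P(-h(c_2) f)=0$, since $\Phi^f$ has periods $\ell_{c_2}$ and therefore entropy $h(c_2)$.

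The key step is convexity of pressure. Consider $t\mapsto P(-h(c_1)+t(h(c_1)-h(c_2)f))$ on $[0,1]$. It vanishes at both endpoints and is convex. Its derivative at $t=0$ is, by \eqref{eq:deriv pressure}, the integral of $h(c_1)-h(c_2)f$ against the equilibrium state of $-h(c_1)$, which is the Bowen--Margulis measure $\mu_{\Phi^{c_1}}$. A convex function vanishing at $0$ and $1$ has nonpositive derivative at $0$. Hence $h(c_1)-h(c_2)\mathbf I(c_1,c_2)\le 0$, which gives $\mathbf J(c_1,c_2)\ge 1$.

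For the equality case, suppose $\mathbf J=1$. Then the derivative at $0$ vanishes, and a convex function with zero derivative at $0$ and equal values at $0$ and $1$ is constant on $[0,1]$. It is real analytic by \cite{PP90:Zeta}, so it is linear and in fact constant. Strict convexity fails precisely when $h(c_2)f$ is Liv\v{s}ic cohomologous to a constant $k$. Evaluating pressure, $P(-k)=h_{top}-k=h(c_1)-k=0$, so $k=h(c_1)$. Comparing periods on each closed orbit then gives $h(c_2)\ell_{c_2}([\gamma])=h(c_1)\ell_{c_1}([\gamma])$. Conversely, if $L_{c_1}=L_{c_2}$, Liv\v{s}ic's theorem makes $h(c_2)f$ cohomologous to $h(c_1)$. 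Integrating against the invariant measure $\mu_{\Phi^{c_1}}$ then yields $\mathbf J=1$.

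The main obstacle is justifying that the convexity/analyticity input applies, namely that the pressure is strictly convex unless the potential is cohomologous to a constant. This needs the symbolic coding from Theorem~\ref{translationMetricAnosov}. I would simply invoke \cite{PP90:Zeta} via that coding, as in \cite[Proposition~3.8]{BCLS15:Pressure}.
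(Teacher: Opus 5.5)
Your proof is correct. It is essentially the standard argument behind the cited \cite[Proposition~3.8]{BCLS15:Pressure}; the paper itself gives no proof beyond this citation. The differences are only in packaging. There the inequality comes directly from the variational principle, $0=P(-h(c_2)f)\ge h(\mu_{\Phi^{c_1}})-h(c_2)\int f\,d\mu_{\Phi^{c_1}}$ with $h(\mu_{\Phi^{c_1}})=h(c_1)$, which is exactly your supporting-line inequality at $t=0$. The equality case there is handled by uniqueness of equilibrium states rather than by strict convexity of pressure.

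Two small simplifications are available. Since your base potential is constant, your function is just $P(t\psi)-h(c_1)$ with $\psi=h(c_1)-h(c_2)f$, so the paper's derivative formula for $P(t\psi)$ and \cite[Prop.~4.12]{PP90:Zeta} apply verbatim. The appeal to real analyticity is also unnecessary, because vanishing identically on $[0,1]$ already rules out strict convexity.
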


\subsubsection{Proof of the correlation theorem}

Recall that the renormalized length spectrum of a strongly hyperbolic current $\mu$ is
\[
L_\mu=h(\mu)\ell_\mu.
\]

\begin{theorem}
\label{thm:main}
Fix $\varepsilon>0$. Let $\mu_1,\mu_2$ be strongly hyperbolic currents with distinct renormalized length spectra. Then there exist constants
\[
C=C(\varepsilon,\mu_1,\mu_2)>0
\quad\text{and}\quad
M=M(\mu_1,\mu_2)\in(0,1)
\]
such that
\[
\#\Big\{
[\gamma]\in[\Gamma]:
L_{\mu_1}([\gamma])
\in
(x,x+h(\mu_1)\varepsilon),
\
L_{\mu_2}([\gamma])
\in
(x,x+h(\mu_2)\varepsilon)
\Big\}
\sim
C\frac{e^{Mx}}{x^{3/2}}.
\]
\end{theorem}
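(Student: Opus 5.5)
The plan is to follow the Sharp / Dai--Martone strategy. We realize $\ell_{\mu_1}$ as the period spectrum of a metric Anosov translation flow, realize $\ell_{\mu_2}$ as the periods of a Hölder reparametrization function, and then apply the Lalley--Sharp theorem (Theorem~\ref{prop:lalley}) after a change of variables.

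First, since $\mu_1,\mu_2$ are strongly hyperbolic, the metrics $d_{\mu_i}$ are strongly hyperbolic. They are filling, because strongly hyperbolic pseudometrics are Gromov hyperbolic and in $\mathcal D(\Gamma)$; if needed we restrict to filling currents as in Section~\ref{sec:stronghyp}. By Lemma~\ref{lem:strong-cocycle-holder} and the lemma following it, the maps $c_i\coloneqq c_{d_{\mu_i}}$ are Hölder cocycles. Their periods are $\ell_{c_i}([\gamma])=\ell_{d_{\mu_i}}(\gamma)=i(\mu_i,\gamma)$ by Proposition~\ref{item:dmu_length_intersection}, and they have finite positive entropy $h(\mu_i)$.

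By Theorem~\ref{translationMetricAnosov}, the translation flow $\Phi$ on $M_{c_1}$ is a topologically mixing metric Anosov flow with periods $\ell_{\mu_1}$. Lemma~\ref{lem: rep} then gives a positive Hölder function $f=f_{c_1}^{c_2}$ with $\lambda(f,\tau_\gamma)=\ell_{\mu_2}([\gamma])$. By the independence Lemma~\ref{lem:indep}, $c_1,c_2$ are independent. Lemma~\ref{lem:f_is_indep} therefore shows that $f$ is independent, and in particular $f$ is not cohomologous to a constant, so $t\mapsto P(tf)$ is strictly convex.

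Next we rescale. The condition $L_{\mu_1}\in(x,x+h(\mu_1)\varepsilon)$ is equivalent to $\ell_{\mu_1}\in(x/h(\mu_1),x/h(\mu_1)+\varepsilon)$, and similarly for $\mu_2$. Setting $y=x/h(\mu_1)$ and $a\coloneqq h(\mu_1)/h(\mu_2)$, the count becomes the number of $\tau$ with $\lambda(\tau)\in(y,y+\varepsilon)$ and $\lambda(f,\tau)\in(ay,ay+\varepsilon)$.

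The key step is to check that $a\in J(f)$, the interior of the image of $P'(tf)$. We use Lemma~\ref{lem:PressueZero}. The function $s\mapsto P(-s f)$ vanishes at $s=h(\mu_2)$, and $P(0)=h(\mu_1)$. Alternatively we consider the curve $\{(p,q): P(-p-qf)=0\}$, which is the Manhattan curve. Strict convexity of $P(tf)$ together with Proposition~\ref{thm:Jrigidity} gives $\mathbf J(c_1,c_2)>1$, because the renormalized spectra differ. This places $a$ strictly between the derivatives at the relevant points, namely $\int f\,d\mu_{BM}=\mathbf I$ and the value at the equilibrium state of $-h(\mu_2)f$. The argument is that the mean-value slope $h(\mu_1)/h(\mu_2)$ of $t\mapsto P(tf)$ between $t=-h(\mu_2)$ (where $P=0$) and $t=0$ (where $P=h(\mu_1)$) is attained at some interior $t_a$ by strict convexity. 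I expect the main care to go into this step.

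Theorem~\ref{prop:lalley} then gives asymptotics $C\,e^{h(\mu_a)y}/y^{3/2}$. Substituting $y=x/h(\mu_1)$ gives the stated form, with $M=h(\mu_a)/h(\mu_1)$ and the constant absorbing $h(\mu_1)^{3/2}$.

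Finally we show $M\in(0,1)$. By the variational principle, $h(\mu_a)=P(t_af)-t_a a\le h(\mu_1)$, with equality only if $\mu_a$ is the measure of maximal entropy, i.e.\ $t_a=0$. That would force $a=\mathbf I(c_1,c_2)$, i.e.\ $\mathbf J=1$, which contradicts Proposition~\ref{thm:Jrigidity}. Positivity of $M$ holds because equilibrium states of Hölder potentials on mixing Anosov flows have positive entropy.
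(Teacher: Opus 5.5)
Your proposal is essentially the paper's proof: H\"older Busemann cocycles, Sambarino's translation flow on $M_{c_1}$ with the reparametrization $f=f_{c_1}^{c_2}$, independence via Lemma~\ref{lem:indep}, placing $a_0=h(\mu_1)/h(\mu_2)$ in $J(f)$, Lalley--Sharp, the rescaling $x=h(\mu_1)y$, and $M<1$ via uniqueness of the measure of maximal entropy. Your mean-value-theorem argument on the secant of $t\mapsto P(tf)$ over $[-h(\mu_2),0]$ is just a tidier packaging of the paper's inequalities \eqref{eqn:upper} and \eqref{eqn:lower}, i.e.\ $P'(-h(\mu_2)f)<a_0<P'(0)=\mathbf I(c_1,c_2)$.

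One correction: strong hyperbolicity implies neither filling nor membership in $\mathcal D(\Gamma)$. The dual pseudometric of a weighted simple multicurve is a tree pseudometric, hence $\epsilon$-strongly hyperbolic for every $\epsilon$, yet it has infinite entropy. So filling must be imposed as a standing hypothesis, exactly as the paper's proof tacitly does.
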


\begin{proof}
If is a filling geodesic current $\mu$ whose dual pseudometric $d_\mu$ on $\wt{X}$ is strongly hyperbolic, then it follows from Definition~\ref{def:pseudometric_G}, that the corresponding pseudometric on $\Gamma$ is also strongly hyperbolic.
Let $c_\mu$ be the corresponding H\"older cocycle obtained via Lemma~\ref{lem:strong-cocycle-holder}.
Denote by
$f=f_{\mu_1}^{\mu_2}:M_{\mu_1}\to\mathbb R_{>0}$ the reparametrization function $f_{c_1}^{c_2}$ from Lemma~\ref{lem: rep}, where $c_i \coloneqq c_{\mu_i}$ for $i=1,2$, and $M_{\mu_1}$ denotes the Sambarino translation flow associated to $c_1$, which by Theorem~\ref{translationMetricAnosov} is metric Anosov.
By Lemma~\ref{lem:indep}, the translation flow on $M_{\mu_1}$ is weakly mixing and $f$, by Lemma~\ref{lem:f_is_indep}, is independent. Hence Theorem~\ref{prop:lalley} applies to $f$. The proof now follows verbatim as in the proof of~\cite[Theorem~1.1]{DM23:Correlation}. We briefly sketch it here for completeness. We first show that
$a_0=\frac{h(\mu_1)}{h(\mu_2)}$ belongs to $J(f)$.
By Lemma~\ref{lem:PressueZero},  $P(-h(\mu_2)f)=0.$
Hence
\[
0
=
h(\mu_{-h(\mu_2)f})
-
h(\mu_2)\int f\,d\mu_{-h(\mu_2)f},
\]
so
\begin{equation}
\label{eqn:upper}
\int f\,d\mu_{-h(\mu_2)f}
=
\frac{h(\mu_{-h(\mu_2)f})}{h(\mu_2)}
<
\frac{h(\mu_1)}{h(\mu_2)}.
\end{equation}
The inequality is strict since equality would imply that $h(\mu_2)f$ is cohomologous to a constant, contradicting the independence of $\mu_1$ and $\mu_2$.

On the other hand,
\[
1
\leq
\mathbf J(\mu_1,\mu_2)
=
\frac{h(\mu_2)}{h(\mu_1)}
\int f\,d\mu_\Phi,
\]
where $\mu_\Phi$ denotes the Bowen--Margulis measure of the translation flow. Thus
\begin{equation}
\label{eqn:lower}
\int f\,d\mu_\Phi
>
\frac{h(\mu_1)}{h(\mu_2)}.
\end{equation}

Since $J(f)$ is an open interval and
\[
P'(tf)=\int f\,d\mu_{tf},
\]
Equations~\eqref{eqn:upper} and~\eqref{eqn:lower} imply that there exists
\[
t_{a_0}\in(-h(\mu_2),0)
\]
such that
\[
P'(t_{a_0}f)
=
\frac{h(\mu_1)}{h(\mu_2)}.
\]

Applying Theorem~\ref{prop:lalley} with
\[
a=a_0=\frac{h(\mu_1)}{h(\mu_2)},
\]
we obtain
\[
\#\left\{
[\gamma]\in[\Gamma]:
\ell_{\mu_1}([\gamma])\in(y,y+\varepsilon),
\
\ell_{\mu_2}([\gamma])
\in
\left(
\frac{h(\mu_1)}{h(\mu_2)}y,
\frac{h(\mu_1)}{h(\mu_2)}y+\varepsilon
\right)
\right\}
\sim
\widetilde C
\frac{e^{h(\mu_{a_0})y}}{y^{3/2}}.
\]

Setting \(x=h(\mu_1)y\), we obtain the result with
\[
M=\frac{h(\mu_{a_0})}{h(\mu_1)},
\qquad
C=h(\mu_1)^{3/2}\widetilde C,
\]
and note that
\[
0<M<1,
\]
since \(h(\mu_{a_0})<h(\mu_1)\).
and equality would force $\mu_{a_0}$ to coincide with the Bowen--Margulis measure, contradicting Equation~\eqref{eqn:lower}.
\end{proof}

\subsubsection{Manhattan curves and the correlation number}
\label{subsubsec:manhattan}

Let $c_1,c_2$ be H\"older cocycles. The \emph{Manhattan curve} $\mathcal C(c_1,c_2)$ is the boundary of the set
\[
\left\{
(a,b)\in\mathbb R^2:
\sum_{[\gamma]\in[\Gamma]}
e^{-a\ell_{c_1}([\gamma])-b\ell_{c_2}([\gamma])}
<
\infty
\right\}.
\]

Equivalently,
\[
\mathcal C(c_1,c_2)
=
\{
(a,b)\in\mathbb R^2:
P(-a-bf)=0
\},
\]
where
\[
f=f_{c_1}^{c_2},
\]
and $P$ is the pressure functional.

The following theorem summarizes the properties of the Manhattan curve needed later.

\begin{theorem}
\label{PropertyManhattan}
Let $c_1,c_2$ be H\"older cocycles on $M_c$.
\begin{enumerate}
    \item $\mathcal C(c_1,c_2)$ is a real analytic convex curve passing through $(h(c_1),0)$ and $(0,h(c_2))$.
    
    \item The normals at $(h(c_1),0)$ and $(0,h(c_2))$ have slopes
    \[
    \mathbf I(c_1,c_2)
    \qquad\text{and}\qquad
    \mathbf I(c_2,c_1)^{-1},
    \]
    respectively.
    
    \item $\mathcal C(c_1,c_2)$ is strictly convex unless
    \[
    c_1=c_2.
    \]
\end{enumerate}
\end{theorem}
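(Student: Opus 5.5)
The plan is to realize $\mathcal C(c_1,c_2)$ as the zero set of a pressure function and read off all three properties from standard thermodynamic formalism on the metric Anosov flow $\Phi$ on $M_{c_1}$ (Theorem~\ref{translationMetricAnosov}). Set $f=f_{c_1}^{c_2}$ as in Lemma~\ref{lem: rep}. The Poincaré series $\sum_{[\gamma]} e^{-a\ell_{c_1}(\gamma)-b\ell_{c_2}(\gamma)}$ is then a sum over periodic orbits $\tau$ of $e^{-\int_\tau (a+bf)}$. The standard orbit-counting characterization of pressure (Bowen, Parry--Pollicott) says this series converges exactly when $P(-a-bf)<0$ and diverges when $P(-a-bf)>0$. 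So the boundary is $\{P(-a-bf)=0\}$, which is the equivalent description stated in the theorem.

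\textbf{Item (1).} Use the identity $P(-a-bf)=P(-bf)-a$, since $P(g+c)=P(g)+c$ for constants $c$. The curve is therefore the graph $a=\theta(b):=P(-bf)$. Real analyticity and convexity of $b\mapsto P(-bf)$ are \cite[Prop.~4.12]{PP90:Zeta}.

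For the marked points:
\begin{itemize}
\item At $b=0$ we get $P(0)=h_{\mathrm{top}}(\Phi)=h(c_1)$, since the periods of $\Phi$ are $\ell_{c_1}$. This gives the point $(h(c_1),0)$.
\item At $a=0$, Lemma~\ref{lem:PressueZero} applied to the positive function $f$ gives $P(-h(c_2)f)=0$. Here $h(c_2)$ is the entropy of $\Phi^f$, whose periods are $\ell_{c_2}$. This gives the point $(0,h(c_2))$.
\end{itemize}

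\textbf{Item (2).} By \eqref{eq:deriv pressure}, $\theta'(b)=-\int f\,d\mu_{-bf}$. At $b=0$ the equilibrium state of the zero potential is the Bowen--Margulis measure of $\Phi$. Hence the tangent has $da/db=-\mathbf I(c_1,c_2)$, and the normal has the slope claimed, up to the sign and orientation convention being used.

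At the other endpoint I would swap the roles of $c_1$ and $c_2$. This means working on $M_{c_2}$ with $f_{c_2}^{c_1}$, where the curve is the same set with the coordinates reflected. The computation then yields slope $\mathbf I(c_2,c_1)$ in swapped coordinates, i.e.\ $\mathbf I(c_2,c_1)^{-1}$ in the original ones. To justify that the two descriptions give the same curve, I would note that the series definition is symmetric and does not depend on which flow is used.

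\textbf{Item (3).} This is the main point. By \cite[Prop.~4.12]{PP90:Zeta}, $\theta''(b)=\mathrm{Var}(f,\mu_{-bf})$, which is strictly positive unless $f$ is Livšic cohomologous to a constant $k$. If $f\sim k$, then $\ell_{c_2}=k\,\ell_{c_1}$ on every period. I would interpret ``$c_1=c_2$'' as equality up to this scaling, i.e.\ equality of renormalized spectra, which is the form in which it is used via Proposition~\ref{thm:Jrigidity}. In that case the curve is the straight line $a+kb=h(c_1)$. Conversely, strict convexity holds otherwise, since a real-analytic convex function that is not strictly convex has $\theta''\equiv0$.

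The main obstacle I expect is the rigorous identification of the convergence region with $\{P<0\}$ for a metric Anosov flow given only a symbolic coding. For this I would invoke the coding in Theorem~\ref{translationMetricAnosov} and the prime orbit theorem for subshifts with Hölder roof functions. Alternatively, I would cite \cite[Theorem~4.16]{CT25:Manhattan} directly.
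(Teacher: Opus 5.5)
Your proposal is correct and follows essentially the same route as the paper, which defers to the pressure-function proof of Dai--Martone's Theorem~4.1 (after Burger and Sharp). That route writes the curve as the graph $a=P(-bf)$, gets the axis intercepts from $P(0)=h(c_1)$ and $P(-h(c_2)f)=0$, gets the normal slopes from the derivative of pressure at the Bowen--Margulis measure (swapping the roles of $c_1,c_2$ at the second intercept), and gets strict convexity from the variance being positive unless $f$ is cohomologous to a constant. Two remarks: reading ``$c_1=c_2$'' as equality of renormalized length spectra is the right reading and is in fact necessary, since $c_2=2c_1$ gives the straight line $a+2b=h(c_1)$; and no sign caveat is needed in (2), because the tangent direction $(-\mathbf I(c_1,c_2),1)$ gives a normal of slope exactly $\mathbf I(c_1,c_2)$.
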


The proof follows verbatim from~\cite[Theorem~4.1]{DM23:Correlation}; see also~\cite{Bur93:Manhattan,CT25:Manhattan,COR22:Manhattan}.

The correlation number admits the following geometric interpretation.

\begin{theorem}
\label{thm:Manhattan}
Let $\mu_1,\mu_2$ be strongly hyperbolic currents with distinct renormalized length spectra. Then
\[
M(\mu_1,\mu_2)
=
\frac{a}{h(\mu_1)}
+
\frac{b}{h(\mu_2)},
\]
where $(a,b)\in\mathcal C(\mu_1,\mu_2)$ is the unique point whose tangent line is parallel to the line joining $(h(\mu_1),0)$ and $(0,h(\mu_2))$.
\end{theorem}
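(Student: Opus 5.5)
The plan is to identify the correlation exponent $M(\mu_1,\mu_2)=h(\mu_{a_0})/h(\mu_1)$ from the proof of Theorem~\ref{thm:main} with a point on the Manhattan curve. Set $c_i=c_{\mu_i}$ and $f=f_{c_1}^{c_2}$ on $M_{c_1}$, so that, by the description following the definition in Section~\ref{subsubsec:manhattan}, $\mathcal C(\mu_1,\mu_2)=\{(a,b): P(-a-bf)=0\}$. Since the flow has unit roof, $P(-a-bf)=P(-bf)-a$. Hence the curve is the graph $a=\theta(b)\coloneqq P(-bf)$. By Theorem~\ref{PropertyManhattan}, this graph is real analytic and strictly convex (distinct renormalized spectra), and passes through $(h(\mu_1),0)$ and $(0,h(\mu_2))$.

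\textbf{Step 1: locate the tangency point.} The chord joining $(h(\mu_1),0)$ and $(0,h(\mu_2))$, written as $a$ versus $b$, has slope $-h(\mu_1)/h(\mu_2)=-a_0$. Differentiating and using \eqref{eq:deriv pressure} gives $\theta'(b)=-\int f\,d\mu_{-bf}$. Tangency to a parallel line therefore means $\int f\,d\mu_{-bf}=a_0$, i.e.\ $P'(t f)=a_0$ with $t=-b$. The proof of Theorem~\ref{thm:main} produced exactly such a $t_{a_0}\in(-h(\mu_2),0)$, so $b=-t_{a_0}\in(0,h(\mu_2))$. Strict convexity makes $\theta'$ strictly monotone, which gives uniqueness of this point; the mean value theorem on the chord gives existence independently.

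\textbf{Step 2: compute the entropy.} The equilibrium state $\mu_{a_0}=\mu_{t_{a_0}f}$ satisfies the variational identity
\[
P(t_{a_0}f)=h(\mu_{a_0})+t_{a_0}\int f\,d\mu_{a_0}=h(\mu_{a_0})-b\,a_0 .
\]
Since $a=\theta(b)=P(-bf)=P(t_{a_0}f)$, this yields $h(\mu_{a_0})=a+a_0 b$. Dividing by $h(\mu_1)$,
\[
M=\frac{h(\mu_{a_0})}{h(\mu_1)}=\frac{a}{h(\mu_1)}+\frac{a_0 b}{h(\mu_1)}=\frac{a}{h(\mu_1)}+\frac{b}{h(\mu_2)} .
\]
This is the claimed formula.

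\textbf{Main obstacle: normalization.} The delicate point is consistency between the two parametrizations. The translation flow on $M_{c_1}$ has periods $\ell_{\mu_1}$, while Theorem~\ref{thm:main} counts with renormalized lengths $L_{\mu_i}$. One must check that the identification $\mathcal C=\{P(-a-bf)=0\}$ uses the unnormalized lengths $\ell_{c_1}$ and $\ell_{c_2}$, as in its definition. This is consistent here: the rescaling $x=h(\mu_1)y$ in Theorem~\ref{thm:main} is precisely what produces the division by $h(\mu_1)$ and $h(\mu_2)$. I would verify this step carefully against the proof of Theorem~\ref{thm:main}, and also confirm that $(a,b)$ lies on the arc of the curve between the two axis points, consistent with $0<M<1$.
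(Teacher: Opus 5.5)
Your proposal is correct and is essentially the argument the paper invokes verbatim from Dai--Martone \cite[Theorem~4.2]{DM23:Correlation}: you write $\mathcal C(\mu_1,\mu_2)$ as the graph $a=P(-bf)$, use $\frac{d}{dt}P(tf)=\int f\,d\mu_{tf}$ to identify the tangency point with $b=-t_{a_0}$, and apply the variational identity $P(t_{a_0}f)=h(\mu_{a_0})+t_{a_0}\int f\,d\mu_{a_0}$ to get $h(\mu_{a_0})=a+\frac{h(\mu_1)}{h(\mu_2)}b$. Dividing by $h(\mu_1)$ then gives the formula, and your normalization check goes through, since both the Manhattan curve and the flow periods are defined with the unnormalized lengths $\ell_{\mu_i}$.
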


The proof is identical to~\cite[Theorem~4.2]{DM23:Correlation}.

\subsection{Beyond strongly hyperbolic currents}

Theorem~\ref{thm:main} applies to strongly hyperbolic currents because, in this setting, the corresponding Busemann cocycles are H\"older and the associated translation flows admit the thermodynamic formalism needed for the Lalley--Sharp theorem. Nevertheless, the statement of the counting problem only involves the renormalized length spectra
\[
L_\mu=h(\mu)\ell_\mu.
\]
This suggests asking whether an analogous result should hold for arbitrary filling currents.

There is some evidence for this formulation. The entropy function
\[
h:\mathcal C_{\mathrm{fill}}(S)\to\mathbb R
\]
is continuous, and the Manhattan-curve expression for the correlation exponent extends continuously to pairs of non-proportional projective filling currents. More precisely, since Manhattan curves of points in $D(\Gamma)$ are always $C^1$ (\cite[Theorem~1.1]{CT25:Manhattan}) one can define
\[
\overline M(\alpha,\beta)
=
\frac{a}{h(\alpha)}+\frac{b}{h(\beta)},
\]
where $(a,b)$ is the point on the generalized Manhattan curve whose tangent line is parallel to the line joining
\[
(h(\alpha),0)
\qquad\text{and}\qquad
(0,h(\beta)).
\]
With this definition, and using the strict convexity of $\overline M$, one can check it is continuous on
\[
\mathbb P\mathcal C_{\mathrm{fill}}(S)\times
\mathbb P\mathcal C_{\mathrm{fill}}(S)-\Delta.
\]

This motivates the following question.

\begin{question}
\label{thm:FillingMain}
Fix $\varepsilon>0$. Let $\mu_1,\mu_2$ be two non-proportional filling geodesic currents. Do there exist constants
\[
C=C(\varepsilon,\mu_1,\mu_2)>0
\qquad\text{and}\qquad
M=M(\mu_1,\mu_2)>0
\]
such that, as $x\to\infty$,
\[
\#\Big\{[\gamma]\in[\Gamma]\;\Big|\;
L_{\mu_1}([\gamma])\in \bigl(x,x+h(\mu_1)\varepsilon\bigr),\ 
L_{\mu_2}([\gamma])\in \bigl(x,x+h(\mu_2)\varepsilon\bigr)
\Big\}
\sim
C\frac{e^{Mx}}{x^{3/2}}?
\]
\end{question}

By the above discussion, the main obstruction to extending Theorem~\ref{thm:main} is not the definition of the exponent $M$ (since it extends continuously to all filling geodesic currents), but guaranteeing the convergence of the asymptotic and the existence of the constant $C$. Indeed, $C$ is governed by the second derivative of the pressure, equivalently by the curvature of the Manhattan curve. For general filling currents, it is not clear whether the corresponding Manhattan curve is $C^2$.

\begin{question}
Let $(\alpha,\beta)$ be a pair of filling geodesic currents. Is the Manhattan curve abscissa $\theta_\alpha^\beta(t)$ a $C^2$ function? Is it analytic?
\end{question}

Some evidence supporting this comes from the fact that Manhattan curves are analytic for pairs of distinct strongly hyperbolic currents~\cite[Proposition~2.12]{CT25:Manhattan}, and also for pseudometrics associated to filling closed curves. Indeed, if $\gamma$ is filling, then the pseudometric $d_\gamma$ has the same stable lengths as the geometric action of $\pi_1(S)$ on the dual Sageev complex~\cite{S02:Cube2}, a $\CAT(0)$ cube complex in the scope of~\cite[Theorem~6.1]{CT25:Cubulations}, which implies analyticity of the corresponding Manhattan curve.
\appendix 
\appendix

\section{Signed geodesic currents}
\label{sec:signed}

A \emph{signed geodesic current} on $S$ is a $\Gamma$-invariant Radon signed measure on the space of geodesics $G(\widetilde X)$.
Equivalently,
\[
\mathcal C^{\pm}(S)
=
\{
\mu_1-\mu_2 :
\mu_1,\mu_2\in \mathcal C(S)
\}.
\]

By uniqueness of the Jordan decomposition, if
\[
\alpha=\alpha^+-\alpha^-
\]
is the decomposition of a signed current into mutually singular positive Radon measures, then $\alpha^\pm$ are $\Gamma$-invariant. In particular, $\alpha^\pm \in \mathcal C(S)$.
The associated total variation measure
\[
|\alpha|=\alpha^++\alpha^-
\]
is therefore also a geodesic current.

Bonahon's intersection form extends uniquely by bilinearity to a symmetric bilinear form
\[
i:\mathcal C^\pm(S)\times \mathcal C^\pm(S)\to \mathbb R.
\]

Moreover, if $\gamma$ is a closed curve and $[\gamma)$ is a fundamental domain for an axis of $\gamma$ in $\widetilde X$, then we can extend the intersection number on geodesic currents to signed currents linearly as
\[
i(\alpha,\gamma)=\alpha(G_{[\gamma)}),
\]
where $G_{[\gamma)}$ denotes the set of geodesics intersecting $[\gamma)$.

\subsection{Extension of Otal rigidity}

The following extends Otal's injectivity of marked length spectrum from geodesic currents (\cite[Th\'eor\`eme~2]{Otal90:SpectreMarqueNegative}) to signed currents.

\begin{proposition}\label{prop:signed_otal}
Let $\alpha,\beta\in \mathcal C^\pm(S)$.
If
\[
i(\alpha,\gamma)=i(\beta,\gamma)
\]
for every closed curve $\gamma$, then $\alpha=\beta$.
\end{proposition}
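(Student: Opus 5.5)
By linearity it is enough to prove the case $\beta=0$. Set $\sigma\coloneqq\alpha-\beta\in\mathcal C^\pm(S)$ and assume $i(\sigma,\gamma)=0$ for every closed curve $\gamma$; we must show $\sigma=0$. Write the Jordan decomposition $\sigma=\sigma^+-\sigma^-$ with $\sigma^\pm\in\mathcal C(S)$, as recalled above. The hypothesis then says
\[
i(\sigma^+,\gamma)=i(\sigma^-,\gamma)\quad\text{for every closed curve }\gamma .
\]
This is an equality of marked length spectra between two honest positive geodesic currents. By Otal's theorem~\cite[Th\'eor\`eme~2]{Otal90:SpectreMarqueNegative}, in the form valid for arbitrary geodesic currents, this gives $\sigma^+=\sigma^-$. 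But $\sigma^+$ and $\sigma^-$ are mutually singular. Two mutually singular positive measures can be equal only if both vanish. Hence $\sigma=0$, that is, $\alpha=\beta$.

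The step that needs care is checking that Otal's injectivity really applies to all of $\mathcal C(S)$, and not only to Liouville currents of negatively curved metrics. If I cannot simply cite that version, I would reprove the relevant case directly, in three steps.

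\textbf{Step 1 (boxes to transversals).} Let $B=I\times J$ be a box whose four endpoints are endpoints of axes of hyperbolic elements. Choose crossing elements $a,b\in\pi_1(S)$ whose axes approximate the sides of $B$. The combinations $\ell(a^n)+\ell(b^n)-\ell(a^nb^n)$ are the quantities appearing in \cite[Lemma~8.13, Lemma~8.14]{MGT25:Intersections}, which the paper already uses in Lemma~\ref{lem:stronghyp_cross}. For any positive current, these identities express the measure of the double transversal determined by $a^n,b^n$ through intersection numbers with closed curves. The double transversals exhaust $B$ by Lemma~\ref{lem:nested_transversals}.

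\textbf{Step 2 (equal box measures).} Because both $\sigma^+$ and $\sigma^-$ are positive currents, the limit in Step~1 exists for each of them. Letting $n\to\infty$ therefore gives $\sigma^+(B)=\sigma^-(B)$ for every such box $B$.

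\textbf{Step 3 (equal measures).} Endpoints of axes are dense in $\partial\wt X$, and atoms can be handled by approximating from inside and outside with half-open boxes. So the boxes from Step~2 generate the Borel $\sigma$-algebra of $G(\wt X)$ and form a $\pi$-system. A uniqueness-of-measures argument, on each compact set, then yields $\sigma^+=\sigma^-$.

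I expect the main obstacle to be Step~1. The difficulty is controlling boundary effects from atoms of $\sigma^\pm$ located on the sides of the approximating boxes. I would handle this by working only with boxes whose sides carry no atoms. There are only countably many atoms, so such boxes can always be chosen, and they still generate the $\sigma$-algebra.
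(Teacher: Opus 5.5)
Your argument is correct, but it takes a different route from the paper. You reduce to positive currents by moving the negative part to the other side. You then use Otal's injectivity \cite[Th\'eor\`eme~2]{Otal90:SpectreMarqueNegative} purely as a statement. That theorem is indeed formulated for arbitrary geodesic currents; this is the form the paper itself cites just before the proposition and in Corollary~\ref{cor:notdense}. So your fallback Steps~1--3 are unnecessary.

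Neither the Jordan decomposition nor mutual singularity is needed either. The equality $\sigma^+=\sigma^-$ already gives $\sigma=0$. More simply, take any splitting $\alpha=\mu_1-\mu_2$, $\beta=\nu_1-\nu_2$. The hypothesis then reads $i(\mu_1+\nu_2,\gamma)=i(\nu_1+\mu_2,\gamma)$, and Otal gives $\mu_1+\nu_2=\nu_1+\mu_2$ directly.

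The paper instead reopens Otal's proof. For suitable boxes $B$, it approximates $\mu(B)$ by $\tfrac12\bigl(i(\mu,\gamma_1)+i(\mu,\gamma_2)-i(\mu,\gamma_3)-i(\mu,\gamma_4)\bigr)$, with curves depending only on $B$. It then notes that this expression is linear in $\mu$, so it passes to signed currents.

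Your version is more economical. It also avoids having to make that approximation hold simultaneously for the positive pieces of $\alpha$ and $\beta$, which needs some care because the error terms in Otal's argument depend on the current. What the paper's route buys is the approximation formula for signed currents itself. That formula is what Proposition~\ref{prop:signed_multicurves} actually invokes to show that integral intersection numbers force half-integral box measures. The bare injectivity you prove would not suffice there.
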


\begin{proof}
Write
\[
\alpha=\mu_1-\mu_2,
\qquad
\beta=\nu_1-\nu_2,
\]
with $\mu_i,\nu_i\in \mathcal C(S)$.

Otal's argument~\cite{Otal90:SpectreMarqueNegative} shows that for every box of geodesics
\[
B=(a,b)\times(c,d)\subset \mathcal{G}(\widetilde X)
\]
(where $a,b,c,d$ are endpoints of lifts of a dense orbit in the unit tangent bundle of $S$),
and for every $\varepsilon>0$, there exist closed curves
\[
\gamma_1,\gamma_2,\gamma_3,\gamma_4
\]
such that for every geodesic current $\mu$,
\[
\left|
\mu(B)
-
\frac{
i(\mu,\gamma_1)
+i(\mu,\gamma_2)
-i(\mu,\gamma_3)
-i(\mu,\gamma_4)
}{2}
\right|
<\varepsilon.
\]
Since the expression above is linear in $\mu$, the same approximation formula holds for signed currents. Therefore,
\[
\left|
\alpha(B)
-
\frac{
i(\alpha,\gamma_1)
+i(\alpha,\gamma_2)
-i(\alpha,\gamma_3)
-i(\alpha,\gamma_4)
}{2}
\right|
<\varepsilon,
\]
and similarly for $\beta$.

Since
\[
i(\alpha,\gamma)=i(\beta,\gamma)
\]
for every closed curve $\gamma$, we obtain
\[
|\alpha(B)-\beta(B)|<2\varepsilon.
\]
Letting $\varepsilon\to0$ gives
\[
\alpha(B)=\beta(B)
\]
for every such box $B$.
Since these boxes have endpoints dense in $S^1$, they generate the topology of $\mathcal{G}(\wt{X})$, and thus the measures $\alpha$ and $\beta$ coincide.
\end{proof}

\subsection{Integral signed currents}

We say that a signed current is a \emph{signed weighted multicurve} if it is a finite linear combination
\[
\alpha=\sum_{i=1}^n a_i\gamma_i,
\]
where $a_i\in\mathbb R$ and $\gamma_i$ are closed curves.

\begin{proposition}\label{prop:signed_multicurves}
Let $\alpha\in \mathcal C^\pm(S)$.
If
\[
i(\alpha,\gamma)\in\mathbb Z
\]
for every closed curve $\gamma$, then $\alpha$ is an signed weighted multicurve.
\end{proposition}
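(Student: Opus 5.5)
The plan is to show that $\alpha$ takes values in $\tfrac12\mathbb Z$ on a rich family of boxes. From this I will deduce that $|\alpha|$ is purely atomic with atoms of mass bounded below, and then use local finiteness to get only finitely many $\Gamma$-orbits of atoms.

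\textbf{Step 1: exact half-integrality on crossing-pair sets.} For $a,b\in\Gamma$ with crossing axes, the combinatorial identity behind Lemma~\ref{lem:stronghyp_cross} (namely \cite[Lemma~8.13]{MGT25:Intersections}) expresses $\ell(a)+\ell(b)-\ell(ab)$ exactly as $2\mu$ of an explicit set of geodesics. That set consists of the geodesics crossing two consecutive fundamental segments of the axes, in the double-transversal configuration of Definition~\ref{def:box_extension}. The identity is an equality of measures of Borel sets, so it is linear in $\mu$. I will check that the proof uses only finite additivity and $\Gamma$-invariance, not positivity; it then holds verbatim for signed currents, with the convention $i(\alpha,\gamma)=\alpha(G_{[\gamma)})$.

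Applying this with $i(\alpha,\cdot)\in\mathbb Z$ gives $\alpha(G_{a,b})\in\tfrac12\mathbb Z$ for every such set $G_{a,b}$. Replacing $(a,b)$ by $(a^n,b^n)$ gives the same for the nested family $G_{a^n,b^n}$. By the argument of Lemma~\ref{lem:nested_transversals}, this family increases to the box whose corners are the fixed points $a^\pm,b^\pm$.

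\textbf{Step 2: pass to boxes.} By continuity from below for $\alpha^\pm$, we have $\alpha(G_{a^n,b^n})\to\alpha(B)$. A convergent sequence in $\tfrac12\mathbb Z$ is eventually constant, so $\alpha(B)\in\tfrac12\mathbb Z$ for every box $B$ with corners at fixed points of crossing hyperbolic elements. Such corners are dense in $S^1$. Boxes can also be cut into smaller boxes, and each smaller box again has this form (choose elements with axes through the prescribed corner pairs via density of pairs $(g^+,g^-)$).

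\textbf{Step 3: no diffuse part.} Take a geodesic $\ell$ that is not an atom of $|\alpha|$. Choose boxes $B_k$ of the above type shrinking to $\{\ell\}$. Then $|\alpha|(B_k)\to0$, so $\alpha(B')\in\tfrac12\mathbb Z$ forces $\alpha(B')=0$ for every sub-box $B'\subset B_k$ of the same type once $k$ is large. Hence $\alpha$ vanishes near $\ell$.

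I then have to upgrade this to $\alpha^\pm$ vanishing near $\ell$. Applying the conclusion to sub-boxes on which, by Hahn decomposition approximated by finite unions of boxes, $\alpha$ is close to $\alpha^+$, shows $\alpha^+=\alpha^-=0$ near $\ell$. So $|\alpha|$ is supported on its atoms. Moreover each atom has mass at least $\tfrac12$: shrink boxes around an atom, and the values stabilise in $\tfrac12\mathbb Z\setminus\{0\}$.

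\textbf{Step 4: finiteness.} By \cite[Lemma~2.6]{DRMG23:Duals}, atoms of currents are lifts of closed geodesics. Local finiteness of $|\alpha|$ on the compact set of geodesics meeting a fundamental domain, together with the lower bound $\tfrac12$ on atom masses, leaves only finitely many $\Gamma$-orbits of atoms. Therefore $\alpha=\sum_i a_i\gamma_i$ with $a_i\in\tfrac12\mathbb Z$, a signed weighted multicurve.

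The main obstacle I expect is Step 1 in the presence of atoms. The exact identity depends on endpoint conventions, half-open segments, and geodesics through $x,y,z,w$, which Lemma~\ref{lem:transversal_vs_distances} sidesteps only by assuming no mass through the points. I would handle this by choosing the base points on the axes generically: only countably many lifts of closed geodesics carry atoms, so generic points avoid them. The intersection-number side is independent of the base point, so the identity still yields half-integers.
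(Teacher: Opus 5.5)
Your proposal is correct in outline, and its second half is more complete than the paper's own proof. One claim in Step~2 is false as written, but what you actually use from it is true and easy to supply.

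\textbf{Comparison with the paper.} The paper obtains half-integrality from Otal's four-curve approximation $\alpha(B)\approx\tfrac12\bigl(i(\alpha,\gamma_1)+i(\alpha,\gamma_2)-i(\alpha,\gamma_3)-i(\alpha,\gamma_4)\bigr)$, on boxes whose endpoints are lifts of a dense orbit. Your Steps~1--2 rely on the same mechanism: integer combinations of lengths of crossing-pair words converge to twice a box mass. You phrase it through the four-point identity and nested double transversals instead.

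You need neither an exact identity at finite $n$ nor generic base points. Genericity is not really at your disposal anyway: in the exact identity the corners are pinned to the midpoints of the fundamental segments of the two axes adjacent to their intersection point $p$, because the axis of $ab$ passes through the midpoint of $[p,ap]$. Since $\tfrac12\mathbb Z$ is closed, it suffices that $\tfrac12\bigl(\ell_\alpha(a^n)+\ell_\alpha(b^n)-\ell_\alpha(a^nb^n)\bigr)\to\alpha(B)$, and the endpoint conventions at the receding corners wash out in the limit.

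The genuine difference comes afterwards. The paper picks one box with $0<|\alpha|(B)<\tfrac14$, deduces $\alpha(B)=0$, and declares a contradiction. But $\alpha(B)=0$ is compatible with $|\alpha|(B)>0$ through cancellation between $\alpha^+$ and $\alpha^-$. Its opening step (``not a signed weighted multicurve, hence $|\alpha|$ not purely atomic'') also overlooks purely atomic currents with infinitely many orbits of small atoms. Your Step~3 closes the first point: $\alpha$ vanishes on a whole neighborhood of each non-atom, and mutual singularity of $\alpha^\pm$ then kills both parts. Your Step~4 closes the second: atoms have mass in $\tfrac12\mathbb Z\setminus\{0\}$, and local finiteness leaves finitely many orbits. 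Together they also give the half-integrality of the subsequent Remark for free.

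\textbf{The false claim.} A good box (one whose diagonals are axes) cannot be cut into good boxes, and you cannot choose axes through prescribed corners. In $\pi_1(S)$ the stabilizer of a boundary point is cyclic, so each point of $S^1$ is an endpoint of at most one axis. For example, the piece $(b^-,u)\times(s,a^+)$ would need $\overline{b^-s}$ to be an axis, which forces $s=b^+$.

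What Step~3 needs is weaker and true. By density of fixed-point pairs $(g^-,g^+)$, every open box $U\subset B_k$ is an increasing union of good boxes. Hence $\alpha(U)=0$ by continuity from below for $\alpha^\pm$, which are finite on $B_k$. Open boxes form a $\pi$-system generating the Borel sets of $B_k$, so $\alpha^+|_{B_k}=\alpha^-|_{B_k}$, and mutual singularity forces both to vanish. This also replaces your Hahn-approximation sentence. The same density argument supplies the good boxes shrinking to a given geodesic, which you use both for non-atoms and for the lower bound $\tfrac12$ on atom masses.
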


\begin{proof}
Suppose that $\alpha$ is not a signed weighted multicurve.
Then the total variation geodesic current $|\alpha|$ is not purely atomic.
Hence there exists a box $B$ such that
\[
0<|\alpha|(B)<\frac14.
\]
In particular,
\[
|\alpha(B)|<\frac14.
\]

By Proposition~\ref{prop:signed_otal}, for every $\varepsilon>0$ there exist closed curves
\[
\gamma_1,\gamma_2,\gamma_3,\gamma_4
\]
such that
\[
\left|
\alpha(B)
-
\frac{
i(\alpha,\gamma_1)
+i(\alpha,\gamma_2)
-i(\alpha,\gamma_3)
-i(\alpha,\gamma_4)
}{2}
\right|
<\varepsilon.
\]
Since each intersection number is integral,
\[
\frac{
i(\alpha,\gamma_1)
+i(\alpha,\gamma_2)
-i(\alpha,\gamma_3)
-i(\alpha,\gamma_4)
}{2}
\in \frac12\mathbb Z.
\]
Thus $\alpha(B)$ can be approximated arbitrarily well by half-integers.
Since
\[
|\alpha(B)|<\frac14,
\]
the only possible such half-integer is $0$.
Hence
\[
\alpha(B)=0,
\]
contradicting the choice of $B$.
Therefore $|\alpha|$ is purely atomic, and so $\alpha$ is a signed weighted multicurve.
\end{proof}

\begin{remark}
The proof of the proposition shows that $\alpha$ is a half-integral signed multi-curve, although we will not use this. Indeed, the argument implies that $|\alpha|$ and the components of its Hahn decomposition are geodesic currents whose box measures are half-integers. By~\cite[Proposition~4.12]{BIPP21:Crossratios}, twice each of these currents is an integral multi-curve, and hence the currents themselves are half-integral multi-curves. Therefore $\alpha$ is a half-integral signed multi-curve.
\end{remark}
\section{Elliptic integrals and the modulus of a box}
\label{sec:elliptic-modulus}

In this section we express the conformal modulus of a box in terms of complete elliptic integrals and derive the two asymptotic regimes that will be used later.

\subsection{Background on elliptic integrals}

For $k\in(0,1)$, the \emph{complete elliptic integral of the first kind} is defined by
\[
K(k)=\int_0^{\pi/2}\frac{d\theta}{\sqrt{1-k^2\sin^2\theta}},
\]
and we write
\[
k'=\sqrt{1-k^2}
\]
for the \emph{complementary modulus}.

We will use the following two classical facts.

\begin{proposition}[Estimates for $K$]
\label{prop:K-estimates}
There exist absolute constants $c,C>0$ such that:
\begin{enumerate}
\item if $k$ stays in a compact subset of $(0,1)$, then
\[
c\le K(k)\le C;
\]
\item as $k\to 1$, equivalently $k'\to 0$,
\[
c\,\log\frac{4}{k'}\le K(k)\le C\,\log\frac{4}{k'}.
\]
\end{enumerate}
\end{proposition}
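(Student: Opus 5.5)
The two estimates are classical facts about the complete elliptic integral $K(k)=\int_0^{\pi/2}(1-k^2\sin^2\theta)^{-1/2}\,d\theta$, and we treat the two regimes separately.

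\emph{Compact regime.} Suppose $k\in[k_1,k_2]\subset(0,1)$. Since $0\le k^2\sin^2\theta\le k^2<1$, the integrand satisfies
\[
1\le (1-k^2\sin^2\theta)^{-1/2}\le (1-k^2)^{-1/2}.
\]
Integrating over $[0,\pi/2]$ gives
\[
\frac{\pi}{2}\le K(k)\le \frac{\pi}{2}\,(1-k_2^2)^{-1/2}.
\]
The lower constant is absolute. The upper constant depends only on the compact set; this is how the ``absolute'' in the statement must be read, since $K(k)\to\infty$ as $k\to1$.

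\emph{Regime $k\to1$.} Substitute $t=\cos\theta$ and write $k'^2=1-k^2$. Then
\[
1-k^2\sin^2\theta=k'^2+k^2t^2,
\]
so
\[
K(k)=\int_0^1\frac{dt}{\sqrt{(k'^2+k^2t^2)(1-t^2)}}.
\]
Split the integral at $t=1/2$.

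On $[1/2,1]$ we have $k'^2+k^2t^2\ge k^2/4$. Hence this piece is bounded above by
\[
\frac{2}{k}\int_{1/2}^1\frac{dt}{\sqrt{1-t^2}}=\frac{2}{k}\cdot\frac{\pi}{3},
\]
and it is nonnegative. For $k\ge 1/2$, say, this is $O(1)$.

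On $[0,1/2]$ the factor $(1-t^2)^{-1/2}$ lies between $1$ and $2/\sqrt3$. So this piece is comparable, with absolute constants, to
\[
\int_0^{1/2}\frac{dt}{\sqrt{k'^2+k^2t^2}}=\frac1k\,\operatorname{arsinh}\!\Big(\frac{k}{2k'}\Big).
\]
Since $\operatorname{arsinh}(u)=\log\big(u+\sqrt{u^2+1}\big)$, we have $\log(2u)\le\operatorname{arsinh}(u)\le\log(2u+1)$. With $u=k/(2k')$, this gives
\[
\operatorname{arsinh}\!\Big(\frac{k}{2k'}\Big)=\log\frac1{k'}+O(1)
\]
uniformly for $k\in[1/2,1)$.

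Combining the two pieces,
\[
K(k)=\log\frac{1}{k'}+O(1)=\log\frac{4}{k'}+O(1).
\]
As $k'\to0$ we have $\log(4/k')\ge\log4>0$, so $\log(4/k')$ dominates the $O(1)$ error. This yields two-sided bounds
\[
c\log\frac4{k'}\le K(k)\le C\log\frac4{k'}
\]
for $k'$ small, say $k'\le 1/2$.

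\emph{Main obstacle.} The only delicate point is uniformity: the additive $O(1)$ error must be converted into multiplicative constants. This works because $\log(4/k')$ is bounded below by the positive constant $\log4$ on the whole regime. On any remaining intermediate range of $k$, part (1) applies, since $\log(4/k')$ is bounded above and below there as well. Alternatively, one can cite the classical asymptotic $K(k)\sim\log(4/k')$ from any standard reference on elliptic integrals.
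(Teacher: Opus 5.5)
Your proof is correct in substance, but it takes a different route from the paper. The paper gives no argument of its own: it cites DLMF \S19.12 and the classical expansion $K(k)=\log\frac{4}{k'}+O\bigl((k')^2\log(1/k')\bigr)$, from which (2) is immediate. You instead derive everything from the integral. For (1) you use trivial bounds on the integrand. For (2) you use the substitution $t=\cos\theta$, a split at $t=1/2$, and an explicit $\operatorname{arsinh}$ computation. Your version is self-contained, and it makes explicit that the constants in (1) must depend on the compact set; the word ``absolute'' in the statement cannot be taken literally there. The paper's citation gives more: the sharp asymptotic with an \emph{additive} error, which is stronger than the two-sided multiplicative bound the statement asks for.

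One step in your write-up does not follow as written. You only know that the $[0,1/2]$ piece lies between $I$ and $\frac{2}{\sqrt3}I$, where $I=\frac1k\operatorname{arsinh}\bigl(k/(2k')\bigr)$. From this you cannot conclude $K(k)=\log\frac1{k'}+O(1)$, because the factor $\frac{2}{\sqrt3}$ multiplies $\log\frac1{k'}$ and produces an unbounded error. This is harmless for the statement. Your inequalities directly give, for $k\ge 1/2$,
\[
\log\tfrac{1}{k'}-\log 2\;\le\;K(k)\;\le\;\tfrac{4}{\sqrt3}\bigl(\log\tfrac{1}{k'}+\log 2\bigr)+\tfrac{4\pi}{3},
\]
which, together with $K\ge\pi/2$, yields the multiplicative bounds. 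If you want the additive form, it is true, and it is what the paper's citation supplies. To get it, note that $(1-t^2)^{-1/2}-1\le \frac{2}{\sqrt3}t^2$ on $[0,1/2]$ and $t^2/\sqrt{k'^2+k^2t^2}\le t/k$. So replacing $(1-t^2)^{-1/2}$ by $1$ costs only $O(1)$, and then $\frac1k=1+O(k'^2)$ gives $K(k)=\log\frac1{k'}+O(1)$.
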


\begin{proof}
These estimates are classical; see \cite[§19.12]{DLMF19:Elliptic}. In particular, the second statement follows from the asymptotic expansion
\[
K(k)=\log\frac{4}{k'}+O\bigl((k')^2\log(1/k')\bigr).
\qedhere
\]
\end{proof}

We will also use \emph{Carlson's symmetric elliptic integral}
\[
R_F(x,y,z)=\frac12\int_0^\infty \frac{dt}{\sqrt{(t+x)(t+y)(t+z)}},
\]
which satisfies
\begin{equation}
K(k)=R_F(0,1-k^2,1)
\label{eq:Carlson}
\end{equation}
by \cite[§19.25]{DLMF19:Elliptic}.

Finally, we will use the following rectangular conformal map. If $x_1>x_2>x_3$ are real numbers, then
\[
z(p)=R_F(p-x_1,p-x_2,p-x_3)
\]
maps the upper half-plane conformally onto a rectangle, whose horizontal and vertical side lengths are
\[
W=R_F(0,x_1-x_2,x_1-x_3),
\qquad
H=R_F(0,x_1-x_3,x_2-x_3),
\]
respectively. In particular, the conformal modulus of this rectangle is
\[
\mu=\frac{W}{H}.
\]
See \cite[§19.32]{DLMF19:Elliptic}.

\subsection{Normalization of a box}

Let $Q=[a,b]\times[c,d]$ be a box, where $a,b,c,d\in \partial\widetilde X$ are in positive cyclic order, and let
\[
t=L_{X}(Q)>0
\]
be its Liouville mass. Identifying $\partial \wt{X}$ with $\widehat{\mathbb R}$ and using M\"obius invariance, we may assume that
\[
(a,b,c,d)=(\infty,1,\lambda,0)
\]
for some $\lambda\in(0,1)$.

In this normalization,
\[
L_{X}(Q)=\log \frac{(c-a)(d-b)}{(b-a)(d-c)}=\log\frac{1}{1-\lambda},
\]
and therefore
\[
e^t=\frac{1}{1-\lambda},
\qquad\text{so}\qquad
\lambda=1-e^{-t}.
\]

Thus the conformal modulus of $Q$ depends only on the parameter $t\in(0,\infty)$.

\subsection{Expression of the modulus in terms of $K$}

Apply the rectangular conformal map above with
\[
x_1=1,\qquad x_2=\lambda,\qquad x_3=0.
\]
Then
\[
z(p)=R_F(p-1,p-\lambda,p)
\]
maps the upper half-plane conformally onto a rectangle. Its side lengths are
\[
W=R_F(0,1-\lambda,1),
\qquad
H=R_F(0,1,\lambda)=R_F(0,\lambda,1),
\]
so its conformal modulus is
\[
\mu(Q)=\frac{R_F(0,1-\lambda,1)}{R_F(0,\lambda,1)}.
\]

Now set
\[
k=\sqrt{\lambda},
\qquad
k'=\sqrt{1-\lambda}.
\]
Using Equation~\ref{eq:Carlson}, we obtain
\[
R_F(0,1-\lambda,1)=K(\sqrt{\lambda})=K(k),
\]
and, by symmetry of $R_F$,
\[
R_F(0,\lambda,1)=K(\sqrt{1-\lambda})=K(k').
\]
Hence
\[
\mu(Q)=\frac{K(k)}{K(k')}.
\]

Since $\lambda=1-e^{-t}$, this yields
\[
k=\sqrt{1-e^{-t}},
\qquad
k'=e^{-t/2},
\]
and therefore
\[
\eta(t):=\mu(Q)=\frac{K(\sqrt{1-e^{-t}})}{K(e^{-t/2})}.
\]

\subsection{The two regimes for $\eta$}

We now derive the estimates we will need.
The notation $f(t)\asymp g(t)$ will be used in the relevant
asymptotic regime. More precisely, as $t\to0$, it means that there
exist constants $c,C,t_0>0$ such that
\[
c g(t)\le f(t)\le C g(t)
\qquad\text{for }0<t<t_0,
\]
and as $t\to\infty$, it means that the same estimate holds for
$t>t_0$.

\begin{proposition}
\label{prop:eta-regimes}
The function $\eta:(0,\infty)\to(0,\infty)$ satisfies
\[
\eta(t)\asymp
\begin{cases}
\dfrac{1}{\log(16/t)}, & 0<t\le 1,\\[1.2ex]
t, & t\ge 1.
\end{cases}
\]
\end{proposition}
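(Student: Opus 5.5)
We will estimate the numerator and denominator of
\[
\eta(t)=\frac{K(\sqrt{1-e^{-t}})}{K(e^{-t/2})}
\]
separately with Proposition~\ref{prop:K-estimates}, splitting $(0,\infty)$ into the small-$t$ regime, the large-$t$ regime, and a compact middle range. Set $k=\sqrt{1-e^{-t}}$ and $k'=e^{-t/2}$. Since $\eta$ is continuous and positive on $(0,\infty)$, it is bounded above and below on any compact subinterval. The two claimed comparison functions are also continuous and positive there. Hence it suffices to prove the asymptotics as $t\to0$ and as $t\to\infty$, and then adjust constants to cover intermediate values, such as $t\in[t_0,1]$ and $[1,t_1]$.

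\textbf{Regime $t\to\infty$.} Here $k'=e^{-t/2}\to0$, so $k'$ stays in a compact subset of $[0,1)$ near $0$, and $K(k')\to K(0)=\pi/2$. In particular $K(k')$ is bounded above and below by absolute constants; this is just the continuity of $K$ at $0$, in the spirit of item (1) of Proposition~\ref{prop:K-estimates}. Meanwhile $k\to1$ with complementary modulus $k'=e^{-t/2}$. Item (2) then gives
\[
K(k)\asymp \log\frac{4}{e^{-t/2}}=\log 4+\frac t2\asymp t \qquad (t\ge 1).
\]
Therefore $\eta(t)\asymp t$.

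\textbf{Regime $t\to0$.} Now $k'=e^{-t/2}\to1$, while $k=\sqrt{1-e^{-t}}\to0$, so $K(k)\to\pi/2$ is bounded above and below. For the denominator, apply item (2) with the roles of the moduli swapped: $K(k')$ has complementary modulus $k=\sqrt{1-e^{-t}}$, so
\[
K(k')\asymp\log\frac{4}{\sqrt{1-e^{-t}}}.
\]
Using $1-e^{-t}\asymp t$ for $0<t\le1$, we get $\log\bigl(4/\sqrt{1-e^{-t}}\bigr)=\tfrac12\log\bigl(16/(1-e^{-t})\bigr)$, and this is comparable to $\log(16/t)$ uniformly for small $t$. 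The comparison holds because $\log(16/t)\ge\log16>0$, so additive constants in the logarithm are absorbed multiplicatively. This yields $\eta(t)\asymp 1/\log(16/t)$.

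The only mildly delicate point is bookkeeping. Item (2) of Proposition~\ref{prop:K-estimates} is asymptotic, valid for $k'$ below some threshold. So we first choose $t_0$ small and $t_1$ large so that both applications are legitimate. We then handle $[t_0,1]$ and $[1,t_1]$ by the compactness and continuity argument. Finally, we check that on $[t_0,1]$ the function $1/\log(16/t)$ is bounded between positive constants, and likewise $t$ on $[1,t_1]$, so the piecewise statement holds with uniform constants.
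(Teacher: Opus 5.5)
Your proof is correct and follows the same route as the paper. You bound the numerator and denominator of $\eta$ separately: continuity of $K$ at $0$ handles the factor that stays bounded, and the logarithmic estimate of Proposition~\ref{prop:K-estimates}(2) handles the factor whose modulus tends to $1$; you then convert $\log(4/k)$ to $\log(16/t)$ and $\log(4/k')$ to $t$. The only difference is that you explicitly address the fact that item (2) is merely asymptotic, choosing thresholds $t_0,t_1$ and covering $[t_0,1]$ and $[1,t_1]$ by compactness, a step the paper leaves implicit.
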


\begin{proof}
Assume first that $0<t\le 1$. Then
\[
k=\sqrt{1-e^{-t}}\to 0
\qquad\text{as }t\to0.
\]
Since $K$ extends continuously to $0$ and $K(0)=\pi/2$, we have
\[
K(k)\asymp 1.
\]
On the other hand,
\[
k'=e^{-t/2}\to 1
\qquad\text{as }t\to 0,
\]
and the complementary modulus of $k'$ is
\[
\sqrt{1-(k')^2}=\sqrt{1-e^{-t}}=k.
\]
Therefore Proposition~\ref{prop:K-estimates}(2) gives
\[
K(k')\asymp \log\frac{4}{k}.
\]
Since
\[
1-e^{-t}\asymp t
\qquad (0<t\le 1),
\]
we have
\[
k=\sqrt{1-e^{-t}}\asymp \sqrt t.
\]
Hence
\[
\log\frac{4}{k}\asymp \log\frac{16}{t}.
\]
Combining these estimates yields
\[
\eta(t)=\frac{K(k)}{K(k')}
\asymp \frac{1}{\log(16/t)}.
\]

Assume now that $t\ge 1$. Then
\[
0<k'=e^{-t/2}\le e^{-1/2}.
\]
Since $K$ extends continuously to $0$ and is positive on
$[0,e^{-1/2}]$, we have
\[
K(k')\asymp 1.
\]
At the same time,
\[
k=\sqrt{1-e^{-t}}\to 1
\qquad\text{as }t\to\infty,
\]
and its complementary modulus is precisely
\[
\sqrt{1-k^2}=k'=e^{-t/2}.
\]
Thus Proposition~\ref{prop:K-estimates}(2) implies
\[
K(k)\asymp \log\frac{4}{k'}
=\log 4+\frac t2
\asymp t.
\]
Therefore
\[
\eta(t)=\frac{K(k)}{K(k')}
\asymp t.
\]
This proves the result.
\end{proof}

\subsection{The distortion function $\omega$}

The function $\eta$ is strictly increasing and maps $(0,\infty)$
onto $(0,\infty)$. Hence $\eta^{-1}$ is well-defined.
Let $M$ be any real number so that $M\ge 1$ and define
\[
\omega(t)\coloneqq\eta^{-1}(M\,\eta(t)).
\]
In the applications, $M$ will be the quasi-symmetric distortion constant.

\begin{corollary}
\label{cor:omega-regimes}
The function $\omega:(0,\infty)\to(0,\infty)$ satisfies
\[
\omega(t)\asymp
\begin{cases}
t^{1/M}, & 0<t\le 1,\\[1.2ex]
t, & t\ge 1,
\end{cases}
\]
where the implicit constants depend on $M$.
\end{corollary}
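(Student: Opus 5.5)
The plan is to read off the asymptotics of $\omega=\eta^{-1}\circ(M\eta)$ from the two regimes of $\eta$ in Proposition~\ref{prop:eta-regimes}, treating small and large $t$ separately and gluing on a compact middle range by continuity and monotonicity. Since $\eta$ is strictly increasing and $M\ge1$, we have $\omega(t)\ge t$. Also $\omega(t)\to 0$ as $t\to 0$ and $\omega(t)\to\infty$ as $t\to\infty$. So for $t$ near $0$ both $t$ and $\omega(t)$ are small, and for $t$ large both are large, and we may apply the corresponding regime of $\eta$ to each of them. On any compact interval $[t_0,t_1]\subset(0,\infty)$, the functions $\omega(t)$, $t^{1/M}$ and $t$ are all bounded above and below by positive constants, so the comparisons there are trivial. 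This lets us move the cutoff between the regimes freely.

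\emph{Large $t$.} Here $\eta(t)\asymp t$, with constants $c_1 t\le \eta(t)\le C_1 t$ for $t\ge t_0$. Then $M\eta(t)\asymp Mt$. Applying $\eta^{-1}$, which in the large regime also satisfies $\eta^{-1}(s)\asymp s$ (invert the two-sided bound using monotonicity), we get $\omega(t)\asymp Mt\asymp t$. The constants depend on $M$, $c_1$ and $C_1$. This is the estimate actually used in Theorem~\ref{thm:comb_liouville}, namely $\omega(t)\le M' t$ for $t\ge1$.

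\emph{Small $t$.} Here $\eta(t)\asymp 1/\log(16/t)$. Write $s=\omega(t)$, so that $\eta(s)=M\eta(t)$. Then
\[
\frac{1}{\log(16/s)}\asymp \frac{M}{\log(16/t)},
\]
that is, $\log(16/s)\asymp M^{-1}\log(16/t)$, which suggests $s\approx 16\,(t/16)^{1/M}\asymp t^{1/M}$.

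\emph{Main obstacle.} The relation above only holds up to multiplicative constants in the logarithm, and that gives $t^{c/M}\lesssim s\lesssim t^{C/M}$, not $s\asymp t^{1/M}$. To get the sharp exponent I would use the additive form of the asymptotics in the regime $k'\to1$ of the proof of Proposition~\ref{prop:eta-regimes}:
\[
K(k')=\log\frac{4}{k}+O\bigl(k^2\log(1/k)\bigr),\qquad K(k)=\tfrac{\pi}{2}+O(k^2),\qquad k^2=1-e^{-t}=t+O(t^2).
\]
This gives
\[
\frac{1}{\eta(t)}=\frac{2}{\pi}\cdot\frac12\log\frac{16}{t}+O(1)=\frac1\pi\log\frac{16}{t}+O(1).
\]
Substituting into $1/\eta(s)=M^{-1}\cdot 1/\eta(t)$ yields $\log(16/s)=M^{-1}\log(16/t)+O(1)$. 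Exponentiating gives $s\asymp t^{1/M}$, with constants depending on $M$. If only the multiplicative version of the estimates were available, I would settle for the weaker power bounds, which still suffice for the paper's application, since only the $t\ge1$ regime is used.
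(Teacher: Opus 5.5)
Your proposal is correct, and for $t\ge 1$ it is exactly the paper's argument. For small $t$ you differ from the paper, and the difference matters.

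The paper sets $s=\omega(t)$ and uses the multiplicative estimate of Proposition~\ref{prop:eta-regimes} to get $\log(16/s)\asymp\frac1M\log(16/t)$. It then simply ``exponentiates'' to conclude $s\asymp t^{1/M}$. That is precisely the step you flag as invalid. Two-sided bounds $\frac{c}{M}\log(16/t)\le\log(16/s)\le\frac{C}{M}\log(16/t)$ only give $t^{C/M}\lesssim s\lesssim t^{c/M}$ for $t<1$; your version has the order of the exponents reversed, which is immaterial.

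Your repair is what is actually needed. You keep the additive form $1/\eta(t)=\frac1\pi\log(16/t)+O(1)$, which comes from $K(k)=\frac\pi2+O(k^2)$ together with the expansion $K(k')=\log(4/k)+O(k^2\log(1/k))$. The paper itself quotes the latter in the proof of Proposition~\ref{prop:K-estimates}, but then discards it in favour of multiplicative bounds. This additive form yields $\log(16/s)=\frac1M\log(16/t)+O(1)$, hence $s\asymp t^{1/M}$. That holds once $t$ is small enough, depending on $M$, that $s=\omega(t)$ also lies in the small regime; the compact middle range is then absorbed by continuity.

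So the paper's version buys brevity at the cost of a non sequitur, while yours proves the sharp exponent $1/M$ as stated. Your closing observation is also accurate: Theorem~\ref{thm:comb_liouville} only uses $\omega(t)\le M't$ for $t\ge 1$, so the main results do not depend on the small-$t$ regime.
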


\begin{proof}
Let $s=\omega(t)$. By definition,
\[
\eta(s)=M\,\eta(t).
\]

Assume first that $0<t\le 1$. For $t$ sufficiently small, the estimate
\[
\eta(t)\asymp \frac{1}{\log(16/t)}
\]
implies that $\eta(t)$ is small. Since $M$ is fixed, $\eta(s)=M\eta(t)$
is also small, and hence $s$ lies in the small regime as well. Therefore
\[
\frac{1}{\log(16/s)}
\asymp \eta(s)
=M\eta(t)
\asymp \frac{M}{\log(16/t)}.
\]
Thus
\[
\log(16/s)\asymp \frac{1}{M}\log(16/t).
\]
Exponentiating gives
\[
s\asymp t^{1/M}.
\]
After enlarging the implicit constants, the same estimate holds for all
$0<t\le1$. Hence
\[
\omega(t)\asymp t^{1/M}.
\]

Assume next that $t\ge 1$. Then
\[
\eta(t)\asymp t,
\]
so
\[
\eta(s)=M\eta(t)\asymp t.
\]
In particular, for $t$ sufficiently large, $s$ lies in the large regime.
Using $\eta(s)\asymp s$, we obtain
\[
s\asymp t.
\]
Again enlarging the implicit constants if necessary, this holds for all
$t\ge1$. Therefore
\[
\omega(t)\asymp t.
\]
This proves the claim.
\end{proof}

    \bibliography{main}
	\bibliographystyle{hamsalpha}

\end{document}